\newtheorem{definition}{Definition}[section]
\newtheorem{claim}{Claim}
\newtheorem{remark}{Remark}
\newtheorem{proposition}[definition]{Proposition}
\newtheorem{theorem}[definition]{Theorem}
\newtheorem{lemma}[definition]{Lemma}
\numberwithin{equation}{section}
\newcommand{\comment}[1]{}
\newcommand{\N}{\mathbb N}
\newcommand{\R}{\mathbb R}
\newcommand{\es}{\emptyset}
\newcommand{\cB}{\mathcal{B}}
\newcommand{\cC}{\mathcal{C}}
\newcommand{\cD}{\mathcal{D}}
\newcommand{\cE}{\mathcal{E}}
\newcommand{\cF}{\mathcal{F}}
\newcommand{\cW}{\mathcal{W}}
\newcommand{\cX}{\mathcal{X}}
\newcommand{\bG}{\mathbf{G}}
\newcommand{\bX}{\mathbf{X}}
\newcommand{\bF}{\mathbf{F}}
\newcommand{\bq}{\mathbf{q}}
\newcommand{\ih}{\mathbf{i}}
\newcommand{\ihb}{{\underline{\mathbf{i}}}}
\newcommand{\bPr}{\mathbb{P}}
\newcommand{\cen}{{\rm Cen}}
\newcommand{\last}{{L^{last}}}
\newcommand{\sE}{\mathscr{E}}
\newcommand{\Pro}{\mathbb{P}}
\newcommand{\Exp}{\mathbb{E}}
\newcommand{\den}{{\rm den}}
\newcommand{\li}{{\mathcal{L}}}
\newcommand{\he}{{\mathcal{H}}}
\newcommand{\lea}{\mathscr{L}}
\newcommand{\LA}{\Lambda}
\newcommand{\fsum}[3]{#1^{\Sigma}\left(#2,#3\right)}
\newcommand{\norm}[1]{\|#1\|}
\renewcommand{\epsilon}{\varepsilon}
\newcommand{\sm}{\setminus}
\newcommand{\sub}{\subseteq}
\newcommand{\COMMENT}[1]{}
\newcounter{step}
\title{Spanning trees in randomly perturbed graphs}
\author{Felix Joos}
\address{School of Mathematics, University of Birmingham, 
Edgbaston, Birmingham, B15 2TT, United Kingdom}
\email{f.joos@bham.ac.uk, j.kim.3@bham.ac.uk}
\author{Jaehoon Kim}
\thanks{The research leading to these results was partially supported by the EPSRC, grant no.~EP/M009408/1, and the DFG, grant no.~JO 1457/1-1 (F.~Joos).
The research was  also partially supported by the European Research Council under the European Union's Seventh Framework Programme (FP/2007--2013) / ERC Grant 306349 (J.~Kim). }
\date{\today}
\begin{document}

\begin{abstract}
A classical result of 
Koml\'os, S\'ark\"ozy and  Szemer\'edi states that every $n$-vertex graph with minimum degree at least $(1/2+ o(1))n$ contains every $n$-vertex tree with maximum degree $O(n/\log{n})$ as a subgraph, and the bounds on the degree conditions are sharp. 
On the other hand, Krivelevich, Kwan and Sudakov recently proved that for every $n$-vertex graph $G_\alpha$ with minimum degree at least $\alpha n$ for any fixed $\alpha >0$ and every $n$-vertex tree $T$ with bounded maximum degree, one can still find a copy of $T$ in $G_\alpha$ with high probability after adding $O(n)$  randomly-chosen edges to $G_\alpha$.
We extend their results to trees with unbounded maximum degree. 
More precisely, for a given $n^{o(1)}\leq \Delta\leq cn/\log n$ and $\alpha>0$, 
we determine the precise number (up to a constant factor) of random edges  that we need to add to an arbitrary $n$-vertex graph $G_\alpha$ with minimum degree $\alpha n$ in order to guarantee a copy of any fixed $n$-vertex tree $T$ with maximum degree at most~$\Delta$ with high probability.
\end{abstract}
\maketitle

\section{Introduction}

One central theme of extremal combinatorics deals with the question which conditions on a `dense' graph $G$ imply the existence of a `sparse'/`small' graph $H$ as a subgraph of $G$.
The earliest results of this type include Mantel's theorem and its generalisation by Tur\'an,
which states that $G$ contains a complete graph on $r$ vertices whenever its number of edges is at least $(1-1/(r-1)+o(1))\binom{n}{2}$.
Another cornerstone in the area is due to Dirac~\cite{Dir52} who proved that whenever the minimum degree $\delta(G)$ of $G$ is at least $n/2$,
the graph $G$ contains a spanning cycle, known as a Hamilton cycle, and thus also a spanning path. 

This was improved 40 years later,
when Koml\'os, S\'ark\"ozy and  Szemer\'edi~\cite{KSS97} proved in a seminal paper that the condition of $\delta(G)\geq (1/2 + o(1))n$ ensures the containment of every bounded-degree $n$-vertex tree as a subgraph, 
and in \cite{KSS01}, they enormously extended this result to any $n$-vertex tree of maximum degree $O(n/\log n)$;
for refinements of the statement see~\cite{CLNS10}.
In 2009, 
B\"ottcher, Schacht and Taraz \cite{BST09} found a minimum degree condition which implies
the containment of a subgraph $H$ from a more general graph class (than trees) of bounded maximum degree graphs (known as the bandwidth theorem).

We emphasise that the mentioned minimum degree conditions cannot be further improved;
in particular, 
the disjoint union of two complete graphs $K_{n/2}$ contains neither a Hamilton cycle nor any spanning tree. 
Similarly, the almost balanced complete bipartite graph $K_{n/2-1, n/2+1}$ also does not contain a Hamilton cycle nor most $n$-vertex trees.
However, these extremal examples admit very specific structures. 
Indeed, `typical' graphs,
as binomial random graphs, which we denote by $\bG(n,p)$, with a fixed edge density $p>0$ contain many Hamilton cycles
as well as any fixed spanning tree of maximum degree  $O(n/\log n)$ with high probability.
More precisely,
it is known that the choice $p\geq \log n/n$ ensures the existence of a Hamilton cycle
and recently
Montgomery announced a proof showing that for any $n$-vertex tree $T$ with bounded maximum degree,
$\bG(n,p)$ contains $T$ with high probability whenever $p\geq c'\log n/n$ where $c'$ depends on $\Delta(T)$.
Krivelevich~\cite{Kri10} showed that the condition $p= \Omega(\Delta(T)\log n/n)$ ensures the containment of any fixed $T$ whenever $\Delta(T)\geq n^{\epsilon}$ for some small $\epsilon>0$.
We remark that there is an $n$-vertex tree $T$ of maximum degree $O(n/\log n)$ 
such that $\bG(n,0.9)$ does not contain $T$ as a subgraph with high probability;
in particular, there are graphs of minimum degree at least $0.8n$ that do not contain $T$ as a subgraph.

As an interpolation of both aforementioned models,
Bohman, Frieze and Martin \cite{BFM03} considered the following question, which has initiated a lot of research since then~\cite{BTW17,BHKM18,BFKM04,BHKMPP18,BMPP18,HZ18,  KKS16, KST04,MM18}.
Given any fixed $\alpha >0$ and
any $n$-vertex graph $G_\alpha$ with $\delta(G)\geq \alpha n$,
which lower bound on $p$ guarantees with high probability a Hamilton cycle in $G_\alpha\cup \bG(n,p)$?
This type of question combines extremal and probabilistic aspects in one graph model,
which is nowadays known as dense randomly perturbed graph model.
In fact, Bohman et al.~proved that $p=\Theta_\alpha(1/n)$ is the right answer to their question.
Thus, in this case $p$ can be taken smaller by a $\log n$-factor in comparison to the purely random graph model.
Interestingly,
it turns out that for several settings the omission of a $\log n$-factor is the correct answer.

Exactly this phenomenon 
also appears in the work of Krivelevich, Kwan and Sudakov in~\cite{KKS17}.
They proved the natural generalisation of Bohman et al.
by showing that for a given $n$-vertex tree $T$ with bounded maximum degree, say bounded by $\Delta$,
if $p= \Theta_{\alpha, \Delta}(1/n)$,
then $G_\alpha \cup \bG(n,p)$ contains $T$ with high probability.
Hence they translate the setting of the first paper of Koml\'os, S\'ark\"ozy, and Szemer\'edi into this randomly perturbed graph model. 
Here we consider trees of arbitrary maximum degree.
Interestingly, the optimal value for $p$ exhibits a certain threshold behaviour.

\begin{theorem}\label{thm: main}
For each $k\in \mathbb{N}$ and $\alpha >0$, there exists $M>0$ such that the following holds.
Suppose that $G$ is an $n$-vertex graph with $\delta(G)\geq \alpha n$ and $T$ is an $n$-vertex tree with $n^{1/(k+1)} \leq \Delta(T) < \min\{ n^{1/k}, \frac{n}{M\log{n}}\}$, 
and $R\in \bG(n, M p)$ is a random graph on $V(G)$ with $p= \max\big\{ n^{- \frac{k}{k+1}}, \Delta(T)^{k+1}n^{-2}\big\}$, 
then $T\subseteq G\cup R$ with probability $1-o(1)$.
\end{theorem}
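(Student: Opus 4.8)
The plan is to split the random graph as $R=R_0\cup R_1$ into two independent copies of $\bG(n,\Theta(p))$ and argue in two layers. First, show that with high probability $H:=G\cup R_0$ is a graph with $\delta(H)\ge \alpha n$ that moreover inherits the expansion of $\bG(n,\Theta(p))$ "on top of'' $G$, in the sense that every vertex set $S$ of size $s$ satisfies $|N_H(S)|\ge \alpha n+\Omega(\min\{n,spn\})$; this is a routine first/second moment computation, and $p$ is chosen exactly large enough for it to hold. Second, show that any such $H$, together with $R_1$, contains every $n$-vertex tree $T$ with the given degree bound. The content is the second step. Note the extremal examples already force $\Omega(n^2p)$ random edges — a disjoint union of cliques $K_{\alpha n}$ needs edges to cross between cliques, and an unbalanced complete bipartite graph $K_{\alpha n,(1-\alpha)n}$ needs $\Omega(n^2p)$ edges inside the large side to repair the imbalance of $T$ — so the aim is to match this count.

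\emph{Decomposing $T$.} Root $T$ arbitrarily. The one structural input used throughout is elementary: an $n$-vertex tree has at most $2n/d$ vertices of degree $\ge d$. Fix the threshold $d^*=n^{1/(k+1)}\le\Delta$; then the set $W$ of vertices of degree $\ge d^*$ has $|W|\le 2n^{k/(k+1)}=o(n)$, and $T-W$ is a forest of maximum degree $<d^*$. Applying a standard tree-cutting lemma to the components of $T-W$, remove a further small set of edges so that $T$ breaks into (i) the vertices of $W$, viewed as singletons, and (ii) subtrees $T_1,\dots,T_t$ of maximum degree $<d^*$ and size at most $m=n^{1-\Omega(1)}$, each with at least $\Delta(T_i)+1$ leaves. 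The key bookkeeping point is that the total number of "cut edges'' — edges of $T$ incident to $W$, or running between two of the pieces — is $O(n^2p)$: the $O(n/m)$ size-cuts are negligible, while the $\le |W|\cdot\Delta\le 2n^{k/(k+1)}\Delta$ edges incident to $W$ are $O(\max\{n^{(k+2)/(k+1)},\Delta^{k+1}\})=O(n^2p)$, which is precisely where the two terms in the definition of $p$ originate.

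\emph{Embedding, gluing and finishing.} Using the expansion of $H$, reserve a random linear-sized buffer in $V(H)$ and, away from it, embed the pieces $T_i$ one at a time: since $\Delta(T_i)<d^*\ll\alpha n$, the vertex currently being embedded always has many free $H$-neighbours, and because each $T_i$ has at least $\Delta(T_i)+1$ leaves one may embed its non-leaf part first and place the leaves last, so the procedure does not run out of room even as $V(H)$ fills up. Embed the vertices of $W$ among the last, using that the image of $v\in W$ still has $\alpha n$ $H$-neighbours. Then realise the $O(n^2p)$ cut edges, repair any parity/imbalance mismatches forced by the structure of $G$ — the mechanism behind the perturbed-model speed-up — and absorb the buffer: each of these requirements is encoded as a perfect matching in an auxiliary bipartite graph whose edges come partly from $H$ and partly from $R_1$, and the matchings exist by Hall's theorem because $\bG(n,\Theta(p))$ is pseudorandom (so every set of size $s$ has $\Omega(\min\{n,spn\})$ neighbours) and $pn$ is calibrated exactly to the number of cut edges and to the number of children of the vertices of $W$.

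\emph{Main obstacle.} The difficulty is that the "expensive'' demands of $T$ — the up to $\Delta$ children of a vertex of $W$, and all the cut edges — must ultimately be served by the random graph, whose effective minimum degree $\Theta(n^2p/n)$ is, over part of the range of $\Delta$, strictly below $\Delta$; so no single vertex of $W$ can reach all its children through random edges. One is therefore forced to route most of a heavy vertex's children through $G$ and only a bounded overflow through $R$, and to choose the decomposition of $T$ and the placement of its pieces so that the overflow plus the number of cut edges stays $O(n^2p)$ — simultaneously for every admissible tree $T$ and every $G$ with $\delta(G)\ge\alpha n$. Making this count close, and dovetailing it with the absorption step, is the technical heart of the proof.
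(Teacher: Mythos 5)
Your outline correctly locates the central tension (for $\Delta$ in the middle of the range one has $np<\Delta$, so a heavy vertex cannot feed all its children through $R$ and most must go through $G$), but the machinery you propose does not resolve it, and the route is genuinely different from the paper's (which uses the regularity lemma on $G$, super-regular pairs, and random perfect matchings, rather than expansion of $G\cup R_0$ plus Hall plus a buffer). The first concrete gap is the finishing step. Since $T$ is spanning, the last $\Omega(n/\log n)$ leaves of the heavy stars must land \emph{exactly} on the leftover vertex set, i.e.\ you need a perfect matching in the bipartite graph between heavy centres (each with multiplicity equal to its number of unembedded children) and leftover vertices. You invoke Hall's theorem "because $\bG(n,\Theta(p))$ is pseudorandom", but the random graph cannot supply this matching: a centre demanding $\Delta$ leaves has only $O(np)$ random neighbours, and $np<\Delta$ in part of the range — the very obstacle you name. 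So the matching edges must come from $G$, and an arbitrary $G$ with $\delta(G)\ge\alpha n$ restricted to an adversarially arising leftover set need not satisfy Hall's condition at all (think of the leftover set drifting into the non-neighbourhood of some centre). This is why the paper runs the regularity lemma on $G$, embeds the heavy stars into super-regular pairs via (random) perfect matchings, and maintains explicit quasirandomness invariants on where the centres land throughout all $k+1$ rounds; min-degree plus generic expansion is not a substitute.

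The second gap is the global balancing that you compress into "repair any parity/imbalance mismatches". When a heavy star is embedded into a (near-)bipartite part of $G$, all its leaves are forced into the opposite class, so the number of leaves assigned to each class is determined the moment the centres are placed — it cannot be repaired afterwards, and the random graph has too few edges inside a class to absorb a linear imbalance. One must therefore decide \emph{in advance}, for every subtree hanging below a colour-change, which side of which regular pair it goes to, so that every class is filled exactly; this is a nontrivial discrepancy problem (the paper's vector-distribution lemma and the whole of its Section~5), and it interacts with the fact that heavy vertices nest inside one another up to depth $k$ — which is also why a single degree threshold $d^*=n^{1/(k+1)}$ with $W$ "embedded among the last" cannot work: a vertex of $W$ whose children are again in $W$ must be embedded before them, forcing the $k$-round iteration that your sketch omits. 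Your edge-count $|W|\cdot\Delta=O(n^2p)$ is correct, but counting the expensive edges is not the issue; realising them exactly is.
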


Let us add here a few remarks.
\begin{itemize}
\item The bound on $p$ is sharp up to a constant factor for any $\alpha \leq 1/2$. (If $\alpha>1/2$, then $p=0$ is enough.)
See Proposition~\ref{prop: extremal example} for further details.
\item Whenever $\Delta=\Theta(n^{1/k})$, we can only omit a $\log n$-factor in comparison to the $\bG(n,p)$-model.
However, in all other cases $p$ can be taken (significantly) smaller.
\item In certain ranges for $\Delta$,
increasing $\Delta$ does not lead to a change in the bound on $p$.
For example,
the class of trees with maximum degree at most $n^{1/2}$
requires the same bound on $p$ as the class of trees with maximum degree $n^{3/4}$.
See Figure~\ref{Fig1} for an illustration. 
\end{itemize}

To see that the condition $\Delta=O(n/\log n)$ is needed, observe that $\bG(n,0.9)$ does not contain vertex sets of size $o(\log n)$ that dominate the graph.
Hence the tree that arises from the disjoint union of $o(\log n)$ stars with $\Omega(n/\log n)$ leaves
by adding a vertex and joining it to the centres of the stars is not a subgraph of $\bG(n,0.9)$ (with probability $1-o(1)$).

\begin{figure}[bt]%
\begin{center}
\begin{tikzpicture}[scale = 0.7,every text node part/.style={align=center}]

\draw[thick,->] (-0.5,0)--  (6.2,0);
\draw[thick,] (5,-0.1)-- node[anchor=north] {$1$}  (5,0.1);
\draw[thick,] (5/2,-0.1)-- node[anchor=north] {$\frac{1}{2}$}  (2.5,0.1);
\draw[thick,] (5/3,-0.1)-- node[anchor=north] {$\frac{1}{3}$}  (5/3,0.1);
\draw[thick,] (5/4,-0.1)-- node[anchor=north] {$\frac{1}{4}$}  (5/4,0.1);
\draw[thick,] (5/5,-0.1)-- node[anchor=north] {}  (5/5,0.1);
\draw[thick,->] (0,-0.5)-- (0,6);

\draw[thick] (5.8,0) node[anchor=north] {$\frac{\log \Delta}{\log n}$};
\draw[thick] (0,5.8) node[anchor=east] {$\frac{\log np}{\log n}$};

\draw[thick,] (-0.1,5)-- node[anchor=east] {$1$}  (0.1,5);
\draw[thick,] (-0.1,5/2)-- node[anchor=east] {$\frac{1}{2}$}  (0.1,2.5);
\draw[thick,] (-0.1,5/3)-- node[anchor=east] {$\frac{1}{3}$}  (0.1,5/3);
\draw[thick,] (-0.1,5/4)-- node[anchor=east] {}  (0.1,5/4);
\draw[thick,] (-0.1,5/5)-- node[anchor=east] {}  (0.1,5/5);

\draw[thick]
(5/1,5/1)-- (5/4+5/2,5/2)--
(5/2,5/2)-- (2*5/2/3+5/3/3,5/3)--
(5/3,5/3)--(3*5/3/4+5/4/4,5/4)--
(5/4,5/4)--(4*5/4/5+5/5/5,5/5)--
(5/5,5/5)--(5*5/5/6+5/6/6,5/6)--
(5/6,5/6)--(6*5/6/7+5/7/7,5/7)--
(5/7,5/7)--(7*5/7/8+5/8/8,5/8)--
(5/8,5/8)--(8*5/8/9+5/9/9,5/9)--
(5/9,5/9)--(9*5/9/10+5/10/10,5/10)--
(5/10,5/10)--(10*5/10/11+5/11/11,5/11)--
(5/11,5/11)--(11*5/11/12+5/12/12,5/12)--
(5/12,5/12)--(12*5/12/13+5/13/13,5/13)--
(5/13,5/13)--(13*5/13/14+5/14/14,5/14)--
(5/14,5/14)--(14*5/14/15+5/15/15,5/15)--
(5/15,5/15)--(0,0)
;

\end{tikzpicture}
\end{center}
\caption{An illustration of the statement of Theorem~\ref{thm: main}.
The graph shows the magnitude of $np$ in terms of $\Delta$.}%
\label{Fig1}
\end{figure}
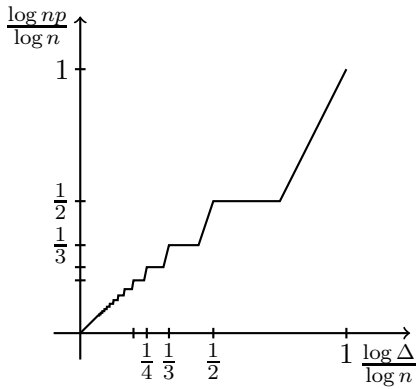

We want to point out here that randomly perturbed graphs can also be seen from a different angle as discussed above.
Let us equip the set of $n$-vertex graphs with a metric, namely the edit-distance.
Given $G_\alpha$ and a tree $T$,
we investigate how many graphs $G'$ in the $m$-neighbourhood of $G_\alpha$ contain $T$ as a subgraph.
This can be easily modelled by adding $m$ edges at random to $G$,
which is almost exactly the graph model we consider in this paper.%
\footnote{We gloss here over the technicality that graphs in the neighbourhood of $G_\alpha$ may also contain fewer edges, 
but as this does not affect the results, we
ignore this for now.}
Hence, the randomly perturbed graph model measures how `typical' a property is from a local point of view.

Randomly perturbed model appears also in theoretical computer science.
In their ground-breaking work~\cite{ST04}, Spielman and Teng introduced the notion of smoothed analysis of algorithms.
They evaluate the performance of algorithms in more `realistic' settings by using randomly perturbed inputs and
a combination of worst-case and average-case analysis.

\section{Outline of the proof}
\label{sec: outline}
Assume, as in the setting of Theorem~\ref{thm: main}, we have an $n$-vertex graph $G$ with minimum degree at least $\alpha n$, an $n$-vertex tree $T$ with maximum degree at most $\Delta$, and a random graph $R=\mathbf{G}(n,Mp)$ on the vertex set $V(G)$. 
We aim to embed $T$ into $G\cup R$.
We extensively use the following facts.
\begin{itemize}
\item[($a$1)] For any set $U\subseteq V(G)$ of size which is linear in $n$,
a $(1-\epsilon)|U|$-vertex forest $F$ with maximum degree $O(np)$ embeds into $R[U]$ (see Lemma~\ref{lem: random embedding} and Lemma~\ref{lem: random embedding simple}).  Moreover, the image of $V(F)$ is a random subset of $U$ (see Remark~\ref{rmk: symmetry}).
\item[($a$2)] For any set $U\subseteq V(G)$ of size which is linear in $n$, 
a $|U|$-vertex forest $F$ with maximum degree $O(\frac{np}{\log{n}})$ embeds into $R[U]$ (see Lemma~\ref{lem: light leaves embedding}).
\item[($b$1)] Suppose $G'$ is a bipartite graph with vertex partition $(V_1,V_2)$
and  $F$ is a star-forest with at most $|V_2|$ leaves. 
If the centres of $F$ are already `quasi-randomly' mapped to $V_1$ and $\Delta(F)=o(|V_2|)$, 
then we can extend the mapping to an embedding of $F$ into $G'$ by embedding $L(F)$, the leaves of $F$, into $V_2$.
(see Lemma~\ref{lem: random matching behaves random} and Lemma~\ref{lem: p matching}).
 Moreover, in our embedding, we can ensure that $L(F)$ is `quasi-randomly' mapped into $V_2$ (see Lemma~\ref{lem: random matching behaves random}).
\end{itemize}

Our approach is as follows.
We first apply the regularity lemma to $G$ to obtain a subgraph $G'$ and a partition $\{V_{\ih}\}_{\ih \in [r]\times[2]}$ of $V(G)$ such that $G'[V_{(i,1)}, V_{(i,2)}]$ is $(\epsilon,d)$-super-regular for all $i \in [r]$.
We decompose $T$ into subforests $F_1,\dots, F_{k+1}$, $F'_1,\dots, F'_{k}$, $L_1$ and $\last$ such that $\Delta(F)=O(np)$ for all $F\in \{F_1,\dots, F_{k+1}, L_1\}$ and $F'_1,\dots, F'_k, L_1, \last$ are star-forests.
We embed the edges of $F_1$ into $R$ by using ($a$1) as it has maximum degree $O(np)$ and we embed $V(F'_1)$ onto the `unused' vertices of $G'$  by using the super-regularity of $G'$ and ($b$1).
Iteratively, we embed $F_2, F'_2,\dots, F'_{k}, F_{k+1}$ onto `unused' vertices of $V(G)$. 
Finally, we want to complete the exact embedding by embedding $L_1$ by using ($a$1), and $\last$ by using ($b$1) onto the remaining vertices of $G$.

For this approach, we need to make sure that we can repeat this procedure until the end. 
We can use ($a$1) for any subset $U$ of $V(G)$, thus we can always embed $F_i$ into the remaining vertices. However, in order to use ($b$1), we need to ensure that the centre vertices are `quasi-randomly' embedded. In order to ensure this, we extensively use the `moreover part' of ($a$1) and ($b$1). Every time we embed $F_i$ or $F'_i$ into $G'$, we always make sure that the image of the embedding is chosen in a `quasi-random' way (see \ref{Phi3} and \ref{Phi4} in Section~\ref{sec: embedding}).

One big obstacle for this approach is that as $T$ and $G$ both contain exactly $n$ vertices;
in particular, we need to find an exact embedding of $\last$ into $G'$ at the last step. 
Suppose first that  $T$ contains many `light' leaves, that is, leaves whose neighbour has degree at most $O({np}/{\log n})$, or many vertices of degree $2$.
Then the situation is easier as we can reserve such `light' vertices for the last step  and in the last step, we embed them into $R$ using ($a$2) or Lemma~\ref{lem: bare paths embedding} (see Section~\ref{sec: few heavy leaves}). 

Suppose now that $T$ does not have many `light' leaves nor many vertices of degree $2$.
This implies that there are many `heavy' leaves, that is, leaves whose neighbour has degree $\Omega({np}/{\log n})$. 
As ($a$1) does not apply to spanning trees, it is necessary to use ($b$1) to find an exact embedding of these `heavy' leaves at the last step. 
For this purpose, we reserve some leaves $\last$ at the beginning and we embed them at the last step by using ($b$1) to finish the algorithm.
In order to use ($b$1) for $\last$, the graph $\last$ must be a star-forest of $T$ which only consists of leaf-edges of $T$ so that we do not have to embed any more edges after embedding $\last$.

There are several further obstacles.
For example, 
after all the centres $x_1,\dots, x_{s}$ of $\last$ are embedded into $V_{i,1}$, 
the number $\sum_{i\in [s]} d_{\last}(x_i) $ of leaves attached to the centres might not equal to the number of vertices left in $V_{i,2}$. 
In this case, it is impossible to find an exact embedding using ($b$1). 
To overcome this, we reserved a set $L_1$ of leaf-edges of $T$ in the beginning. 
Furthermore, we will reserve a small subgraph $F^{\circ}$ of $\last$. 
Before we embed $\last$ into $G'$,
we embed exactly the right number of leaf-vertices of $L_1 \cup F^\circ$ into each $V_{\ih}$ for each $\ih \in [r]\times [2]$ by using ($a$1). 
Hence this problem does not occur when we are about to embed $\last\setminus F^{\circ}$.

We may also face the problem that $|\last|$ is too small (say, size of $O(1)$).
Then we may not be able to guarantee that the remaining small number of vertices still induces super-regular pairs in $G'$ which is required to use ($b$1). 
However, by a clever choice of the edge decomposition $E(T)$
into the forests described above,
it is possible to ensure that $|\last|=\Omega(n/\log{n})$ (see the definition of $\last$ and \ref{F12} in Section~\ref{sec:prep}).

Another problem is that we might not be able to obtain strong enough `quasi-randomness' on the distribution of images of centre vertices of $\last$ to apply ($b$1). 
This happens when most of centre vertices of $\last$ embedded on $V_{\ih}$ are embedded using ($b$1) rather than ($a$1).
To better estimate the `quasi-randomness', 
we define subsets $\bigcup_{i\in [k]} \LA_i \subseteq \last$ of vertices whose parents are embedded using ($a$1).
As parents of these vertices (which form the centre vertices of $\last$) are embedded by ($a$1) rather than ($b$1) (that is, they are embedded into $R$ rather than $G$), 
the distribution of the images of these centre vertices of $\last$ satisfies strong `quasi-random' assumptions.
Thus as long as enough of such vertices are embedded into each $V_{\ih}$, 
we have a sufficiently strong `quasi-randomness' distribution to apply a weaker version of ($b$1) (see Lemma~\ref{lem: p matching}). 
We can actually ensure that each $V_{\ih}$ contains enough images of such parents (see \eqref{eq: size of Xih W in case 2} in Section~\ref{sec: distribution}).

The organisation of the paper is as follows.
In Section~\ref{sec: preliminaries}, 
we introduce notation, state some probabilistic tools,
and present some results involving the graph regularity set up.
In Section~\ref{sec: embed distributing}, 
we prove ($a$1)--($b$1) and we prove Lemma~\ref{lem: vector distribution} which we use in Section~\ref{sec: distribution}
to assign the vertices of $T$ to the different sets in $\{V_{\ih}\}_{\ih \in [r]\times[2]}$.
In Section~\ref{sec:prep}, we construct the (edge) decomposition $F_1,\dots, F_{k+1},$ $F'_1,\dots, F'_{k},$ $L_1, \last$ of $T$, 
and we verify several properties of this decomposition for later use.
In Section~\ref{sec: distribution}, 
by using Lemma~\ref{lem: vector distribution}, 
we determine for each vertex $x$ in $T$ into which $V_{\ih}$ it will be embedded.
In Section~\ref{sec: embedding}, finally we construct the actual embedding of $T$ into $G\cup R$.
In Section~\ref{sec: few heavy leaves}, we consider the case when either $T$ contains many vertices of degree $2$ or not many `heavy' leaves. 
Both cases can be easily deduced from the results before.

\section{Preliminaries}
\label{sec: preliminaries}

\subsection{Basic definitions}

Let $\mathbb{N}$ denote the set of all positive integers and let $\mathbb{N}_{0}$ denote the set of all non-negative integers. 
We often treat large numbers as integers whenever this does not affect the argument.
For $n\in \mathbb{N}$, let $[n]:=\{1,\dots, n\}$.
For $a,b,c\in \mathbb{R}$, 
we write $a = b\pm c$ if $b-c \leq a \leq b+c$.
We write $\log x:=\log_e x$ for all $x>0$.
The constants in the hierarchies used to state our results are chosen from right to left. 
More precisely, if we for example claim that a result holds whenever $0< a \ll b \ll c \leq 1$, then this means that there are non-decreasing functions $f^* : (0, 1] \rightarrow (0, 1]$ and $g^* : (0, 1] \rightarrow (0, 1]$ such that the result holds for all $0 < a, b, c \leq 1 $ with $b \leq f^*(c)$ and $a \leq g^*(b)$.  
Every asymptotic notation refers to the parameter $n$ if not stated otherwise.

For a finite set $A$,  a function $f:A \rightarrow \mathbb{R}$, and $p\in \mathbb{N}$, 
we define $\norm{f}_p :=(\sum_{a\in A} |f(a)|^p)^{1/p}$ and $\norm{f}_{\infty} := \max_{a\in A} |f(a)|$.
For $\ih =(i,h) \in \mathbb{N}\times [2]$, we define $\ihb:=(i,3-h)$.

Let $A,B$ be two disjoint finite sets. 
For a function $\psi : A \rightarrow B$ and a set $A'$, 
we denote by $\left.\psi \right|_{A'}$ the restriction of $\psi$ on $A'\cap A$.
For an injective function $\psi :A\rightarrow B$, a function $f: A\rightarrow\mathbb{R}$, and a set $B'$, we define
\begin{align}\label{def: fsum}
\fsum{f}{\psi}{B'} := \sum_{a\in A \colon \psi(a)\in B'\cap B} f(a).
\end{align}
%Every time we use this notation, the set $A$ will be clear from the function $\psi$ and the context.
Note that if $B'$ is disjoint from $B$, then $\fsum{f}{\psi}{B'}=0$.

Let $G$ be a graph.
We slightly abuse notion by identifying a graph with its edge set.
For a collection $E$ of edges, we treat it as a graph with vertex set $\bigcup_{uv \in E} \{u,v\}$ and edge set $E$.
Let $e(G)$ be the number of edges of $G$.
Let $u,v \in V(G)$ and let $U,V\sub V(G)$ be disjoint.
We write $G[U,V]$ to denote the bipartite \mbox{(multi-)subgraph} of $G$ induced by the edges joining $U$ and $V$ and let $e_G(U,V):=e(G[U,V])$.
In addition, let $\den_G(U,V):= e_G(U,V)/(|U||V|)$.
We define the \emph{degree} $d_{G}(v)$ of $v$ in $G$ by $|N_{G}(v)|$.
We further define $d_{G}(u,v):=|N_{G}(u)\cap N_{G}(v)|$. 
We define $d_{G,U}(v):=|N_{G}(v)\cap U|$ and $d_{G,U}(u,v):= |N_{G}(u)\cap N_{G}(v)\cap U|$.
We interpret the $d_G(\cdot)$ as a function $d_G: V(G)\to \N_0$
and so we have $\fsum{d_G}{\psi}{V'}=\sum_{v\in V'\cap \psi(U)}d_G(\psi^{-1}(v))$ for any injective function $\psi: U\to V$ and $V'\sub V$.

We say that a bipartite graph $G$ with vertex partition $(A,B)$ is \emph{$(\epsilon,d)$-regular} if for all sets $A'\subseteq A$, $B'\subseteq B$ with  $|A'|\geq \epsilon |A|$, $|B'|\geq \epsilon |B|$,
we have
\begin{align*}
	| \den_G(A',B')- d| < \epsilon.
\end{align*}
If $G$ is $(\epsilon,d)$-regular for some $d\in [0,1]$, then we say $G$ is $\epsilon$-regular.
If $G$ is $(\epsilon,d)$-regular and $d_{G}(a)= (d\pm \epsilon)|B|$ for all $a\in A$ and $d_{G}(b)= (d\pm \epsilon)|A|$ for all $b\in B$, 
then we say that $G$ is {\em $(\epsilon,d)$-super-regular}. 
For a bipartite graph $G$ with vertex partition $(A,B)$,
let $J_G(A,d,\epsilon)$ be a graph (that may contain loops) defined by
\begin{align}\label{eq: def J graph}
V(J_G(A,d,\epsilon))=A \text{ and }
E(J_G(A,d,\epsilon)) = \{ aa'\colon a\in A,a'\in A, d_{G}(a,a') \neq (d^2\pm 3 \epsilon)|B|\}.
\end{align}
We call $J_G(A,d,\epsilon)$ an \emph{irregularity-graph} (with respect to $A$) of $G$.

We say that a forest $F$ is a \emph{star-forest} if every component of $F$ is a star.
A vertex of degree~$1$ is a \emph{leaf} and for a forest $F$, we denote by $L(F)$ the set of leaves of $F$.
For a forest $F$, let $C(F)$ be the collection of components of $F$ that contain at least one edge.
For a tree $T$, we let $P_T(u,v)$ be the unique path in $T$ connecting $u$ and $v$ and $|P_T(u,v)|$ be the number of edges in the path.

A tree $T$ is \emph{rooted} if it contains a distinguished vertex $r$ -- its root; 
we often write $(T,r)$ for a rooted tree with root $r$.
We say a tuple $(F,R)$ is a \textit{rooted forest} if $F$ is a forest and $R\sub V(F)$ contains exactly one vertex of every component of $F$ -- their roots.
For a vertex $u\in V(T)$ in a rooted tree $(T,r)$, 
we let $T(u)$ be the subtree of $T$ induced all vertices $w$ such that $u \in V(P_T(r,w))$.
For any subtree $T'$ of $(T,r)$, 
we let $r(T')$ be the root of $T'$, 
which is the unique vertex $x\in V(T')$ such that $V(P_T(r,x)) \cap V(T') =\{x\}$ and we always consider $T'$ as a rooted tree $(T',r(T'))$. 
For any subforest $F$ of $(T,r)$, 
let $R(F):= \{ r(T'): T'\in C(F)\}$. 
For a rooted tree $(T,r)$,
we define the \emph{height of $T$} to be the length of the longest path between $r$ and a leaf of $T$.
Moreover,
let $D_T(u) := N_T(u)\cap V(T(u))$ be the set of  \emph{descendants} (children) of $u$ and let $a_T(u):= N_T(u) \setminus V(T(u))$ be its \emph{ancestor} (which does not exist if $u$ is the root of $T$).
Let $D_T^\ell(u):= \{ w \in T(u): |P_T(u,w)| = \ell \}$ and let $S_T(u):= \{ u u' : u'\in D_T(u)\}$.
In addition, let $D_T^{0}(u):= \{u\}$.
For a star-forest $F$, 
we let $\cen(F)$ be the set of centres of all star-components of $F$ (if a star has only one edge, we assume that one vertex is given as a centre).

For a tree $T$ and a vertex $v\in V(T)$, let $(A_T(v), B_T(v))$ be the unique bipartition of $V(T)$ such that $v\in A_T(v)$ and $v\notin B_T(v)$.
For a tree $T$, we say a path $P=u_1\dots u_{k}$ is a \emph{bare path} if $d_T(u_i)=2$ for all $i\in [k]$.
For $\ell \in \mathbb{N}$ and edge-disjoint paths $P_1,\dots, P_{\ell}$, 
we write $P=P_1P_2\dots P_{\ell}$ for the concatenation of $P_1,\dots, P_{\ell}$ provided $E(P)= \bigcup_{i=1}^{\ell} E(P_i)$ forms a path.

\subsection{Probabilistic tools}

A sequence $X_0,\dots, X_N$ of random variables is a {\em martingale} if $X_0$ is a fixed real number and $\mathbb{E}[X_{n}\mid X_0,\dots,X_{n-1}] = X_{n-1}$ for all $n\in [N]$. 
%We say that the martingale $X_0,\dots, X_N$ is {\em $c$-Lipschitz} if $|X_{n}-X_{n-1}| \leq c$ holds for all $n\in[N]$. 
Our applications of Azuma's inequality will involve \emph{exposure martingales}. 
These are martingales of the form $X_i:=\mathbb{E}[X\mid Y_1,\dots, Y_i]$, where $X$ and $Y_1,\dots,Y_i$ are some previously defined random variables.

\begin{theorem}[Azuma's inequality \cite{Azu67, Hoe63}]\label{Azuma} 
Suppose that $\lambda>0$ and that $X_0,\dots, X_N$ is a martingale such that
$|X_{i}- X_{i-1}|\leq c_i$ for all $i\in [N]$.
Then 
\begin{align*}
\bPr[\left|X_N-X_0\right|\geq \lambda]\leq 2e^{\frac{-\lambda^2}{2\sum_{i\in [N]}c_i^2}}.
\end{align*}
\end{theorem}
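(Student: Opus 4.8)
The plan is to use the standard exponential-moment (Chernoff-type) method applied to the martingale increments. Set $Y_i := X_i - X_{i-1}$ for $i\in[N]$, so that $X_N - X_0 = \sum_{i\in[N]} Y_i$; the martingale property gives $\mathbb{E}[Y_i\mid X_0,\dots,X_{i-1}] = 0$, and by hypothesis $|Y_i|\leq c_i$. For any $t>0$, Markov's inequality applied to the nonnegative random variable $e^{t(X_N-X_0)}$ yields
\[
\bPr[X_N - X_0 \geq \lambda] \leq e^{-t\lambda}\,\mathbb{E}\big[e^{t(X_N-X_0)}\big],
\]
so it suffices to control the exponential moment $\mathbb{E}[e^{t(X_N-X_0)}]$ and then optimise over $t$.

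The key auxiliary fact I would prove first is the conditional \emph{Hoeffding lemma}: if $Z$ is a random variable with $\mathbb{E}[Z\mid\mathcal{G}]=0$ and $|Z|\leq c$ almost surely, then $\mathbb{E}[e^{tZ}\mid\mathcal{G}]\leq e^{t^2c^2/2}$ for every $t\in\mathbb{R}$. This follows from convexity of $z\mapsto e^{tz}$ on $[-c,c]$: writing $z$ as the convex combination $z=\tfrac{c-z}{2c}(-c)+\tfrac{c+z}{2c}c$ gives $e^{tz}\leq \tfrac{c-z}{2c}e^{-tc}+\tfrac{c+z}{2c}e^{tc}$; taking conditional expectation and using $\mathbb{E}[Z\mid\mathcal{G}]=0$ leaves $\mathbb{E}[e^{tZ}\mid\mathcal{G}]\leq \cosh(tc)\leq e^{t^2c^2/2}$, where the last inequality is the elementary bound $\cosh(x)\leq e^{x^2/2}$ (compare Taylor coefficients, using $(2k)!\geq 2^k k!$). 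Applied with $\mathcal{G}$ generated by $X_0,\dots,X_{i-1}$ and $Z=Y_i$, this gives $\mathbb{E}[e^{tY_i}\mid X_0,\dots,X_{i-1}]\leq e^{t^2c_i^2/2}$.

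Next I would peel off the increments one at a time: conditioning on $X_0,\dots,X_{N-1}$ and factoring out the measurable term,
\[
\mathbb{E}\big[e^{t\sum_{i\in[N]}Y_i}\ \big|\ X_0,\dots,X_{N-1}\big] = e^{t\sum_{i\in[N-1]}Y_i}\,\mathbb{E}\big[e^{tY_N}\mid X_0,\dots,X_{N-1}\big] \leq e^{t^2c_N^2/2}\, e^{t\sum_{i\in[N-1]}Y_i};
\]
taking total expectation and iterating $N$ times yields $\mathbb{E}[e^{t(X_N-X_0)}]\leq \exp\big(\tfrac{t^2}{2}\sum_{i\in[N]}c_i^2\big)$. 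Hence $\bPr[X_N-X_0\geq\lambda]\leq \exp\big(-t\lambda+\tfrac{t^2}{2}\sum_{i\in[N]}c_i^2\big)$, and choosing $t=\lambda/\sum_{i\in[N]}c_i^2$ (the minimiser of this quadratic in $t$) gives $\bPr[X_N-X_0\geq\lambda]\leq \exp\big(-\lambda^2/(2\sum_{i\in[N]}c_i^2)\big)$. Since $-X_0,\dots,-X_N$ is also a martingale with increments bounded by the same $c_i$, the identical argument gives $\bPr[X_N-X_0\leq-\lambda]\leq \exp\big(-\lambda^2/(2\sum_{i\in[N]}c_i^2)\big)$, and a union bound over the two tail events gives the claimed factor of $2$. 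This proof is essentially routine; the only point needing a little care is the conditional Hoeffding step, i.e.\ justifying the interchange of conditional expectations in the peeling and the scalar inequality $\cosh(x)\leq e^{x^2/2}$ for all real $x$, both of which are elementary.
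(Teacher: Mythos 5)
Your proof is correct: it is the standard exponential-moment argument, combining the conditional Hoeffding lemma (proved via convexity and $\cosh(x)\leq e^{x^2/2}$), the tower-property peeling of the increments, optimisation over $t$, and a union bound over the two tails, and each of these steps is carried out accurately. The paper itself does not prove this statement — it is quoted as a classical result with citations to Azuma and Hoeffding — so there is no in-paper argument to compare against; your write-up coincides with the classical proof in those references.
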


For $n\in \mathbb{N}$ and $0\leq p\leq 1$, 
we write $Bin(n,p)$ to denote the binomial distribution with parameters $n$ and $p$. 
For $m,n,N\in \mathbb{N}$ with $m,n<N$, 
the \emph{hypergeometric distribution} with parameters $N$, $n$ and $m$ is the distribution of the random variable $X$ defined as follows. 
Let $S$ be a random subset of $\{1,2, \dots, N\}$ of size $n$ and let $X:=|S\cap \{1,2,\dots, m\}|$. 
We will use the following bound, which is a simple form of Chernoff-Hoeffding's inequality.

\begin{lemma}[Chernoff-Hoeffding's inequality, see {\cite[Remark 2.5 and Theorem 2.10]{JLR00}}] \label{Chernoff Bounds}
Suppose that $X\sim Bin(n,p)$, 
then $\bPr[|X - \mathbb{E}(X)| \geq t] \leq 2e^{-t^2/(3pn)}$ if $t\leq 3np/2$.
If $X$ has a hypergeometric distribution with parameters $N,n,m$, 
then
$\bPr[|X - \mathbb{E}(X)| \geq t] \leq 2e^{-2t^2/n}$.
\end{lemma}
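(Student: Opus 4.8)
The plan is to derive both inequalities by the Cram\'er--Chernoff exponential moment method: bound the moment generating function $\mathbb{E}[e^{\lambda X}]$, apply Markov's inequality to $e^{\lambda X}$, optimise over the free parameter $\lambda$, and finally combine the upper and lower tails by a union bound (this accounts for the factor~$2$).

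For the binomial bound I would write $X=\sum_{i=1}^{n}X_i$ with $X_i$ independent $\mathrm{Bernoulli}(p)$ and set $\mu:=np=\mathbb{E}(X)$. Independence and $1+x\le e^x$ give $\mathbb{E}[e^{\lambda X}]=(1-p+pe^{\lambda})^{n}\le \exp\!\big(np(e^{\lambda}-1)\big)$. For the upper tail the choice $\lambda=\log(1+t/\mu)$ yields the classical estimate $\bPr[X\ge \mu+t]\le e^{-\mu\varphi(t/\mu)}$ with $\varphi(x):=(1+x)\log(1+x)-x$; it then remains to use the elementary inequality $\varphi(x)\ge \frac{x^{2}}{2(1+x/3)}$ valid for all $x\ge 0$, which gives $\bPr[X\ge \mu+t]\le \exp\!\big(-\tfrac{t^{2}}{2(np+t/3)}\big)$, and when $t\le 3np/2$ the denominator is at most $3np$, so this is at most $e^{-t^{2}/(3np)}$. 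For the lower tail the analogous computation with $e^{-\lambda X}$ gives $\bPr[X\le \mu-t]\le e^{-\mu\psi(t/\mu)}$ with $\psi(x):=(1-x)\log(1-x)+x$, and the elementary bound $\psi(x)\ge x^{2}/2$ for $0\le x\le 1$ yields $\bPr[X\le \mu-t]\le e^{-t^{2}/(2np)}\le e^{-t^{2}/(3np)}$ (the probability being trivially $0$ once $t>np$). Adding the two tail bounds gives $\bPr[|X-\mathbb{E}(X)|\ge t]\le 2e^{-t^{2}/(3np)}$ for $t\le 3np/2$.

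For the hypergeometric bound the key ingredient is Hoeffding's convex domination lemma: if $X$ has the hypergeometric distribution with parameters $N,n,m$ and $Y\sim Bin(n,m/N)$, so that $\mathbb{E}(X)=\mathbb{E}(Y)=nm/N=:\mu$, then $\mathbb{E}[\phi(X)]\le \mathbb{E}[\phi(Y)]$ for every convex $\phi:\mathbb{R}\to\mathbb{R}$. One proves this by viewing $X$ as the sum of the $n$ indicators obtained by sampling without replacement from a population of $N$ items of which $m$ are marked, and exhibiting the law of $X$ as an average of laws of sums of the with-replacement type, so that Jensen's inequality applies. Applying the lemma to the convex function $\phi(z)=e^{\lambda(z-\mu)}$ and then bounding $\mathbb{E}[e^{\lambda(Y-\mu)}]$ by Hoeffding's lemma applied to each of the $n$ independent summands of $Y$, each lying in $[0,1]$ (contributing a factor $e^{\lambda^{2}/8}$), we obtain $\mathbb{E}[e^{\lambda(X-\mu)}]\le e^{n\lambda^{2}/8}$ for every $\lambda\in\mathbb{R}$. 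Markov's inequality together with the optimal choice $\lambda=4t/n$ then gives $\bPr[X-\mu\ge t]\le e^{-2t^{2}/n}$, the same argument with $-\lambda$ gives the matching lower tail, and a union bound yields $\bPr[|X-\mathbb{E}(X)|\ge t]\le 2e^{-2t^{2}/n}$.

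I expect the main obstacle to be the proof of Hoeffding's convex domination lemma, i.e.\ making precise the comparison between sampling with and without replacement; the remainder is the routine Cram\'er--Chernoff optimisation together with the two elementary inequalities on $\varphi$ and $\psi$. (An exposure martingale and Azuma's inequality, Theorem~\ref{Azuma}, with unit increments would only give $2e^{-t^{2}/(2n)}$ for the binomial case, which is weaker than the stated bound for $p<2/3$, so the exponential-moment route is the natural one and it handles both statements uniformly. In any case these are classical estimates; see \cite[Remark 2.5 and Theorem 2.10]{JLR00}.)
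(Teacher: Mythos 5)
Your proposal is correct: the paper itself gives no proof of this lemma (it is quoted from [JLR00]), and your Cram\'er--Chernoff derivation is the standard route to exactly these bounds — the Bernstein-type estimate $\exp\bigl(-t^2/(2(np+t/3))\bigr)$ plus $t\le 3np/2$ for the binomial upper tail, $\psi(x)\ge x^2/2$ for the lower tail, and Hoeffding's comparison of sampling without replacement to sampling with replacement followed by Hoeffding's lemma for the hypergeometric case. One small point of precision in the step you flag as the main obstacle: the domination lemma should be stated as exhibiting $X$ as equal in distribution to a \emph{conditional expectation} of the with-replacement sum $Y$ (so that conditional Jensen gives $\mathbb{E}\phi(X)\le\mathbb{E}\phi(Y)$); writing the law of $X$ merely as a mixture of laws of with-replacement-type sums would not by itself yield the inequality in the needed direction.
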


Let $\cX = (x_1, \dots , x_N)$ be a finite ordered collection of $N$ not necessarily distinct real numbers.
A random sample $(X_1,\dots, X_n)$ drawn without replacement of size $n\leq N$ from $\cX$ can be generated as follows: 
First let $I_1:= [N]$, and for each $j\in [n]$, we sequentially choose $i_j$ uniformly at random from $I_j$
and set $I_{j+1}:= I_j \setminus \{i_j\}$ and $X_j:=x_{i_j}$.

\begin{theorem}[Bernstein's inequality,  see \cite{BLM13}] \label{Bernstein}
Let $\cX=(x_1,\dots, x_N)$ be a finite collection of $N$ real numbers and let $(X_1,\dots, X_n)$ be a random sample drawn without replacement from $\cX$.
Let $a:= \min_{i\in [N]} x_i$ and $b:=\max_{i\in [N]} x_i$.
Let $\mu := \mathbb{E}[\sum_{i=1}^{n} X_i]=\frac{n}{N}\sum_{i=1}^Nx_i$ and $\sigma^2 := \frac{1}{N} \sum_{i\in [N]} (x_i-\mu/N)^2$.
Then for all $\lambda >0$, we have
$ \bPr\left[ \left|\sum_{i=1}^{n} X_i - \mu\right| \geq \lambda  \right] \leq \exp\left(- \frac{\lambda^2}{2\sigma^2n + (2/3)(b-a)\lambda}\right).$
\end{theorem}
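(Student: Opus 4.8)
The plan is to reduce to the classical i.i.d.\ (with-replacement) setting and then run the standard Bernstein exponential-moment argument. The crucial input is Hoeffding's convex-domination lemma: if $(Y_1,\dots,Y_n)$ is instead sampled \emph{with} replacement from $\cX$, i.e.\ the $Y_i$ are i.i.d.\ and uniform on the multiset $\{x_1,\dots,x_N\}$, then $\mathbb{E}\big[f(\sum_{i=1}^n X_i)\big]\le\mathbb{E}\big[f(\sum_{i=1}^n Y_i)\big]$ for every convex $f\colon\R\to\R$; this follows from Jensen's inequality (see \cite{Hoe63} or \cite{BLM13}). I would first quote this lemma and observe that, since $t\mapsto e^{\theta(t-\mu)}$ is convex for any $\theta>0$, it gives a bound on the exponential moment of $\sum_{i=1}^n X_i$ in terms of that of $\sum_{i=1}^n Y_i$.

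Write $\bar x:=\tfrac1N\sum_{i\in[N]}x_i$ for the population mean, so that $\mu=n\bar x$ and $\sigma^2=\tfrac1N\sum_{i\in[N]}(x_i-\bar x)^2=\mathbb{E}\big[(Y_1-\bar x)^2\big]$. Applying the lemma with $f(t)=e^{\theta(t-\mu)}$ and using independence of the $Y_i$ yields $\mathbb{E}\big[e^{\theta(\sum_{i=1}^n X_i-\mu)}\big]\le\mathbb{E}\big[e^{\theta(Y_1-\bar x)}\big]^{\,n}$. For the single exponential moment I would use the crude bound $|Y_1-\bar x|\le b-a$, whence $\mathbb{E}\big[|Y_1-\bar x|^k\big]\le\sigma^2(b-a)^{k-2}$ for every $k\ge2$, together with the elementary inequality $\sum_{k\ge2}\frac{x^{k-2}}{k!}\le\frac1{2(1-x/3)}$ valid for $0\le x<3$, to obtain
\begin{align*}
\mathbb{E}\big[e^{\theta(Y_1-\bar x)}\big]\ \le\ 1+\frac{\theta^2\sigma^2/2}{1-\theta(b-a)/3}\ \le\ \exp\!\Big(\tfrac{\theta^2\sigma^2/2}{1-\theta(b-a)/3}\Big),\qquad 0<\theta<\tfrac{3}{b-a}.
\end{align*}
Markov's inequality then gives $\bPr\big[\sum_{i=1}^n X_i-\mu\ge\lambda\big]\le\exp\!\big(-\theta\lambda+\tfrac{n\theta^2\sigma^2/2}{1-\theta(b-a)/3}\big)$.

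Finally I would optimise over $\theta$. The choice $\theta:=\lambda/\big(n\sigma^2+(b-a)\lambda/3\big)$ lies in $(0,3/(b-a))$ (the degenerate case $\sigma^2=0$, where all $x_i$ coincide, is trivial) and satisfies $1-\theta(b-a)/3=n\sigma^2/\big(n\sigma^2+(b-a)\lambda/3\big)$, so substituting back makes the exponent collapse to exactly $-\lambda^2\big/\big(2\sigma^2 n+(2/3)(b-a)\lambda\big)$, which is the claimed bound for the upper tail. The lower tail $\bPr\big[\sum_{i=1}^n X_i-\mu\le-\lambda\big]$ follows by running the same argument on the population $(-x_1,\dots,-x_N)$, which has the same $\sigma^2$ and the same range $b-a$; summing the two one-sided estimates yields the two-sided statement (the resulting factor $2$ is harmless and is absorbed into the stated inequality).

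The only genuinely non-routine ingredient is Hoeffding's convex-domination lemma, which I would cite rather than reprove; everything after that is the textbook Bennett/Bernstein exponential-moment computation. The small points requiring attention are: reading $\sigma^2$ as the population variance (so that it coincides with $\mathbb{E}[(Y_1-\bar x)^2]$ and hence with the with-replacement variance), using the crude bound $|Y_1-\bar x|\le b-a$ in place of the sharper $\max\{b-\bar x,\bar x-a\}$, and checking that the optimal $\theta$ respects the constraint $\theta<3/(b-a)$.
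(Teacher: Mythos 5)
You are not really deviating from the paper here, because the paper contains no proof of Theorem~\ref{Bernstein} at all: it is quoted from \cite{BLM13}. Your argument is precisely the standard proof from that source --- Hoeffding's convex-domination lemma to pass from sampling without replacement to i.i.d.\ sampling, followed by the Bennett/Bernstein exponential-moment computation --- and the individual steps you outline all check out: $\mathbb{E}\big[|Y_1-\bar x|^k\big]\le \sigma^2(b-a)^{k-2}$, the series bound $\sum_{k\ge 2}x^{k-2}/k!\le 1/(2(1-x/3))$ for $0\le x<3$, and the choice $\theta=\lambda/\big(n\sigma^2+(b-a)\lambda/3\big)$, which indeed collapses the exponent to $-\lambda^2/\big(2\sigma^2 n+(2/3)(b-a)\lambda\big)$. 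Reading $\sigma^2$ as the population variance is also harmless: the theorem centres at $\mu/N$ rather than at the population mean $\mu/n$, which can only increase $\sigma^2$, so the version you prove implies the stated one.

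The one step I would not accept as written is the final sentence: the factor $2$ obtained by adding the two one-sided tails cannot be ``absorbed'' into the stated inequality, since $2e^{-c}\le e^{-c}$ never holds; and in fact the two-sided bound without the factor $2$ is false in general (take $N$ even with half of the $x_i$ equal to $0$ and half equal to $1$, $n=1$ and $\lambda=1/2$: the left-hand side equals $1$ while the right-hand side is strictly smaller than $1$). What your argument honestly yields is the one-sided bound exactly as displayed, and the two-sided bound with a leading factor $2$. The defect here lies in the paper's statement rather than in your proof: its own Lemma~\ref{Chernoff Bounds} carries the factor $2$, the form in \cite{BLM13} is one-sided, and the missing $2$ is immaterial in every application of Theorem~\ref{Bernstein} in the paper. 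So phrase the conclusion as the one-sided estimate together with a factor-$2$ two-sided corollary instead of claiming absorption, and your proof is complete.
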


%\begin{lemma}\label{lem: random subset edge size}
%Suppose $0<1/n \ll 1/q \ll 1$ and $n,q\in \N\sm \{1\}$.
%Let $H$ be an $n$-vertex graph on vertex set $V$. Let $Q\in \binom{V}{q}$ be a $q$-vertex subset of $V$ chosen uniformly at random. Then for all $\nu>0$, we have
%$\bPr\left[|H[Q]| = \frac{q^{2}}{n^{2}}|H| \pm \nu \binom{q}{2} \right] \geq 1- 2 e^{\frac{-\nu^2q}{32}}.$
%\end{lemma}
%\COMMENT{we can cite this from property testing paper.}

\begin{theorem}[\cite{KKOT:ta}]\label{MM}
Suppose $n\in \mathbb{N}$ with $0<1/n \ll \epsilon \ll d\leq 1$.
Let $G$ be an $(\epsilon,d)$-super-regular bipartite graph with vertex partition $(U,W)$ such that $|U|=|W|=n$.
Suppose $\sigma:U\to W$ is a perfect matching in $G$ chosen uniformly at random (where we treat a perfect matching as a bijection), then for any edge $uv\in E(G)$ with $u\in U$ and $v\in W$, we have
$\bPr[ \sigma(u)=v ] = (1\pm \epsilon^{1/20}) /{dn}.$
\end{theorem}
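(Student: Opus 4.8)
The statement asserts that in a uniformly random perfect matching $\sigma$ of an $(\epsilon,d)$-super-regular bipartite graph $G$ with parts of size $n$, every edge $uv\in E(G)$ is used with probability $(1\pm\epsilon^{1/20})/(dn)$. Since $d_G(u)=(d\pm\epsilon)n$, this says that conditioned on the random matching, $\sigma(u)$ is close to uniformly distributed over $N_G(u)$. The natural strategy is to compare the number of perfect matchings of $G$ that contain a given edge $uv$ with the total number of perfect matchings of $G$, i.e.\ to show that $M(G-u-v)/M(G)$ is, up to a $(1\pm\epsilon^{1/20})$ factor, independent of the choice of $uv\in E(G)$, where $M(\cdot)$ denotes the number of perfect matchings.

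The plan is to use the permanent/entropy machinery for counting perfect matchings in super-regular (more generally, dense regular or nearly-regular) bipartite graphs. First I would invoke a counting estimate of the following shape: for an $(\epsilon,d)$-super-regular bipartite graph $H$ with both parts of size $m$, one has $M(H) = \big(1\pm f(\epsilon)\big)^m\, d^{m} m!\, e^{-m}$ for some $f(\epsilon)\to 0$ as $\epsilon\to 0$ (this is the standard consequence of the Egorychev--Falikman/van der Waerden lower bound together with the Br\'egman--Minc upper bound, or alternatively of the Kahn--Lov\'asz type entropy argument; such a statement is exactly what is used in the cited reference \cite{KKOT:ta}). The key point is the multiplicative error of the form $(1\pm f(\epsilon))^m$. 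Then, given an edge $uv\in E(G)$, the graph $G-u-v$ is still $(\epsilon',d)$-super-regular with parts of size $n-1$ for a slightly worse $\epsilon'=O(\epsilon)$ — one only needs that deleting one vertex from each side changes densities and degrees by $O(1/n)\ll\epsilon$. Applying the counting estimate to both $G$ and $G-u-v$ gives
\begin{align*}
\bPr[\sigma(u)=v] = \frac{M(G-u-v)}{M(G)} = \big(1\pm f'(\epsilon)\big)^{n}\cdot \frac{d^{n-1}(n-1)!\,e^{-(n-1)}}{d^{n}\,n!\,e^{-n}} = \big(1\pm f'(\epsilon)\big)^{n}\cdot\frac{e}{dn}.
\end{align*}
This already pins down the answer up to the factor $\big(1\pm f'(\epsilon)\big)^{n}$, which is the crux: a bound of the form $(1\pm f(\epsilon))^n$ is useless since it is exponentially large in $n$, so a naive application of black-box counting does not suffice.

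To get the sharp $(1\pm\epsilon^{1/20})/(dn)$ one therefore cannot afford an exponential-in-$n$ error and must instead argue \emph{relatively}. The cleaner route, which I expect to be the real content of the proof, is to compare $M(G-u-v)$ with $M(G-u-v')$ for two neighbours $v,v'$ of $u$ directly, via a switching/injection argument or via the permanent's multilinearity, showing $M(G-u-v)=(1\pm\epsilon^{1/20})M(G-u-v')$; summing over $v'\in N_G(u)$ and using $\sum_{v'\in N_G(u)} M(G-u-v')=M(G)$ then yields $\bPr[\sigma(u)=v]=(1\pm\epsilon^{1/20})/d_G(u)=(1\pm\epsilon^{1/20})^2/(dn)$, which after adjusting the exponent is the claim. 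Concretely, I would fix $u$, let $H$ be the bipartite graph $G-u$ on parts $(U\setminus\{u\},W)$, and for $w\in N_G(u)$ consider $p_w:=M(H-w)/M(H')$ where $H'$ is an auxiliary near-perfect-matching count; the super-regularity of $G$ forces the neighbourhoods $N_G(u)$ to be "spread out" enough that deleting any one of them perturbs the matching count by the same relative amount up to $\epsilon^{1/20}$. The main obstacle is precisely establishing this relative stability with a \emph{polynomially small} (indeed $\epsilon^{1/20}$) error rather than a constant or exponential one; this is where one needs the quantitative strength of the super-regularity hypothesis (all degrees $(d\pm\epsilon)n$, all codegrees controlled) and likely an entropy-compression or second-moment estimate on the distribution of $\sigma(u)$ rather than pure extremal permanent bounds. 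Everything else — checking that vertex deletions preserve super-regularity with a slightly inflated parameter, and bookkeeping the hierarchy $1/n\ll\epsilon\ll d$ — is routine.
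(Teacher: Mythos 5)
The paper does not prove Theorem~\ref{MM} at all: it is imported verbatim from \cite{KKOT:ta}, so there is no internal argument to compare yours against. Judged on its own terms, your proposal correctly sets up the right reduction --- $\bPr[\sigma(u)=v]=M(G-u-v)/M(G)$ where $M(\cdot)$ counts perfect matchings, $\sum_{v'\in N_G(u)}M(G-u-v')=M(G)$, and the black-box permanent route (Egorychev--Falikman lower bound plus Br\'egman upper bound) is rightly discarded because its multiplicative error is $(1\pm f(\epsilon))^n$ --- but it stops exactly where the theorem begins. The assertion $M(G-u-v)=(1\pm\epsilon^{1/20})M(G-u-v')$ for all $v,v'\in N_G(u)$ is \emph{equivalent} to the statement being proved, and you offer no argument for it beyond naming candidate techniques (``switching/injection'', ``multilinearity of the permanent'', ``entropy-compression or second-moment estimate'') and asserting that super-regularity forces the neighbourhoods to be sufficiently spread out. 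That comparison is the entire content of the theorem, and it is left unproved; the passage introducing ``$p_w:=M(H-w)/M(H')$ where $H'$ is an auxiliary near-perfect-matching count'' is not a well-defined step.

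To close the gap one would have to actually execute, say, the switching argument: set up an auxiliary bipartite graph between $\mathcal{M}_v:=\{M:\ uv\in M\}$ and $\mathcal{M}_{v'}$ whose edges are local switches (replace $\{uv,u'v'\}$ by $\{uv',u'v\}$ when $u'=M^{-1}(v')$ and $u'v\in E(G)$, supplemented by longer alternating-cycle switches for the matchings where the short switch is unavailable), and bound degrees on both sides of this auxiliary graph. This is delicate precisely because the paper's notion of super-regularity controls all degrees but \emph{not} all codegrees --- only most pairs have codegree $(d^2\pm3\epsilon)n$ (cf.\ the irregularity graph $J_G$ and Lemma~\ref{lem: max degree HGde}) --- so the count of available switches must tolerate an $\epsilon$-fraction of atypical pairs, which is where a polynomial-in-$\epsilon$ loss such as $\epsilon^{1/20}$ ultimately arises. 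Your parenthetical claim that super-regularity gives ``all codegrees controlled'' overstates the hypothesis. Without carrying out this step, the proposal is a correct reduction followed by a restatement of the problem.
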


The following three lemmas are explicitly stated in~\cite{Kri10};
however,
the first one is elementary and the third is a special case of multipartite version of the seminal result of Johansson, Kahn, and Vu~\cite{JKV08}.

\begin{lemma}\label{lem: number of leaves}
Suppose $k, \ell, n \in \mathbb{N}$ and $T$ is an $n$-vertex tree with at most $\ell$ leaves. 
Then $T$ contains a collection of at least $n/k- 2\ell$ vertex-disjoint $k$-vertex bare paths.
\end{lemma}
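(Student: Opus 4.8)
The plan is entirely elementary: delete from $T$ every vertex whose degree is different from~$2$, observe that what remains is a vertex-disjoint union of bare paths of $T$, bound both the number of these paths and the total number of vertices on them, and finally cut each one greedily into blocks of $k$ consecutive vertices.

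Concretely, I would first write $\ell' \le \ell$ for the true number of leaves of $T$ and set $W := \{v \in V(T) : d_T(v) \ge 3\}$. Using $\sum_{v} (d_T(v) - 2) = 2e(T) - 2n = -2$ and splitting the sum according to whether $d_T(v)$ is $1$, is $2$, or is at least $3$, one gets $\sum_{w \in W}(d_T(w) - 2) = \ell' - 2$, and hence $|W| \le \ell' - 2$ since every summand is at least $1$. Put $B := L(T) \cup W$, so that $|B| = \ell' + |W|$ and every vertex of $V(T) \setminus B$ has degree exactly $2$ in $T$. Then $T - B$ is a forest of maximum degree at most $2$, i.e.\ a vertex-disjoint union of paths $Q_1, \dots, Q_t$, each of which is a bare path of $T$, with $\sum_i |Q_i| = n - \ell' - |W|$. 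Since at most $\sum_{v \in B} d_T(v)$ edges of $T$ meet $B$, the forest $T - B$ has at least $(n-1) - \sum_{v \in B} d_T(v)$ edges, so its number of components satisfies $t = |V(T-B)| - e(T-B) \le 1 + \sum_{v \in B}(d_T(v) - 1) = 1 + (\ell' - 2) + |W|$, where leaves contribute $0$ and $\sum_{w \in W}(d_T(w) - 1) = (\ell' - 2) + |W|$.

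Now each $Q_i$, on $m_i := |Q_i|$ vertices, splits into $\lfloor m_i / k \rfloor \ge (m_i - (k-1))/k$ vertex-disjoint $k$-vertex subpaths, all of which are again bare paths of $T$, and paths coming from different $Q_i$ are disjoint. Summing over $i$, $T$ contains at least
\[
\sum_i \frac{m_i - (k-1)}{k} = \frac{\bigl(n - \ell' - |W|\bigr) - (k-1)t}{k} \ \ge\ \frac{n - \ell' - |W| - (k-1)\bigl(\ell' - 1 + |W|\bigr)}{k}
\]
vertex-disjoint $k$-vertex bare paths, so it only remains to check that the right-hand side is at least $n/k - 2\ell$. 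Since $\ell \ge \ell'$, after rearranging this reduces to $k(\ell' - |W| + 1) \ge 1$, which holds because $|W| \le \ell' - 2$ forces $\ell' - |W| + 1 \ge 3$.

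I do not anticipate a genuine obstacle. The only points requiring a little care are to use the sharper floor estimate $\lfloor m/k \rfloor \ge (m - k + 1)/k$ rather than the cruder $m/k - 1$ (the loss in the latter is exactly what would spoil the bound when $k$ is small or $\ell$ is large), and to note that the degenerate cases $n = 1$, $t = 0$, or $k > n$ are handled trivially or directly by the very same inequality.
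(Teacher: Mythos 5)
Your argument is correct, and it is the standard elementary proof of this fact; the paper itself gives no proof but simply cites Krivelevich~\cite{Kri10}, where essentially the same argument (bounding the number of branch vertices by the number of leaves, deleting leaves and branch vertices, and chopping the resulting paths into blocks of $k$ vertices) appears. Your bookkeeping, including the bound $|W|\le \ell'-2$, the bound $t\le \ell'-1+|W|$ on the number of residual paths, and the final verification $k(2\ell-\ell'-|W|+1)\ge 1$, all checks out, and the lemma's generous $-2\ell$ slack absorbs the degenerate cases you mention.
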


\begin{lemma}\label{lem: light leaves embedding}
Suppose $k,n,\Delta\in \N$ and $d_1,\dots, d_{k}$ are non-negative integers such that $d_i\leq \Delta$ and $\sum_{i=1}^{k} d_i = n$. 
Let $A=\{a_1,\dots, a_k\}$ and $B$ be a set of $n$ vertices that is disjoint from $A$. 
Let $G= \bG(|A\cup B|,p)$ with $V(G)=A\cup B$. 
If $p\geq {2\Delta \log{n}}/{n}$, then $G$ contains with probability $1-o(1)$ a collection $S_1,\dots, S_k$ of vertex-disjoint stars such that 
$a_i$ is the centre of $S_i$ and $S_i$  has $d_i$ neighbours in $B$.
\end{lemma}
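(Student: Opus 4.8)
The plan is to reduce the statement to the existence of a perfect matching in an auxiliary bipartite graph and then verify Hall's condition by a first-moment computation.

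\textbf{Reductions.} Since adding edges can only help, by a standard coupling we may assume $p=2\Delta\log n/n$. We may also assume $d_i\ge 1$ for all $i$ (an index with $d_i=0$ contributes the trivial one-vertex star $\{a_i\}$ and can be removed), so that $k\le\sum_i d_i=n$. Form the bipartite graph $\Gamma$ with parts $B$ and $A^*:=\bigcup_{i\in[k]}\{a_i\}\times[d_i]$, where $(a_i,j)$ is joined to $b\in B$ precisely when $a_ib\in E(G)$; note $|A^*|=\sum_i d_i=n=|B|$, and a collection of vertex-disjoint stars as in the statement corresponds exactly to a perfect matching of $\Gamma$. As $|A^*|=|B|$, Hall's theorem (applied on the $A^*$-side, together with the observation that a Hall violator on that side may be taken to be a union of full blocks $\{a_i\}\times[d_i]$) shows that it suffices to prove that with probability $1-o(1)$,
\begin{align*}
\Big|\bigcup_{i\in T}\big(N_G(a_i)\cap B\big)\Big|\ \ge\ \sum_{i\in T}d_i\qquad\text{for every }T\subseteq[k].
\end{align*}

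\textbf{First moment.} For fixed $T$ with $|T|=t$ the size of the left-hand union is $Bin\big(n,1-(1-p)^t\big)$, with mean at least $n\tfrac{pt}{1+pt}$, and the right-hand side is at most $t\Delta$; when $\Delta$ is not too small a direct Chernoff bound together with a union bound over the $\binom kt$ choices of $T$ already suffices. The genuinely delicate regime is $\Delta$ small (e.g. $\Delta=O(1)$, where this is essentially the classical threshold for a perfect matching in $\mathbf G(n,n,p)$ at $p=2\log n/n$), and there I would instead analyse the $B$-side Hall condition $|N_\Gamma(S)|\ge|S|$ through a \emph{minimal} violator $S$. A minimal violator satisfies: $|S|\ge 2$ provided no $b\in B$ is isolated in $\Gamma$; $\sum_{i\in I(S)}d_i=|S|-1$, where $I(S):=\{i:N_G(a_i)\cap S\ne\emptyset\}$; and $|N_G(a_i)\cap S|\ge 2$ for every $i\in I(S)$ (if $a_i$ had a unique neighbour $b$ in $S$, then $|N_\Gamma(S\setminus\{b\})|\le|N_\Gamma(S)|-d_i\le|S|-2$, contradicting minimality). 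Writing $s:=|S|$ and $T:=I(S)$, the probability that a given pair $(S,T)$ with $\sum_{i\in T}d_i=s-1$ arises is at most
\begin{align*}
(1-p)^{\,s(k-|T|)}\ \prod_{i\in T}\bPr\big[Bin(s,p)\ge 2\big]\ \le\ (1-p)^{\,s(n-s+1)/\Delta}\,\Big(\tfrac{s^2p^2}{2}\Big)^{|T|},
\end{align*}
using $k-|T|\ge(n-s+1)/\Delta$ and $|T|\ge(s-1)/\Delta$. Summing over $S$ (with $\binom ns\le(en/s)^s$ choices), over $T$, substituting $p=2\Delta\log n/n$ (so $(1-p)^{s(n-s+1)/\Delta}\le n^{-s}$ for $s\le n/2$), and splitting the outer sum over $s$ into the ranges $2\le s\le\sqrt n$, $\sqrt n<s\le n/2$, and $n/2<s\le n$ (the last treated via the fact that then $\sum_{i\notin I(S)}d_i=n-s+1$ is small, so few $a_i$ avoid $S$ and each does so with probability $(1-p)^{\ge s}$), each piece is $o(1)$. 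One also checks separately that with probability $1-o(1)$ every $b\in B$ has a neighbour among the $a_i$ (here $\bPr[b\text{ has no such neighbour}]=(1-p)^k\le e^{-pk}\le n^{-2}$ since $pk\ge p\cdot n/\Delta\ge 2\log n$), which rules out violators of size $1$.

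\textbf{Main obstacle.} All of this is elementary; the work lies in making the union bound of the last step converge uniformly in $\Delta$. The counting factors $\binom ns\binom kt$ are as large as $n^{\Theta(s)}$, while the probability bounds are of order $n^{-\Theta(s)}(sp)^{\Theta(s)}$, so in the small-$\Delta$ case the crude estimate $\binom ns\le n^s$ is too lossy and one must retain $\binom ns\le(en/s)^s$, exploit the quadratic saving $(sp)^2$ per vertex of $T$ coming from the ``$\ge 2$ neighbours'' structure of a minimal violator, and balance these carefully against $(1-p)^{s(n-s+1)/\Delta}$ in each range of $s$; the endpoints $s\approx 2$ and $s\approx n$ are the tightest and need the most care. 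This is exactly the type of computation that underlies the classical proof that $\mathbf G(n,n,p)$ has a perfect matching just above the connectivity threshold.
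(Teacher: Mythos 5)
The paper does not actually prove this lemma --- it is quoted from Krivelevich~\cite{Kri10} --- so there is no internal proof to compare against; your argument has to stand on its own. The framework you choose (blow up each $a_i$ into $d_i$ copies, reduce to a perfect matching in the auxiliary balanced bipartite graph $\Gamma$, and rule out minimal Hall violators) is the right one, and your structural analysis of a minimal $B$-side violator ($\sum_{i\in I(S)}d_i=|S|-1$ and $|N_G(a_i)\cap S|\ge 2$ for $i\in I(S)$) is correct. However, the proof as written has a genuine gap in the range $n/2<s\le n$. For $s=n-j$ the union bound you describe runs over $\binom{n}{j}\le n^j$ choices of $S$ and up to $\sum_{m\le j+1}\binom{k}{m}\le (j+2)n^{j+1}$ choices of $T$, while the probability bound $(1-p)^{sm}\le n^{-\Delta m}\le n^{-(j+1)}$ (using $\Delta m\ge\sum_{i\notin T}d_i=j+1$, which is tight when $\Delta=1$) only contributes $n^{-(j+1)}$; the product is of order $n^{j}$, which diverges for every $j\ge 1$. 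The ``$\ge 2$ neighbours'' saving is useless here since $\Pr[Bin(s,p)\ge 2]\approx 1$ for $s\ge n/2$. The standard repair is to pass to the complementary violator $U:=A^*\setminus N_\Gamma(S)$, a union of blocks of total size $n-s+1\le n/2+1$, and run a minimal-violator argument on the $A^*$-side --- but there the block structure changes the game: a vertex $b\in N_\Gamma(U)$ having two $\Gamma$-neighbours in $U$ may be witnessed by two copies of the \emph{same} $a_i$, so you only harvest a factor $\approx mp$ per vertex of $N_\Gamma(U)$ rather than $(mp)^2$, and the $A^*$-side computation is genuinely different from the $B$-side one you wrote out. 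None of this is addressed.

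Two further points where the written bounds do not close. First, for $2\le s\le n/2$ you substitute $(1-p)^{s(n-s+1)/\Delta}\le n^{-s}$; with $\Delta=1$ and $s=2$ this gives a contribution of order $\binom{n}{2}\cdot k\cdot p^2\cdot n^{-2}=\Theta(\log^2 n)$, which is not $o(1)$ --- you must retain the exponent $2s(n-s+1)/n\approx 2s$ for small $s$ (the true bound is $\approx n^{-4}$ there). You flag that the endpoints are delicate, but the inequality you actually display is exactly the lossy step. Second, in the ``large $\Delta$'' branch the assertion that ``a direct Chernoff bound together with a union bound over the $\binom{k}{t}$ choices of $T$ already suffices'' is not checked: $k$ can be as large as $n$ even when $\Delta$ is large (most $d_i$ equal to $1$), so the union is over up to $2^n$ sets and one needs $\mathbb{E}|{\bigcup_{i\in T}N(a_i)}|-\sum_{i\in T}d_i$ to exceed the entropy cost $t\log(ek/t)$, which is precisely the computation that fails for small $\Delta$ and must be delimited carefully. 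In short: the strategy is sound and standard, but the proposal is a sketch whose stated estimates do not suffice in the regimes you yourself identify as critical, and the large-$s$/complementation step is missing an idea rather than merely missing details.
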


\begin{lemma}\label{lem: bare paths embedding}
Let $k\geq 4$ and $0< 1/n \ll 1/M \ll 1/k$. Let $G = \bG( kn,p)$. Let $S=\{s_1,\dots, s_n\}$, $T=\{t_1,\dots, t_n\}$ be disjoint vertex subsets of $V(G)$. If $p \geq M(\frac{\log{n}}{n^{k-2}})^{1/(k-1)}$, then $G$ contains a family $\{P_i: i\in [n]\}$ of vertex-disjoint paths 
where each $P_i$ is $k$-vertex path connecting $s_i$ and $t_i$ with probability $1-o(1)$.
\end{lemma}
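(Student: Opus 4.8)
The plan is to recast the statement as a perfect-matching problem for an auxiliary multipartite hypergraph and then invoke the multipartite version of the Johansson--Kahn--Vu theorem. Set $W:=V(G)\setminus(S\cup T)$, so $|W|=(k-2)n$, and fix an \emph{arbitrary} partition $W=W_1\cup\dots\cup W_{k-2}$ with $|W_j|=n$ for each $j\in[k-2]$ (as $G$ is a random graph there is no adversary, so any fixed partition will do). For $i\in[n]$, a $k$-vertex path joining $s_i$ to $t_i$ whose $\ell$-th interior vertex lies in $W_\ell$ is precisely a tuple $(w_1,\dots,w_{k-2})\in W_1\times\dots\times W_{k-2}$ with $s_iw_1,\,w_1w_2,\,\dots,\,w_{k-3}w_{k-2},\,w_{k-2}t_i\in E(G)$. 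Hence a family $\{P_i:i\in[n]\}$ as in the statement --- necessarily spanning, since $kn=|V(G)|$ --- is exactly a perfect matching in the $(k-1)$-uniform, $(k-1)$-partite hypergraph $\cH$ on parts $Q:=\{q_1,\dots,q_n\}$ and $W_1,\dots,W_{k-2}$, where a $(k-1)$-set $\{q_i,w_1,\dots,w_{k-2}\}$ with $w_\ell\in W_\ell$ is an edge iff the $k-1$ graph-edges above all lie in $G$.

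The hypergraph $\cH$ is random with all parts of size $n$: each of its $n^{k-1}$ potential edges is present with probability $p^{k-1}$, and two potential edges fail to be independent only when they share one of the underlying $G$-edge slots (i.e.\ coincide in the pair $(q_i,w_1)$, in a consecutive pair $(w_\ell,w_{\ell+1})$, or in the pair $(w_{k-2},q_i)$). Heuristically, for a fixed $i$ the expected number of valid tuples is $n^{k-2}p^{k-1}$, so --- as in a coupon-collector bound --- the perfect-matching threshold should sit where $n^{k-2}p^{k-1}=\Theta(\log n)$, namely at $p=\Theta\big((\log n/n^{k-2})^{1/(k-1)}\big)$. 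Our hypothesis $p\ge M(\log n/n^{k-2})^{1/(k-1)}$ with $1/M\ll 1/k$ puts $p$ a $k$-dependent constant factor above this threshold, which is exactly the slack the theorem requires; applying the multipartite Johansson--Kahn--Vu theorem~\cite{JKV08} (in the form used by Krivelevich~\cite{Kri10}) to $\cH$ then produces a perfect matching with probability $1-o(1)$, and reading it back through the correspondence above gives the required paths.

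The point needing care --- and the reason a bare first/second moment argument does not suffice --- is that the edges of $\cH$ are mildly correlated through the shared $G$-edges above; equivalently one is seeking a $P_{k-2}$-factor of $G[W]$ with prescribed attachments in $S$ and $T$. The key structural fact is that, counting only the $w$-vertices as free, the full template $s_iw_1w_2\cdots w_{k-2}t_i$ (with its $k-1$ edges and $k-2$ free vertices, density $\tfrac{k-1}{k-2}$) is strictly denser than every proper subconfiguration --- interior segments $w_a\cdots w_b$ have density $\tfrac{b-a}{b-a+1}<1$, and end segments such as $s_iw_1\cdots w_b$ have density exactly $1<\tfrac{k-1}{k-2}$ --- so the template is strictly balanced and the threshold is governed by the template itself. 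This strictly balanced situation is precisely the favourable case for the Johansson--Kahn--Vu machinery, so once the correspondence is in place the conclusion is immediate; what remains is the routine check that the constant $M=M(k)$, absorbed by the hierarchy $1/n\ll 1/M\ll 1/k$, exceeds the constant implicit in the theorem.
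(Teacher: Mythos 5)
Your argument is correct and coincides with the paper's treatment: the paper gives no proof of this lemma, instead citing Krivelevich~\cite{Kri10} and noting it is a special case of the multipartite Johansson--Kahn--Vu theorem~\cite{JKV08}, which is exactly the reduction (paths $\leftrightarrow$ perfect matchings in the auxiliary $(k-1)$-partite $(k-1)$-uniform hypergraph, with the strictly balanced rooted path template governing the threshold) that you spell out. Nothing in your write-up diverges from that route, so there is nothing further to compare.
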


\subsection{Results involving $\epsilon$-regularity}

The following lemma shows that if $G$ is an $\epsilon$-regular bipartite graph, then its irregularity-graph is small.
\begin{lemma}\label{lem: max degree HGde}
Suppose $0<2 \epsilon < d <1$, and that $G$ is an $(\epsilon,d)$-super-regular graph with vertex partition $(A,B)$.
Then $\Delta(J_G(A,d,\epsilon)) \leq 2\epsilon |A|$.
\end{lemma}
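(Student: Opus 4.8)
The plan is to fix an arbitrary vertex $a \in A$ and show that the number of $a' \in A$ with $aa' \in E(J_G(A,d,\epsilon))$ — that is, with $d_G(a,a') \neq (d^2 \pm 3\epsilon)|B|$ — is less than $2\epsilon|A|$. Write $N := N_G(a) \subseteq B$ and observe that for every $a' \in A$ we have $d_G(a,a') = |N_G(a) \cap N_G(a')| = |N_G(a') \cap N|$, so $d_G(a,a')$ is exactly the degree of $a'$ into $N$. Split the set of $J$-neighbours of $a$ into the disjoint sets $A^+ := \{a' \in A : d_G(a,a') > (d^2+3\epsilon)|B|\}$ and $A^- := \{a' \in A : d_G(a,a') < (d^2-3\epsilon)|B|\}$, whose union is precisely that set of $J$-neighbours. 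Since $G$ is $(\epsilon,d)$-super-regular, $|N| = d_G(a) = (d \pm \epsilon)|B|$, and as $d > 2\epsilon$ this gives $\epsilon|B| < (d-\epsilon)|B| \le |N| \le (d+\epsilon)|B|$. It therefore suffices to prove $|A^+| < \epsilon|A|$ and $|A^-| < \epsilon|A|$.

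For $A^+$, I would argue by contradiction: if $|A^+| \ge \epsilon|A|$, then since also $|N| \ge \epsilon|B|$ we may apply $(\epsilon,d)$-regularity to the pair $(A^+, N)$, obtaining $\den_G(A^+,N) < d+\epsilon$ and hence $e_G(A^+,N) < (d+\epsilon)|A^+||N| \le (d+\epsilon)^2|A^+||B|$. On the other hand, $e_G(A^+,N) = \sum_{a' \in A^+} d_G(a,a') > (d^2+3\epsilon)|A^+||B|$. Comparing the two estimates gives $d^2 + 3\epsilon < (d+\epsilon)^2 = d^2 + 2d\epsilon + \epsilon^2$, i.e. $3 < 2d + \epsilon$, which is false since $2d + \epsilon < 2 + 1/2 = 5/2 < 3$. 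For $A^-$, symmetrically, if $|A^-| \ge \epsilon|A|$ then $(\epsilon,d)$-regularity applied to $(A^-,N)$ gives $\den_G(A^-,N) > d - \epsilon$, so $e_G(A^-,N) > (d-\epsilon)|A^-||N| \ge (d-\epsilon)^2|A^-||B|$, while $e_G(A^-,N) = \sum_{a' \in A^-} d_G(a,a') < (d^2-3\epsilon)|A^-||B|$; hence $(d-\epsilon)^2 < d^2-3\epsilon$, i.e. $3 < 2d-\epsilon$, which is false since $2d-\epsilon < 2$. (If $d^2 \le 3\epsilon$, then $A^- = \emptyset$ in any case.) Thus $|A^+| + |A^-| < 2\epsilon|A|$, and since $a$ was arbitrary, $\Delta(J_G(A,d,\epsilon)) \le 2\epsilon|A|$.

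I do not expect a genuine obstacle: the whole argument rests on the observation that $d_G(a,a')$ equals the degree of $a'$ into $N_G(a)$, after which each of the two bounds follows from a single application of $\epsilon$-regularity. The only points needing (minor) care are selecting the correct direction of the density inequality for $A^+$ versus $A^-$, checking that $|N| > \epsilon|B|$ so that regularity is applicable (which is where the hypothesis $d > 2\epsilon$ enters), and verifying the two elementary numeric inequalities $2d+\epsilon < 3$ and $2d-\epsilon < 3$ under $2\epsilon < d < 1$.
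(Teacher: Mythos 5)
Your proof is correct and takes essentially the same route as the paper's: both split the $J$-neighbours of $a$ into the sets $A^{+}$ and $A^{-}$ of vertices with too-large or too-small co-degree, observe that the density of each such set into $N_G(a)$ deviates from $d$ by more than $\epsilon$, and then use $(\epsilon,d)$-regularity together with $|N_G(a)| \ge (d-\epsilon)|B| \ge \epsilon|B|$ to force $|A^{+}|, |A^{-}| < \epsilon|A|$. The only differences are presentational: you phrase it as a contradiction and spell out the elementary numeric inequalities that the paper leaves implicit.
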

\begin{proof}
For each $u\in A$, let $A_u^{+}:= \{ u'\in A: d_{G}(u,u') > (d^2+ 3\epsilon)|B|\}$ and
$A_{u}^{-}:= \{ u'\in A: d_{G}(u,u') < (d^2 - 3\epsilon)|B|\}$.
Then $d_{J_G(A,d,\epsilon)}(u) = |A_u^{+}| + |A_u^{-}|$.

It is easy to see that $\den_{G}(A_u^{+}, N_{G}(u)) > d+\epsilon$ and $\den_{G}(A_u^{-}, N_{G}(u)) < d-\epsilon$ (observe that $u\notin A_u^{-}$). 
Since $G$ is $(\epsilon,d)$-super-regular and $|N_{G}(u)|\geq (d-\epsilon)|B| \geq \epsilon |B|$, we conclude that $|A_u^{+}|< \epsilon |A|$ and $|A_u^{-}|<\epsilon |A|$.
Thus $d_{J_G(A,d,\epsilon)}(u) = |A_u^{+}| + |A_u^{-}| \leq 2\epsilon |A|$ for any $u\in A$. %Thus we have $\Delta(J_G(A,d,\epsilon)) \leq 2\epsilon |A|$.
\end{proof}

Next theorem is proved in \cite{DLR95}. 
(In~\cite{DLR95} it is proved in the case when $|A|=|B|$ with $16\epsilon^{1/5}$ instead of $\epsilon^{1/6}$.
The version stated below can be easily derived from this.)
This with the previous lemma together asserts that a bipartite graph $G$ being $\epsilon$-regular is roughly equivalent to an appropriate irregularity-graph of $G$ being small.

\begin{theorem}\label{thm: almost quasirandom}
Suppose $n\in \N$ with $0<1/n\ll\epsilon \ll \alpha,p \leq 1$.
Suppose $G$ is a bipartite graph with vertex partition $(A,B)$ such that $\alpha n\leq |A| \leq \alpha^{-1}n$, $|B|=n$ and
$| E(J_G(A,d,\epsilon))|  \leq \epsilon n^2.$
Then $G$ is $(\epsilon^{1/6},d)$-regular. 
\end{theorem}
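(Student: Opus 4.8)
The plan is to reduce the statement to the \emph{balanced} version of the theorem of Duke, Lefmann and R\"odl~\cite{DLR95} quoted above: if $H$ is a bipartite graph with partition $(A_H,B_H)$ satisfying $|A_H|=|B_H|=N$ and $|E(J_H(A_H,d,\eta))|\le\eta N^2$, then $H$ is $(16\eta^{1/5},d)$-regular. Concretely, I would replace $G$ by an \emph{exactly balanced} blow-up $G^*$, apply the balanced result to $G^*$, and then transfer regularity back to $G$, using that densities of pairs of subsets are unchanged under blow-ups.

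So first set $m:=|A|$ and let $G^*$ be the bipartite graph with parts $A^*:=A\times[n]$ and $B^*:=B\times[m]$, where $(a,i)$ is joined to $(b,j)$ exactly when $ab\in E(G)$; then $|A^*|=|B^*|=mn$. For $a\ne a'$ and any $i,i'$ one has $d_{G^*}((a,i),(a',i'))=m\,d_G(a,a')$, so such a pair lies in $J_{G^*}(A^*,d,\epsilon)$ if and only if $aa'\in E(J_G(A,d,\epsilon))$; these contribute at most $n^2\,|E(J_G(A,d,\epsilon))|\le\epsilon n^4\le\alpha^{-2}\epsilon\,|A^*|^2$ edges, where we used $m\ge\alpha n$. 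All remaining edges of $J_{G^*}(A^*,d,\epsilon)$, in particular all loops, arise from the at most $mn^2$ pairs with $a=a'$, and $mn^2\le\epsilon\,|A^*|^2$ since $\epsilon m\ge\epsilon\alpha n\gg 1$. Hence, writing $\eta:=2\alpha^{-2}\epsilon$ and noting that enlarging the parameter only deletes edges of the irregularity graph, $|E(J_{G^*}(A^*,d,\eta))|\le|E(J_{G^*}(A^*,d,\epsilon))|\le\eta\,|A^*|^2$.

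Since $\epsilon\ll\alpha$ we have $\eta\ll1$, so the balanced result applies and gives that $G^*$ is $(16\eta^{1/5},d)$-regular; moreover $16(2\alpha^{-2}\epsilon)^{1/5}\le\epsilon^{1/6}$ because $\epsilon\ll\alpha$, so $G^*$ is $(\epsilon^{1/6},d)$-regular. To conclude, take any $A'\sub A$ and $B'\sub B$ with $|A'|\ge\epsilon^{1/6}|A|$ and $|B'|\ge\epsilon^{1/6}|B|$, and put $(A')^*:=A'\times[n]$, $(B')^*:=B'\times[m]$. Then $|(A')^*|/|A^*|=|A'|/|A|\ge\epsilon^{1/6}$ and likewise $|(B')^*|/|B^*|\ge\epsilon^{1/6}$, while the blow-up structure gives $\den_{G^*}((A')^*,(B')^*)=\den_G(A',B')$. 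Regularity of $G^*$ therefore yields $|\den_G(A',B')-d|<\epsilon^{1/6}$, i.e.\ $G$ is $(\epsilon^{1/6},d)$-regular.

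There is no genuinely hard step once~\cite{DLR95} is quoted; the reduction is bookkeeping. The points that need a little care are: choosing the two blow-up factors ($n$ on the $A$-side, $m$ on the $B$-side) so that $G^*$ is \emph{exactly} balanced, which is the reason for the asymmetric construction; checking that the diagonal pairs $a=a'$ and the loops in the irregularity graph of $G^*$ are negligible, which uses only $\epsilon n\to\infty$; and absorbing into the gap between the exponents $1/5$ and $1/6$ both the constant $16\cdot 2^{1/5}$ from~\cite{DLR95} and the factor $\alpha^{-2}$ that appears because the hypothesis bounds $|E(J_G(A,d,\epsilon))|$ by $\epsilon n^2$ rather than by $\epsilon|A|^2$ — exactly what the hierarchy $1/n\ll\epsilon\ll\alpha$ provides.
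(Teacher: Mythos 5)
Your proposal is correct and follows exactly the route the paper intends: the paper cites the balanced case from \cite{DLR95} and merely asserts that the unbalanced version ``can be easily derived from this'' without writing out the derivation. Your blow-up of $A$ by a factor $n$ and $B$ by a factor $m=|A|$, the verification that codegrees scale by $m$ so that the irregularity graph of $G^*$ is controlled by that of $G$ plus a negligible diagonal contribution, and the transfer of densities back via product subsets, correctly supply the omitted reduction, with the constant $16\cdot 2^{1/5}\alpha^{-2/5}$ absorbed into the exponent gap $1/5$ versus $1/6$ exactly as the hierarchy $\epsilon\ll\alpha$ permits.
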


The following lemma is proved in \cite{KKS17}. Note that our definition of super-regularity is slightly different from theirs. (Their definition ensures that a super-regular pair has a lower bound on minimum degree, on the other hand our definition ensures that a super-regular pair has both an upper and a lower bound on the degree of a vertex. 
This notion is required when we use Theorem~3.4.)
So we introduce a subgraph $G'$ to adjust the statement to our setting.

\begin{lemma}[\cite{KKS17}]\label{lem: partition}
Suppose $0< 1/C \ll 1/C', \epsilon, d, 1/t \ll \alpha$. 
Let $G$ be an $n$-vertex graph with $\delta(G)\geq \alpha n$. 
Then there exists a partition $\{ V_\ih\}_{\ih\in [r]\times[2]}$ of $v(G)$ and a subgraph $G'\subseteq G$ such that $C'\leq r \leq C$ and  $G'[V_{(i,1)}, V_{(i,2)}]$ is a $(\epsilon,d)$-super-regular bipartite graph and $n/(tr) \leq |V_{(i,1)}| \leq |V_{(i,2)}| \leq tn/r$ for all $i\in [r]$.
\end{lemma}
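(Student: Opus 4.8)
The plan is to invoke the Szemer\'edi regularity lemma and then extract, from the resulting reduced graph, a perfect matching on which the original pairs are regular and nearly dense, finally passing to a subgraph and cleaning up degrees to upgrade regularity to super-regularity. Concretely, I would first apply the degree form of the regularity lemma to $G$ with a suitably small regularity parameter $\epsilon' \ll \epsilon, d, 1/t$ and with a lower bound $C_0$ on the number of clusters large compared to $1/\epsilon'$ and $C'$; this yields a partition of almost all of $V(G)$ into clusters $W_1,\dots,W_m$ of equal size together with a small exceptional set $W_0$, and a spanning subgraph $G_0 \subseteq G$ with $\delta(G_0) \ge (\alpha - \epsilon' - d_0)n$ (here $d_0$ a fixed density threshold) in which every pair $(W_i,W_j)$ is either empty or $\epsilon'$-regular with density at least $d_0$.

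Next I would consider the reduced graph $\mathcal{R}$ on vertex set $[m]$ with an edge $ij$ whenever $(W_i,W_j)$ is $\epsilon'$-regular of density at least $d_0$ in $G_0$. A standard averaging argument shows $\delta(\mathcal{R}) \ge (\alpha - 2\epsilon' - d_0 - |W_0|/n)m \ge (\alpha/2)m$, say. Since $\mathcal{R}$ has minimum degree at least $m/2$ (after choosing $d_0,\epsilon'$ small and $|W_0|$ small relative to $\alpha$ — if $\alpha < 1/2$ this needs a small twist, namely we only need a perfect matching, not a Hamilton cycle, so minimum degree $\ge \alpha m /2 \cdot$ something suffices), I can greedily or via a defect-Hall argument find a near-perfect matching; actually, to get a genuine \emph{perfect} matching on a large induced sub-collection, I would instead select greedily $r$ disjoint edges of $\mathcal{R}$ with $C' \le r \le C$, which is trivially possible since $\delta(\mathcal{R}) \ge 1$ and $m$ is large, and simply \emph{discard} all remaining clusters. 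This gives pairs $(W_{i_1},W_{j_1}),\dots,(W_{i_r},W_{j_r})$, each $\epsilon'$-regular with density $\ge d_0$ in $G_0$. Relabel $V_{(i,1)} := W_{i_\ell}$ and $V_{(i,2)} := W_{j_\ell}$ after possibly swapping so that the smaller side is first; since all clusters have the same size, in fact $|V_{(i,1)}| = |V_{(i,2)}|$, which comfortably lies in $[n/(tr), tn/r]$ once $t \ge 2$ and $r$ is within a constant factor of $m$ — here I must be slightly careful and instead keep a few extra clusters or redistribute $W_0$ to arrange the size window; alternatively, since the lemma only requires $n/(tr) \le |V_{(i,1)}| \le |V_{(i,2)}| \le tn/r$, equal-size clusters of size $\Theta(n/m)$ with $r = \Theta(m)$ trivially satisfy this for $t$ a large constant.

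Finally, to pass from $\epsilon'$-regular to $(\epsilon,d)$-super-regular: set $d := d_0/2$ and, within each chosen pair $(V_{(i,1)},V_{(i,2)})$, delete from each side the (at most $\epsilon'$-fraction of) vertices with fewer than $(d_0-\epsilon')|V_{(i,2)}|$ or more than $(d_0+\epsilon')|V_{(i,2)}|$ neighbours on the other side — the upper-degree cleaning is what the paper's remark about Theorem~3.4 requires, and is handled symmetrically. A slicing lemma argument shows the surviving pair is $(2\epsilon', d_0)$-regular, and the surviving vertices all have degree $(d_0 \pm 2\epsilon')|V_{(i,2)}|$ into the other surviving side; re-balancing by deleting a few more vertices to equalise the two side sizes, and then renaming, gives a $(\epsilon, d_0)$-super-regular pair with $\epsilon' \ll \epsilon$. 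Put the deleted vertices, together with $W_0$ and the discarded clusters, arbitrarily into the $V_\ih$'s — wait, that would destroy super-regularity — so instead I distribute all leftover vertices arbitrarily among the sets $V_\ih$ without adding them to $G'$: that is, $G' \subseteq G_0 \subseteq G$ is the graph consisting only of the cleaned super-regular pairs, while the partition $\{V_\ih\}$ of \emph{all} of $V(G)$ assigns leftover vertices to blocks as isolated vertices of $G'$. The condition in the statement only demands that $G'[V_{(i,1)},V_{(i,2)}]$ be super-regular and that the block sizes lie in the stated window; dumping $O(\epsilon' n + |W_0|)$ stray vertices into the $2r$ blocks changes each $|V_\ih|$ by at most an $O(\epsilon' t)$-fraction, which is absorbed by loosening $t$. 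The main obstacle is bookkeeping: simultaneously guaranteeing (i) the precise size window $n/(tr) \le |V_{(i,1)}| \le |V_{(i,2)}| \le tn/r$, (ii) exact placement of every vertex of $G$ into some block, and (iii) genuine super-regularity (both-sided degree bounds) of each pair — this forces the careful separation of "$G'$ = only the clean pairs" from "the partition covers everything," and the order in which one cleans degrees versus equalises side sizes.
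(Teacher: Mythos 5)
You should first note that the paper does not prove this lemma at all: it is quoted from~\cite{KKS17}, with only the remark that a subgraph $G'$ is introduced to upgrade their one-sided notion of super-regularity to the two-sided one used here. So the comparison is between your argument and the (external) KKS17 proof, whose overall architecture — regularity lemma, matching in the reduced graph, upgrade to super-regularity — you have correctly identified. However, your write-up has a genuine gap at exactly the point that is the heart of the KKS17 lemma.

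The gap is the treatment of the vertices not covered by the chosen pairs. These consist of the exceptional set $W_0$, all clusters not in your matching, and the vertices deleted during degree-cleaning and re-balancing. This is not a set of size $O(\epsilon' n + |W_0|)$: a maximal matching in a reduced graph of minimum degree $\alpha m/2$ need only cover an $\alpha/2$-fraction of the clusters (and with your ``greedily pick some $r\in[C',C]$ edges and discard the rest'' it covers an arbitrarily small fraction), so the leftover can be a constant fraction of $V(G)$, even almost all of it. Your final decision — distribute these vertices among the classes $V_\ih$ as isolated vertices of $G'$ — does not satisfy the lemma: with this paper's definition, $G'[V_{(i,1)},V_{(i,2)}]$ being $(\epsilon,d)$-super-regular requires \emph{every} vertex of $V_{(i,1)}$ to have $(d\pm\epsilon)|V_{(i,2)}|$ neighbours in $G'$, and an isolated vertex has none. (This is not pedantry: the later embedding argument needs the degree condition for every vertex of every class, e.g.\ via the collections $\cB_\ih$ and condition (A2) of Lemma~\ref{lem: random matching behaves random}.) The missing idea is to \emph{absorb} each uncovered vertex $v$ using $\delta(G)\ge\alpha n$: since the matched clusters cover a positive fraction of $V(G)$, by pigeonhole there is a matched pair $(V_{(i,1)},V_{(i,2)})$ such that $v$ has at least, say, $\tfrac{\alpha}{4r}|V_{(i,1)}|$ neighbours in $V_{(i,1)}$; one adds $v$ to $V_{(i,2)}$, balancing so that no class receives too many vertices. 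This is why the conclusion only controls the class sizes up to a factor $t^2$ rather than making them equal, and it is the step your proof does not contain.

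A secondary error: you delete vertices whose degree deviates from $d_0|V_{(i,2)}|$, but the regularity lemma only guarantees each pair has density \emph{at least} $d_0$; a pair of density $1/2\gg d_0$ would lose essentially all of its vertices under this rule. One must first normalise each pair from its own density $d_{ij}$ down to a common target $d$ (e.g.\ by sparsifying each pair independently), and only then enforce the two-sided degree window. This is standard and fixable, but as written the cleaning step fails on dense pairs.
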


\subsection{Sharpness example}

We also prove the following proposition which shows that the bound on $p$ in Theorem~\ref{thm: main} is sharp up to a constant factor.

\begin{proposition}\label{prop: extremal example}
%For given $k\in \mathbb{N}$, there exist $c$ and $n_0$ such that the following holds for all $n\geq n_0$. 
Suppose $n,k\in \mathbb{N}$ with $0< 1/n \ll c \ll 1/k \leq 1$
and suppose $G=K_{\frac{n}{2},\frac{n}{2}}$.
If $n^{1/(k+1)} \leq \Delta <  n^{1/k}$, 
then there exists an $n$-vertex tree $T$ with $\Delta(T)\leq \Delta$ such that 
$T\nsubseteq G\cup R$ with probability $1-o(1)$ where
$R\in \bG(n, c p/2)$ is a random graph on $V(G)$ and $p= \max\big\{ n^{- \frac{k}{k+1}}, \Delta^{k+1}n^{-2}\big\}$.
\end{proposition}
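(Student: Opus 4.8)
The plan is to build a tree $T$ that is a disjoint union of spiders (stars with subdivided legs) whose centres all have degree exactly $\Delta$, together with a small connecting structure, in such a way that fitting $T$ into $G\cup R=K_{n/2,n/2}\cup\bG(n,cp/2)$ forces some centre $x$ to have $\Delta$ neighbours in $T$ all lying in the $R$-edges incident to $\phi(x)$, and then show w.h.p.\ no vertex of $R$ has that many incident edges into the "right" side. Concretely, set $m:=\Theta(n/\Delta)$ and let $T$ consist of $m$ vertex-disjoint spiders $S_1,\dots,S_m$, where each $S_j$ has a centre $c_j$ of degree $\Delta$ and $\Delta$ legs, each leg being a bare path of length $k$ (so each spider has $1+k\Delta$ vertices); then add one extra apex vertex $u$ joined to $c_1,\dots,c_m$ (adjusting $m$ and a few leg-lengths so the total vertex count is exactly $n$, keeping $\Delta(T)\le\Delta$ since $m=o(\Delta)$ as $\Delta\ge n^{1/(k+1)}$ forces $m=O(n/\Delta)=O(n^{k/(k+1)})$, and one checks $m\le\Delta$ so the apex has degree $\le\Delta$; if $k=1$ drop the apex and instead take $m$ disjoint stars, handled the same way).

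The key observation is a bipartiteness/counting argument. Since $G$ is bipartite with parts $A,B$ of size $n/2$, in any embedding $\phi$ of $T$ the $G$-edges of $T$ go between $\phi^{-1}(A)$ and $\phi^{-1}(B)$. For a single spider $S_j$, all $k$ edges of a given leg alternate sides if they are $G$-edges; an $R$-edge can "break" the alternation. The heart of the argument is to show that $R$ is so sparse that w.h.p.\ it cannot supply enough edges. Quantitatively: $R$ has $\Theta(pn^2)$ edges in total, and $pn^2=\max\{n^{2/(k+1)},\Delta^{k+1}\cdot n^{-0}\}$ after the normalisation — wait, more carefully $pn^2=\max\{n^{(k+2)/(k+1)},\Delta^{k+1}\}$. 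The structure forces, for each spider, that either its centre uses $\Delta$ incident $R$-edges, or a positive fraction of its $\Delta$ legs each contain an $R$-edge; summing over all $m$ spiders this needs $\Omega(m\Delta)=\Omega(n)$ edges of $R$ in the first horn, or the second horn needs a near-perfect $R$-matching of a prescribed type. I would isolate the combinatorial core as: \emph{in any embedding, at least one spider centre $c_j$ has all $\Delta$ of its tree-neighbours joined to $\phi(c_j)$ via $R$} — this follows because the legs are long bare paths and $G$ alone (being complete bipartite) already embeds any single leg perfectly only if the two endpoints lie on appropriately opposite sides, so the only obstruction is a global parity deficit, which I route entirely through the centres. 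Then it suffices to show w.h.p.\ $\Delta_{\max}(R)<\Delta$, i.e.\ the maximum degree of $\bG(n,cp/2)$ is below $\Delta$.

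The maximum-degree computation is the clean probabilistic step: the degree of a fixed vertex in $\bG(n,cp/2)$ is $\mathrm{Bin}(n-1,cp/2)$ with mean $\tfrac{c}{2}pn=\Theta(c\cdot\max\{n^{1/(k+1)},\Delta^{k+1}/n\})$. In the regime $\Delta\le n^{1/k}$ we have $\Delta^{k+1}/n\le\Delta$ and $n^{1/(k+1)}\le\Delta$, so the mean is $O(c\Delta)\le\Delta/2$ for $c$ small; a union bound over $n$ vertices using the Chernoff bound in Lemma~\ref{Chernoff Bounds} (with $t=\Delta/2\ge\Delta/2$ comfortably above the mean) gives $\bPr[\Delta(R)\ge\Delta]\le n\cdot 2e^{-\Omega(\Delta)}=o(1)$ since $\Delta\ge n^{1/(k+1)}=\omega(\log n)$. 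The main obstacle I anticipate is not the probability estimate but pinning down the combinatorial forcing cleanly — proving rigorously that a parity/deficit argument across the bipartition of $K_{n/2,n/2}$ genuinely forces some \emph{centre} (rather than a scattered collection of leg-internal vertices) to absorb $\Delta$ incident $R$-edges. I would handle this by choosing the leg lengths and the apex attachment so that the bipartition class sizes of $T$ are forced to split as $(n/2-O(1),\,n/2+O(1))$ in a way incompatible with the natural "alternating" embedding unless a centre is fully served by $R$; alternatively, a cleaner route is to make the legs pendant edges at a second level so that each spider is a "double star" and argue that embedding a double star with centre degree $\Delta$ into $K_{n/2,n/2}\cup R$ with the leaves forced onto the same side as the centre requires $\Delta$ incident $R$-edges at the centre's image — this trades a delicate parity count for a direct same-side obstruction, and I would present that version.
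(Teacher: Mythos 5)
There is a fatal gap in the construction itself: a spider whose legs are bare paths of length $k$ has an (almost) \emph{balanced} bipartition --- each leg contributes $\lceil k/2\rceil$ vertices to one class and $\lfloor k/2\rfloor$ to the other --- so your tree $T$ has bipartition classes of size $n/2\pm O(n/\Delta)$ (and exactly balanceable for even $k$, and freely balanceable for the $k=1$ ``disjoint stars'' variant, which moreover is a forest rather than a tree). Any tree whose bipartition classes both have size $n/2$ embeds into $K_{n/2,n/2}$ with no help from $R$ at all, so your $T$ is simply not a counterexample. The lower bound genuinely requires a tree whose components are \emph{bushy at every internal level}: in the paper's construction every internal vertex has degree $\Theta(\Delta)$ (resp.\ $\Theta(np)$), so that almost all vertices of each component sit at depth exactly $k$, the component's bipartition is maximally lopsided, and an odd number of components forces more than $n/2$ vertices onto one side of $G$.

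Even granting an imbalanced variant, your claimed combinatorial core --- that some centre must have all $\Delta$ of its tree-neighbours served by $R$ --- does not follow and cannot be rescued by counting edges of $R$: a parity deficit can be repaired by $R$-edges scattered one per leg across many spiders, and for $k\ge 2$ the graph $R$ has $\Theta(cpn^2)\ge \Theta(cn^{(k+2)/(k+1)})\gg n$ edges, so no global edge count rules this out. The paper instead works vertex-locally: on the event that every $v$ satisfies $d_R(v)\le cnp$ (note $cnp\ll\Delta$ here, a much stronger bound than $\Delta(R)<\Delta$), each internal vertex loses only $O(cnp)$ of its $\Theta(\Delta)$ children to $R$-edges, so the subtrees cut off by $R$ contain only $o(\Delta^k)$ of the $\ge(1-o(1))\Delta^k$ depth-$k$ vertices of each component; this is exactly where bare-path legs fail, since a single $R$-edge at the top of a leg deletes the entire leg. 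Your max-degree calculation is also aimed at the wrong threshold (and as stated the Chernoff bound of Lemma~\ref{Chernoff Bounds} does not apply since $t=\Delta/2$ far exceeds $3\mathbb{E}[X]/2$), but the essential missing idea is the high-branching tree and the per-vertex ``cut-off subtree'' accounting.
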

\begin{proof}
Let $\cE$ be the event that every vertex $v\in V(R)$ satisfies $d_{R}(v)  \leq cnp$. 
It is easy to see that $\mathbb{P}[\cE] \geq 1- n^{-1}$.
%Assume $(A,B)$ is the unique bipartition of $K_{\frac{n}{2},\frac{n}{2}}$.
It is enough to show that there exists an $n$-vertex tree $T$ with $\Delta(T)\leq \Delta$
such that whenever $\cE$ holds then $T\nsubseteq G\cup R$.

First, assume that $\Delta > n^{\frac{k+2}{(k+1)^2}}$. 
Then
$p= \Delta^{k+1}n^{-2} > n^{- \frac{k}{k+1}}$.
Let $s \in \{ \lceil \frac{n}{(\Delta/2)^k}\rceil ,\lceil \frac{n}{(\Delta/2)^k}\rceil+1\}$ be an odd integer.
Thus  $s\geq 3$ as $\Delta^k < n$.
Let $m:= \lfloor (n-1)/s\rfloor$ and consider an $m$-vertex tree $T'$ with root $\hat{x}$ of height $k$ 
such that every internal vertex has degree at least $\Delta/5$ and at most $2\Delta/3$. 
Note that such a tree exists as $\sum_{\ell=0}^{k} (2\Delta/3-1)^{\ell} > m > \sum_{\ell=0}^{k} (\Delta/5)^{\ell}\geq \Delta^k/5$.
Hence
\begin{align}\label{eq: Dk size}
|D^{k}_{T'}(\hat{x})| \geq |V(T')| - \sum_{\ell\in \{0,\ldots,k-1\}} |D^{\ell}_{T'}(\hat{x})| 
%\geq m - \sum_{\ell\in \{0,\ldots,k-1\}}\Delta^{\ell} 
\geq m - \Delta^{k-1} 
\geq (1- 5 \Delta^{-1} ) m.
\end{align}

We consider $s$ vertex-disjoint copies $T_1,\dots, T_{s}$ of $T'$ with the roots $x_1,\dots, x_{s}$, respectively.
We add 
a vertex $x$ and edges $xx_i$ for all $i\in [s]$
and further
at most $s$ vertices and edges in such a way that the new graph $T$ is an $n$-vertex tree with $\Delta(T)\leq \Delta$.
Assume, for a contradiction, $\cE$ holds and there exists an embedding $\phi$ of $T$ into $G\cup R$. 

For each $i\in [s]$, we let $U_i$ be the set of all vertices $y\in D_{T_i}^k(x_i)$ such that all edges of the path that joins $x_i$ and $y$ are embedded into $E(G)$; that is,
$$U_i:= \{ y \in D^{k}_{T_i}(x_i) : E(\phi(P_{T_i}(x_i, y))) \subseteq E(G )\}.$$
Let $(A,B)$ be the bipartition of $G$.
As for each $y\in U_i$, every edge in $E(\phi(P_{T_i}(x_i,y)))$ joins a vertex in $A$ with a vertex in $B$ , 
it is easy to see that either $\phi(U_i)\subseteq A$ or $\phi(U_i)\subseteq B$ holds depending on whether $\phi(x_i)\in A$ or $\phi(x_i)\in B$.
Observe that for each $y\in V(T_i)\sm U_i$, 
the vertex $y$ is contained in $T_i(z')$ for some 
$z\in V(T_i)$ and $z'\in D_{T_i}(z)$ such that $\phi(zz') \subseteq E(R)$. 
Hence
\begin{eqnarray*}
|U_i| &\geq& |D^{k}_{T_i}(x_i)| - \sum_{\substack{z\in V(T_i), z'\in D_{T_i}(z)\colon \phi(zz') \subseteq E(R) }} |T_i(z')| \\
%&\geq& |D^{k}_{T_i}(x_i)| - \sum_{z \in V(T_i)} \sum_{z'\in D_{T_i} (z)\cap N_R(\phi(z)) } |T_i(z')| \\
&\geq& |D^{k}_{T_i}(x_i)| - \sum_{\ell \in [k]} \sum_{z \in D^{\ell-1}_{T_i}(x_i) } \sum_{z'\in D_{T_i} (z)\cap N_R(\phi(z)) } |T_i(z')|  \\
&\geq& |D^{k}_{T_i}(x_i)| - \sum_{\ell \in [k]} \sum_{z \in D^{\ell-1}_{T_i}(x_i) } 2\Delta^{k-\ell} d_{R}(\phi(z))  \\
&\stackrel{\eqref{eq: Dk size}}{\geq} &  (1- 5 \Delta^{-1} ) m -  \sum_{\ell \in [k]} 2\Delta^{k-1} \cdot 2cnp 
%\stackrel{\cE \text{ holds}}{ \geq} 
\geq (1- 5 \Delta^{-1} ) m - \sqrt{c}\Delta^{2k} n^{-1}.  
\end{eqnarray*}
Here, we obtain the third inequality as $|T_i(z)|\leq 2\Delta^{k-\ell}$ for $z\in D^{\ell}_{T_i}(x_i)$. 
As $s$ is an odd integer and each $U_i$ entirely belongs to $A$ or $B$, 
without loss of generality,
we can assume that there exists $I\sub [s]$ such that $|I|\geq (s+1)/2$ and $\bigcup_{i\in I} U_i \subseteq A$.
Then 
\begin{align*}
\bigg|\bigcup_{i\in I} U_i \bigg| 
&\geq \frac{s+1}{2}\left((1- 5 \Delta^{-1} ) m - \sqrt{c}\Delta^{2k} n^{-1}\right)\\
&\geq n/2 + m/2 - s - 10 n/\Delta - s \sqrt{c}\Delta^{2k} n^{-1} \\
&\geq  n/2 + \Delta^k/4 - 20 n/\Delta  - \frac{2n}{(\Delta/2)^k} \cdot \sqrt{c}\Delta^{2k} n^{-1}\\
&\geq n/2 + \Delta^k/8.
\end{align*}
%Here, we obtain the term `$-s$' in the second inequality because of the floor on $m= \lfloor (n-1)/s\rfloor$, and we get the final inequality since  $n^{(k+2)/(k+1)} \leq \Delta^{k+1}$ and $\Delta^k < n$ and $c\ll 1/k$.
This is a contradiction as either all vertices of $\phi(\bigcup_{i\in I} U_i)$ belong to $A$ or 
all vertices of $\phi(\bigcup_{i\in I} U_i)$ belong to $B$ whereas $|A|=|B|=n/2$.

Now assume that $\Delta < n^{\frac{k+2}{(k+1)^2}}$. 
Thus $p = n^{- \frac{k}{k+1}}$.
Consider an $n$-vertex tree $T$ with root $r$ of height $k+1$ such that every internal vertices has degree at least $np/2 = n^{-1/(k+1)}/2$ and at most $2np= 2n^{-1/(k+1)}$. 
It is easy to see that such a tree $T$ exists.
We assume for a contradiction that $T$ embeds into $G\cup R$ and $\cE$ holds.
It is easy to check (by a similar, but much a simpler argument as before) 
that at least $(1 - 10kc)n > n/2$ vertices are embedded into one of either $A$ or $B$. 
However, this is a contradiction as $|A|=|B|=n/2$.
\end{proof}

\section{Embedding and Distributing lemmas}

\label{sec: embed distributing}

In this section, we state and prove several lemmas, which we use later in the proof of our main result.
In Lemma~\ref{lem: random embedding},
we show that if $p$ is large enough, then a rooted forest can be embedded into $\bG(n,p)$ in such a way that certain additional properties hold;
in particular, we can specify target sets for every vertex and also require that the vertices are well-distributed with respect to a specified weight function.

\begin{lemma}\label{lem: random embedding}
Suppose $0< 1/n, 1/w \ll \epsilon,1/r, 1/t$ and $w \leq \log^2{n}$.
Suppose that $(F,R)$ is a rooted forest and $(R, X_1,\dots, X_r,X'_1,\dots, X'_r)$ is a partition of $V(F)$.
Suppose $V$ is a set of $n$ vertices,
 $(U,V_1,\dots, V_r, V'_1,\dots, V'_r)$ is a partition of $V$, and $G=\bG(n,p)$ is a random graph on $V$.
Suppose that $\phi':R\rightarrow U$ is an injective function and $f: \bigcup_{i\in [r]} X'_i \rightarrow [0,1]$.
Suppose that for each $i\in [r]$, we have a multiset $\cB_i$ of subsets of $V'_i$
and suppose that the following hold for each $i\in [r]$:
\begin{enumerate}[label={\rm (A\arabic*)$_{\ref{lem: random embedding}}$}]
%\item[{\rm (A1)$_{\ref{lem: random embedding}}$}] $|V(F)|\leq n$,
\item\label{L41A1}  $|V_i| \geq |X_i| + 12 p^{-1} \Delta(F) + 30 p^{-1} \log{n}$,
\item\label{L41A2} 
$|V'_i| \geq  p^{-1} \log^6{n}$ and $|X'_i| \leq \log^2{n}$,
%\item[{\rm (A4)$_{\ref{lem: random embedding}}$}] $\phi'$ is an injective map of $R$ into $U$,
\item\label{L41A3} for each $B\in \cB_i$, we have $|B| \leq \epsilon |V'_i|$, and
\item\label{L41A4} $\sum_{x'\in X'_i} f(x')  \leq 1$ and for each $x\in X'_i$, we have  $0\leq f(x)\leq w^{-1}$.
\end{enumerate}
Then with probability at least $1- n^{-2}$, there exist a multiset $\cB'_i\subseteq \cB_i$ for each $i\in [r]$ and an embedding $\phi$ of $F$ into $G$ which extends $\phi'$ such that the following hold for each $i\in [r]$: 
\begin{enumerate}[label={\rm (B\arabic*)$_{\ref{lem: random embedding}}$}]
\item\label{L41B1} $\phi(X_i)\subseteq V_i$ and $\phi(X'_i)\subseteq V'_i$, 
\item\label{L41B2}  $|\cB'_i| \leq 2^{-\epsilon^2 w} |\cB_i|$, and
\item\label{L41B3} for each $B\in \cB_i\setminus \cB'_i$, we have
$\fsum{f}{\phi}{B} \leq \epsilon^{1/2}$.
\end{enumerate}
\end{lemma}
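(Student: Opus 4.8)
The plan is to embed $F$ one vertex at a time, processing vertices in a breadth-first order compatible with the rooted structure (so that whenever we embed $x$, its parent $a_F(x)$ has already been embedded), and to run this as a randomized greedy procedure: when it is the turn of $x$ (say $x \in X_i$ or $x \in X'_i$), we look at the neighbours of $\phi(a_F(x))$ in $G$ that lie in the appropriate target class ($V_i$ or $V'_i$) and have not yet been used, and we pick the image of $x$ uniformly at random among these candidates. Since $F$ is a forest, exposing the edges of $G$ incident to $\phi(a_F(x))$ only when we process $x$ means that these edges are fresh, so each candidate set has size distributed as $\mathrm{Bin}(m, p)$ where $m$ is the number of still-available vertices in the target class. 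The first task is to show that with probability at least $1 - n^{-2}$ we never get stuck, i.e.\ every candidate set is non-empty (in fact large) at every step.

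For the no-getting-stuck part, the key counting input is \ref{L41A1} and \ref{L41A2}. At any point during the embedding, the number of vertices of $V_i$ already used is at most $|X_i|$, so there remain at least $12p^{-1}\Delta(F) + 30 p^{-1}\log n$ unused vertices in $V_i$; hence the candidate set for a vertex going to $V_i$ has mean at least $12\Delta(F) + 30\log n$, and a Chernoff bound (Lemma~\ref{Chernoff Bounds}) plus a union bound over the at most $n$ vertices of $F$ shows that every such candidate set has size at least, say, $6\Delta(F) + 15\log n$ with probability $1 - o(n^{-2})$. Crucially $6\Delta(F)$ dominates the number of children of any vertex, so even after we reserve the images of the (at most $\Delta(F)$) siblings of $x$ that are embedded from the same parent, there is always room. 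For the $V'_i$ classes we use \ref{L41A2}: at most $|X'_i| \le \log^2 n$ vertices of $V'_i$ are ever used, which is negligible compared to $|V'_i| \ge p^{-1}\log^6 n$, so candidate sets there have mean $\ge (1-o(1)) p|V'_i| \ge (1-o(1))\log^6 n$ and are tightly concentrated. This establishes \ref{L41B1}.

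For the distribution property \ref{L41B2}--\ref{L41B3}, fix $i$ and $B \in \cB_i$. The quantity $\fsum{f}{\phi}{B} = \sum_{x \in X'_i : \phi(x) \in B} f(x)$ is a sum over the at most $\log^2 n$ vertices $x \in X'_i$; when we embed such an $x$, the conditional probability that $\phi(x)$ lands in $B$, given the history, is (number of available candidates inside $B$)/(total number of available candidates), which by the concentration above and \ref{L41A3} is at most $\epsilon |V'_i| / ((1-o(1))|V'_i|) \le 2\epsilon$, essentially because the candidate set is an unbiased $p$-random sample of the available part of $V'_i$ (this is where Remark~\ref{rmk: symmetry}, the "image is random" phenomenon, comes from). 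Thus $\fsum{f}{\phi}{B}$ is stochastically dominated by a weighted sum of independent-ish Bernoulli$(2\epsilon)$ variables with weights $f(x) \le w^{-1}$ and total weight $\sum_x f(x) \le 1$ by \ref{L41A4}; its mean is at most $2\epsilon$, and by a Chernoff/Azuma-type bound (Theorem~\ref{Azuma} applied to the exposure martingale, with increments bounded by $w^{-1}$ and total squared variation $\le w^{-1}\sum_x f(x) \le w^{-1}$) we get $\bPr[\fsum{f}{\phi}{B} > \epsilon^{1/2}] \le \exp(-\Omega(\epsilon w))$. Since $w$ may be only polylogarithmic, this per-set failure probability is not small enough to union-bound over all of $\cB_i$ (which could be huge); instead, we let $\cB'_i$ be exactly the set of "bad" $B$'s, and bound $\mathbb{E}[|\cB'_i|] \le \exp(-\Omega(\epsilon w))|\cB_i| \le 2^{-\epsilon^2 w}|\cB_i|/2$ (adjusting constants), then apply Markov's inequality to get $|\cB'_i| \le 2^{-\epsilon^2 w}|\cB_i|$ with probability $\ge 1 - 1/n^3$ say, for each of the $r$ values of $i$. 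Combining all the failure events by a union bound gives the overall $1 - n^{-2}$.

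The main obstacle, and the reason the statement is phrased with the exceptional multiset $\cB'_i$ rather than a clean "for all $B$" conclusion, is precisely that $w$ is allowed to be as small as polylogarithmic in $n$, so the deviation bound for a single $B$ only beats $\exp(-\Omega(\epsilon w))$ and cannot survive a union bound over an exponentially large family $\cB_i$; the fix is the Markov-on-the-count argument, which only needs the expected number of bad sets to be a small fraction of $|\cB_i|$. A secondary technical point requiring care is that the events "$\phi(x) \in B$" for different $x \in X'_i$ are not independent, since earlier choices deplete the available candidates — but the depletion is by at most $\log^2 n$ vertices out of $\ge p^{-1}\log^6 n$, so the conditional probabilities stay within $(1 \pm o(1))$ of the unconditional ones uniformly over the history, which is exactly what is needed to feed into the martingale bound; making this "uniform over the history" statement precise (e.g.\ by conditioning on the high-probability event that all candidate sets are large and then working on that event) is the one place where one must be a little careful about the order of quantifiers.
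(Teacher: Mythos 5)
Your overall architecture (BFS ordering so parents precede children, embedding each vertex into the fresh random neighbourhood of its parent's image, taking $\cB'_i$ to be the collection of ``bad'' sets, and aiming at a per-set badness probability of $\exp(-\Omega(\epsilon w))$) matches the paper's. But there is a genuine gap in the final step, where you pass from $\mathbb{E}[|\cB'_i|]\leq \exp(-\Omega(\epsilon w))|\cB_i|$ to the conclusion \ref{L41B2} ``with probability $\geq 1-n^{-3}$'' via Markov's inequality. Markov only gives
\[
\bPr\big[|\cB'_i|>2^{-\epsilon^2 w}|\cB_i|\big]\;\leq\; \exp\big(-\Omega(\epsilon w)+\epsilon^2 w\ln 2\big)\;=\;\exp(-\Omega(\epsilon w)),
\]
and the hierarchy $1/w\ll \epsilon,1/r,1/t$ together with $w\leq\log^2 n$ permits $w$ to be a \emph{constant} (depending only on $\epsilon,r,t$); indeed in the application in Section~\ref{sec: embedding} one takes $w=tM_*w_*/r$ with $w_*$ possibly as small as $M_*^{1/2}$. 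For such $w$ your failure probability is a constant, nowhere near the required $n^{-2}$, and concentration of $|\cB'_i|$ cannot rescue this either: a single relocation of one $\phi(x)$ can flip the status of essentially every $B\in\cB_i$, so the Lipschitz constant for an Azuma bound on $|\cB'_i|$ is of order $|\cB_i|$.

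The paper avoids this by derandomizing exactly the step you randomize. It defines the potential $E_i(\psi)=\sum_{B\in\cB_i}p_i^s\big(|B\cap\psi(X'_{i,s})|,\fsum{f}{\psi}{B}\big)$, where $p_i^s(s',q)$ is the probability that $B$ ends up bad if the remaining vertices of $X'_i$ were completed by a uniform injection (a sample without replacement); this is precisely the conditional expectation of the number of bad sets. An averaging argument over the fresh neighbours $N_{G_1,\widehat V'_i}(y)$ of the parent's image shows some choice of image increases $E_i$ by at most a factor $(1+3\log^{-2}n)$, and the algorithm picks that vertex deterministically. Since $E_i(\phi)=|\cB'_i|$ exactly at the end, the bound $|\cB'_i|\leq (1+3\log^{-2}n)^{m_i}\exp(-\epsilon w/5)|\cB_i|$ holds \emph{deterministically} conditional on the degree events $\cE_0,\cE_h$, whose failure probability is controlled by Chernoff bounds that do not involve $w$ at all. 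To repair your argument you would need to replace the Markov step with this (or an equivalent) conditional-expectation greedy choice; the rest of your proof, including the no-getting-stuck analysis via \ref{L41A1}--\ref{L41A2} and the computation that the per-set badness probability under uniform completion is $\exp(-\Omega(\epsilon w))$ (the paper's \eqref{eq: Wi000 value}, proved via Bernstein for sampling without replacement), is sound.
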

\begin{proof}
For each $i\in [r]$, let $n_i:=|V'_i|$, $m_i := |X'_i|$ and $q_i:=\sum_{x'\in X'_i} f(x')$.
Note that \ref{L41A2} implies 
\begin{align}\label{eq: sec 4 X'i sizes}
n_i= |V'_i| \geq \log^6{n} \geq w^3 \enspace \text{and} \enspace m_i = |X'_i| \leq  \log^2{n}.
\end{align}
By adding some vertices to some sets $B\in \cB_i$ if necessary, 
we may assume that $|B| =\epsilon n_i$ for all $i\in [r]$ and $B\in \cB_i$. Note that if we obtain a function $\phi$ and a multiset $\cB'_i$ satisfying \ref{L41B1}--\ref{L41B3} for these multisets, 
then $\phi$ also satisfies \ref{L41B1}--\ref{L41B3} for the original multisets.

For each component $T$ of $F$, we consider a breath-first-search ordering $(x^T_{1},\dots, x^T_{|V(T)|})$ of each component of $F$, starting with its root $\{x^T_{1}\} = R\cap V(T)$.
Whenever we have the choice, we give the vertices in $X_1\cup\dots\cup X_r$ priority over the vertices in $X'_1\cup\dots\cup X'_r$;
that is, for every vertex $x$ in $T$, the children of $x$ in the former set precede the children of $x$ in the latter set.
We consider an ordering $\pi=(x_1,\dots, x_{|V(F)|})$ 
such that $R= \{x_1,\dots, x_{|R|}\}$ and 
$(x_{|R|+1},\dots, x_{|V(F)|})$ is an arbitrary concatenation of $(x^T_{2},\dots, x^T_{|V(T)|})$ of all components $T$ of $F$. 
Then for all $j\in [|V(F)|]\setminus [|R|]$, 
\begin{equation}\label{eq: BFS ordering}
\begin{minipage}[c]{0.9\textwidth} \em
the ancestor of $x_j$ precedes $x_j$ and all children of $x_j$ appear consecutively after $x_j$.
\end{minipage}
\end{equation}
We remark that if we dropped the conditions  \ref{L41B2} and  \ref{L41B3},
then a simply greedy algorithm would yield the desired statement.

As for all $i\in [r]$, we have $X'_i\subseteq V(F)$, 
the ordering $\pi$ naturally gives rise to an ordering $(x_{i,1},\dots, x_{i,m_i} )$ of vertices in $X'_i$ 
such that if $x_{i,j} = x_{\ell}$ and $x_{i,j'}=x_{\ell'}$ for some $j<j' \in [m_i]$, 
then $\ell <\ell'$. 
For each $s\in [m_i]$, let $X'_{i,s}:=\{ x_{i,1},\dots, x_{i,s}\}$.

For all $i\in [r]$,  $s'\leq s \in [m_i]\cup \{0\}$, and $q\in [0,q_i]$, let
$\cW:=( f(x_{i,s+1}),\dots, f(x_{i,m_i}) , 0, \dots, 0)\in \R^{n_i -s}$, 
and let $(W'_{s'+1},\dots, W'_{\epsilon n_i})$ be a random sample drawn without replacement from $\cW$.
Let  
\begin{align}\label{eq: Wi ss'q def}
p^s_i(s',q) := \mathbb{P}\bigg[ q+ \sum_{j \in [\epsilon n_i]\setminus [s']} W'_j \geq  \epsilon^{1/2}\bigg].
\end{align}
First we estimate $p_i^0(0,0)$.
We have 
\begin{align*}
\mu:= 
\mathbb{E}\bigg[\sum_{j \in [\epsilon n_i] } W'_i \bigg] 
= \epsilon \sum_{j \in [m_i]} f(x_{i,j}) = \epsilon q_i \text{ and } 
\sigma^2:= \frac{1}{n_i} \sum_{j\in [m_i]} (f(x_{i,j})- \epsilon q_i/n_i )^2 + \sum_{j\in [n_i]\setminus [m_i]} ( 0 -\epsilon q_i/n_i)^2.
\end{align*}
Then by the convexity of sums of squares, we conclude that
\begin{eqnarray*}
\sigma^2  
&\leq& \frac{1}{n_i} \Big(\frac{q_i}{\max_{x'\in X_i'}f(x')}\Big( \max_{x'\in X_i'}f(x')- \frac{\epsilon q_i}{n_i} \Big)^2 
+ \Big( n_i- \frac{q_i}{\max_{x'\in X_i'}f(x')}\Big)\Big( \frac{\epsilon q_i}{n_i}\Big)^2 \Big) \\
&\stackrel{\text{\ref{L41A4}}}{\leq}& \frac{1}{n_i} \Big(w q_i\Big( w^{-1}- \frac{\epsilon q_i}{n_i} \Big)^2 + ( n_i- w q_i)\Big( \frac{\epsilon q_i}{n_i}\Big)^2 \Big) 
\leq \frac{2}{w n_i}.
\end{eqnarray*}
\COMMENT{
\begin{align*}
&\sigma^2:= \frac{1}{n_i} \sum_{j\in [m_i]} (f(x_{i,j})- \mu/n_i )^2 + \sum_{j\in [n_i]\setminus [m_i]} ( 0 -\mu/n_i)^2 \leq \frac{1}{n_i} \left(w q_i( w^{-1}- \frac{\epsilon q_i}{n_i})^2 + ( n_i- w q_i)( \frac{\epsilon q_i}{n_i})^2 \right) \\
&\leq \frac{q_i }{w n_i} - \frac{2\epsilon q_i^2}{n_i^2} + \frac{\epsilon^2 q_i^2}{n_i^2} 
\leq \frac{2}{w n_i}.
\end{align*}
}
Here, we obtain the final equality since \eqref{eq: sec 4 X'i sizes} implies $n_i \geq w^3$ and \ref{L41A4} implies $q_i\leq 1$.
As $\mu = \epsilon q_i \leq \epsilon$, Bernstein's inequality~(Theorem~\ref{Bernstein}) implies that
\begin{align}\label{eq: Wi000 value}
p^0_i(0,0) 
\leq \exp\Big(- \frac{ (\epsilon^{1/2}-\epsilon)^2 }{ 4 w^{-1}  + (2/3)(w^{-1}-0)\epsilon^{1/2}  } \Big) 
\leq \exp(- \epsilon w/5).
\end{align}
Moreover, it is easy to see that the following holds for any $s'\in [m_i]$:
\begin{align}\label{eq: Wi at the end}
p^{m_i}_i(s',q) = \left\{\begin{array}{ll} 1 & \text{ if } q \geq \epsilon^{1/2} \\
0 & \text{ if } q <  \epsilon^{1/2}.
\end{array}\right.
\end{align}
Recall the notation introduced in \eqref{def: fsum}.
Suppose $B\in \cB_i$ and $\psi: X'_{i,s}\rightarrow V'_i$ is an injective function such that 
$|B\cap \psi(X'_{i,s})|=s'$ and $\fsum{f}{\psi}{B}=q$.
Suppose $\psi'$ is an injective function chosen uniformly at random among all injective functions from $X'_{i}$ to $V'_i$ extending $\psi$. 
Let $\cE_{B}$ be the event that  $\fsum{f}{\psi'}{B} \geq \epsilon^{1/2}$.
Then
$$\mathbb{P} [ \cE_{B}]= p^s_i(s',q).$$
Furthermore, we observe that
\begin{align}\label{eq: Wi relation}
p^s_i(s',q)&
= \mathbb{P}\big[ \psi'(x_{i,s+1}) \in B\big]\mathbb{P}\big[ \cE_{B} \mid \psi'(x_{i,s+1}) \in B\big] + \mathbb{P}\big[ \psi'(x_{i,s+1}) \notin B\big] \mathbb{P} \big[\cE_{B}\mid \psi'(x_{i,s+1}) \notin B\big] \nonumber \\
&= \frac{\epsilon n_i - s'}{n_i -s} p^{s+1}_i(s'+1,q+ f(x_{i,s+1}))
+ \frac{(1-\epsilon) n_i - s+ s'}{n_i -s}p^{s+1}_i(s',q) \nonumber \\
&= (1\pm  \log^{-3}n )\Big( \epsilon p^{s+1}_i(s'+1,q+ f(x_{i,s+1})) + (1-\epsilon) p^{s+1}_i(s',q) \Big).
\end{align}
Here, we obtain the final equality from \eqref{eq: sec 4 X'i sizes} as $s,s' \leq m_i$. 
For all $i\in [r]$, $s\in [m_i]$ and an injective function $\psi: X'_{i,s}\rightarrow V_i$, we define
$$E_i(\psi):= \sum_{B\in \cB} p^s_i\big(|B\cap \psi(X'_{i,s})|, \fsum{f}{\psi}{B} \big).$$
Moreover, for $i\in [r]$ and an empty function $\psi_0:\emptyset\rightarrow \emptyset$, we have
\begin{eqnarray}\label{eq: E000 value}
E_i(\psi_0) = p^0_i(0,0) |\cB| \stackrel{\eqref{eq: Wi000 value}}{\leq}
\exp(-\epsilon w/5) |\cB|. 
\end{eqnarray}
Roughly speaking,
$E_i(\psi)$ measures how `good' the partial embedding $\psi$ is.
To ensure \ref{L41B2} and \ref{L41B3},
we aim to choose $\phi$
such that $E_i(\phi|_{X_i'})$ is not too large.

Let $G_1, G_2$ be pairwise independent random graphs such that $G_1\cup G_2\subseteq G$ and $G_1,G_2=\bG(n,p/3)$.
Let $\cE_0$ be the event that for all $i\in [r]$, $v\in V(G)$ and $B\in \cB_i$ , we have 
\begin{align}\label{eq: B and V' sizes p}
d_{G_1,V'_i}(v) = n_i p \pm (n_ip)^{3/5} \enspace \text{and}  \enspace 
d_{G_1,B}(v) = p|B| \pm (n_ip)^{3/5} = \epsilon n_ip \pm (n_ip)^{3/5}.
\end{align}
Note that \ref{L41A2} implies that $n_ip \geq \log^6{n}$.
Thus Chernoff's inequality (Lemma~\ref{Chernoff Bounds}) implies that 
\begin{eqnarray}\label{eq: cE1 prob}
\mathbb{P}[\cE_0] \geq 1- n^{-4}.
\end{eqnarray}

Now we begin our algorithm which gradually extends $\phi'$ to our desired embedding $\phi$ of $F$ into $G$ in at most $n$ steps, and each step will be successful with probability at least $1- n^{-4}$. 
The success of each step only depends on whether a potential set of edges
in $G_1$ or $G_2$ contains roughly as many edges as we expect it to have.
These potential sets of edges will be disjoint.

First, assume that $\cE_0$ holds (this is the only property of $G_1$ we will use).
Let $\phi_{|R|}:=\phi'$.
Assume we have defined $\phi_h$ for some $h \in [ |V(F)| ]\sm [|R|-1]$
satisfying the following, where $X^h:= \{x_1,\dots, x_h\}$:
\begin{enumerate}[label=(\text{$\Phi$\arabic*)$_{\ref{lem: random embedding}}^h$}]
\item\label{sec 4 Phi 1} $\phi_h$ embeds $F[X^h]$ into $G$, 
\item\label{sec 4 Phi 2} for each $i\in [r]$, we have $\phi_h(X_i\cap X^h) \subseteq V_i$, and 
$\phi_h(X'_i\cap X^h) \subseteq V'_i$,
\item\label{sec 4 Phi 3} for each $i\in [r]$,
we have $E_i( \left.\phi_h\right|_{X'_i}) \leq (1+ 3\log^{-2}{n} )^{|X^h\cap X'_i|} \exp(-\epsilon w/5) |\cB_i|$, and
\item\label{sec 4 Phi 4} for each $h'\in [h]$, either all children or no child of $x_{h'}$ lie in $X^h$.
\end{enumerate}
Clearly, $\phi_{|R|}$ satisfies ($\Phi$1)$_{\ref{lem: random embedding}}^{|R|}$, ($\Phi$2)$_{\ref{lem: random embedding}}^{|R|}$ and ($\Phi$4)$_{\ref{lem: random embedding}}^{|R|}$.
By \eqref{eq: E000 value}, ($\Phi$3)$_{\ref{lem: random embedding}}^{|R|}$ holds as well.
Let $x$ be the unique neighbour of $x_{h+1}$ in $\{x_1,\ldots,x_h\}$ (the ancestor of $x_{h+1}$) and $y:=\phi_h(x)$.
By \eqref{eq: BFS ordering} and~\ref{sec 4 Phi 4}, the set of children of $x$ is $\{x_{h+1},\dots, x_{h+d}\}$
for some $d\in [\Delta(F)]$.
For each $i\in [r]$, let 
$\widehat{V}_i := V_{ i }\setminus \phi_h( X^h).$ 
By our choice of the ordering $(x_1,\dots, x_{|V(F)|})$, there exists $d'\in [d]$ such that
$$\{x_{\ell_1},\dots, x_{\ell_{d'}} \} = \{x_{h+1}\dots x_{h+d}\}\cap \bigcup_{i\in [r]} X'_i,  
\enspace \text{ where } \enspace \ell_j := h+d-d'+j \text{ for each }j\in [d']. $$
Now we expose the neighbours of $y$ in $G_2$ 
and let $\cE_{h}$ be the event that $d_{G_2,\widehat{V}_i}(y) \geq \Delta(F)$ for each $i\in [r]$.
Note that 
$$|\widehat{V}_i| 
\stackrel{\ref{sec 4 Phi 2}}{\geq}  |V_i| - |X_i|
 \stackrel{\text{\ref{L41A1}}}{\geq} 12p^{-1}\Delta(F) + 30p^{-1}\log{n}.$$
Thus, by Chernoff's inequality (Lemma~\ref{Chernoff Bounds}), we have
$$\mathbb{P}[\cE_h] 
\geq 1 - r \mathbb{P}\big[ {\rm Bin}( |V_i| - |X_i| , p/3 ) < \Delta(F) \big]
\leq 1- e^{ - \frac{ (3\Delta(F) + 10 \log{n})^2}{ 3(4\Delta(F) + 10\log{n}) }}  < 1 - n^{-3.2}.$$
Now, assume that $\cE_h$ holds.
For each $j\in [d-d']$, 
we embed vertices in $\{x_{h+1},\dots, x_{h+d-d'}\}\cap X_i$ onto distinct vertices in $N_{G_2}(y)\cap \widehat{V}_{i}$.
We denote the new embedding by $\widehat{\phi}_0$ and by construction $\widehat{\phi}_0$ embeds  $X^h \cup \{x_{h+1}\dots x_{h+d-d'}\}$ into $G$.
%Note that $\widehat{\phi}_0$ satisfies ($\Phi$1)$_{\ref{lem: random embedding}}^{s+d-d'}$--($\Phi$3)$_{\ref{lem: random embedding}}^{s+d-d'}$.
Note that by construction $\widehat{\phi}_0$ satisfies ($\Phi$1)$_{\ref{lem: random embedding}}^{\ell_0}$--($\Phi$2)$_{\ref{lem: random embedding}}^{\ell_0}$ where $\ell_0:= h+d-d'$.
Moreover, ($\Phi$3)$_{\ref{lem: random embedding}}^{\ell_0}$ also holds by 
($\Phi$1)$_{\ref{lem: random embedding}}^{h}$ as
$X^{\ell_0}\cap X_i = X^{h}\cap X_i$ for each $i\in [r]$.

Now we want to iteratively extend  $\widehat{\phi}_{j}$ to $\widehat{\phi}_{j+1}$ for $j\in \{0,\ldots,d'-1\}$ in such a way
that $\widehat{\phi}_j$ satisfies ($\Phi$1)$_{\ref{lem: random embedding}}^{\ell_j}$--($\Phi$3)$_{\ref{lem: random embedding}}^{\ell_j}$.
We assume now that for some $j\in [d']\cup \{0\}$ we have defined $\widehat{\phi}_j$ satisfying ($\Phi$1)$_{\ref{lem: random embedding}}^{\ell_j}$--($\Phi$3)$_{\ref{lem: random embedding}}^{\ell_j}$.
Let $\widehat{V}'_i:= V'_{i}\setminus \widehat{\phi}_j(X^{\ell_j})$.
Note that since $\cE_0$ holds, for each $i\in [r]$ and $B\in \cB_i$, we have
\begin{eqnarray}\label{eq: hat V B R1 good}
 d_{G_1, \widehat{V}'_i\cap B} (y) &\stackrel{\eqref{eq: sec 4 X'i sizes}}{=}& d_{G_1, V'_i\cap B}(y) \pm \log^2{n} 
 \stackrel{\eqref{eq: B and V' sizes p},\text{\ref{L41A2}}}{=} \epsilon d_{G_1, V'_i}(y)  \pm 2(n_ip)^{3/5} \nonumber \\
&\stackrel{\text{\ref{L41A2}} }{=}&(1 \pm \log^{-2}{n})\epsilon d_{G_1,V'_i}(y).
\end{eqnarray}
We next embed $x_{\ell_{j+1}}$.
To ensure ($\Phi$3)$_{\ref{lem: random embedding}}^{\ell_{j+1}}$, 
we want to use a vertex $u \in N:=N_{G_1,\widehat{V}'_i}(y)$ as the image for $x_{\ell_{j+1}}$
which does not increase the `$E_i$-value' too much.
We now show that such a vertex exists.
Note that there exist $i\in [r]$ and $s\in [m_i-1]\cup \{0\}$ such that $x_{\ell_{j+1}} = x_{i,s+1}$.
We define $\psi:= \widehat{\phi}\big|_{X'_{i,s}}$
and for each $u \in N$,
let $\psi_u$ be a function extending $\psi$ by defining $\psi_u(x_{\ell_{j+1}}) := u$.
For each $B\in \cB_i$, we write $b_B := |B\cap \psi(X'_{i,s})|$.
Hence
\begin{align}\label{eq: Ei psi u estimate}
&\sum_{u\in N} E_i(\psi_u) 
= \sum_{u\in N} \sum_{B\in \cB} p^{s+1}_i\big(|B\cap \psi_u(X'_{i,s+1})|, \fsum{f}{\psi_u}{B}\big) \nonumber \\ 
&=\sum_{B\in \cB} \sum_{u\in N\cap B} p^{s+1}_i \big(b_B+1, \fsum{f}{\psi}{B}+ f(x_{\ell_{j+1}})\big) 
+ \sum_{B\in \cB} \sum_{u\in N\sm B} p^{s+1}_i\big(b_B, \fsum{f}{\psi}{B} \big) \nonumber \\
&= \sum_{B\in \cB} d_{G_1,\widehat{V}'_i\cap B}(y) p^{s+1}_i\big(b_B+1, \fsum{f}{\psi}{B}+ f(x_{\ell_{j+1}})\big)  + \sum_{B\in \cB} d_{G_1,\widehat{V}'_i\setminus B}(y) p^{s+1}_i\big(b_B, \fsum{f}{\psi}{B} \big).
\end{align}
Therefore,
\begin{eqnarray*}
&& \hspace{-2cm} d_{G_1,\widehat{V}'_i}(y)^{-1} \sum_{u\in N} E_i(\psi_u)   \\
&\stackrel{\eqref{eq: hat V B R1 good},\eqref{eq: Ei psi u estimate}}{\leq}& 
 \left(1+  2\log^{-2}{n}\right) \sum_{B\in \cB} \left( \epsilon p^{s+1}_i\big(b_B+1, \fsum{f}{\psi}{B}+ f(x'_{j+1})\big) + (1-\epsilon) p^{s+1}_i\big(b_B, \fsum{f}{\psi}{B} \big) \right)  \\
&\stackrel{(\ref{eq: Wi relation})}{\leq} &  (1+ 3 \log^{-2}{n}) E_i(\psi).
\end{eqnarray*}
This shows that there exists a choice $u\in N$ such that 
$$E_i(\psi_u) \leq (1+3\log^{-2}{n}) E_i(\psi) 
\stackrel{\text{($\Phi$3)$_{\ref{lem: random embedding}}^{\ell_j}$}}{\leq }(1+ 3\log^{-2}{n})^{s+1}\exp(-\epsilon^2 w/5) |\cB_i|.$$
We let $\widehat{\phi}_{j+1}$ be a function which arises from $\widehat{\phi}_{j}$ by defining $\widehat{\phi}_{j+1}(x_{\ell_{j+1}}) = u$. 
Observe that $\widehat{\phi}_{j+1}$ extends $\widehat{\phi}_j$ and ($\Phi$1)$_{\ref{lem: random embedding}}^{\ell_{j+1}}$--($\Phi$3)$_{\ref{lem: random embedding}}^{\ell_{j+1}}$ hold.
By repeating this, as $\ell_{d'}= h+d$, we obtain $\widehat{\phi}_{d'}$ satisfying
($\Phi$1)$_{\ref{lem: random embedding}}^{h+d}$--($\Phi$3)$_{\ref{lem: random embedding}}^{h+d}$.
Let $\phi_{h+d}:=\widehat{\phi}_{d'}$. 
Then ($\Phi$4)$_{\ref{lem: random embedding}}^{h+d}$ also holds by the choice of the ordering $(x_1,\dots, x_{|V(F)|})$.

Observe that the algorithm completes the embedding of $F$ whenever $\cE_0 \wedge \bigwedge_{ x_h \notin L(F) }\{\cE_h\} $  hold,
which happens with probability at least $1-n^{-2}$. %(Note that \ref{L41A1} and \ref{L41A1} implies $|V(F)|\leq n + r \log^2{n} \leq 2n$.)
In this case, we define $\phi:= \phi_{|V(F)|}$ and
 for each $i\in [r]$, let 
$\cB'_i:=\{B\in \cB_i : \fsum{f}{\phi}{B} \geq \epsilon^{1/2}\}.$
Hence \ref{L41B3} holds.
Note that \eqref{eq: Wi at the end} implies that 
\begin{eqnarray*}
|\cB'_i| = E_i(\phi) \stackrel{\text{($\Phi$3)$_{\ref{lem: random embedding}}^{|V(F)|}$}}{\leq} 
(1+ 3\log^{-2}{n})^{m_i}\exp(-\epsilon w/5) |\cB_i| 
\stackrel{\eqref{eq: sec 4 X'i sizes}}{=}100 \exp(-\epsilon w/5) |\cB_i|\leq 2^{- \epsilon^2w}|\cB_i|.
\end{eqnarray*}
Therefore, \ref{L41B2} holds and
($\Phi$1)$_{\ref{lem: random embedding}}^{|V(F)|}$ and ($\Phi$2)$_{\ref{lem: random embedding}}^{|V(F)|}$ imply that \ref{L41B1} hold.
\end{proof}
\begin{remark}\label{rmk: symmetry}
One can use Lemma~\ref{lem: random embedding} to verify that there exists an embedding $\phi$ of $(F,R)$ into a random graph satisfying \ref{L41B1}--\ref{L41B3}.
For any permutation $\sigma$ acting on $V(G)$ such that $\sigma(V_i) = V_i$ and 
$\left.\sigma\right|_{V'_i}$ is identity map for each $i\in [r]$, 
the injective map $\sigma\circ \phi$ also satisfies \ref{L41B1}--\ref{L41B3}.
For any permutation $\sigma$, $\sigma(G)$ is again a random graph with the same distribution as $G$. 
Let $\cE$ be the event that there exists an embedding $\phi$ satisfying \ref{L41B1}--\ref{L41B3}.
Thus, for any two permutations $\sigma$ and $\sigma'$ on $V_i$, 
conditional on $\cE$, if we choose an embedding $\phi$ satisfying \ref{L41B1}--\ref{L41B3} uniformly at random
then we have
$\mathbb{P}[\left.\phi\right|_{V_i} =\sigma \mid \cE ] =\mathbb{P}[ \left.\phi\right|_{V_i} = \sigma' \mid \cE]$.
In addition, once we assume $\cE$ holds and we have chosen $\phi$ uniformly among all functions satisfying \ref{L41B1}--\ref{L41B3}, for a function $g: V(F)\rightarrow \mathbb{N}$ and any set $U'\subseteq V_i$, the random variable $\sum_{u\in U} g(\phi^{-1}(u))$ is distributed as a random variable which is sampled without replacement from a multiset $\{ f(v) : v\in \phi^{-1}(i)\}$ exactly $|U'|$ times.
\end{remark}

Note that by essentially same proof as  Lemma~\ref{lem: random embedding}, we can prove the following lemma. We omit the proof here.
Note that the above remark also applies for Lemma~\ref{lem: random embedding simple}.

\begin{lemma}\label{lem: random embedding simple}
Suppose $0< 1/n \ll 1/r$.
Suppose that $(F,R)$ is a rooted forest and $(X_1,\dots, X_r)$ is a partition of $V(F)\setminus R$, and 
$\{U\}\cup \{V_i:i\in [r]\}$ is a collection of pairwise disjoint sets and $|\bigcup_{i\in [r]} V_i \cup U|=n$.
Suppose the graph $G=\bG(n,p)$ is a random graph on vertex set $\bigcup_{i\in [r]} V_i \cup U$.
Suppose the following hold:
\begin{enumerate}[label={\rm (A\arabic*)$_{\ref{lem: random embedding simple}}$}]
%\item[{\rm (A1)$_{\ref{lem: random embedding}}$}] $\Delta(F) \leq \Delta$,
\item%[{\rm (A2)$_{\ref{lem: random embedding}}$}] 
\label{A1,41}
$|V_i| \geq |X_i| + 12 p^{-1} \Delta(F) + 30 p^{-1} \log{n}$ for each $i\in [r]$, and
%\item[{\rm (A3)$_{\ref{lem: random embedding}}$}] $R$ is a set of roots of the components of $F$ (exactly one for each component), and
\item \label{A2,41} $\phi'$ is an injective map of $R$ into $U$.
\end{enumerate}
Then with probability at least $1- n^{-2}$, there exists an embedding $\phi$ of $F$ into $G$ which extends $\phi'$ such that $\phi(X_i) \subseteq V_i$ for all $i\in [r]$.
\end{lemma}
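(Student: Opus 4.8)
The plan is to run the greedy embedding scheme that forms the core of the proof of Lemma~\ref{lem: random embedding}, with all the extra bookkeeping stripped away: here there is no weight function $f$, no multisets $\cB_i$, and no conclusions of the type \ref{L41B2}--\ref{L41B3}, so the quantities $p^s_i(s',q)$, $E_i(\cdot)$ and the event $\cE_0$ disappear, and what remains is precisely the argument that builds an embedding one family of children at a time. Condition~\ref{A1,41} is exactly what keeps this greedy process alive, and \ref{A2,41} supplies the base case.

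First I would fix a vertex ordering $\pi=(x_1,\dots,x_{|V(F)|})$ of $F$ exactly as in the proof of Lemma~\ref{lem: random embedding}: list the roots $R$ first, and then for each component run a breadth-first search from its root, so that the ancestor of every non-root precedes it and the children of any vertex appear consecutively; this is property~\eqref{eq: BFS ordering}. Since $(X_1,\dots,X_r)$ is now a partition of all of $V(F)\setminus R$, every child of every vertex lies in exactly one part $X_i$, so the ``priority'' refinement needed in Lemma~\ref{lem: random embedding} is not required.

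Then I would build $\phi$ iteratively. Set $\phi_{|R|}:=\phi'$, which maps $R$ into $U$ by~\ref{A2,41}, and suppose $\phi_h$ has been defined for some $h\ge|R|$ so that, writing $X^h:=\{x_1,\dots,x_h\}$, we have: (i) $\phi_h$ embeds $F[X^h]$ into $G$; (ii) $\phi_h(X_i\cap X^h)\subseteq V_i$ for every $i\in[r]$; and (iii) for every $h'\le h$, either all or no children of $x_{h'}$ lie in $X^h$. Let $x$ be the ancestor of $x_{h+1}$, put $y:=\phi_h(x)$, and let $x_{h+1},\dots,x_{h+d}$ be the children of $x$, so $d\le\Delta(F)$. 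Set $\widehat V_i:=V_i\setminus\phi_h(X^h)$ for each $i$; by (ii) the only vertices of $\phi_h(X^h)$ lying in $V_i$ are the at most $|X_i|$ images of $X_i\cap X^h$, so \ref{A1,41} yields $|\widehat V_i|\ge|V_i|-|X_i|\ge 12p^{-1}\Delta(F)+30p^{-1}\log n$. Now expose the edges of $G$ from $y$ to $\bigcup_{i\in[r]}\widehat V_i$, and let $\cE_h$ be the event that $d_{G,\widehat V_i}(y)\ge\Delta(F)$ for every $i\in[r]$. On $\cE_h$, embed, for each $i$ separately, the children of $x$ that belong to $X_i$ onto distinct vertices of $N_G(y)\cap\widehat V_i$; this is possible since there are at most $d\le\Delta(F)$ such children and the sets $\widehat V_i$ are pairwise disjoint. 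The resulting $\phi_{h+d}$ again satisfies (i)--(iii), and iterating until $h=|V(F)|$ produces an injection $\phi$ extending $\phi'$ with $\phi(X_i)\subseteq V_i$ for all $i$ that embeds $F$ into $G$.

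It then remains to check that $\bigwedge_h\cE_h$ holds with probability at least $1-n^{-2}$, and here I would reuse the disjointness bookkeeping of Lemma~\ref{lem: random embedding}: the edges exposed at the step processing $x$ are all of the form $yz$ with $z$ currently unembedded, and once a vertex has been used as an image it is never again a candidate, while distinct processed vertices have distinct images; hence the edge sets exposed at different steps are pairwise disjoint. Consequently, conditioning on the history of the algorithm up to the step processing $x$ (which determines $y$ and each $\widehat V_i$), the edges from $y$ to $\widehat V_i$ are still unexposed, so $d_{G,\widehat V_i}(y)\sim\mathrm{Bin}(|\widehat V_i|,p)$; since $|\widehat V_i|p\ge 12\Delta(F)+30\log n$, Chernoff's inequality (Lemma~\ref{Chernoff Bounds}) gives, exactly as in the proof of Lemma~\ref{lem: random embedding}, that $\mathbb{P}[\,\overline{\cE_h}\mid\text{history}\,]\le n^{-3.2}$. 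A union bound over the at most $n$ steps yields $\mathbb{P}[\bigwedge_h\cE_h]\ge 1-n^{-2}$, which finishes the proof. There is no genuine obstacle beyond carrying out this disjointness-of-exposures check, which is why the authors state that the result follows ``by essentially the same proof''; the one point worth writing down explicitly is the inequality $|\widehat V_i|\ge|V_i|-|X_i|$ together with~\ref{A1,41}, since that is the sole place the hypothesis is used.
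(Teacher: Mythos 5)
Your proposal is correct and is essentially the proof the paper intends: the authors omit it, stating it follows ``by essentially the same proof'' as Lemma~\ref{lem: random embedding}, and your argument is exactly that proof with the weight function, the multisets $\cB_i$, and the associated $E_i$-bookkeeping stripped away, retaining the BFS ordering, the greedy child-by-child embedding, the event $\cE_h$ controlled via \ref{A1,41} and Chernoff, and the disjoint-exposure union bound.
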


Suppose $U,V$ are two disjoint sets with $|U|\leq |V|$ and $f$ is a weight function on $U$.
The next lemma shows that a random injective function $\sigma :U\to V$ behaves nicely with respect to $\fsum{f}{\sigma}{B}$
for some priorly specified sets $B\subseteq V$.

\begin{lemma}\label{lem: randomly distributing}
Suppose $0<1/n \ll  \epsilon \ll 1/s, 1/t, 1/k$.
Suppose that $U,V$ are disjoint sets such that $|U|\leq |V|=n$.
Suppose that $f_1,\ldots,f_s:U\to \mathbb{N}_0$ and $w_1,\ldots, w_s \in \mathbb{N}$ such that $\norm{f_i}_1 \leq tn$ and 
$\norm{f_i}_\infty \leq {n}/{w_i}$ as well as $w_i \geq \epsilon^{-3} \log{n}$ for each $i\in [s]$.
Suppose $\cB_1,\ldots,\cB_s$ are multisets of subsets of $V$ and $|\cB_i|\leq n^k$ for each $i\in [s]$.
Let $\sigma: U\to V$ be an injective function chosen uniformly at random among all possible injective functions from $U$ into $V$.
Then with probability at least $1- n^{-3}$, for each $i\in [s]$ and $B\in \cB$, we have
$$\fsum{f_i}{\sigma}{B} = \frac{|B|\cdot \norm{f_i}_1 }{|V|} \pm  \epsilon \sqrt{w_i n \norm{f_i}_\infty} .$$
\end{lemma}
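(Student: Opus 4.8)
The plan is to fix $i\in[s]$ and $B\in\cB_i$, prove the concentration bound for this single pair with failure probability at most $n^{-k-4}$, and then take a union bound over the at most $s\cdot n^k$ pairs (note $s=O(1)$), which gives the claimed $1-n^{-3}$. So the whole argument reduces to analysing the scalar random variable $\fsum{f_i}{\sigma}{B}=\sum_{a\in U\colon \sigma(a)\in B} f_i(a)$ for a single weight function $f:=f_i$, a single set $B\subseteq V$, and a single parameter $w:=w_i$.

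The key observation is that the distribution of $\fsum{f}{\sigma}{B}$ depends on $\sigma$ only through the set $\sigma^{-1}(B)\subseteq U$, which is a uniformly random subset of $U$ of size $\min\{|U|,|B|\}$ — wait, more carefully: $\sigma^{-1}(B)$ has size $|U\cap\sigma^{-1}(B)|$, and since $\sigma$ is a uniform injection $U\to V$, the set $\sigma^{-1}(B)$ is a uniformly random subset of $U$ of size exactly $\min\{|B|,|U|\}$ if $|B|\le|U|$... in general it is the set of $a\in U$ landing in $B$, and its distribution is: pick a uniform injection, so $|\sigma^{-1}(B)|$ is hypergeometric and conditionally uniform. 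Cleaner: list $U=\{a_1,\dots,a_{|U|}\}$ and let $(X_1,\dots,X_{|U|})$ be the sample without replacement from the multiset $\cX$ consisting of $|B|$ copies of $1$ and $|V|-|B|$ copies of $0$; then $\fsum{f}{\sigma}{B}$ has the same distribution as $\sum_{j=1}^{|U|} f(a_j)X_j$. Equivalently, and this is the form I will actually use, let $\cX'$ be the multiset of $N:=|V|$ reals consisting of the $|U|$ values $\{f(a):a\in U\}$ together with $N-|U|$ zeros; draw a sample $(Y_1,\dots,Y_{|B|})$ without replacement of size $|B|$ from $\cX'$; then $\fsum{f}{\sigma}{B}\stackrel{d}{=}\sum_{j=1}^{|B|}Y_j$. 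Now apply Bernstein's inequality (Theorem~\ref{Bernstein}) to this sample: here $a=0$, $b=\norm{f}_\infty\le n/w$, the mean is $\mu=\frac{|B|}{N}\norm{f}_1=\frac{|B|\norm{f}_1}{|V|}$, and $\sigma^2=\frac1N\sum(x_\ell-\mu/N)^2\le\frac1N\sum x_\ell^2\le\frac{\norm{f}_\infty\norm{f}_1}{N}\le\frac{t\,\norm{f}_\infty\, n}{N}=t\norm{f}_\infty$ (using $\norm{f}_1\le tn$ and $N=n$). Bernstein then bounds
\[
\bPr\!\left[\Big|\fsum{f}{\sigma}{B}-\tfrac{|B|\norm{f}_1}{|V|}\Big|\ge\lambda\right]\le\exp\!\left(-\frac{\lambda^2}{2\sigma^2|B|+(2/3)\norm{f}_\infty\lambda}\right).
\]

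Finally, plug in $\lambda:=\epsilon\sqrt{w\,n\,\norm{f}_\infty}$. For the first term in the denominator, $\sigma^2|B|\le t\norm{f}_\infty\cdot n$, so $\lambda^2/(2\sigma^2|B|)\ge \epsilon^2 w n\norm{f}_\infty/(2tn\norm{f}_\infty)=\epsilon^2 w/(2t)$, and since $w\ge\epsilon^{-3}\log n$ this is at least $\tfrac{1}{2t}\epsilon^{-1}\log n=\omega(\log n)$. For the second term, $\norm{f}_\infty\lambda=\epsilon\norm{f}_\infty^{3/2}\sqrt{wn}\le\epsilon(n/w)^{1/2}\norm{f}_\infty\sqrt{wn}=\epsilon n\norm{f}_\infty$, so $\lambda^2/((2/3)\norm{f}_\infty\lambda)=\tfrac32\lambda/\norm{f}_\infty=\tfrac32\epsilon\sqrt{wn/\norm{f}_\infty}\ge\tfrac32\epsilon\sqrt{w}$ (as $\norm{f}_\infty\le n$), which again is $\ge\tfrac32\epsilon^{-1/2}\sqrt{\log n}=\omega(\log n)$... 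I should be a little careful here since $\norm f_\infty$ could be as large as $n/w$, giving $\sqrt{wn/\norm f_\infty}\ge w$, even better; and it could be small, but $\norm f_\infty\le n$ always, giving the stated bound. Either way the exponent is $\Omega(\min\{\epsilon^2 w, \epsilon\sqrt{w}\})=\omega(\log n)$, so the single-pair failure probability is $\le n^{-k-4}$ for $n$ large, and the union bound over $\le s n^k$ pairs finishes the proof, using $s=O(1)$ so that $sn^k\cdot n^{-k-4}\le n^{-3}$.

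The main obstacle, such as it is, is purely bookkeeping: making the reduction to "sampling without replacement" precise (handling the degenerate cases $|B|=0$, $|B|\ge|U|$, and $B\not\subseteq V$, the last of which is trivial since then the sum is empty and $|B\cap V|=0$ makes the RHS also control it), and checking that the chosen $\lambda$ makes \emph{both} terms of the Bernstein denominator small relative to $\lambda^2$ simultaneously — the variance term forces $\lambda\gtrsim\sqrt{w n\norm f_\infty}$ up to the $\epsilon$, and the range term is then automatically dominated because $\norm f_\infty\le n/w$ keeps $b-a$ from being too large. No genuinely hard idea is needed; the content is entirely in setting up the hypergeometric-type large deviation correctly and verifying the hierarchy $\epsilon\ll 1/s,1/t,1/k$ together with $w\ge\epsilon^{-3}\log n$ gives enough room.
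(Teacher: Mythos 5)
Your proposal is correct and follows essentially the same route as the paper: both realise $\fsum{f_i}{\sigma}{B}$ as a sample drawn without replacement from the multiset of $|U|$ values of $f_i$ padded with $|V|-|U|$ zeros, apply Bernstein's inequality with the same $\lambda=\epsilon\sqrt{w_i n\|f_i\|_\infty}$, and finish with a union bound over the at most $s n^k$ pairs. The only wobble is your summary exponent $\min\{\epsilon^2 w,\epsilon\sqrt{w}\}$, whose second term would \emph{not} be $\omega(\log n)$ for constant $\epsilon$; but as you yourself observe, $\|f\|_\infty\le n/w$ upgrades the range term to $\tfrac32\epsilon w\ge\tfrac32\epsilon^{-2}\log n$, which is exactly the bound the paper uses, so the argument stands.
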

\begin{proof}
We assume for now that $i\in [s]$ is fixed.
For each $v\in V$,
let $X_v:= \fsum{f_i}{\sigma}{v}$ be the random variable which equals $f_i(u)$ if $\sigma(u)=v$ for some $u\in U$ and $0$ otherwise.
For any $B\sub V$, we define
\begin{align*}%\label{eq: mu for Bernstein}
\mu_{B}:=\sum_{b \in B}\Exp[X_b]=  \frac{|B|\cdot\norm{f_i}_1}{|V|} ,
\end{align*}
and observe that 
\begin{align*}
 \sum_{u\in U }\Big( f_i(u) - \frac{\mu_{B}}{|V|}\Big)^2 
+ (|V|-|U|)\Big(0 - \frac{\mu_{B}}{|V|}\Big)^2
\leq \norm{f_i}_2^2+ \frac{\mu_{B}^2}{|V|}
%\stackrel{\eqref{eq: mu for Bernstein}}{\leq} 
\leq \norm{f_i}_2^2+ \frac{\norm{f_i}_1^2}{|V|}
\leq 2\norm{f_i}_1 \norm{f_i}_\infty.
%\label{eq: sigma2 for Bernstein}
\end{align*}
Bernstein's inequality (Lemma~\ref{Bernstein}) implies that
\begin{align*}\notag
\bPr\bigg[ \Big|\sum_{b\in B } X_b- \mu_{B}\Big| >    \epsilon\sqrt{w_i n \norm{f_i}_\infty } \bigg]  
&\leq \exp\Big( -\frac{ \epsilon^2 w_i n \norm{f_i}_\infty }{ 4 \norm{f_i}_1\norm{f_i}_\infty  + (2 /3)\epsilon w_i^{1/2} n^{1/2} \norm{f_i}_\infty^{3/2}  } \Big)  \\
&\leq \exp\bigg( - \min\Big\{ \frac{\epsilon^2 w_i}{8t}, \frac{3\epsilon w_i^{1/2} n^{1/2}  }{4\norm{f_i}_\infty^{1/2}}  \Big\} \bigg) \leq n^{-10k}.
%\label{eq: Y probability}
\end{align*}
Here, we obtain the second inequality since $\norm{f_i}_1 \leq t n$ and because
${x}/({y+z})\geq \min\{x/(2y),x/(2z)\}$ for all $x,y,z\in \R^+$.
We obtain the final inequality since $\norm{f_i}_\infty  \leq {n}/w_i$ and $w_i \geq \epsilon^{-3} \log{n}$.
A union bound over all $i\in [s]$ and $B\in \cB$
implies that with probability at least $1- n^{-10k} \sum_{i\in [s]}|\cB_i| \geq 1- n^{-3}$, 
for each $B\in \cB_i$, we have 
$$\fsum{f}{\sigma}{B} = \sum_{b\in B} X_b 
=  \frac{|B| \cdot\norm{f_i}_1 }{|V|} \pm  \epsilon \sqrt{w_i n \norm{f_i}_\infty }.$$
\end{proof}
Note that in the above lemma, we may take $w_i$ smaller than $n/\norm{f_i}_\infty$ to obtain a stronger concentration bound.

The next lemma shows that we can embed a star-forest with weights on its leaves into a `quasi-random' bipartite graph
such that the weights are distributed nicely.

\begin{lemma}\label{lem: random matching behaves random}
Suppose $0< 1/n \ll \epsilon \ll d, 1/t, 1/k <1$, and $s\leq n$.
Suppose that $G$ is a bipartite graph with vertex partition $(U,V)$ and $|V|=n$.
Suppose $\cB_1,\ldots,\cB_s$ are multisets of subsets of $V$ and $|\cB_i| \leq n^k$. 
Suppose $F$ is a star-forest with at most $n$ leaves and $\psi:\cen(F)\rightarrow U$ is an injective function.
Suppose $f_1,\ldots,f_s: L(F) \to \mathbb{N}_0$ are functions such that the following hold:
\begin{itemize}
\item[{\rm (A1)$_{\ref{lem: random matching behaves random}}$}] $\norm{f_i}_\infty \leq { \epsilon^4 n}/{ \log^2{n}}$ and $\norm{ f_i}_1 \leq t n$ for all $i\in [s]$.
\item[{\rm (A2)$_{\ref{lem: random matching behaves random}}$}] For each $u\in U$,  
we have $d_{G}(u) = (d \pm \epsilon)n$ and 
for each $v\in V$, we have
$\fsum{d_F}{\psi}{N_G(v)}=\sum_{x\in V(F)\colon \psi(x)\in N_G(v)} d_F(x)=d|L(F)| \pm \epsilon n.$
%$$\sum_{x\in\cen(F): \psi(x)\in N_G(v)}{d_{F}(x)} = d|L(F)| \pm \epsilon n.$$
\item[{\rm (A3)$_{\ref{lem: random matching behaves random}}$}] We have $\displaystyle \sum_{x,x'\colon\psi(x)\psi(x') \in E(J_G(U,d,\epsilon))}  d_{F}(x) d_{F}(x')  \leq \epsilon n^2.$
\end{itemize}
Then there exists an embedding $\phi$ of $F$ into $G$ which extends $\psi$ and satisfies the following:
\begin{itemize}
%\item[{\rm (B1)}$_{\ref{lem: random matching behaves random}}$] $\phi(L(F)) \subseteq B$.
\item[{\rm (B1)}$_{\ref{lem: random matching behaves random}}$] For all $i\in [s]$ and $B \in \cB_i$, 
we have $$\fsum{f_i}{\phi}{B} = \sum_{x\in \cen(F) }\sum_{y\in N_{F}(x)}  \frac{f_i(y) | N_{G}( \phi(x) )\cap B| }{ dn } \pm  \epsilon^{1/200} n.$$
\end{itemize}
\end{lemma}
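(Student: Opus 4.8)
The plan is to replace $G$ by a balanced super-regular bipartite graph that encodes the star-forest, pick a uniformly random perfect matching of it, read off an embedding $\phi$ of $F$ extending $\psi$, and then establish {\rm (B1)}$_{\ref{lem: random matching behaves random}}$ by a first-moment computation followed by a concentration estimate and a union bound over the at most $\sum_{i}|\cB_i|\le sn^k$ pairs $(i,B)$. For the auxiliary graph: for each centre $x\in\cen(F)$ create $d_F(x)$ clones of $\psi(x)$, and add $n-|L(F)|\ge 0$ further ``dummy'' vertices, so that the resulting set $W$ has $|W|=n=|V|$; let $H$ be the bipartite graph on $(W,V)$ in which each clone of $\psi(x)$ is joined to $N_G(\psi(x))$ and each dummy is joined to an independent uniformly random $\lceil dn\rceil$-subset of $V$. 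Using {\rm (A2)}$_{\ref{lem: random matching behaves random}}$, every vertex of $W$ has degree $(d\pm\epsilon)n$ in $H$, and by {\rm (A2)}$_{\ref{lem: random matching behaves random}}$ and Lemma~\ref{Chernoff Bounds} every $v\in V$ has $H$-degree $\fsum{d_F}{\psi}{N_G(v)}+(\text{dummy part})=(d\pm2\epsilon)n$ with high probability. Hypothesis {\rm (A3)}$_{\ref{lem: random matching behaves random}}$ says precisely that the number of ordered pairs of clones whose codegree in $H$ lies outside $(d^2\pm3\epsilon)n$ is at most $\epsilon n^2$, and with high probability no pair involving a dummy is ``bad'', so $|E(J_H(W,d,\epsilon))|\le\epsilon n^2$; Theorem~\ref{thm: almost quasirandom} then gives that $H$ is $(\epsilon^{1/6},d)$-regular, and with the degree bounds that $H$ is $(\epsilon^{1/6},d)$-super-regular.

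Next, let $\sigma:W\to V$ be a uniformly random perfect matching of $H$, and let $\phi$ map $x$ to $\psi(x)$ and the $j$-th leaf of the star at $x$ to the $\sigma$-image of the $j$-th clone of $\psi(x)$; since clones of $\psi(x)$ are matched to distinct vertices of $N_G(\psi(x))$, this is an embedding of $F$ into $G$ extending $\psi$. By Theorem~\ref{MM}, $\Pro[\sigma(w)=v]=(1\pm\epsilon^{1/120})/(dn)$ for every edge $wv$ of $H$, so writing $\fsum{f_i}{\phi}{B}=\sum_{w\in W}g_i(w)\cdot\1[\sigma(w)\in B]$, where $g_i$ is supported on the clones and assigns to the $j$-th clone of $\psi(x)$ the $f_i$-value of the $j$-th leaf at $x$ (hence $\norm{g_i}_\infty=\norm{f_i}_\infty$ and $\norm{g_i}_1=\norm{f_i}_1$), one gets
\begin{align*}
\Exp\big[\fsum{f_i}{\phi}{B}\big]=\sum_{x\in\cen(F)}\sum_{y\in N_F(x)}f_i(y)\cdot\Pro[\phi(y)\in B]=\sum_{x\in\cen(F)}\sum_{y\in N_F(x)}\frac{f_i(y)\,|N_G(\phi(x))\cap B|}{dn}\pm\tfrac12\epsilon^{1/200}n,
\end{align*}
the error term using $\sum_{x}\sum_{y\in N_F(x)}f_i(y)|N_G(\psi(x))\cap B|/(dn)\le 2\norm{f_i}_1\le 2tn$ together with $\epsilon\ll 1/t$.

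The hard part is the concentration: I must show that $\fsum{f_i}{\phi}{B}$ lies within $\tfrac12\epsilon^{1/200}n$ of its mean with probability $1-\exp(-\Omega(\log^2 n))$ for a fixed pair $(i,B)$, after which a union bound over the at most $sn^k$ pairs yields the required $\phi$. A plain application of Azuma's inequality (Theorem~\ref{Azuma}) to the vertex-exposure martingale of $\sigma$ is too lossy, since its increments are of order $\norm{f_i}_\infty$ and there are up to $|L(F)|\le n$ of them. Instead I would exploit both hypotheses on $f_i$: the bound $\norm{f_i}_\infty\le\epsilon^4 n/\log^2 n$ keeps the increments small while $\norm{f_i}_1\le tn$ forces the sum of conditional variances down to $O(\norm{f_i}_\infty\norm{f_i}_1)=O(t\epsilon^4 n^2/\log^2 n)$, so a Bernstein/Freedman-type martingale bound gives the desired failure probability. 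Equivalently, and perhaps more cleanly, I would condition on the pairwise-disjoint sets $T_x:=\phi(N_F(x))$ occupied by the individual stars: conditionally, $\phi$ restricts to independent uniformly random bijections $N_F(x)\to T_x$, so $\fsum{f_i}{\phi}{B}=\sum_{x\in\cen(F)}\sum_{y\in N_F(x),\,\phi(y)\in B}f_i(y)$ becomes a sum of independent sampling-without-replacement quantities, to each of which Bernstein's inequality for sampling without replacement (Theorem~\ref{Bernstein}) applies with range parameter $\norm{f_i}_\infty$ — crucially not a quantity growing with $\Delta(F)$ — while the sizes $|B\cap T_x|$ are pinned down to $d_F(x)|N_G(\psi(x))\cap B|/(dn)$ up to lower-order error through the first-moment behaviour of $\sigma$ from Theorem~\ref{MM}. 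The two places where real care is needed are thus the blow-up step, namely verifying that the $d_F$-weighted quasi-randomness {\rm (A3)}$_{\ref{lem: random matching behaves random}}$ transfers to $H$ so that Theorems~\ref{thm: almost quasirandom} and \ref{MM} are applicable, and the concentration estimate just described.
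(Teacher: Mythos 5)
Your blow-up construction and first-moment computation match the paper's: the paper also clones each $\psi(x)$ into $d_F(x)$ copies, pads with $n-|L(F)|$ dummies carrying random $dn$-neighbourhoods, transfers (A3)$_{\ref{lem: random matching behaves random}}$ to the irregularity graph of the blown-up graph, and invokes Theorem~\ref{thm: almost quasirandom} and Theorem~\ref{MM}. The gap is in the concentration step, which you correctly flag as the crux but do not close. Your route (a) needs increment and conditional-variance bounds for the vertex-exposure martingale of a uniformly random perfect matching of a \emph{super-regular} (not complete) bipartite graph; changing one matched edge can in principle perturb the conditional law of the entire remaining matching, and neither the claimed $O(\norm{f_i}_\infty)$ increments nor the $O(\norm{f_i}_\infty\norm{f_i}_1)$ variance sum is established, nor is a Freedman-type inequality available among the stated tools. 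Your route (b) is circular: conditioning on the sets $T_x=\phi(N_F(x))$ does give independent uniform bijections within stars (since all clones of $\psi(x)$ share a neighbourhood), but the conditional mean is $\sum_x \bar f_x\,|B\cap T_x|$ with $\bar f_x$ the average $f_i$-value on the star at $x$, and this is again a linear functional of the random matching of exactly the original type; Theorem~\ref{MM} pins down only $\mathbb{E}[|B\cap T_x|]$, not the fluctuations of this weighted sum, so you have reduced the problem to itself.

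What the paper does instead is to \emph{not} take a single uniform perfect matching of the whole blown-up graph: it randomly partitions $U^*$ and $V$ into $T=\epsilon^{-1}\log n$ equal blocks so that each block pair is still super-regular, each block receives $g_i$-mass at most $m_i/T+\epsilon n/T\le 2tn/T$ (this is where $\norm{f_i}_\infty\le \epsilon^4 n/\log^2 n$ is actually used, via Bernstein for the random partition), and each block meets every $B\in\cB_i$ proportionally. It then takes \emph{independent} uniform perfect matchings of the $T$ block pairs and runs Azuma on the exposure martingale over these $T$ blocks only: the increments are bounded by the per-block mass $2tn/T$, giving failure probability $\exp(-\epsilon^{-1/2}\log n)$, which survives the union bound over $sn^k$ sets. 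If you want to salvage your write-up, this block-decomposition is the missing idea; without it (or a genuinely proved martingale/Freedman bound for random perfect matchings) the proof is incomplete.
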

\begin{proof}
Observe that we may assume that $\psi$ is a bijection by ignoring the vertices in $U$ outside the image of $\psi$.
Our strategy for the proof is as follows.
We replace every vertex $u\in U$ by $d_{F}(\psi^{-1}(u))$ distinct copies of $u$ and obtain a new bipartite graph $G'$. 
Clearly, there is a bijection between the matchings in $G'$ covering all the copies of vertices in $U$ and the embeddings of $F$ into $G$. 

We write $\ell:= |L(F)|$ and $\{u_1,\dots, u_{m}\}:= U$.
Let $U^*:=\{  u_{i,j}\colon i\in [m], j\in [d_F(\psi^{-1}(u_i))]\}\cup \{u_{0,1},\dots, u_{0,n-\ell} \} $. 
We claim that there exists a bipartite graph $G'$ with vertex partition $(U^*,V)$ such that the following hold:
\begin{itemize}
\item[(a1)] $|E(J_{G'}(U^*,d,2\epsilon))| \leq 2\epsilon n^2$,
\item[(a2)] for each $w\in U^*\cup V$, we have $d_{G'}(w)= (d\pm 2\epsilon)n$, and
\item[(a3)] for each $u_{i,j}\in U^*$ with $i>0$, we have $N_{G'}(u_{i,j})= N_{G}(u_i)$.
\end{itemize} 
To see that such a graph $G'$ exists, we let $N_{G'}(u_{i,j}):= N_{G}(u_i)$ for each $u_{i,j}\in U^*$ with $i>0$, and for each $j\in [n-\ell]$, 
let $N_{G'}(u_{0,j})$ be a subset of $V$ of size $dn$ chosen independently and uniformly at random.
Chernoff's inequality (Lemma~\ref{Chernoff Bounds}) implies that with probability at least $1- n^{-1}$, 
we have the following for all $j\in [n-\ell], v\in V$ and $u_{i,j'}\in U^*\sm\{u_{0,j}\}$:
\begin{align*}
d_{G'}(v) 
&=  \fsum{d_F}{\psi}{N_{G}(v)}  +d(n-\ell)\pm \epsilon n 
\stackrel{\rm (A2)_{\ref{lem: random matching behaves random}}}{=} 
  (d\pm 2\epsilon) n,\\
d_{G'}(u_{0,j}, u_{i,j'}) &=  (d^2 \pm 2\epsilon )n.
\end{align*}
These bounds on the (co)degrees imply that
\begin{align*}
|E(J_{G'}(U^*,d,2\epsilon))| 
=\big|\{ u_{i,j}u_{i',j'} : u_i u_{i'} \in E(J_{G}(U,d,2\epsilon))\big| \leq \sum  d_{F}(x) d_{F}(x') 
\stackrel{\rm (A3)_{\ref{lem: random matching behaves random}}}{\leq} \epsilon n^2.
\end{align*}
Here, the summation in the third term is over all $(x,x')$ with $\psi(x)\psi(x') \in E(J_G(U,d,\epsilon))$. 
Therefore, (a1)--(a3) hold and in particular such a $G'$ exists.

We fix a bijection $\tau\colon \{  u_{i,j} : i\in [m], j\in [d_F(\psi^{-1}(u_i))] \}\to L(F)$ such that 
%whenever $\psi(x)=u_i$ for some $x\in \cen(F)$, 
%we have 
$\tau(u_{i,j}) \in N_{F}(\psi^{-1}(u_i))$ for all $i\in[m]$ and $j\in [d_F(\psi^{-1}(u_i))]$.
Observe that any matching $\sigma:U^*\to V$ in $G'$ covering $\{  u_{i,j}\colon i\in [m], j\in [d_F(\psi^{-1}(u_i))] \}$ yields an embedding of $F$ into $G$
by mapping $x\in L(F)$ onto $\sigma(\tau^{-1}(x))$.
We will show that a perfect matching of $G'$ chosen uniformly at random among a large set perfect matching leads to an embedding with the desired properties with probability a least $1/2$.

For each $u \in U^*$, let $g_i(u)$ be the `$f_i$-value' of the corresponding leaf given by $\tau$; that is,
$$g_i(u):= \left\{\begin{array}{ll}
f_i( \tau(u)) & \text{ if } u=u_{i',j'} \text{ for some } i'>0, \\
0 & \text{ if } u= u_{0,j'} \text{ for some } j'\in [n-\ell].
\end{array}\right.$$
Let $T:=\epsilon^{-1}\log n$.
Next we partition $U^*$ into sets $U^1,\dots, U^{T}$ and $V$ into sets $V^1,\dots, V^{T}$ 
such that the following hold for all $u,u'\in U^*, v\in V, i\in [s], \ell' \in [T]$ and $B\in \cB_i$:
\begin{itemize}
\item[(a4)] $|U^{\ell'}|= |V^{\ell'}|  = \lfloor {(n+ \ell' -1)}/{T} \rfloor$,
\item[(a5)] $d_{G',V^{\ell'}}(u), d_{G',U^{\ell'}}(v) = (d\pm 3\epsilon)	|U^{\ell'}|$ and  
$d_{G',V^{\ell'}}(u,u') =  d_{G',V}(u,u')/T \pm  n^{2/3},$
\item[(a6)] $\sum_{u'' \in U^{\ell'}} g_i(u'' ) 
=  m_i/T \pm {\epsilon n}/{T}$, where $m_i:=\sum_{x\in L(F)} f_i(x)$,
\item[(a7)]  $|V^{\ell'} \cap B| 
= |B|/T \pm n^{2/3}$
and $|N_{G}(u)\cap B\cap V^{\ell'} | = |N_{G}(u)\cap B|/T \pm n^{2/3}$, and
\item[(a8)]  $|E(J_{G'[U^{\ell'}]}(U^{\ell'},d,2\epsilon))| \leq 3\epsilon |U^{\ell'}|^2$.
\end{itemize}
Indeed, such a partition exists, 
because a random partition $U_1,\dots, U_{T}$ of $U^*$ and $V_1,\dots, V_{T}$ of $V$ chosen uniformly at random such that 
$|U_i|= |V_i|  = \lfloor {( n + i -1)}/T \rfloor$ satisfies (a4)--(a8) with probability at least $1/2$. 
Indeed, that property (a4) holds by construction,  
(a5) and (a7) holds with probability at least $1-n^{-1}$ by Lemma~\ref{Chernoff Bounds}, 
(a6) holds with probability at least $1-n^{-1}$ by Bernstein's inequality (Lemma~\ref{Bernstein}) and 
(a8) holds with probability at least $1-n^{-1}$ by Lemma~\ref{Chernoff Bounds} (for example by showing that every vertex $u\in U^ *$ satisfies 
$d_{J,U^{\ell'}}(u) \leq  d_{J}(u)/T +n^{2/3} $ for each $\ell'\in [T]$ where $J= J_G(U^*,d,2\epsilon)$ with probability at least $1-n^{-2}$.)%
\COMMENT{
To show (a6), we can use Bernstein's inequality.
Note that $\sum_{u\in U_j} g_i(u)$ has a distribution of random sample drawn without replacement from $X$. Moreover, we have $\mu = \frac{\epsilon}{\log{n}} m_i $ and by the convexity of square-sum, we have $\sigma^2 \leq \frac{1}{n} \sum_{u\in U} g_i(u)^2 \leq \frac{\epsilon^4 nm_i} {\log^{2} n}$.
By using these with some calculations, we can see that (a6) holds with probability at least $1-n^{-5}$. By taking union bound, all of the above hold with probability at least $1/2$.
}

By Theorem~\ref{thm: almost quasirandom},  (a5) and (a8) imply that for each $\ell'\in [T]$, the bipartite graph $G[U^{\ell'},U^{\ell'}]$ is $(\epsilon^{1/7},d)$-super-regular.
For each $\ell' \in [T]$, 
we select a perfect matching $\sigma_{\ell'} : U^{\ell'} \rightarrow V^{\ell'}$ of $G[U^{\ell'},V^{\ell'}]$ uniformly at random and 
let $\sigma := \bigcup_{\ell' \in [T]} \sigma_{\ell'}$. 
(We aim to define later $\phi=\sigma\circ \tau^{-1}$.)
Hence, for all $B \in \cB_i$ and $i\in [s]$, we conclude
\begin{eqnarray}\label{eq: Expect}
\mu_{i,B}\hspace{-0.3cm}
&:=& \hspace{-0.3cm} \mathbb{E} \Big[  \sum_{b \in B }\ g_i(\sigma^{-1}(b))  \Big] 
=\sum_{\ell'=1}^{T} \mathbb{E} \Big[  \sum_{b \in B\cap V^{\ell'}   }  g_i(\sigma^{-1}(b)) \Big]
 =\sum_{\ell'=1}^{T} \sum_{u\in U^{\ell'}} g_i(u)\Pro\big[\sigma_{\ell'}(u)\in B\cap V^{\ell'}\big] \nonumber \\
&\stackrel{\rm Thm~\ref{MM}}{=}&\hspace{-0.3cm}
\sum_{\ell' =1}^{T} \sum_{u\in U^{\ell'}} g_i(u) \frac{1\pm 2\epsilon^{1/140}}{ dn/T} |N_{G'}(u)  \cap B\cap V^{\ell'} | \nonumber  \\
& \stackrel{{\rm (a7)}}{=} & \hspace{-0.3cm}
\sum_{\ell'=1}^{T} \sum_{u\in U^{\ell'}}g_i(u)\Big( \frac{|N_{G'}(u)  \cap B| /T \pm n^{2/3} }{ dn/T } \pm  3\epsilon^{1/140} \Big) \nonumber \\
& =& \hspace{-0.3cm} \sum_{u\in U^*}\frac{g_i(u) |N_{G'}(u)  \cap B| }{ dn }   \pm  \epsilon^{1/150} n =  \hspace{-0.2cm} \sum_{x\in \cen(F) }\sum_{y\in N_{F}(x)}  \hspace{-0.3cm} \frac{f_i(y) | N_{G}( \phi(x) )\cap B| }{ dn } \pm  \epsilon^{1/150} n.
\end{eqnarray}
For each $\ell'\in [T]$, let $X^i_{j}(\ell'):= \mathbb{E}[\sum_{b \in B  }g_i(\sigma^{-1}(b)) \mid \sigma_1,\dots, \sigma_{\ell'}].$ 
Then $\{X^i_j(\ell')\}_{\ell'\in \{0,\ldots,T\}}$ is an exposure martingale.
Also it is easy to see that $|X^i_j(\ell')-X^i_j(\ell'-1)|\leq {2t n}/T$
as changing $\sigma_{\ell'}$ can change the value of $X^i_{j}(\ell')$ 
at most $\sum_{u\in U^{\ell'}} g_i(u)\leq  m_i/T + \epsilon n/T \leq {2t n}/T$ (by (a6)).
Thus Azuma's inequality shows that
$$\mathbb{P}\Big[ \sum_{b \in B  } g_i(\sigma^{-1}(b))  = \mu_{i,B} \pm \epsilon^{1/180} n  \Big]
\geq 1- 2 \exp\Big( -\frac{ \epsilon^{1/90}n^2 }{  2T ({2t  n}/T)^2 } \Big) 
\geq 1 - \exp(- \epsilon^{-1/2} \log{n} ).$$
Thus a union bound over all $i\in [s]$, $B \in \cB_i$ together with \eqref{eq: Expect} shows 
that there exists a perfect matching $\sigma $ of $G'[U^*,V]$ such that for any $B\in \cB_i$ and $i\in [s]$ the following holds:
\begin{align}\label{eq: eqeq}
\fsum{f_i}{\sigma \circ \tau^{-1}}{B}=\sum_{b \in B } g_i(\sigma^{-1}(b)) = \sum_{x\in \cen(F) }\sum_{y\in N_{F}(x)}  \frac{f_i(y) | N_{G}( \phi(x) )\cap B'| }{ dn } \pm \epsilon^{1/180} n.
\end{align}
This also yields an embedding $\phi=\sigma \circ \tau^{-1}$ of $F$ into $G$ as desired.
%For each $x\in V(F)$, let 
%$$\phi(x):= \left\{ \begin{array}{ll}
%\psi(x) & \text{ if } x\in \cen(F), \\
%\sigma(\tau^{-1}(x)) & \text{ if } x\in L(F).
%\end{array}\right.$$
%Since $\phi= \sigma\circ \tau^{-1}$ on $L(F)$ and $g_i= f_i\circ \tau$ on $A$, we know that
%	\begin{eqnarray*}
%\sum_{x\colon\phi(x)\in B' } f_i(x) 
%&=&  \ \sum_{a\colon\sigma(a) \in B' } f_i(\tau(a))  
%= \sum_{b \in B' } g_i(a)\\
% &\stackrel{\eqref{eq: eqeq}}{=} & \sum_{x\in \cen(F) }\sum_{y\in N_{F}(x)}  \frac{f_i(y) | N_{G}( \phi(x) )\cap B'| }{ dn } \pm 2 \epsilon^{1/180} n,
%\end{eqnarray*}
%which implies that (B2)$_{\ref{lem: random matching behaves random}}$ holds and this completes the proof.
\end{proof}

The following is an easy lemma showing the existence of an embedding of a star-forest
into a bipartite graph that satisfies mild quasi-random properties.
\begin{lemma}\label{lem: p matching}
Suppose $0< 1/n \ll \epsilon \ll d <1$ and $0\leq \nu<\epsilon$.\COMMENT{Here, $\nu$ could be 0 or $1/n$.}
Suppose that $G$ is a bipartite graph with vertex partition $(U,V)$ and $ |V|=n$. 
Let $F$ be a star-forest with at most $n$ leaves and $\psi:\cen(F)\rightarrow U$ is an injective map.
Suppose the following hold:
\begin{enumerate}[label={\rm (A\arabic*)$_{\ref{lem: p matching}}$}]
\item\label{L45A1}%[{\rm (C1)$_{\ref{lem: p matching}}$}] 
For each $u\in U$, 
we have $d_{G}(u) = (d\pm \epsilon)n$.
\item \label{L45A2}
%[{\rm (C2)$_{\ref{lem: p matching}}$}] 
For all $v\in V$, 
we have $\fsum{d_{F}}{\psi}{N_{G}(v)} \geq \nu n$,
and for all $v\in V$  except at most $\nu n$ vertices, 
we have $\fsum{d_{F}}{\psi}{N_{G}(v)}  = (d \pm \epsilon) n$.
\item\label{L45A3}
%[{\rm (C3)$_{\ref{lem: p matching}}$}] 
We have
$$\sum_{x,x'\colon\psi(x)\psi(x') \in E(J_G(U,d,\epsilon))}  d_{F}(x) d_{F}(x')  \leq \epsilon n^2.$$
\end{enumerate}
Then there exists an embedding $\phi$ of $F$ into $G$ which extends $\psi$.
\end{lemma}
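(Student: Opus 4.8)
The plan is to convert the existence of $\phi$ into a bipartite matching problem and to verify Hall's condition using (A1)$_{\ref{lem: p matching}}$--(A3)$_{\ref{lem: p matching}}$. I would form the bipartite graph $H$ with parts $L(F)$ and $V$ in which a leaf $y\in L(F)$ with parent $x$ is joined to every vertex of $N_G(\psi(x))$; then embeddings $\phi$ of $F$ extending $\psi$ correspond exactly to matchings of $H$ saturating $L(F)$. Since for a fixed set $S$ of ``used'' centre images the set of leaves whose $H$-neighbourhood lies in $N_G(S)$ is largest when it consists of all children of all $x$ with $\psi(x)\in S$, Hall's theorem (together with $|L(F)|\le n$) reduces everything to proving
\[
|N_G(S)|\ \ge\ w(S):=\sum_{x\in\cen(F)\colon\psi(x)\in S}d_F(x)\qquad\text{for every }S\subseteq\psi(\cen(F)),
\]
where I write $\bar S:=\psi(\cen(F))\setminus S$, so that $w(S)+w(\bar S)=|L(F)|\le n$.

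I would first record that (A3)$_{\ref{lem: p matching}}$ already forces $\sum_x d_F(x)^2\le\epsilon n^2$: every loop lies in $J_G(U,d,\epsilon)$ because $d_G(a)=(d\pm\epsilon)n\neq(d^2\pm3\epsilon)n$, so the diagonal terms are among those counted in (A3)$_{\ref{lem: p matching}}$; in particular $\Delta(F)\le\sqrt\epsilon n<(d-\epsilon)n$. Then I dispose of two easy cases. If $S\neq\emptyset$ and $w(S)\le(d-\epsilon)n$, then for any $x$ with $\psi(x)\in S$ we get $|N_G(S)|\ge d_G(\psi(x))\ge(d-\epsilon)n\ge w(S)$ by (A1)$_{\ref{lem: p matching}}$. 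If $w(S)>(d-\epsilon)n$ but $w(\bar S)<(d-\epsilon)n$, I use (A2)$_{\ref{lem: p matching}}$: every $v\notin N_G(S)$ satisfies $\fsum{d_F}{\psi}{N_G(v)}=\fsum{d_F}{\psi}{N_G(v)\cap\bar S}\le w(\bar S)<(d-\epsilon)n$, so $v$ is one of the $\le\nu n$ exceptional vertices and also $\nu n\le\fsum{d_F}{\psi}{N_G(v)}\le w(\bar S)$; hence either $w(\bar S)<\nu n$ and $V\setminus N_G(S)=\emptyset$, or $|V\setminus N_G(S)|\le\nu n\le w(\bar S)\le(n-|L(F)|)+w(\bar S)=n-w(S)$.

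The remaining and main case is $w(S)>(d-\epsilon)n$ and $w(\bar S)\ge(d-\epsilon)n$, where $w(S)\le n-w(\bar S)\le(1-d+\epsilon)n$. Here I would run a second-moment estimate. Set $Y_v:=\fsum{d_F}{\psi}{N_G(v)\cap S}$, so that $\{v\colon Y_v>0\}=N_G(S)$. By (A1)$_{\ref{lem: p matching}}$, $\sum_v Y_v=\sum_{\psi(x)\in S}d_F(x)\,d_G(\psi(x))=(d\pm\epsilon)n\,w(S)$; expanding $Y_v^2$, summing over $v$, and splitting according to whether $\psi(x)\psi(x')\in E(J_G(U,d,\epsilon))$ — bounding the codegree by $(d^2+3\epsilon)n$ for good pairs and by $(d+\epsilon)n$ for bad ones, whose total $d_F$-weight is $\le\epsilon n^2$ by (A3)$_{\ref{lem: p matching}}$ (diagonal included) — gives
\[
\sum_v Y_v^2=\sum_{\psi(x),\psi(x')\in S}d_F(x)d_F(x')\,d_G(\psi(x),\psi(x'))\ \le\ (d^2+3\epsilon)n\,w(S)^2+2\epsilon n^3.
\]
Cauchy--Schwarz then yields $|N_G(S)|\ge\dfrac{(\sum_v Y_v)^2}{\sum_v Y_v^2}\ge\dfrac{(d-\epsilon)^2 n^2 w(S)^2}{(d^2+3\epsilon)n\,w(S)^2+2\epsilon n^3}$, and clearing denominators reduces $|N_G(S)|\ge w(S)$ to $(d-\epsilon)^2 n\ge(d^2+3\epsilon)w(S)+2\epsilon n^2/w(S)$, which holds by a short computation using $(d-\epsilon)n<w(S)\le(1-d+\epsilon)n$ and $0<\epsilon\ll d$ (the slack is of order $d^3 n$, absorbing all $O(\epsilon/d)$ error terms).

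I expect this last case to be the crux. Plain edge counting in $H$ only verifies Hall up to an additive error of order $\epsilon n/d$, and to close that gap one genuinely needs the codegree control of (A3)$_{\ref{lem: p matching}}$, here exploited via the second moment of the $Y_v$; the fact that (A3)$_{\ref{lem: p matching}}$ simultaneously bounds $\sum_x d_F(x)^2$ is what keeps the second-moment bound valid uniformly in $S$ (e.g.\ when $S$ is a single vertex, so $w(S)\le\Delta(F)\le\sqrt\epsilon n$ and we are back in the first case).
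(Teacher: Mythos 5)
Your proof is correct, but it takes a genuinely different route from the paper's. The paper blows up each centre $\psi^{-1}(u)$ into $d_F(\psi^{-1}(u))$ copies to form an auxiliary bipartite graph $G'$, uses (A3)$_{\ref{lem: p matching}}$ together with Theorem~\ref{thm: almost quasirandom} to conclude that $G'$ is $(\epsilon^{1/6},d)$-regular, greedily pre-matches the at most $\nu n$ vertices of $V$ with atypical $\fsum{d_F}{\psi}{N_G(\cdot)}$ (this is where the first half of (A2)$_{\ref{lem: p matching}}$ enters), and then completes a matching saturating all copies via Theorem~\ref{MM}. You instead verify Hall's condition for the leaf--vertex incidence graph directly: your reduction to sets of the form ``all children of centres mapped into $S$'' is valid, your Case~2 uses (A2)$_{\ref{lem: p matching}}$ in essentially the same way the paper handles its exceptional set, and your Case~3 replaces the regularity machinery by a Cauchy--Schwarz bound on $Y_v=\fsum{d_F}{\psi}{N_G(v)\cap S}$, which is in effect a hands-on version of the second-moment argument hidden inside Theorem~\ref{thm: almost quasirandom}. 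One sanity check worth recording: Case~3 is non-vacuous only when $(d-\epsilon)n<(1-d+\epsilon)n$, i.e.\ $d<1/2+\epsilon$; this is exactly what makes your final inequality close with slack of order $d^3n$ (no hidden $(1-d)^{-1}$ factors), and in that regime $d-\epsilon>d^2+3\epsilon$, so the loops genuinely lie in $J_G(U,d,\epsilon)$ and (A3)$_{\ref{lem: p matching}}$ really does control the diagonal term $\sum_x d_F(x)^2$ as you claim. What each approach buys: yours is elementary and self-contained (Hall plus Cauchy--Schwarz), whereas the paper's reuses verbatim the blow-up framework it needs anyway for Lemma~\ref{lem: random matching behaves random}, where the stronger ``well-distributed image'' conclusion genuinely requires the perfect-matching machinery and could not be extracted from Hall's theorem alone.
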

\begin{proof}
We may assume that $\psi$ is a bijection by ignoring the vertices in $U$ outside the image of $\psi$.
Let $\{u_1,\dots, u_{m}\}:= U$.
Next we replace  each vertex $u_i\in U$ by $d_F(\psi^{-1}(u))$ copies of $u_i$. 
Let $U^*:=\{  u_{i,j} : i\in [m], j\in [d_F(\psi^{-1}(u_i))]\}$
and let $G'$ be the bipartite graph with vertex partition $(U^*,V)$ and
$E(G'):= \{ u_{i,j} v: u_iv \in E(G) \}.$
Then \ref{L45A3} implies that $|E(J_{G'}(U^*,d,\epsilon))| \leq \epsilon n^2$.
Consequently, Theorem~\ref{thm: almost quasirandom} implies that $G'$ is $(\epsilon^{1/6},d)$-regular.

Let $V':= \{ v\in V: \fsum{d_{F}}{\psi}{N_{G}(v)} < (d- \epsilon^{1/6}) n\}.$
Then~\ref{L45A2} implies that $|V'|\leq \nu n$.
Since all $v\in V'$ satisfy $d_{G'}(v)\geq \nu n$,
we can greedily pick a matching $M'$ in $G'$ of size $|V'|$ covering $V'$.
As $\nu <\epsilon$, the graph $G' \setminus V(M')$ is still $(2\epsilon^{1/6},d)$-regular, and every vertex $u \in V(G')\setminus V(M')$ satisfies $| N_{G'}(u)\setminus V(M') | = (d\pm 2\epsilon^{1/6})n$.
Theorem~\ref{MM}  implies that $G' \setminus V(M')$ contains a perfect matching $M''$.
Hence $M'\cup M''$ is a perfect matching in $G'$, 
which leads to the desired embedding $\phi$.
\end{proof}

The following lemma provides a partition of a collection of vectors in $\N_0^6$
into well-balanced parts.
We use this lemma later to assign subforests of $T$ to different clusters of the regularity partition (see Section~\ref{sec: distribution}).
Recall that for $\ih=(i,h)\in \mathbb{N}\times [2]$, we write $\ihb$ for $(i,3-h)$.
To be a bit more precise,
for a graph $G$,
suppose we have a partition $\{V_\ih\}_{\ih\in [r]\times[2]}$ of $V(G)$ as given by Lemma~\ref{lem: partition}
and suppose we decide to embed a subtree $T'$ into $G[V_\ih,V_\ihb]$.
Suppose $(A,B)$ is the unique vertex bipartition of $T'$ such that $r(T')\in A$
and we further decide that $r(T')$ shall be embedded into $V_\ih$.
Then $A$ has to be embedded into $V_\ih$ and $B$ into $V_\ihb$.
We associate a vector $\bq\in \N^6$ with such a decision (one for each subtree)
where each coordinate captures how many vertices of a certain type are embedded into certain clusters due to this decision.
Then a partition of the decision vectors corresponds to a assignment of subtrees to vertex classes.

\begin{lemma}\label{lem: vector distribution}
Suppose $r,  \Delta_1,\Delta_2,\Delta_3 \in \mathbb{N}$ and $0<1/r \ll \beta \ll 1/t \leq 1$.
Suppose $\bF \subseteq \mathbb{N}_0^6$ and $\{\alpha_{\ih}\}_{\ih\in [r]\times [2]}$ is a probability distribution on $[r]\times [2]$.
Suppose the following holds for all $\bq=(q_1,\ldots, q_6) \in \bF$:
\begin{itemize}
\item[{\rm (A1)$_{\ref{lem: vector distribution}}$}] $q_1=0$ or $q_2=0$ or $\frac{q_1}{q_2} > 2r^2$ or $\frac{q_2}{q_1} > 2r^2$,
\item[{\rm (A2)$_{\ref{lem: vector distribution}}$}] $q_1, q_2 \leq \Delta_1$, $q_3, q_4 \leq \Delta_2$, $q_5, q_6\leq \Delta_3$, and
\item[{\rm (A3)$_{\ref{lem: vector distribution}}$}]$1/(2tr)\leq \alpha_{\ih} \leq 2t/r$ for each $\ih\in [r]\times [2]$.
\end{itemize}
Then there exists a partition $\{ \bF_{\ih}\}_{\ih\in [r]\times[2] }$ of $\bF$ such that the following hold:
\begin{itemize}
\item[{\rm (B1)$_{\ref{lem: vector distribution}}$}]  For each $\ih, \ih'\in [r]\times [2]$, we have
$$ \Bigg| \alpha_{\ih}^{-1} \Bigg(\sum_{\bq\in \bF_{\ih}} q_1 + \sum_{\bq\in \bF_{\ihb}} q_2 \Bigg) -  
\alpha_{\ihb}^{-1}\Bigg(\sum_{\bq\in \bF_{\ih'}} q_1 + \sum_{\bq\in \bF_{\ihb'}} q_2 \Bigg) \Bigg| 
\leq r^5 \Delta_1,$$
\item[{\rm (B2)$_{\ref{lem: vector distribution}}$}]   $\sum_{\bq\in \bF_{\ih} } q_3 + \sum_{\bq\in \bF_{\ihb}} q_4 
\geq  \alpha_{\ih}\beta^2 \sum_{ \bq\in \bF} (q_3+q_4) - r^2 \Delta_2,$ and
\item[{\rm (B3)$_{\ref{lem: vector distribution}}$}]   $\sum_{\bq\in \bF_{\ih} } q_5 + \sum_{\bq\in \bF_{\ihb}} q_6 
\geq  \alpha_{\ih}\beta^2 \sum_{ \bq\in \bF} (q_5+q_6) -  r^2 \Delta_3.$\COMMENT{We could add a $\beta$ here if needed.}
\end{itemize}
\end{lemma}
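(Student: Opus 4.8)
My plan would be to obtain $\{\bF_\ih\}$ by linear‑programming rounding. Write $N:=|\bF|$, and set $\Sigma_{12}:=\sum_{\bq\in\bF}(q_1+q_2)$, and define $\Sigma_{34}:=\sum_{\bq\in\bF}(q_3+q_4)$ and $\Sigma_{56}:=\sum_{\bq\in\bF}(q_5+q_6)$ analogously. First I would set up a polytope $\cP$ of fractional assignments of the vectors of $\bF$ to $[r]\times[2]$, then exhibit a feasible point of $\cP$ (this is where hypotheses (A1) and (A3) are used), then round a vertex of $\cP$; the generous error terms in (B1)--(B3) will absorb the rounding loss. Concretely, let $\cP$ be the set of $(x_{\bq,\ih})_{\bq\in\bF,\,\ih\in[r]\times[2]}$ with all $x_{\bq,\ih}\ge 0$ satisfying, for every $\bq$ and every $\ih$: (i)~$\sum_{\ih'}x_{\bq,\ih'}=1$; (ii)~$\sum_{\bq\in\bF}(x_{\bq,\ih}q_1+x_{\bq,\ihb}q_2)=\alpha_\ih\Sigma_{12}$; (iii)~$\sum_{\bq\in\bF}(x_{\bq,\ih}q_3+x_{\bq,\ihb}q_4)\ge\alpha_\ih\beta^2\Sigma_{34}$ and $\sum_{\bq\in\bF}(x_{\bq,\ih}q_5+x_{\bq,\ihb}q_6)\ge\alpha_\ih\beta^2\Sigma_{56}$. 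A $\{0,1\}$‑valued point of $\cP$ is precisely a partition $\{\bF_\ih\}$ (put $\bq\in\bF_\ih$ iff $x_{\bq,\ih}=1$) with $\sum_{\bq\in\bF_\ih}q_1+\sum_{\bq\in\bF_\ihb}q_2=\alpha_\ih\Sigma_{12}$ for all $\ih$ — hence with $\alpha_\ih^{-1}\big(\sum_{\bq\in\bF_\ih}q_1+\sum_{\bq\in\bF_\ihb}q_2\big)=\Sigma_{12}$ independent of $\ih$, which is (B1) with zero error — and with (B2), (B3) holding outright. Such integral points may fail to exist, so the real content is the rounding.

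To see $\cP\neq\emptyset$, I would consider the explicit $x^{\ast}$ defined by: $x^{\ast}_{\bq,\ih}:=\alpha_{\ihb}$ if $q_1=0$; $x^{\ast}_{\bq,\ih}:=\alpha_{\ih}$ if $q_1>0=q_2$; and, if $q_1,q_2>0$ — so by (A1) one has $\min(q_1,q_2)<\max(q_1,q_2)/(2r^2)$, hence $s:=(q_1+q_2)/(q_1-q_2)$ satisfies $|s|\in[1,1+2r^{-2}]$ — one puts, for every cluster $i\in[r]$,
\[
x^{\ast}_{\bq,(i,1)}:=\tfrac{\alpha_{(i,1)}+\alpha_{(i,2)}}{2}+\tfrac{\alpha_{(i,1)}-\alpha_{(i,2)}}{2}\,s,\qquad x^{\ast}_{\bq,(i,2)}:=\big(\alpha_{(i,1)}+\alpha_{(i,2)}\big)-x^{\ast}_{\bq,(i,1)}.
\]
A direct computation gives $\sum_{\ih'}x^{\ast}_{\bq,\ih'}=1$ and that the contribution of $\bq$ to the left side of (ii) equals $\alpha_\ih(q_1+q_2)$ in each case (in the mixed case since $x^{\ast}_{\bq,(i,1)}q_1+x^{\ast}_{\bq,(i,2)}q_2=\tfrac{\alpha_{(i,1)}+\alpha_{(i,2)}}{2}(q_1+q_2)+\tfrac{\alpha_{(i,1)}-\alpha_{(i,2)}}{2}(q_1-q_2)s=\alpha_{(i,1)}(q_1+q_2)$, and symmetrically for $(i,2)$); so (i) and (ii) hold, and summing over $\bq$ shows the target $\alpha_\ih\Sigma_{12}$ is hit exactly. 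Since $\tfrac{\alpha_{(i,1)}+\alpha_{(i,2)}}{2}\pm\tfrac{\alpha_{(i,1)}-\alpha_{(i,2)}}{2}\in\{\alpha_{(i,1)},\alpha_{(i,2)}\}$ and $\big||s|-1\big|\le 2r^{-2}$, for $\ih=(i,h)$ the value $x^{\ast}_{\bq,\ih}$ lies within $r^{-2}|\alpha_{(i,1)}-\alpha_{(i,2)}|\le 2t/r^{3}$ of $\alpha_{(i,1)}$ or of $\alpha_{(i,2)}$; with (A3) and $1/r\ll 1/t$ this yields $\tfrac{1}{3tr}\le x^{\ast}_{\bq,\ih}\le 1$, so $x^{\ast}\ge 0$. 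Finally $x^{\ast}_{\bq,\ih}\ge\tfrac{1}{3tr}$ together with $\alpha_\ih\beta^{2}\le 2t\beta^{2}/r\le\tfrac{1}{3tr}$ (the last because $\beta\ll 1/t$) gives $\sum_{\bq}(x^{\ast}_{\bq,\ih}q_3+x^{\ast}_{\bq,\ihb}q_4)\ge\tfrac{1}{3tr}\Sigma_{34}\ge\alpha_\ih\beta^{2}\Sigma_{34}$, and similarly for the $q_5,q_6$ inequality; hence $x^{\ast}\in\cP$.

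For the rounding: being non‑empty and bounded, $\cP$ has a vertex $\widehat x$. Introducing one slack variable for each of the $4r$ inequalities in (iii) writes $\cP$ as $\{y\ge 0:By=c\}$ with $B$ having $N+6r$ rows (the $N$ equations in (i), the $2r$ equations in (ii), and $4r$ slack equations), so the basic feasible solution $\widehat x$ has at most $N+6r$ nonzero coordinates; since each block $\{x_{\bq,\ih'}\}_{\ih'}$ has a nonzero coordinate by (i), at most $6r$ blocks $\bq$ are non‑integral. I would then define $\bF_\ih$ by putting $\bq\in\bF_\ih$ whenever $\widehat x_{\bq,\ih}=1$ and assigning each of the at most $6r$ non‑integral blocks (one per block) to some $\ih$ with $\widehat x_{\bq,\ih}>0$. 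For every $\ih$, the quantity $\sum_{\bq\in\bF_\ih}q_1+\sum_{\bq\in\bF_\ihb}q_2$ differs from $\sum_{\bq}(\widehat x_{\bq,\ih}q_1+\widehat x_{\bq,\ihb}q_2)=\alpha_\ih\Sigma_{12}$ only through the non‑integral blocks, each changing it by at most $\max(q_1,q_2)\le\Delta_1$ by (A2); so it equals $\alpha_\ih\Sigma_{12}\pm 6r\Delta_1$. Dividing by $\alpha_\ih\ge 1/(2tr)$ shows $\alpha_\ih^{-1}\big(\sum_{\bq\in\bF_\ih}q_1+\sum_{\bq\in\bF_\ihb}q_2\big)=\Sigma_{12}\pm 12tr^{2}\Delta_1$ for all $\ih$, whence (B1) (with $24tr^{2}\le r^{5}$ since $r\gg t$). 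The same estimate applied to the two inequalities of $\cP$ shows their left sides decrease by at most $6r\Delta_2$, resp.\ $6r\Delta_3$, under the rounding, so (B2) and (B3) follow from $6r\le r^{2}$.

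The only genuinely delicate point is the feasibility of $\cP$: one must meet the targets $\alpha_\ih\Sigma_{12}$ \emph{exactly} in the fractional solution, and the natural candidate carries the factor $(q_1+q_2)/(q_1-q_2)$, which is finite and within $2r^{-2}$ of $\pm1$ only because (A1) forces $q_1,q_2$ to be wildly unbalanced whenever both are positive; that is exactly what confines $x^{\ast}$ to $[\tfrac1{3tr},1]$ and thereby hands us the (B2)/(B3) slack for free. Everything after that — the vertex count $6r$ and the error accounting for (B1)--(B3) — is routine bookkeeping, and the hierarchy $1/r\ll\beta\ll 1/t$ leaves ample room.
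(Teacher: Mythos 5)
Your proposal is correct, but it proves the lemma by a genuinely different route than the paper. The paper's proof is a two-stage combinatorial argument: first it assigns a random $\beta$-fraction of the vectors to the $2r$ parts and uses Azuma's inequality to secure (B2)$_{\ref{lem: vector distribution}}$ and (B3)$_{\ref{lem: vector distribution}}$ outright; then it distributes the remaining vectors by choosing a partition minimising $t_{\max}(\cF)-t_{\min}(\cF)$ and derives a contradiction from a local swap of a small bundle $\bX_{\ih}$ of vectors (three cases, using the sets $\bF^1,\bF^2$ of "$q_1$-heavy" and "$q_2$-heavy" vectors). Your LP-rounding argument replaces both stages: hypothesis (A1)$_{\ref{lem: vector distribution}}$ enters through the explicit fractional point $x^{\ast}$, where the unbalancedness $\min(q_1,q_2)<\max(q_1,q_2)/(2r^2)$ keeps the correction factor $s$ within $2r^{-2}$ of $\pm1$ and hence keeps $x^{\ast}_{\bq,\ih}$ in $[\tfrac{1}{3tr},1]$ — this is the same structural fact the paper exploits via $\bF^1,\bF^2$ and \eqref{eq:q1q2}, just packaged differently — and the basic-feasible-solution count (at most $6r$ fractional blocks) replaces the swapping argument. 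Your computations check out: the contribution identity $x^{\ast}_{\bq,(i,1)}q_1+x^{\ast}_{\bq,(i,2)}q_2=\alpha_{(i,1)}(q_1+q_2)$ is exact, $q_1=q_2>0$ is excluded by (A1)$_{\ref{lem: vector distribution}}$ so $s$ is well defined, the per-block rounding error is indeed at most $\max(q_1,q_2)\le\Delta_1$ (the fractional contribution never exceeds $\max(q_1,q_2)$ because $x_{\bq,\ih}+x_{\bq,\ihb}\le1$), and the hierarchy $1/r\ll\beta\ll1/t$ absorbs all constants ($24tr^2\le r^5$, $6t^2\beta^2\le1$, $12t^2\le r^2$). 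What your approach buys is a fully deterministic and arguably shorter proof with a cleaner error accounting; what the paper's approach buys is that it avoids any LP machinery and its swap argument is self-contained. One cosmetic remark: the displayed form of (B1)$_{\ref{lem: vector distribution}}$ contains $\alpha_{\ihb}^{-1}$ where the paper's own proof (and yours) actually establishes the statement with $\alpha_{\ih'}^{-1}$, i.e.\ that all quantities $\alpha_{\ih}^{-1}\big(\sum_{\bq\in\bF_{\ih}}q_1+\sum_{\bq\in\bF_{\ihb}}q_2\big)$ lie within $r^5\Delta_1$ of each other; you prove the intended statement.
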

\begin{proof}
Our strategy for the proof is as follows.
We first assign a few vectors of $\bF$ randomly to the $2r$ parts of the future partition to ensure 
that (B2)$_{\ref{lem: vector distribution}}$ and (B3)$_{\ref{lem: vector distribution}}$ already hold for every $\ih\in [r]\times [2]$.
Afterwards we greedily assign the rest according to some target function that ensures that (B1)$_{\ref{lem: vector distribution}}$ holds.

For $j\in [3]$, let $m_j:= \sum_{\bq\in \bF} (q_{2j-1}+q_{2j})$.
%, $m_2 := \sum_{\bq \in \bF} (q_3+ q_4)$ and $m_3 := \sum_{\bq \in \bF} (q_5+ q_6)$.
For each vector $\bq$, we choose an index $i_{\bq}\in [r]\times[2] \cup \{0\}$ independently at random such that 
$\ih \in [r]\times[2]$ is chosen with probability $\beta/(2r) $ and $0$ is chosen with probability $1 - \beta $.
For each $I\in [r]\times[2] \cup \{0\}$, $\ih\in[r]\times[2]$, and  $j\in [3]$, let
\begin{align}\label{eq:Tj}
	\bF'_{I} := \{ \bq : i_{\bq} = I \} \quad\text{ and }
	\quad T^j_{\ih}:= \sum_{\bq\in \bF'_{\ih} } q_{2j-1} + \sum_{\bq\in \bF'_{\ihb}} q_{2j}.
\end{align}
Note that for each $\ih\in [r]\times[2]$ and $j\in [3]$, we have
\begin{align*}
\mathbb{E}[T^j_{\ih}] &= 
\frac{\beta}{2r} \sum_{\bq\in \bF} q_{2j-1} + \frac{\beta}{2r} \sum_{\bq\in \bF} q_{2j} = \frac{\beta m_j}{2r}.
\end{align*}
For each $j\in [3], \ih\in [r]\times[2]$, 
let $\cE(j,\ih)$ be the event that 
$$T^{j}_{\ih} = \frac{\beta m_j}{2r} \pm  \max\left\{ \frac{\beta m_j}{4r} , \frac{\beta r^2\Delta_j}{2} \right\}.$$

Let $T^{j}_{\ih}(s) := \mathbb{E}[T^{j}_{\ih} \mid i_{\bq(1)},\dots, i_{\bq(s)}]$ be an exposure martingale 
where $\bq(1), \dots, \bq(|\bF|)$ is an arbitrary ordering of $\bF$. 
It is easy to check that $|T^{j}_{\ih}(s+1)-T^{j}_{\ih}(s)| \leq \max\{ \bq(s)_{2j-1}, \bq(s)_{2j} \}$.
Thus Azuma's inequality (Theorem~\ref{Azuma}) implies that 
\begin{align*}
\Pro[\cE(j,\ih)] 
&= 1 - 2 \exp\bigg( \frac{ - \beta^2\max\{ m_j^2/(16r^2),  r^4 \Delta_j^2/4\} }{ 2\sum_{ \bq\in \bF} \max\{ (\bq_{2j-1})^2, (\bq_{2j})^2 \}} \bigg) \\
&\geq 1- 2 \exp\bigg( \frac{ - \beta^2\max\{ m_j^2 /(16r^2) ,  r^4 \Delta_j^2/4  \}}{2 m_j \Delta_j } \bigg) 
\geq 1 - 2 e^{-\beta^2 r/50}.
\end{align*}
The final inequality is easy to verify by considering the case $m_j\geq r^3 \Delta_j$ and $m_j < r^3 \Delta_j$ separately.
Thus, a union bound over all $\ih\in [r]\times[2]$ 
with the fact that $1 - 2r\cdot 2e^{-\beta^2 r/50} > 0$ as $1/r \ll \beta$ 
ensures that there exists an assignment such that $\cE(j,\ih)$ holds for all $j\in [3]$ and $\ih\in [r]\times[2]$. 
By some abuse of notation, we let $\{ \bF'_{\ih}\}_{\ih\in [r]\times[2]} $ be a such choice
and let $\bF'_0:=\bF\sm \{ \bF'_{\ih} : \ih\in [r]\times[2]\}$.
Observe that provided $\bF_\ih'\sub \bF_{\ih}$ for every $\ih\in [r]\times [2]$
and some partition $\{\bF_{\ih}\}_{\ih\in [r]\times [2]}$ of $\bF$,
by {\rm (A3)$_{\ref{lem: vector distribution}}$},
both (B2)$_{\ref{lem: vector distribution}}$ and (B3)$_{\ref{lem: vector distribution}}$ hold.

For a partition $\cF = \{ \bF^*_{\ih}\}_{\ih\in [r]\times[2] }$ of $\bF'_0$ (recall that $T_\ih^1$ is defined in~\eqref{eq:Tj}), let
\begin{align*} w_{\cF}(\ih)&:= \sum_{\bq \in \bF^*_{\ih}}q_1 + \sum_{\bq\in \bF^*_{\ihb}}q_2, &  t_{\cF}(\ih)&:= T^1_{\ih} + w_{\cF}(\ih),\\
t_{\max}(\cF)&:= \max_{\ih\in [r]\times[2]} \alpha_{\ih}^{-1} t_{\cF}(\ih), &
t_{\min}(\cF)&:= \min_{\ih\in [r]\times[2]} \alpha_{\ih}^{-1} t_{\cF}(\ih), \\
I_{\max}(\cF) &:= \{ \ih \in [r]\times [2] : \alpha_{\ih}^{-1} t_{\cF}(\ih)=  t_{\max}(\cF) \},& 
I_{\min}(\cF) &:= \{ \ih \in [r]\times [2] : \alpha_{\ih}^{-1} t_{\cF}(\ih)=  t_{\min}(\cF)\}.&
\end{align*}
Later we aim to take $\bF_\ih=\bF_\ih'\cup\bF_\ih^*$.
In order to achieve (B1)$_{\ref{lem: vector distribution}}$, we select $\cF$ such that $t_{\max}(\cF) - t_{\min}(\cF)$ is minimal.

To this end, choose a partition $\cF=\{ \bF^*_{\ih} : \ih\in [r]\times[2]\}$ of $\bF'_0$ such that 
\begin{itemize}
	\item $t_{\max}(\cF) - t_{\min}(\cF)$ is minimal, and subject to this,
	\item $|I_{\max}(\cF)| + |I_{\min}(\cF)|$ is minimal.
\end{itemize}
Let $$\bF^1:= \{\bq \in \bF'_0 : q_1 > q_2\} \text{ and } \bF^2 := \{\bq \in \bF'_0: q_2>q_1\}.$$ 
If  $t_{\max}(\cF) - t_{\min}(\cF)\leq r^5\Delta_1$ holds, 
then  clearly we may set  $\bF_{\ih}:= \bF'_{\ih}\cup \bF^*_{\ih}$ for each $\ih\in [r]\times[2]$ 
and we found the desired partition satisfying (B1)$_{\ref{lem: vector distribution}}$--(B3)$_{\ref{lem: vector distribution}}$.
If  $m_1 \leq r^3 \Delta_1$ holds,
then we obtain $t_{\max}(\cF) - t_{\min}(\cF)\leq r^2 m_1 \leq r^5\Delta_1$ as $\alpha_{\ih}^{-1}\leq r^2$ by (A3)$_{\ref{lem: vector distribution}}$.
We will show that at least one of these scenarios always applies and assume for a contradiction that 
\begin{align}\label{eq: ind assump}
m_1 > r^3 \Delta_1 \enspace \text{ and } \enspace t_{\max}(\cF) - t_{\min}(\cF) > r^5 \Delta_1.
\end{align}
Since $\cE(1,\ih)$ and~\eqref{eq: ind assump} hold, for each $\ih\in [r]\times[2]$, we obtain
 \begin{align}\label{eq: T1ih size} 
T^1_{\ih} \leq \frac{\beta m_1}{r}.
\end{align}
We choose two indices $\ih^* \in I_{\max}(\cF)$, $\ih_*\in I_{\min}(\cF)$.
Note that 
\begin{align}\label{eq: max i*h* value} 
t_{\cF}(\ih^*) = t_{\cF}(\ih^*)\sum_{\ih\in [r]\times[2]} \alpha_{\ih} =
  \alpha_{\ih^*} \sum_{\ih\in [r]\times[2]}  \alpha_{\ih} \alpha^{-1}_{\ih^*} t_{\cF}(\ih^*) %\nonumber\\
\geq   \alpha_{\ih^*} \sum_{\ih\in [r]\times[2]}  \alpha_{\ih} \alpha^{-1}_{\ih} t_{\cF}(\ih)
 \geq   \alpha_{\ih^*} m_1.
\end{align}
Note that {\rm (A1)$_{\ref{lem: vector distribution}}$} implies that
\begin{align}\label{eq:q1q2}
	\sum_{\bq \in \bF^1} q_2 \leq \frac{ m_1 }{2r^2+1} \leq \frac{\beta m_1}{2r}\text{ and } \sum_{\bq \in \bF^2} q_1 \leq \frac{\beta m_1}{2r}.
\end{align}
Note that
\begin{eqnarray*}
t_{\cF}(\ih)  
&=& T^1_{\ih}+ \sum_{\bq\in \bF_{\ih}^*} q_1 + \sum_{\bq\in \bF_{\ihb}^*} q_2 %\\&=&
= 
T^1_{\ih}+ \sum_{\bq \in \bF_{\ih}^*\cap \bF^1} q_1 + \sum_{\bq \in \bF_{\ih}^*\cap \bF^2} q_1 + 
\sum_{\bq\in \bF_{\ihb}^*\cap \bF^1} q_2 + \sum_{\bq\in \bF_{\ihb}^*\cap \bF^2} q_2\\
&\stackrel{(\ref{eq: T1ih size}),(\ref{eq:q1q2})}{\leq} & %T^1_{\ih}+ \sum_{\bq \in \bF_{\ih}\cap \bF^1} q_1 + \sum_{\bq \in \bF_{\ihb}\cap \bF^2} q_2  + \frac{2 m_1}{2r^2+1} \leq 
 \sum_{\bq \in \bF_{\ih}^*\cap \bF^1} q_1 + \sum_{\bq \in \bF_{\ihb}^*\cap \bF^2} q_2 + \frac{2\beta m_1}{r}.
\end{eqnarray*}
Thus, if $t_{\cF}(\ih) \geq  {3\beta m_1}/{r}$, 
then
\begin{align}\label{eq: enough to extract Delta}
 \sum_{\bq\in \bF_{\ih}^*\cap \bF^1} q_1 + \sum_{\bq \in \bF_{\ihb}^*\cap \bF^2} q_2 > \frac{\beta m_1}{r} \stackrel{(\ref{eq: ind assump})}{\geq} r \Delta_1.
\end{align}
For each $\ih\in [r]\times [2]$ with $t_{\cF}(\ih) \geq  {3\beta m_1}/{r}$, 
we choose a set $\bX_{\ih} \subseteq (\bF^*_{\ih}\cap \bF^1) \cup (\bF^*_{\ihb}\cap \bF^2)$ such that 
$$2\Delta_1\leq \sum_{\bq \in \bX_{\ih}} (q_1+q_2) \leq 4\Delta_1.$$
Indeed, this is possible by  (A2)$_{\ref{lem: vector distribution}}$ and  \eqref{eq: enough to extract Delta}. 
Note that by \eqref{eq: max i*h* value} and the fact that $\beta \ll 1/t$ and $\alpha_{\ih} \geq 1/(2tr)$, 
we have $t_{\cF}(\ih^*)\geq {3\beta m_1}/{r}$.
Now we consider the following three cases.
In each case we construct a partition that contradicts our choice of $\cF$
by reallocating $\bX_{\ih^*},\bX_{\ihb_*}$.
\newline

\noindent {\bf CASE A.} $\alpha_{\ihb^*}^{-1} t_\cF(\ihb^*) < t_{\max}(\cF)  -  r^2\Delta_1 $.
\medskip

\noindent
In this case we use that $\alpha_{\ihb^*}^{-1} t_\cF(\ihb^*)$ is not too large and reallocate $\bX_{\ih^*}$ accordingly.
We define
$$ \bF^{\#}_{\ih^*} := \bF^*_{\ih^*} \triangle \bX_{\ih^*}, \enspace \bF^{\#}_{\ihb^*} := \bF^*_{\ihb^*} \triangle \bX_{\ih^*}, \text{ and }
\bF^{\#}_{\ih} := \bF^*_{\ih} \text{ for each } \ih \in [r]\times [2] \setminus \{ \ih^*, \ihb^*\} $$
and let $\cF^{\#}:=\{\bF^{\#}_{\ih} \}_{\ih \in [r]\times [2]}$. 
Then, since $1/(2tr) \leq \alpha_{\ih^*} , \alpha_{\ihb^*}\leq 2t/r$, we have
\begin{align*}
%\alpha_{\ih^*}^{-1} t_{\cF^{\#}}(\ih^*) &\geq   \alpha_{\ih^*}^{-1} (t_{\cF}(\ih^*) - 4\Delta_1 ) \stackrel{\eqref{eq: ind assump}}{>}t_{\min}(\cF), \\
t_{\min}(\cF)
\stackrel{(\ref{eq: ind assump})}{<}\alpha_{\ih^*}^{-1} (t_{\cF}(\ih^*) - 4\Delta_1 )\leq \alpha_{\ih^*}^{-1} t_{\cF^{\#}}(\ih^*) 
& \stackrel{\text{(A1)$_{\ref{lem: vector distribution}}$}}{\leq} \alpha_{\ih^*}^{-1} \left(t_{\cF}(\ih^*) - 2\Delta_1 + \frac{4\Delta_1}{2r^2}\right)< t_{\max}(\cF),\\
% \alpha_{\ihb^*}^{-1}  t_{\cF^{\#}}(\ihb^*) &\geq \alpha_{\ihb^*}^{-1} \left(t_{\cF}(\ihb^*)+2\Delta_1 - \frac{3\Delta_1}{2r^2}\right) \geq  t_{\min}(\cF), \text{ and} \\
t_{\min}(\cF) \leq  \alpha_{\ihb^*}^{-1} \left(t_{\cF}(\ihb^*)+2\Delta_1 - \frac{4\Delta_1}{2r^2}\right)
\leq \alpha_{\ihb^*}^{-1}  t_{\cF^{\#}}(\ihb^*) 
&  \leq \alpha_{\ihb^*}^{-1} (t_{\cF}(\ih^*) +4\Delta_1)  < t_{\max}(\cF).
\end{align*}
Since $\cF$ and $\cF^{\#}$ coincides on $[r]\times [2]\setminus \{\ih^*, \ihb^*\}$, 
either 
$t_{\max}(\cF^{\#}) -t_{\min}(\cF^{\#})  
 < t_{\max}(\cF) -t_{\min}(\cF)$  or $t_{\max}(\cF^{\#}) -t_{\min}(\cF^{\#})  = t_{\max}(\cF) -t_{\min}(\cF) $
as well as $I_{\max}(\cF^{\#}) \subseteq I_{\max}(\cF) \setminus \{ \ih^*\}$ and $I_{\min}(\cF^{\#})\sub I_{\min}(\cF)$. 
In either way, we obtain a contradiction to the choice of $\cF$. \newline

\noindent {\bf CASE B.} $\alpha_{\ihb_*}^{-1} t_\cF(\ihb_*) > t_{\max}(\cF)  -  r^2\Delta_1.$
\medskip

\noindent
Observe that 
\begin{eqnarray*}
t_{\cF}(\ihb_*) & >& \alpha_{\ihb_*} ( \alpha_{\ih^*}^{-1} t_{\cF}(\ih^*) - r^2\Delta_1) \geq \frac{1}{4t^2} t_{\cF}(\ih^*) -  2tr \Delta_1 \\
&\stackrel{\eqref{eq: max i*h* value}}{\geq}& \frac{\alpha_{\ih^*}m_1}{4t^2} - 2tr \Delta_1 \stackrel{\eqref{eq: ind assump}}{\geq}  \frac{3\beta m_1}{r}.
\end{eqnarray*}
Thus $X_{\ihb_*}$ is defined.
Then similarly as in the Case 1, we can construct $\cF^{\#}$ as
$$ \bF^{\#}_{\ih_*} := \bF^*_{\ih_*} \triangle \bX_{\ihb_*},\enspace  \bF^{\#}_{\ihb_*} := \bF^*_{\ihb_*} \triangle \bX_{\ihb_*}, \text{ and }
\bF^{\#}_{\ih} := \bF^*_{\ih} \text{ for each } \ih \in [r]\times [2] \setminus \{ \ih_*, \ihb_*\}$$
and let $\cF^{\#}:=\{\bF^{\#}_{\ih} \}_{\ih \in [r]\times [2]}$.
Similarly as in the Case 1, we obtain either 
$t_{\max}(\cF^{\#}) -t_{\min}(\cF^{\#})  
 < t_{\max}(\cF) -t_{\min}(\cF)$  or $t_{\max}(\cF^{\#}) -t_{\min}(\cF^{\#})  = t_{\max}(\cF) -t_{\min}(\cF) $
and $I_{\max}(\cF^{\#}) \subseteq I_{\max}(\cF)$ while $I_{\min}(\cF^{\#})=I_{\min}(\cF) \setminus \{ \ih_*\}$, we derive a contradiction. \newline

\noindent {\bf CASE C.} $\alpha_{\ihb^*}^{-1} t_\cF(\ihb^*) \geq t_{\max}(\cF)  -  r^2\Delta_1 $ and $\alpha_{\ihb_*}^{-1} t_\cF(\ihb_*) \leq t_{\max}(\cF)  -  r^2\Delta_1.$
\medskip

\noindent
Note that in this case, by \eqref{eq: ind assump} we have $\ih^*\notin \{\ih_*,\ihb_*\}$.
We let 
$$ \bF^{\#}_{\ih^*} := \bF^*_{\ih^*} \setminus \bX_{\ih^*},\enspace  \bF^{\#}_{\ihb^*} := \bF^*_{\ihb^*} \setminus \bX_{\ih^*}, \enspace
 \bF^{\#}_{\ih_*} := \bF^*_{\ih_*} \cup (\bX_{\ih^*}\cap \bF^1),\enspace  \bF^{\#}_{\ihb_*} := \bF^*_{\ihb_*} \cup (\bX_{\ih^*}\cap \bF^2) , \text{ and }$$
$$\bF^{\#}_{\ih} := \bF^*_{\ih} \text{ for each } \ih \in [r]\times [2] \setminus \{\ih^*, \ihb^*, \ih_*, \ihb_*\}.$$
Let $\cF^{\#}:=\{\bF^{\#}_{\ih} \}_{\ih \in [r]\times [2]}$.
Then, since $1/(2tr) \leq \alpha_{\ih^*},\alpha_{\ihb^*},\alpha_{\ih_*},\alpha_{\ihb_*}\leq 2t/r$, we have
\begin{align*}
%\alpha_{\ih^*}^{-1} t_{\cF^{\#}}(\ih^*) &\geq \alpha_{\ih^*}^{-1}( t_{\cF}(\ih^*) - 3\Delta ) \geq t_{\max}(\cF) - 6tr \Delta_1 \stackrel{\eqref{eq: ind assump}}{>} t_{\min}(\cF),  \\
t_{\min}(\cF)\stackrel{\eqref{eq: ind assump}}{<} \alpha_{\ih^*}^{-1}( t_{\cF}(\ih^*) - 4\Delta_1 ) \leq
\alpha_{\ih^*}^{-1} t_{\cF^{\#}}(\ih^*) & < \alpha_{\ih^*}^{-1} t_{\cF}(\ih^*)\leq  t_{\max}(\cF), \\
%\alpha_{\ihb^*}^{-1} t_{\cF^{\#}}(\ihb^*) & \geq \alpha_{\ihb^*}^{-1} (t_{\cF}(\ihb^*)- 4\Delta_1 ) \geq  t_{\max}(\cF) - r^2 \Delta_1 - 6tr\Delta_1 \stackrel{\eqref{eq: ind assump}}{>}  t_{\min} (\cF) ,\\
t_{\min} (\cF) \stackrel{\eqref{eq: ind assump}}{<} t_{\max}(\cF) - r^2 \Delta_1 - 8tr\Delta_1 \leq %\alpha_{\ihb^*}^{-1} (t_{\cF}(\ihb^*)- 4\Delta_1 ) \leq 
\alpha_{\ihb^*}^{-1} t_{\cF^{\#}}(\ihb^*) & \leq \alpha_{\ihb^*}^{-1} t_{\cF}(\ihb^*)  \leq  t_{\max}(\cF),\\
%\alpha_{\ih_*}^{-1} t_{\cF^{\#}}(\ih_*) & \geq \alpha_{\ih_*}^{-1} \left(t_{\cF}(\ih_*) +2\Delta_1 -\frac{ 3\Delta_1}{2r^2} \right) >  t_{\min}(\cF) ,\\
t_{\min}(\cF) = \alpha_{\ih_*}^{-1} t_{\cF}(\ih_*) 
<\alpha_{\ih_*}^{-1} t_{\cF^{\#}}(\ih_*)  & \leq \alpha_{\ih_*}^{-1} (t_{\cF}(\ih_*) +4\Delta_1 )  \stackrel{\eqref{eq: ind assump}}{<}  t_{\max}(\cF),\\
%\alpha_{\ihb_*}^{-1} t_{\cF^{\#}}(\ihb_*) & \geq \alpha_{\ihb_*}^{-1} (t_{\cF}(\ihb_*) ) \geq  t_{\min}(\cF) ,\\
t_{\min}(\cF) \leq \alpha_{\ihb_*}^{-1} t_{\cF}(\ihb_*)
<\alpha_{\ihb_*}^{-1} t_{\cF^{\#}}(\ihb_*)  & \leq \alpha_{\ihb_*}^{-1} (t_{\cF}(\ihb_*) +4\Delta_1 ) <  t_{\max}(\cF).
\end{align*}

Since $\cF$ and $\cF^{\#}$ coincides on $[r]\times [2]\setminus \{\ih^*, \ihb^*, \ih_*, \ihb_*\}$, 
either
$t_{\max}(\cF^{\#}) -t_{\min}(\cF^{\#})  
 < t_{\max}(\cF) -t_{\min}(\cF)$  or $t_{\max}(\cF^{\#}) -t_{\min}(\cF^{\#})  = t_{\max}(\cF) -t_{\min}(\cF) $
and $I_{\max}(\cF^{\#}) \subseteq I_{\max}(\cF) \setminus \{ \ih^*\}$ and $I_{\min}(\cF^{\#})\sub I_{\min}(\cF^{\#})\setminus\{\ih_*\}$. 
In both cases, we obtain the final contradiction.
\end{proof}

%The following result is an elementary result about trees.

%\begin{lemma}\label{lem: bare paths}
%Suppose $k,\ell,n \in \mathbb{N}$. Let $T$ be an $n$-vertex tree with at most $\ell$ leaves. 
%Then $T$ contains a collection of at least $\frac{n - (k+1)(2\ell-2)}{k+1}$ vertices of bare paths of length $k$ each.
%\end{lemma}

\section{Preparation}\label{sec:prep}

We start this section by setting up some terminology, constants and notation for the proof of Theorem~\ref{thm: main}.
For given $k$ and $\alpha$, 
we choose constants so that 
\begin{align}\label{eq: hierarchy}
0< \frac{1}{n_0} \ll \frac{1}{M_*} 
\ll \frac{1}{C} \ll \frac{1}{C'}
\ll \epsilon \ll \epsilon_1 \ll \dots \ll \epsilon_{k} \ll \mu \ll \eta \ll d,\frac{1}{t} \ll \frac{1}{k}, \alpha.
\end{align}
Let $M:= 10 k M_*^7$.
The roles of $C,C',\epsilon,d,t$ are explained in \ref{G1}--\ref{G3}
and $\epsilon_1,\ldots,\epsilon_k$ are `error' parameters for the $k$ steps of our embedding process.
Recall that we are given an $n$-vertex graph $G$ with minimum degree at least $\alpha n$ and $n\geq n_0$, a binomial random graph $R \in \bG(n,M p)$, and an $n$-vertex tree $T$ with $\Delta(T)\leq\Delta$.
Note that in the setting of Theorem~\ref{thm: main} we assume that
\begin{align}\label{eq: def Delta p}
n^{1/(k+1)} \leq \Delta < \min\Big\{ n^{1/k}, \frac{n}{M\log{n}} \Big\}
\enspace \text{ and } \enspace p:= \max\big\{ n^{-k/(k+1)}, \Delta^{k+1} n^{-2}\big\}.
 \end{align}
Let $p' := M_*^6 p$.
It will be convenient to consider mutually independent random graphs $R_1,\dots, R_{k+3} \subseteq R$ such that $R_i \in \bG(n, M_* p')$. 
Such random graphs exist by standard probabilistic arguments.

First, we apply Lemma~\ref{lem: partition} to obtain a partition  $\{ V_\ih \}_{\ih \in  [r]\times [2]}$ of $V(G)$ and a subgraph $G'\subseteq G$ satisfying the following for all $i\in [r]$:
\begin{enumerate}[label={\rm(G\arabic*)}]
\item\label{G1} $G'[V_{\ih}, V_{\ihb}]$ is $(\epsilon,d)$-super-regular,
\item\label{G2} ${n}/{tr} \leq |V_{\ih}|  \leq {t n}/{r}$, and
\item\label{G3} $C' \leq r \leq C$.
\end{enumerate}
Later we will embed $T$ into $G'\cup\bigcup_{i=1}^{k+3} R_i$.
For the remainder of the section, we focus on finding an appropriate edge-decomposition $F_1,\dots, F_{k+1}, F'_1,\dots, F'_{k}, L_1, \last$ of $T$.
The tree $T$ will be a fixed tree for the rest of the paper
and we denote by $\lea$ the set of leaves of $T$.
Choose a leaf $x_1$ of $T$ as the root of $T$ and consider a breath-first-search ordering $x_1,\dots, x_{n}$ of $V(T)$. From now on, for any subtree $T'\subseteq T$, we always consider $T'$ as a rooted tree $(T',x_i)$ where $i= \min\{j : x_j \in V(T')\}$.

As explained in Section~\ref{sec: outline}, we need to decide whether we use Lemma~\ref{lem: p matching}, Lemma~\ref{lem: light leaves embedding} or Lemma~\ref{lem: bare paths embedding} at the last step to finish the embedding. 
We introduce some more terminology.
For a leaf $x$ of $T$ and its ancestor $y$, we say that the vertex $x$ and the edge $xy$ are \emph{heavy} if $|D_T(y )\cap \lea| \geq {np'}/\log{n}$ and \emph{light} otherwise. 
Let $\li$ be the set of all light leaves of $T$ and let $\he$ be the set of all heavy leaves of $T$.
Thus, if a vertex has one heavy leaf as a child, then it has at least $np'/\log n$ heavy leaves as children.

If $|\he| < 4\eta n$ holds, we will use Lemma~\ref{lem: light leaves embedding} or Lemma~\ref{lem: bare paths embedding} to finish the embedding.
This case is much simpler than the remaining one. 
We will deduce it from the case $|\he| \geq 4\eta n$ in Section~\ref{sec: few heavy leaves}.

We assume now that 
\begin{align}\label{eq: enough heavy leafs}
|\he| \geq 4\eta n.
\end{align}

We aim to decompose $E(T)$ into forests $F_1,\dots, F_{k+1}, F'_{1},\dots, F'_{k}, L_1, \last$ and embed the edges in $F_i$ and $L_1$ into $R$ by using Lemma~\ref{lem: random embedding}, and the edges in $F'_i$ and $\last$ into $G'$ by using Lemma~\ref{lem: random matching behaves random}. 
We embed each forest one by one in the following order $F_1, F'_1, F_2, F'_2, \dots, F'_k, F_{k+1}, L_1, \last$.
We also ensure that the roots of a forest in this list belong to the forests that precedes it in this list.
In order to use Lemma~\ref{lem: random embedding}, we want $F_1,\dots, F_{k+1}, L_{1}$ to have maximum degree at most $O(np')$.
%In order to use Lemma~\ref{lem: random matching behaves random} to embed edges in $F'_i$, the roots of the stars in $F'_i$ must be embedded before we embed $F'_i$. Thus we want the root of $F'_i$ to be included in $\bigcup_{j=1}^{i} V(F_i) \cup \bigcup_{j=1}^{i-1} V(F'_j)$.
%A naive way to decompose $T$ into forests $F_1,\dots, F_{k+1}$ and star-forests $F'_{1},\dots F'_{k}$ is that for each $i=1,\dots, n$,  if the degree of $x_i$ is at most $np$, we add edges incident to $x_i$ to some $F_i$ and otherwise we add edges incident to $x_i$ to some $F'_i$. Among other reasons, this does not seem to work because there may be very long sequences of vertices so that all their incident edges go to $F'_i$, then as we want the roots of $F'_{i}$ to belong to a previous forest, we would get much more than $k$ star-forests of $F'_{i}$. Since we cannot guarantee the super-regularity of $G'$ on the remaining vertices after too long sequence of steps, this edge-decomposition is not good for our purpose.
Next, we inductively define an edge colouring $c\colon E(T)\to [2]$ and two functions $h\colon V(T)\rightarrow \mathbb{N}_0$ and $h'\colon V(T)\rightarrow \mathbb{N}_0$ and use them to find the decomposition of $E(T)$ promised above.

We first sketch the ideas behind our approach.
We define an edge colouring $c$ such that every path between $x_1$ and a leaf of $T$ contains at most $k$ edges of colour $2$ (see Claim~\ref{cl: height and h}).  
We define $E(F_i)$ as the collection of edges $e$ of colour $1$ such that the path between $x_1$ and $e$ contains $i-1$ edges of colour $2$
and define $E(F'_i)$ as the collection of edges of colour $2$ such that the path between $x_1$ and $e$ (including $e$) contains $i$ edges  of colour $2$. 
This definition ensures that all the roots of $F'_i$ belong to `previous' forests.
For each $x\in V(T)$, $h(x)$ measures the height of the vertex $x$ in the component induced by edges colour $2$ (see Claim~\ref{cl: height and h}), 
and $h'(x)$  roughly measures the maximum number of heavy-leaf children of a descendant $y$ of $x$ in a component induced by edges of colour $2$.
At the end of the section, we prove several properties of this decomposition for later use (see Claim~\ref{cl: properties2} and~\ref{cl: properties}).

%Next we define $c,h,h'$ inductively according to the reversed breath-first-search ordering $x_n,\ldots,x_1$. The way we define these quantities may seem a bit cumbersome,but this mainly stems from the fact that we proceed inductively and that we have the choice of using colour $1$ for up to roughly $np$ edges incident to every edge. Afterwards, we show that the used definitions coincide with what we promised above (see Claim~\ref{cl: h-value}--\ref{cl: properties}).

Assume that for some $i\in [n]$ we already have defined $h(x_j), h'(x_j)$ and $c(e)$ for all $j\in [n]\sm [i]$ and $e\in S_T(x_j)$. 
Now we will define $h(x_i), h'(x_i)$ and $c(e)$ for all $e\in S_T(x_i)$. If $x_i$ is a leaf of $T$, 
then we simply define 
\begin{align}\label{eq: leaf h h' def}
h(x_i):=0 \text{ and }h'(x_i):= 0.
\end{align}
Assume now that $x_i$ is not a leaf. 
Then $h(x), h'(x)$ are defined for all vertices $x\in D_{T}(x_i)$.
%Observe that for each vertex $y \in D_{T}(x_i)$ and $e\in S_T(x_j)$ with $i<j\leq n$, 
%we have defined $h(y)$, $h'(y)$ and $c(e)$. 
For each $ \ell\geq 0$, we let  
$H_i^\ell:= \{ x\in D_{T}(x_i) : h(x) = \ell \}$
and
\begin{align}\label{eq: h def}
h(x_i)&:=   \left\{ \begin{array}{ll}
0 & \text{ if } |\lea\cap D_T(x_i)| \leq np' \text{ and }|H_i^\ell| \leq 10 np' \text{ for all }\ell \geq 0, \\ 
1 & \text{ if } |\lea\cap D_T(x_i)| > np' \text{ and } |H_i^\ell| \leq 10 np' \text{ for all }\ell \geq1, \\
\max\{ \ell+1 :  |H_i^\ell| > 10np' \} & \text{ otherwise. }
\end{array}\right.
\end{align}
Observe that $h(x_i)>0$ if and only if $x_i$ has either more than $np'$ leaf-children or at least $10np'$ children of a particular $h$-value.
If this applies, we must colour some edges in $S_{T}(x_i)$ with colour $2$ as otherwise the degree of $F_i$ becomes too large.
%cannot use for all edges in $S_T(x_i)$ colour~$1$ and we are forced to use colour $2$ for some of these edges.
%Note that $\he\cap D_T(x_i) \subseteq H_i(0)$. 

Next we define $c$ and $h'$.
If $h(x_i)=0$, then we simply define for each $x \in D_{T}(x_i)$
\begin{align}\label{eq: c1 h' hdT}
 c(x_i x ):= 1 \text{ and } h'(x_i) := |\he\cap D_T(x_i)|.
 \end{align}
If $h(x_i)>0$, then we want to colour the edges $x_iy\in S_T(x_i)$
with colour $1$
if either $|T(y)|$ is large, $h'(y)$ is large, or $|D_T(y)\cap \he|$ is large.
Exactly for this purpose, we define sets $A_i,B_i,B_i'$ below.
To this end, let
$\{y_1,\dots, y_{s}\}:= \{y \in H_i^{h(x_i)-1} : h'(u)>0 \}$ with $h'(y_1)\geq  \dots \geq h'(y_s)$, and 
let $z_1,\dots, z_{ s' }$ be ordering of $D_T(x_i)\setminus \lea$ such that $|\he\cap D_T(z_j)| \geq |\he\cap D_{T}(z_{j'})|$ for $1\leq j < j' \leq s'$. 
%In what follows we define three sets $A_i,B_i,B_i'$ that identify subtrees for which we will colour the edge in $S_T(x_i)$ which joins them with $x_i$ with colour $1$ and thus embed this edge into $R$.
Let 
\begin{align}\label{eq: Ai Bi def}
A_i&:= \{ x \in D_{T}(x_i) \colon  |T(x) | > (np')^{-1} |T(x_i)| \}, 
&B_i&:= \left\{ \begin{array}{ll}
\{ y_{i'} \colon 1 \leq i' \leq 2np' \} & \text{ if } s> 5np' \\
\{ y_{i'}\colon 1\leq i' \leq s\}    & \text{ if } s\leq 5np',\\
\end{array}\right. \nonumber \\
B'_i&:=  \{z_{i'}\colon 1\leq i'\leq \min\{s',np'\} \}. & &
\end{align}
Now we define $h'(x_i)$ and the edge colouring $c$ on $S_T(x_i)$.
For each $x\in D_T(x_i)$, let
\begin{align}\label{eq: c def}
c(x_i x)&:= \left\{ \begin{array}{ll}
1 & \text{if } x\in \lea \text{ and } |\lea\cap D_T(x_i)| \leq np', \\
1 & \text{if } x\in A_i \cup B_i \cup B'_i \cup \bigcup_{\ell\colon |H_i^{\ell}| \leq  10np'  } (H_i^\ell\setminus \lea), \\
%2 & \text{if } x\in \lea \text{ and } |\lea\cap D_T(x_i)| > np,\\
2 & \text{otherwise}
\end{array}\right.
\end{align}
and we also define 
\begin{align}\label{eq: h' def}
h'(x_i) := \max\{ h'(x)\colon x\in H_i^{h(x_i)-1}, c(x_i x) = 2\}.
\end{align}
%Note that $c$ is well-defined since if $|\lea\cap D_T(x_i)| > np$, then $\lea$ is disjoint from $A_i \cup B_i \cup B'_i  \cup \bigcup_{\ell :  |H_i(\ell)| \leq  4np  } (H_i(\ell)\setminus \lea)$.
By repeating the above for each $i=n,\dots,1$, we obtain edge colouring $c$, functions $h$ and $h'$.  
We continue with observations regarding $c, h$ and $h'$ for later use.
By the definition of $h'$ and $\he$, if a vertex has one heavy leaf child, then it has at least $np'/\log n$ heavy children. Thus for any $x\in V(T)$, we have
\begin{align}\label{eq: h' value range}
h'(x) \in \{0\} \cup \Big[ \frac{np'}{\log{n}} , n\Big].
\end{align}
Since $|T(x_i)| \geq \sum_{x\in A_i} |T(x)| > |A_i| (np')^{-1}|T(x_i)|$, we conclude that
\begin{align}\label{eq: Ai size}
|A_i| < np'.
\end{align}
Moreover, if $c(x_ix)=2$ for some $x\in D_T(x_i)$, then $x \notin A_i$. 
Thus we have
\begin{equation}\label{eq: under red}
\begin{minipage}[c]{0.8\textwidth}\em
if $x\in D_{T}(x_i)$ with $c(x_i x)=2$, then $|T(x)| < (np')^{-1} |T(x_i)|.$ 
\end{minipage}
\end{equation}
For each $i\in [2]$, we define
$$F^i:= \{ e\in T: c(e)=i\}.$$
Let $C^1$ be the component of $F^1$ which contains the root $x_1$. We collect further properties of $h,h'$ and $c$.
\begin{claim}\label{cl: h-value}
Suppose $y\in D_{T}(x_i)$ for some $i\in [n]$. 
Suppose $h(x_i)= \ell>0$ and $c(x_iy)=2$, 
then $h(y)<\ell$. 
Moreover, $D_T(x_i)$ contains at least $np'$ vertices $y$ such that $c(x_iy)=2$ and $h(y) = \ell-1$.
\end{claim}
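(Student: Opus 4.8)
The claim has two parts. For the first part, suppose $h(x_i)=\ell>0$ and $c(x_iy)=2$ for some $y\in D_T(x_i)$. I would argue by contradiction: assume $h(y)\geq\ell$, so $y\in H_i^{\ell'}$ for some $\ell'\geq\ell$. If $\ell'\geq\ell+1$, then by definition of $h(x_i)$ in \eqref{eq: h def} (specifically the last case, since $h(x_i)>0$), we would need $|H_i^{\ell'}|>10np'$ to be \emph{false} for all $\ell'\geq h(x_i)=\ell$ — wait, more carefully: $h(x_i)=\max\{\ell'+1:|H_i^{\ell'}|>10np'\}$, so $|H_i^{\ell'}|\leq 10np'$ for all $\ell'\geq \ell$. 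The key observation is then that if $y\in H_i^{\ell'}$ with $\ell'\geq\ell$ and $\ell'$ is such that $|H_i^{\ell'}|\leq 10np'$ and $y\notin\lea$, then by the second line of \eqref{eq: c def} we have $c(x_iy)=1$ (as $y\in H_i^{\ell'}\setminus\lea\subseteq\bigcup_{\ell:|H_i^\ell|\leq 10np'}(H_i^\ell\setminus\lea)$), contradicting $c(x_iy)=2$. I also need to rule out $y\in\lea$: a leaf has $h(y)=0$ by \eqref{eq: leaf h h' def}, so $h(y)=0<\ell$ already, handling that subcase (and indeed a leaf $y$ with $c(x_iy)=2$ means $|\lea\cap D_T(x_i)|>np'$, which is consistent). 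So $h(y)<\ell$.

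For the ``moreover'' part, I need to show $|\{y\in D_T(x_i): c(x_iy)=2,\ h(y)=\ell-1\}|\geq np'$. Since $h(x_i)=\ell>0$, by definition either $|\lea\cap D_T(x_i)|>np'$ (if $\ell=1$), or $|H_i^{\ell-1}|>10np'$ (from the last case of \eqref{eq: h def}, since $\ell=h(x_i)=\max\{\ell'+1:|H_i^{\ell'}|>10np'\}$ forces $|H_i^{\ell-1}|>10np'$). I would treat these two scenarios. In the second scenario, $|H_i^{\ell-1}|>10np'$, and I need to count how many of these get colour $2$. The vertices in $H_i^{\ell-1}$ that receive colour $1$ are those in $A_i\cup B_i\cup B_i'$ (the $H_i^\ell\setminus\lea$ terms in \eqref{eq: c def} don't apply here since $|H_i^{\ell-1}|>10np'$), plus possibly leaves in $H_i^{\ell-1}$ that are coloured $1$ — but leaves have $h$-value $0$, so if $\ell-1\geq 1$ there are no leaves in $H_i^{\ell-1}$; if $\ell=1$ then $H_i^0$ may contain leaves. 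I'd bound $|A_i|<np'$ by \eqref{eq: Ai size}, $|B_i|\leq 2np'$ by \eqref{eq: Ai Bi def}, and $|B_i'|\leq np'$ by \eqref{eq: Ai Bi def}. So at most $4np'$ vertices of $H_i^{\ell-1}$ get colour $1$ from $A_i\cup B_i\cup B_i'$, leaving more than $10np'-4np'=6np'\geq np'$ vertices $y\in H_i^{\ell-1}$ with $c(x_iy)=2$; and by the first part each such $y$ has $h(y)<\ell$, but I actually need $h(y)=\ell-1$ exactly — this follows since $y\in H_i^{\ell-1}$ means $h(y)=\ell-1$ by definition of $H_i^{\ell-1}$.

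For the $\ell=1$ scenario where instead $|\lea\cap D_T(x_i)|>np'$ but possibly $|H_i^{\ell}|\leq 10np'$ for all $\ell\geq 1$: here $\ell-1=0$, and I need $np'$ children $y$ with $c(x_iy)=2$ and $h(y)=0$. Leaves have $h$-value $0$. A leaf $x\in D_T(x_i)$ gets $c(x_ix)=1$ only if $|\lea\cap D_T(x_i)|\leq np'$ (first line of \eqref{eq: c def}); since we're assuming $|\lea\cap D_T(x_i)|>np'$, that clause fails, and a leaf is not in $A_i$ (as $|T(x)|=1$ is not $>(np')^{-1}|T(x_i)|$ for $|T(x_i)|$ large), not in $B_i$ (which consists of $y$ with $h'(y)>0$, but leaves have $h'=0$), and could be in $B_i'$? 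No — $B_i'\subseteq D_T(x_i)\setminus\lea$ by \eqref{eq: Ai Bi def}. And leaves are not in any $H_i^\ell\setminus\lea$. So every one of the $>np'$ leaves in $D_T(x_i)$ receives colour $2$, and each has $h$-value $0=\ell-1$, giving the bound.

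The main obstacle I anticipate is bookkeeping the interaction between the three defining cases of $h(x_i)$ in \eqref{eq: h def} and correctly identifying, in each regime, exactly which subset of $\bigcup_{\ell:|H_i^\ell|\leq 10np'}(H_i^\ell\setminus\lea)$ in \eqref{eq: c def} is active — in particular being careful that when $|H_i^{\ell-1}|>10np'$ this term does \emph{not} pull vertices of $H_i^{\ell-1}$ into colour $1$, so that only $A_i,B_i,B_i'$ (and, in the $\ell=1$ case, leaves via the first clause) matter. Once that is pinned down, the counting is routine using \eqref{eq: Ai size} and the explicit size bounds in \eqref{eq: Ai Bi def}.
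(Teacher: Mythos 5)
Your argument is correct and follows essentially the same route as the paper: the first part reads off from \eqref{eq: h def} and \eqref{eq: c def} that a non-leaf child $y$ with $h(y)\geq h(x_i)$ lies in some $H_i^{\ell'}$ with $|H_i^{\ell'}|\leq 10np'$ and hence gets colour $1$, and the second part splits into the leaf-heavy case ($\ell=1$, $|\lea\cap D_T(x_i)|>np'$, where $\lea\cap(A_i\cup B_i\cup B_i')=\emptyset$) and the case $|H_i^{\ell-1}|>10np'$, exactly as in the paper. The only slip is the bound $|B_i|\leq 2np'$ — from \eqref{eq: Ai Bi def} one only gets $|B_i|\leq 5np'$ (when $s\leq 5np'$) — but together with the at most $np'$ leaves you flagged, the count $10np'-(np'+5np'+np'+np')\geq np'$ still closes.
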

\begin{proof}
By \eqref{eq: h def} and \eqref{eq: c def},
we have $c(x_i y)=2$ only if $h(y) < h(x_i)=\ell$.

For the second part of the claim, we first consider the case that $h(x_i)=1$ and $|\lea\cap D_T(x_i)|>np'$.
By \eqref{eq: leaf h h' def} and \eqref{eq: Ai Bi def}, we have $\lea \cap (A_i\cup B_i \cup B_i') =\emptyset$. Thus \eqref{eq: leaf h h' def} and \eqref{eq: c def} imply that the edges $x_i y'$ for all $y'\in\lea\cap D_T(x_i)$ are coloured $2$ and $h(y')=h(x_i)-1$, and the claim holds.

If $|\lea\cap D_T(x_i)|\leq np'$ or $h(x_i)>1$, then \eqref{eq: h def} implies that $|H_i^{\ell-1}|> 10np'$.  Hence
$$|H_i^{\ell-1}\setminus (A_i\cup B_i\cup B'_i \cup \lea)| \stackrel{\eqref{eq: Ai Bi def},\eqref{eq: Ai size}}{\geq} 10np'-(np'+5np'+np'+ np') \geq np'.$$ 
Since \eqref{eq: c def} implies $c(x_i y')=2$ for each $y'\in H_i^{\ell-1}\setminus (A_i\cup B_i\cup B'_i\cup \lea )$, there are at least $np'$ vertices $y'$ such that $c(xy')=2$ and $h(y')=\ell-1$.
\end{proof}
%For the second part of the claim, we first consider the case $\ell = h(x_i) >1$.
%In this case, \eqref{eq: h def} implies that $|H_i^{\ell-1}|> 10np'$.  Hence
%$$|H_i^{\ell-1}\setminus (A_i\cup B_i\cup B'_i )| \stackrel{\eqref{eq: Ai Bi def},\eqref{eq: Ai size}}{\geq} 10np'-(np'+5np'+np') \geq np'.$$ 
%Since \eqref{eq: c def} implies that for each $y'\in H_i^{\ell-1}\setminus (A_i\cup B_i\cup B'_i )$, we have $c(x_i y')=2$, thus there are at least $np$ vertices $y'$ such that $c(xy')=2$ and $h(y')=\ell-1$.

%Suppose that $\ell=1$. Then by \eqref{eq: h def}, either $|\lea\cap D_T(x_i)|>np'$ or $|H_i^{0}|\geq 10 np'$.
%In the first case, by \eqref{eq: leaf h h' def} and \eqref{eq: Ai Bi def}, we have $\lea \cap (A_i\cup B_i) =\emptyset$. Thus \eqref{eq: c def} implies that the edges $x_i y'$ for all $y'\in\lea\cap D_T(x_i)$ are coloured 2, and  the claim holds.
%In the second case, there are at least $9np$ vertices $y'\in D_{T}(x_i)\setminus \lea$ with $h(y')=0$, again the similar argument as above shows that there are at least $np$ vertices $y'$ in $H_i^{\ell-1}\setminus (A_i\cup B_i\cup B'_i )$ with $c(x_iy')=2$. Thus the claim holds.

\begin{claim}\label{cl: h'-value}
Suppose $i\in [n]$, $h'(x_i) >0$ and $h(x_i)=\ell>0$.
Then $D_{T}(x_i)$ contains at least $np'$ vertices $y$ with $c(x_iy)=1$ as well as $h'(y)\geq h'(x_i)$ and $h(y) = \ell-1$.
Moreover, $D_T(x_i)$ contains at least $np'$ vertices $y$ with $c(x_iy)=2$ as well as $h'(y)\geq {np'}/{\log{n}}$ and $h(y)=\ell-1$.
\end{claim}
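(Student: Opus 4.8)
The plan is to unwind the recursive definitions \eqref{eq: leaf h h' def}--\eqref{eq: h' def} of $h$, $h'$ and $c$ at the vertex $x_i$, and to show that the hypothesis $h'(x_i)>0$ already forces both $H_i^{\ell-1}$ and the auxiliary set $\{y_1,\dots,y_s\}$ of \eqref{eq: Ai Bi def} (hence $B_i$) to be large.

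First I would produce a witness for $h'(x_i)$. Since $h(x_i)=\ell>0$, \eqref{eq: h' def} reads $h'(x_i)=\max\{h'(x):x\in H_i^{\ell-1},\ c(x_ix)=2\}$, so from $h'(x_i)>0$ I obtain $w\in H_i^{\ell-1}$ with $c(x_iw)=2$ and $h'(w)=h'(x_i)>0$. By \eqref{eq: leaf h h' def}, $w\notin\lea$, so the first line of \eqref{eq: c def} is inapplicable, and $c(x_iw)=2$ forces $w\notin A_i\cup B_i\cup B_i'\cup\bigcup_{\ell'\colon |H_i^{\ell'}|\le 10np'}(H_i^{\ell'}\setminus\lea)$; since $w\in H_i^{\ell-1}\setminus\lea$, the last term forces $|H_i^{\ell-1}|>10np'$. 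Moreover $w$ is a non-leaf child of $x_i$ in $H_i^{\ell-1}$ with $h'(w)>0$, so $w$ is one of $y_1,\dots,y_s$ in the notation of \eqref{eq: Ai Bi def}; as $w\notin B_i$, the case $s\le 5np'$ is impossible (it would give $B_i=\{y_1,\dots,y_s\}\ni w$), hence $s>5np'$, $B_i=\{y_1,\dots,y_{2np'}\}$, and $w=y_{j_0}$ for some $j_0>2np'$.

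For the first assertion I would take $y_1,\dots,y_{2np'}$: these lie in $B_i\subseteq D_T(x_i)$ and are non-leaves, so \eqref{eq: c def} gives $c(x_iy_{i'})=1$; they lie in $H_i^{\ell-1}$, so $h(y_{i'})=\ell-1$; and by the ordering $h'(y_1)\ge\cdots\ge h'(y_{2np'})\ge h'(y_{j_0})=h'(x_i)$. This gives $2np'\ge np'$ vertices of the required kind. For the ``moreover'' part I would take the coloured-$2$ vertices among $y_{2np'+1},\dots,y_s$: each such $y_j$ is a non-leaf of $H_i^{\ell-1}$ with $|H_i^{\ell-1}|>10np'$ lying outside $B_i$, so by \eqref{eq: c def} it satisfies $c(x_iy_j)=2$ unless $y_j\in A_i\cup B_i'$; since $|A_i|<np'$ by \eqref{eq: Ai size} and $|B_i'|\le np'$ by \eqref{eq: Ai Bi def}, at least $(s-2np')-np'-np'=s-4np'>np'$ of them have $c(x_iy_j)=2$. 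Each such $y_j$ lies in $H_i^{\ell-1}$, so $h(y_j)=\ell-1$, and has $h'(y_j)>0$, so $h'(y_j)\ge np'/\log n$ by \eqref{eq: h' value range}.

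I expect the only point of care to be the first paragraph, i.e.\ deducing from the single inequality $h'(x_i)>0$ the two structural facts $|H_i^{\ell-1}|>10np'$ and $s>5np'$ (equivalently $|B_i|=2np'$); once these are in hand both counts are immediate, and there are no estimates of substance, only careful case-reading of \eqref{eq: c def}.
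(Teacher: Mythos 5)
Your proof is correct and follows essentially the same route as the paper's: deduce $s>5np'$ (hence $|B_i|=2np'$) from $h'(x_i)>0$, take $B_i$ for the colour-$1$ vertices and $\{y_{2np'+1},\dots,y_s\}\setminus(A_i\cup B_i')$ for the colour-$2$ ones. If anything, your first paragraph is slightly more careful than the paper in making explicit that the witness $w$ with $c(x_iw)=2$ forces $|H_i^{\ell-1}|>10np'$, which is needed to read off $c(x_iy_j)=2$ from \eqref{eq: c def} in the ``moreover'' part.
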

\begin{proof}
Let $\{y_1,\dots, y_s\}:= \{ y \in H_i^{\ell-1}\colon h'(y )>0\}$ with $h'(y_1)\geq \dots \geq h'(y_s)$. 
By \eqref{eq: Ai Bi def}, \eqref{eq: c def} and \eqref{eq: h' def},
the assumption that $h'(x)>0$ implies that $B_i\subsetneq \{y_1,\dots, y_s\}$ and so $s> 5np'$. 
Hence
\begin{eqnarray*}%\label{eq: s-Ai size}
|\{y_{1},\dots, y_{s}\} \setminus (A_i \cup B_i \cup B'_i) |
=|\{y_{2np'+1},\dots, y_{s}\} \setminus (A_i \cup B'_i) | 
\geq 3np' - | (A_i \cup B'_i) | \stackrel{\eqref{eq: Ai Bi def},\eqref{eq: Ai size} }{\geq} np'.
\end{eqnarray*}
Note that by \eqref{eq: Ai Bi def} and \eqref{eq: h' def}, for any $j'\in [2np']$, we have 
$$c(xy_{j'})=1 \text{ and }
h'(x_i) = \max\{ h'(y_j): 2np<j\leq s, y_{j} \notin A_i \cup B_i' \} \leq h'(y_{j'}).$$
Thus, there are at least $2np'$ vertices $y$ such that $c(x_iy)=1$, $h'(y)\geq h'(x_i)$ and $h(y) = \ell-1$.
Furthermore,
for any $y' \in \{y_{2np'+1},\dots, y_{s}\} \setminus (A_i\cup B_i') $, 
we have $c(xy')=2$, $h'(y')>0$ and $h(y')=\ell-1$. 
This with \eqref{eq: h' value range} implies $h'(y') \geq {np'}/{\log{n}}$ and so proves the claim.
\end{proof}

\begin{claim}\label{cl: height and h}
Suppose $T'\in C(F^2)$.
Then
for any vertex $y \in V(T')$, 
the tree $T'(y)$ has height $h(y)$ and $|D_{T'}^{h(y)}(y) | \geq (np')^{h(y)}$. 
In particular,
the height of $T'$ equals $\ell=h(r(T'))$ 
and $|D_{T'}^{\ell}(r(T'))| \geq (np')^{ \ell }$.  
Moreover, for any $y \in V(T)$, we have $0\leq h(y) \leq k$.
\end{claim}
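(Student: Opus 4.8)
The plan is to prove the first two sentences by strong induction on $h(y)$, ranging over vertices $y\in V(T')$, and then to deduce the bound $h(y)\le k$ from the resulting lower bound on the number of bottom-level descendants of $y$ together with the size of $np'$.

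\textbf{Preliminaries on $T'$.} Since $F^2$ is a subforest of $T$, the component $T'\in C(F^2)$ is a subtree of $T$; hence distances inside $T'$ agree with distances in $T$, the root $r(T')$ is the vertex of $T'$ closest to $x_1$, and for every $y\in V(T')$ the set of children of $y$ in $T'$ equals $\{z\in D_T(y):c(yz)=2\}$, so in particular $D^{\ell}_{T'}(y)\subseteq D^{\ell}_T(y)$ for every $\ell$. I would record these facts first and then handle the base case: if $h(y)=0$ then $y$ has no child joined to it by a colour-$2$ edge (by~\eqref{eq: c1 h' hdT}, or vacuously if $y$ is a leaf), hence $y$ has no child in $T'$, so $T'(y)=\{y\}$ has height $0=h(y)$ and $|D^{0}_{T'}(y)|=1=(np')^{0}$.

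\textbf{Inductive step.} Suppose $h(y)=\ell>0$ and the statement holds for all vertices of $T'$ with smaller $h$-value. By Claim~\ref{cl: h-value}, every colour-$2$ child $z$ of $y$ satisfies $h(z)<\ell$, and at least $np'$ of them satisfy $h(z)=\ell-1$; all of these lie in $V(T')$ since they are joined to $y\in V(T')$ by a colour-$2$ edge. Applying the induction hypothesis to the children of $y$ in $T'$ shows, on one hand, that the height of $T'(y)$ equals $1+\max_z h(z)=1+(\ell-1)=\ell=h(y)$; on the other hand, $D^{\ell}_{T'}(y)=\bigcup_{z}D^{\ell-1}_{T'}(z)$ is a disjoint union over the children $z$ of $y$ in $T'$, and keeping only the at least $np'$ children with $h(z)=\ell-1$, each of which contributes $|D^{\ell-1}_{T'}(z)|\ge(np')^{\ell-1}$, gives $|D^{\ell}_{T'}(y)|\ge np'\cdot(np')^{\ell-1}=(np')^{\ell}$. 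The ``in particular'' statement then follows by taking $y=r(T')$: since $T'$ has an edge, its height is at least $1$, so $h(r(T'))\ge1$, and $T'(r(T'))=T'$.

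\textbf{The bound $h(y)\le k$ and the main obstacle.} Clearly $h(y)\ge0$. If $h(y)=\ell\ge1$, then by Claim~\ref{cl: h-value} the vertex $y$ has at least $np'$ colour-$2$ children and so lies in some $T'\in C(F^2)$; by the part already proved, $n=|V(T)|\ge|D^{\ell}_{T'}(y)|\ge(np')^{\ell}$. Since $p'=M_*^6p$ and $p\ge n^{-k/(k+1)}$ by~\eqref{eq: def Delta p}, we have $np'\ge M_*^6 n^{1/(k+1)}$, hence $(np')^{\ell}\ge M_*^{6\ell}n^{\ell/(k+1)}$, which exceeds $n$ whenever $\ell\ge k+1$ (as $M_*\gg1$), a contradiction; therefore $\ell\le k$. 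The only point requiring care in the whole argument is the bookkeeping in the inductive step: one must verify that the colour-$2$ children of $y$ are again vertices of the \emph{same} component $T'$ (so that the induction hypothesis applies to them and their level-$(\ell-1)$ descendants are counted inside $T'$), and that $D^{\ell}_{T'}(y)$ decomposes as the claimed disjoint union; everything else is a short computation.
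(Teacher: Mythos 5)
Your proof is correct and follows essentially the same route as the paper's: induction on $h(y)$, using Claim~\ref{cl: h-value} both to bound the $h$-values of the colour-$2$ children from above and to produce at least $np'$ children with $h$-value exactly $\ell-1$, and then deducing $h\le k$ from $(np')^{\ell}\le n$ together with $np'\ge n^{1/(k+1)}$. Your explicit handling of the case $h(y)\ge 1$ in the "moreover" part (noting that such a $y$ necessarily lies in a component of $F^2$) is a point the paper glosses over, but it is the same argument.
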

\begin{proof}
Suppose $y\in V(T')$.
We proceed by induction on $\ell' :=h(y)$. 
If $\ell' =0$, 
then by \eqref{eq: h def} and \eqref{eq: c def}, for every vertex $z\in D_T(y)$, we have $c(yz)=1$. 
Thus $T'(y)$ has height $0$ and $|D_{T'}^{0}(y)|= |\{ y\}|= 1 = (np')^{0}$. 
Hence the statement holds for $\ell' =0$.

Assume that the claim holds for $\ell'-1 \geq 0$. 
Suppose $h(y)=\ell'$. 
Then Claim~\ref{cl: h-value}  ensures that any $z \in D_{T'}(y)$ satisfies $h(z)\leq \ell'-1$. 
This with the induction hypothesis shows that height of $T'(y)$ is at most $\ell'$.
Moreover, by Claim~\ref{cl: h-value}, there are at least $np'$ vertices $y_1,\dots, y_{np'}\in D_{T'}(y)$ which satisfy $h(y_i)= \ell'-1$ for $i\in [np']$. 
Thus by the induction hypothesis, $T'$ has height at least $\ell'$, and 
$|D_{T'}^{\ell'}(y)| \geq \sum_{i=1}^{np'} |D_{T'}^{\ell'-1}(y_i)|\geq (np')(np')^{\ell'-1} = (np')^{\ell'}$. 
This proves the first part of Claim~\ref{cl: height and h}.
Moreover, 
for any $y'\in V(T)$,
we conclude that
$$n^{h(y')/(k+1)} \stackrel{\eqref{eq: def Delta p}}{\leq} (np')^{h(y')}\leq |D_{T'}^{h(y') }(y') |    \leq n-1.$$
Thus $0\leq h(y')\leq k$.
\end{proof}

If there are many heavy leaves attached to a component of $F^2$, 
then the $h'$-value of a vertex in $V(F^2)$ is high.
Claim~\ref{cl: h'-value} shows that there are also many heavy leaves attached to a nontrivial component of $F^1$.
To better describe this phenomena later in \ref{F12}, we define the following vertex set $B^*$ and star-forests $L_1, \widehat{L}_2, \widehat{L}_3$ (recall that $C^1$ is the component in $F^1$ which contains $x_1$):
\begin{align}
B^*&:= \bigcup_{T'\in C(F^2) } D_{T'}^{k} (r(T')), & L_1 &:= \bigcup_{x\in B^*} \{ xy : y \in D_T(x) \cap \he\}, \label{eq: def B* L1}\\
\widehat{L}_2 &:= \bigcup_{x\in V(C^1)}\{ xy : y \in D_T(x)\cap \he \}, \text{ and} &
\widehat{L}_3 &:= \bigcup_{x \in V(T)\setminus (B^*\cup C^1)} \{x y : y \in D_T(x) \cap \he \}. \label{eq: def hat L2 L3}
\end{align}
Note that $B^*$ and $V(C^1)$ are disjoint, as for any $x\in B^*$ and $y \in V(C^1)$, the path
$P_T(x_1,x)$ contains exactly $k>0$ edges of colour $2$ and $P_T(x_1,y)$ contains no edge of colour $2$. 
As every edge in $L_1, \widehat{L}_2$ and $\widehat{L}_3$ is incident to a leaf, 
the star-forests $L_1, \widehat{L}_2, \widehat{L}_3$ are pairwise vertex-disjoint.

Observe that by Claim~\ref{cl: height and h}, all edges $e\in L_1$ satisfy $c(e)=1$.
For each $i\in \{2,3\}$, we partition the edges in $\widehat{L}_i$ into two sets $L_i$ and $L'_i$, 
in such a way that the following holds for all $x\in V(T)\setminus B^*$:
\begin{align}\label{eq: L3 prop}
S_{L_i}(x)\cap F^1 = S_{\widehat{L}_i}(x) \cap F^1 \enspace \text{and} \enspace
\frac{1}{3} |S_{\widehat{L}_i}(x) \cap F^2| \leq |S_{L_i}(x)\cap F^2| \leq \frac{1}{2}|S_{\widehat{L}_i}(x) \cap F^2|.
\end{align}
This is possible as \eqref{eq: h' value range} implies 
that either  $|S_{\widehat{L}_i}(x)| = 0$ or $|S_{\widehat{L}_i}(x)| \geq np'/\log{n} \geq 2$,\COMMENT{As long as the number of heavy leaf-edge incident to $x$ with colour $2$ is not 1, we can do the partition as above.} 
and by \eqref{eq: c def} we have $c(e)=c(e')$ for all $e,e'\in S_{\widehat{L}_i}(x)$.
Then it is easy to see that for each $i\in \{2,3\}$ and $x\in V(T)\setminus B^*$, 
we have $d_{L'_i}(x)/2 \leq d_{L_i}(x)$ and $d_{\widehat{L}_i}(x)/3 \leq d_{L_i}(x)$. 
Thus we have
\begin{align}\label{eq: L4 size}
|L_i|\geq \frac{1}{3}|\widehat{L}_i|.
\end{align}
As $L_1, \widehat{L}_2, \widehat{L}_3$ are pairwise vertex-disjoint star-forests and $L_i \subseteq \widehat{L}_i$ for each $i\in \{2,3\}$, 
the star-forests $L_1, L_2, L_3$ are vertex-disjoint 
such that $ L( L_1\cup L_2 \cup L_3) \subseteq \he.$

Note that as $C^1$ will be embedded into the random graph $R$, 
so the images of the centre-vertices of $L_2$ will possess very strong `quasi-random' properties.
Thus, if $|L_2|$ is sufficiently large, the images centre vertices of $L_2$ carry enough `quasi-randomness' to apply later Lemma~\ref{lem: p matching} 
for the sake of embedding $L_2$ at the end of algorithm. 
Indeed, by using \eqref{eq: enough heavy leafs} with property \ref{F12} in Claim~\ref{cl: properties}, 
we can guarantee that $|L_2|$ is never too small. 
To show this, we use some relations among $L_1, L_2$ and $L_3$. 
The only purpose we defined $L_2',L_3'$ is to avoid making the trees in $F'_i$ too `unsymmetrical'.

Recall that, as stated in~\eqref{eq: enough heavy leafs}, we assume that $T$ has at least $4\eta n$ heavy leaves. 
We will adapt our analysis of $T$ according to the following two cases.
\newline

\noindent{\bf CASE 1.} $|L_1| \geq \eta n$.

\noindent
In this case, we let $\last:= L_2$.  \newline

\noindent{\bf CASE 2.} $|L_1| < \eta n$.

\noindent
In this case, we let $\last:= L_2 \cup L_3$.  \newline

Observe that in Case 2, \eqref{eq: enough heavy leafs} and \eqref{eq: L4 size} imply that $|L_2\cup L_3|\geq \eta n$.

Let $$\LA^*:= L(\last)$$ be the set of all leaf-vertices of $\last$.
The vertices in $\LA^*$ (and so the edges on $\last$) will be embedded the final round of our embedding algorithm.

Next, we partition $T-(L_1\cup \last)$ into edge-disjoint forests $F_0,\ldots,F_{k+1}$ and $F_0',\ldots,F_{k}'$ based on the colouring $c$. 
For every edge $xx' \in E(T)$ with $x'\in D_T(x)$, let
$$f(xx'):= |\{e \in E( P_T(x_1,x') ): c(e)=2 \}|.$$
We claim that for any $e=xx' \in E(T)$, we have
\begin{align}\label{eq: f bound k}
f(e)\leq k.
\end{align}
Indeed,
suppose $P_T(x_1,x')$ contains exactly $s=f(xx')$ edges which are coloured by $2$.
Then~\eqref{eq: under red} implies that  $1\leq |T(x')| < (np')^{-s} |T(x_1)| \leq n^{1-s} p'^{-s}$.
As \eqref{eq: def Delta p} implies $np' \geq n^{1/(k+1)}$, we obtain \eqref{eq: f bound k}.
For each $i\in [k+1]$ and $i'\in [k]$, 
let 
\begin{align}\label{eq: def Fi F'i}
F_i := \{ e \in E(F^1)\setminus (L_1\cup \last) \colon f(e) =i-1\} \enspace \text{and} \enspace
F'_{i'} := \{ e \in E(F^2)\setminus (L_1\cup \last) \colon f(e) =i'\}.
\end{align}
By \eqref{eq: f bound k}, $F_1,\dots, F_{k+1}, F'_1,\dots, F'_k, \last, L_1$ form a partition of $E(T)$.
 It will be convenient to define $V(F_0):=\{ x_1\}$ with $E(F_0)=\emptyset$ and $R(F_0):=\emptyset$ as well as $F_0':=\es$.
Moreover,
let 
$$F' := \bigcup_{i\in [k]} F'_i  \enspace\text{ and }  \enspace  F^{\#}:= F'\cup \last.$$
Observe that $F^2\sub F^\#$ and $F^\#$ may contains some edges in $L_2\cup L_3$ of colour $1$.
We also want to classify the components of $F'$ and $F^{\#}$ according to where their roots attach.
For each $i\in [k+1]$, let
$H_i(F')$ be the components of $F'$ whose root lies in $V(F_i)$
and let $H_i(F^\#)$ be similarly defined; that is,
\begin{align*}
H_i(F'):=\{ T'\in C(F')\colon r(T') \in V(F_i)\} \enspace\text{and}\enspace H_i(F^{\#}):=\{ T'\in C(F^{\#})\colon r(T') \in V(F_i)\}.
\end{align*}

For a vertex $y\in \Lambda^*$, if $x:=a_T(y)$ belongs to $V(F_i)\setminus R(F_i)$, then it is easier to use Lemma~\ref{lem: p matching} as $x$ is randomly embedded using the random graph $R$ (as opposed to the case when $x a_T(x)$ is embedded into an edge of $G'$). 
We distinguish such vertices as $\LA_i$ as follows and we partition $\Lambda^*$ into $\LA_1^*,\dots,\LA_{k+1}^*$ as follows.
For each $i\in [k]$, let
\begin{align*}
	\LA^*_i :=  \LA^* \cap \bigcup_{T'\in H_i(F^{\#}) } L(T') \enspace \text{ and }  \enspace 
	\LA_i:= \LA^* \cap  \bigcup_{y\in V(F_i)\setminus R(F_i) } D_{\last}(y) 
\end{align*}

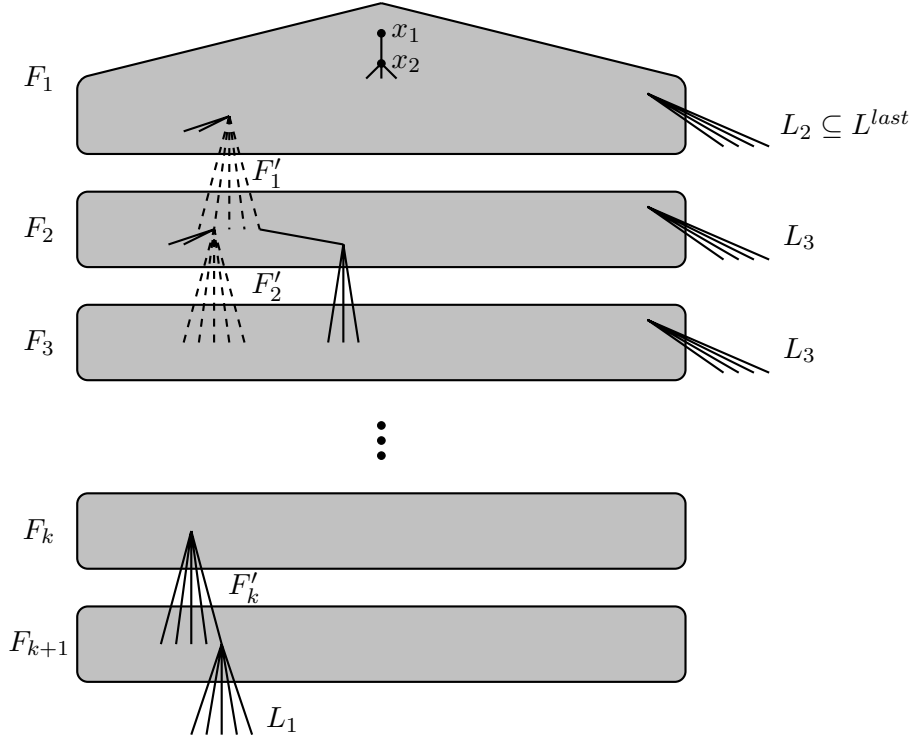
\begin{figure}[t]
\centering
\begin{tikzpicture}

\draw[rounded corners,thick,fill opacity=0.6,fill=black!40]
(0,10)--(4,9)--(4,8)--(-4,8)--(-4,9)--(0,10);

\draw[rounded corners,thick,fill opacity=0.6,fill=black!40] (-4,6.5) rectangle (4,7.5);

\draw[rounded corners,thick,fill opacity=0.6,fill=black!40] (-4,5.0) rectangle (4,6.0);

\draw[rounded corners,thick,fill opacity=0.6,fill=black!40] (-4,2.5) rectangle (4,3.5);

\draw[rounded corners,thick,fill opacity=0.6,fill=black!40] (-4,1.0) rectangle (4,2.0);

\node (v1) at (-4.5,9) {$F_1$};
\node (v1) at (-4.5,7) {$F_2$};
\node (v1) at (-4.5,5.5) {$F_3$};
\node (v1) at (-4.5,3) {$F_{k}$};
\node (v1) at (-4.5,1.5) {$F_{k+1}$};

\node (v1) at (-1.5,7.75) {$F_1'$};
\node (v1) at (-1.5,6.25) {$F_2'$};
\node (v1) at (-1.8,2.25) {$F_k'$};

%\node (v1) at (-0.7,7.05) {$\in H_1(F')$};

\node (v1) at (6.1,8.4) {$L_2\subseteq \last$};
\node (v1) at (-1.3,0.5) {$L_1$};
\node (v1) at (5.5,6.9) {$L_3$};
\node (v1) at (5.5,5.4) {$L_3$};

\draw[fill] (0,4) circle (0.05);
\draw[fill] (0,4.2) circle (0.05);
\draw[fill] (0,4.4) circle (0.05);

\draw[fill] (0,9.6) [anchor=west]  node {$x_1$} circle (0.05);
\draw[fill] (0,9.2) [anchor=west]  node {$x_2$} circle (0.05);
\draw[thick] 
(0,9.6)  --(0,9.2)
(0,9.2)-- (0,9.0)
(0,9.2)-- (0.2,9.0)
(0,9.2)-- (-0.2,9)
;

\draw[thick]

(-2,8.5)--(-2.4,8.3)
(-2,8.5)--(-2.6,8.3)

(-2.6,6.8)--(-2.2,7)
(-2.8,6.8)--(-2.2,7)

(-2.5,3)--(-2.5,1.5)
(-2.5,3)--(-2.7,1.5)
(-2.5,3)--(-2.9,1.5)
(-2.5,3)--(-2.3,1.5)
(-2.5,3)--(-2.1,1.5)

(3.5,8.8)--(4.5,8.1)
(3.5,8.8)--(4.7,8.1)
(3.5,8.8)--(4.9,8.1)
(3.5,8.8)--(5.1,8.1)

(3.5,7.3)--(4.5,6.6)
(3.5,7.3)--(4.7,6.6)
(3.5,7.3)--(4.9,6.6)
(3.5,7.3)--(5.1,6.6)

(3.5,5.8)--(4.5,5.1)
(3.5,5.8)--(4.7,5.1)
(3.5,5.8)--(4.9,5.1)
(3.5,5.8)--(5.1,5.1)

(-2.1,1.5)--(-2.1,0.3)
(-2.1,1.5)--(-2.3,0.3)
(-2.1,1.5)--(-2.5,0.3)
(-2.1,1.5)--(-1.9,0.3)
(-2.1,1.5)--(-1.7,0.3)

(-1.6,7)--(-0.5,6.8)
(-0.5,6.8)--(-0.5,5.5)
(-0.5,6.8)--(-0.3,5.5)
(-0.5,6.8)--(-0.7,5.5)
;

\draw[thick,dashed]
(-2,8.5)--(-2,7)
(-2,8.5)--(-2.2,7)
(-2,8.5)--(-2.4,7)
(-2,8.5)--(-1.8,7)
(-2,8.5)--(-1.6,7)

(-2,5.5)--(-2.2,7)
(-2.2,5.5)--(-2.2,7)
(-2.4,5.5)--(-2.2,7)
(-1.8,5.5)--(-2.2,7)
(-2.6,5.5)--(-2.2,7)

;

\end{tikzpicture}
\caption{Illustration of our edge decomposition of $T$.
With dashed lines we indicate (parts of) a component of $H_1(F')$.}\label{fig:estimable}
\end{figure}

We collect further properties of our decomposition of $E(T)$ in the following two claims.

\begin{claim}\label{cl: properties2}
The following statements hold for each $i\in [k]$:
\begin{enumerate}[label={\rm(F0\arabic*)}]
\item\label{F00} The forests $F_1,\ldots,F_{k+1}$ are vertex-disjoint.
\item\label{F01} The forest $F'_i$ is a star-forest that is either empty or each component has size at least $np'/2$.
\item\label{F02}  
$ \displaystyle \Big(\bigcup_{t\in [i]} V(F_{t}) \cup \bigcup_{t\in [i-1]} V(F'_t)\Big)
\cap \Big( \bigcup_{t \in [k+1]\setminus [i]} V(F_{t}) \cup \bigcup_{t \in [k]\setminus[i-1]} V(F'_t)\Big) 
= R(F'_{i}).$
\item\label{F03}  
$\displaystyle \Big(\bigcup_{t\in [i]} V(F_{t}) \cup \bigcup_{t\in [i]} V(F'_t) \Big) 
\cap \Big(\bigcup_{t \in [k+1]\setminus [i]} V(F_{t}) \cup \bigcup_{t\in [k]\setminus [i]} V(F'_t) \Big) 
= R(F_{i+1})\cup R(F'_{i+1}).$
\item\label{F05} For each $T'\in H_i(F') \cup H_i(F^\#)$,
we have $r(T')\in V(F_i)\setminus R(F_i)$.
\end{enumerate}
\end{claim}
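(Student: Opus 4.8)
The plan is to reduce all five items to a single local observation. For a vertex $v$ write $c_v := |\{e \in E(P_T(x_1,v)) : c(e) = 2\}|$. Since the parent edge $a_T(v)v$ has $f$-value $c_v$ and, for a child $w$ of $v$, one has $f(vw) = c_v$ if $c(vw) = 1$ and $f(vw) = c_v+1$ if $c(vw) = 2$, the definitions in \eqref{eq: def Fi F'i} give: among $F_1,\dots,F_{k+1}$ only $F_{c_v+1}$ can meet $v$, and its edges at $v$ are exactly the colour-$1$ edges at $v$ avoiding $L_1\cup\last$; a colour-$2$ parent edge of $v$ avoiding $L_1\cup\last$ lies in $F'_{c_v}$, and a colour-$2$ child edge of $v$ avoiding $L_1\cup\last$ lies in $F'_{c_v+1}$. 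The first statement already yields \ref{F00}. Moreover, two colour-$2$ edges that share the endpoint which is the child in one of them have $f$-values differing by one, so each $F'_i$ is a star-forest whose components are stars centred at the common parent of their edges; this is half of \ref{F01}.

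For the rest of \ref{F01}: if $v$ is a centre of a component of $F'_i$ it has a colour-$2$ child, so $h(v)>0$ by \eqref{eq: h def}--\eqref{eq: c def}, and Claim~\ref{cl: h-value} gives at least $np'$ children $w$ of $v$ with $c(vw)=2$. A vertex of $B^*$ has no colour-$2$ child, since by Claim~\ref{cl: height and h} a vertex at depth $k$ in its $F^2$-component has $h$-value $0$; hence $v\notin B^*$ and none of these edges lies in $L_1$. By \eqref{eq: L3 prop} at most half of the heavy-leaf ones among them were moved into $\last$, which is contained in $L_2\cup L_3$, and none of the non-leaf ones was moved, so at least $np'/2$ of them survive in $F'_i$.

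For \ref{F02}, \ref{F03} and \ref{F05} I would feed the local observation into the set identities, together with the fact that the root $r(T')$ of a component $T'$ of $F'$ (or of $F^{\#}$) is the root of an $F^2$-component, and hence has a colour-$1$ parent edge. In \ref{F02} a vertex $v$ lies in the intersection exactly when $c_v=i-1$ and $v$ is a centre of an $F'_i$-star; its parent edge is then colour $1$ (so in $F_i$) or colour $2$ (so in $F'_{i-1}$), so $v$ also meets the lower side, and the centre of an $F'_i$-star is its root, giving $R(F'_i)$. For \ref{F05}: if $T'\in H_i(F')\cup H_i(F^{\#})$ then $r(T')\in V(F_i)$ by definition, so $c_{r(T')}=i-1$ and the colour-$1$ parent edge of $r(T')$, of $f$-value $i-1$, lies in $F_i$; thus $r(T')$ is not the top vertex of its $F_i$-component, so $r(T')\notin R(F_i)$. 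For \ref{F03} one computes in the same way that a vertex in the intersection must have $c_v=i$, must have a colour-$2$ parent edge (the lower side, via $F'_i$), and must meet $F_{i+1}$ or $F'_{i+1}$ (the upper side); one then matches this against the characterisations of $R(F_{i+1})$ (colour-$2$ parent edge together with a surviving colour-$1$ child edge) and of $R(F'_{i+1})$ (centre of an $F'_{i+1}$-star).

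I expect the bookkeeping in \ref{F03} to be the delicate part: one has to keep careful track of the edges deleted into $L_1\cup\last$ --- a vertex might meet a forest only through a deleted edge, or fail to meet a forest because every incident candidate edge was deleted --- and of the vertices whose parent-edge colour is not forced by $c_v$ alone. The cleanest organisation is a case split on the colour of the parent edge of $v$, on whether $v$ has a colour-$1$ child, and on which incident edges of $v$ survive in $\bigcup_t F_t\cup\bigcup_t F'_t$ and in $\last$; once those cases are enumerated, \ref{F00}--\ref{F05} all drop out.
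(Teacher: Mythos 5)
Your route is the same as the paper's: \ref{F00} and the star-forest statement in \ref{F01} come straight from the definition of $f$, the size bound in \ref{F01} is the paper's computation $d_{F^2}(x)-d_{\last}(x)\geq\tfrac12 d_{F^2}(x)\geq np'/2$ in only slightly different clothing, and \ref{F02}, \ref{F03}, \ref{F05} are $f$-value bookkeeping. Your arguments for \ref{F00}, \ref{F01} and \ref{F02} are correct. For \ref{F05}, note that a component of $F^{\#}$ can be a single $\last$-star whose centre does not lie in $V(F^2)$ at all, so ``$r(T')$ is the root of an $F^2$-component'' is not quite the right justification; the conclusion survives, though, because the parent edge of $r(T')$ cannot have colour~$2$ (it would then lie in $F^{\#}$, contradicting that $r(T')$ is the root of its $F^{\#}$-component) and cannot lie in $L_1\cup\last$ (as $r(T')$ is not a leaf of $T$), so it is a colour-$1$ edge of $f$-value $i-1$ that survives into $F_i$.

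The genuine obstruction sits in the part of \ref{F03} you deferred, namely the inclusion $R(F'_{i+1})\subseteq\bigcup_{t\in[i]}V(F_t)\cup\bigcup_{t\in[i]}V(F'_t)$. A centre $v$ of an $F'_{i+1}$-star has exactly $i$ colour-$2$ edges on $P_T(x_1,v)$, but nothing forces its parent edge to have colour~$2$: if $v$ is itself the root of a component of $F^2$ and the $i$ colour-$2$ edges above it belong to a different $F^2$-component higher up (the bottom-up definition of $h$ does not exclude this, since a vertex with $h=0$ may have descendants of positive $h$-value), then the parent edge of $v$ has colour~$1$ and lies in $F_{i+1}$, so $v$ meets only $F_{i+1}$ and $F'_{i+1}$ and misses the left-hand side of \ref{F03} entirely. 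This is precisely one of the ``vertices whose parent-edge colour is not forced by $c_v$ alone'' that you flag, and it cannot be case-split away; the paper's own suppressed verification silently asserts $c(v\,a_T(v))=2$ at the same spot. Everything the paper later extracts from \ref{F02}--\ref{F03}, however, is the weaker inclusion $R(F'_\ell)\subseteq(V(F_\ell)\setminus R(F_\ell))\cup L(F'_{\ell-1})$ together with the forward inclusion of \ref{F03}, and both of these do follow correctly from your local observation (in the first case by splitting on the colour of the parent edge of the centre, exactly as you do for \ref{F02}).
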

\begin{proof}
The statement~\ref{F00} follows directly from the definition.

Observe that for each $i\in [k]$, by definition, $F'_i$ is a star-forest.
%Recall that by Claim~\ref{cl: height and h}, the function $h$ represents the height of a vertex in its component in $F^2$.
By Claim~\ref{cl: h-value}, 
in the graph $F^2$, for every vertex $x\in V(F^2)$,
$|D_{F^2}(x)|$ is either $0$ or at least $np'$. 
Suppose $d_{F^2}(x)\geq np'$, then we have
\begin{eqnarray*}
d_{F^2}(x) - d_{\last}(x) \geq d_{F^2}(x) - d_{(L_2\cup L_3)\cap F^2}(x) 
\stackrel{\eqref{eq: L3 prop}}{\geq } d_{F^2}(x) - \frac{1}{2}d_{(\widehat{L}_2\cup \widehat{L}_3)\cap F^2}(x) \geq 
 \frac{1}{2} d_{F^2}(x)
\geq \frac{np'}{2}.
\end{eqnarray*}
As $|D_{F'_i}(x)| \in \{ 0, d_{F^2}(x)- d_{\last}(x) \}$, we obtain \ref{F01}.
The statements~\ref{F02} and \ref{F03} follow easily from~\eqref{eq: def Fi F'i}.%
\COMMENT{
By definition, we observe the following:
\begin{equation}\label{eq:F_i}
\begin{minipage}[c]{0.85\textwidth}\em
Any path $P$ in $T$ between the root $x_1$ and a leaf of $T$ can be written as $P= P_1 e_1 P_2 e_2 \dots  e_{\ell} P_{\ell+1}$ with $x_1 \in P_1$ and $0\leq \ell \leq k$
such that for each $i\in [\ell+1]$ and $j\in [\ell]$, we have
$$E(P_i)\subseteq F_i, \enspace e_j \in F_j' \enspace \text{and} \enspace E(P)\cap F^1  = \bigcup_{i=1}^{\ell+1} E(P_i).$$
\end{minipage}
\end{equation}
Note that each path $P_i$ above may be a $1$-vertex path without any edge.

For~\ref{F02},
consider $x\in  R(F'_{i})$.
Clearly, $x\in V(F_i')$.
Let $x'$ be such that $x\in D_T(x')$.
If $c(xx')=1$,
then $x\in V(F_{i-1})$
and if $c(xx')=2$,
then $x\in V(F_{i-1}')$.

Suppose next that $y\in (\bigcup_{t=1}^{i} V(F_{t}) \cup \bigcup_{t=1}^{i-1} V(F'_t))
\cap ( \bigcup_{t=i+1}^{k+1} V(F_{t}) \cup \bigcup_{t=i}^{k} V(F'_t))$.
If $y\in \bigcup_{t=1}^{i} V(F_{t})$, then also $y\in \bigcup_{t=i}^{k} V(F'_t)$, by~\ref{F00},
and this implies that $y\in V(F_{i}) \cap V(F'_i)$.
Hence $y\in R(F'_i)$.
Assume for a contraction that $y\in \bigcup_{t=1}^{i-1} V(F'_t)$, then $y\in \bigcup_{t=i+1}^{k+1} V(F_{t})$,
but this is a contradiction to the definition of $F_0,\ldots,F_{k+1}$ and $F_1',\ldots,F_k'$.

For~\ref{F03},
consider $x\in  R(F_{i+1})\cup R(F'_{i+1})$ and let $x'$ be such that $x\in D_T(x')$.
Then $P_T(x,x_1)$ contains exactly $i$ edges coloured $2$
and $c(xx')=2$.
Thus $xx'\in E(F_i')$
and hence $x\in V(F_i')\cap (V(F_{i+1})\cup V(F_{i+1}'))$.

Suppose next that $y\in (\bigcup_{t=1}^{i} V(F_{t}) \cup \bigcup_{t=1}^{i} V(F'_t) ) 
\cap (\bigcup_{t=i+1}^{k+1} V(F_{t}) \cup \bigcup_{t=i+1}^{k} V(F'_t) )$.
If $y\in  \bigcup_{t=1}^{i} V(F_{t})$, then 
then by~\ref{F00}, we also have $y\in \bigcup_{t=i+1}^{k} V(F'_t)$, thus $y\in R(F'_{i+1})$ (As $L(F'_{i+1})$ is disjoint from $F_i$ by definition of $f$).
If $y\in \bigcup_{t=1}^{i} V(F'_t)$,
as $y\in \bigcup_{t=i+1}^{k+1} V(F_{t}) \cup \bigcup_{t=i+1}^{k} V(F'_t)$,
we conclude that $y\in R(F_{i+1})\cup R(F'_{i+1})$, again by the definition of $F_0,\ldots,F_{k+1}$ and $F_1',\ldots,F_k'$.}
To see~\ref{F05},
let $T'\in H_i(F')\cup H_i(F^\#)$ for some $i\in [k]$. As $r(T')$ is a root of a component of $F'$ or $F^\#$, 
the edge $e$ between $r(T')$ and its parent (which exists as $r(T') \neq x_1$ and $x_1$ is a leaf and $c(x_1x_2)=1$.) satisfies $c(e)=1$. 
Hence $x\in V(F_i)$ and $r(T')\notin R(F_i)$.
\end{proof}
Note that \ref{F05} shows that $H_{1}(F'),\dots, H_{k+1}(F')$ form a partition of $C(F')$ and 
$H_1(F^{\#}),\dots, H_{k+1}(F^{\#})$ form a partition of $C(F^{\#})$, thus $\LA_1^*,\dots, \LA_{k+1}^*$ form a partition of $\LA^*$.

\begin{claim}\label{cl: properties}
The following holds for all $i\in [k+1]$ and $j\in [k]$:
\begin{enumerate}[label={\rm(F1\arabic*)}]
\item\label{F11} $\Delta(F_i) \leq 40 k np'$.
\item \label{F12} If $L_1\neq \es$, then $L_2$ contains at least $(np')^{k}$ vertex-disjoint star-components of size at least $\Delta(L_1)/3$. 
Moreover, $\Delta(L_1) \leq  np' $.
\item \label{F13} For any component $T'$ of $F^{\#}$, we have $|T'|\leq \min\{ 2\Delta^{k}, n/M_*\}$.
\item  \label{F14} For any $T' \in C(F') \cup C(F^{\#})$, we have
$\max\{ \frac{|A_{T'}(r({T'})) |}{|B_{T'}(r({T'}))|}, \frac{|B_{T'}(r({T'}))|}{|A_{T'}(r({T'}))|} \} \geq M_*$.
%\item \label{F15} if $k=1$ and we are in Case 2, then $\LA^*=\bigcup_{i=1}^{2} Q^*_i$,
\item \label{F16} $|\bigcup_{i=1}^{k+1} \LA_i | \geq \eta n \cdot \min\{\frac{np'}{\Delta},1\}$.
\item \label{F17} Suppose $x\in L(F'_{\ell})$ for some $\ell\in [k]$, then $|T(x)|\leq n^{1-1/(k+1)}$.
\item \label{F18} If Case 1 applies, then $\Delta\leq n^{3/4}\log{n}$.
\end{enumerate}
\end{claim}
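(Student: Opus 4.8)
The plan is to establish the seven items roughly in the order \ref{F00}, \ref{F11}, \ref{F17}, then \ref{F12}, then \ref{F13} and \ref{F14}, and finally \ref{F16} and \ref{F18}, using the earlier results of this section freely. \ref{F00} is immediate from \eqref{eq: def Fi F'i}. For \ref{F11} I would fix $x=x_i$ and bound $d_{F_i}(x)\le 1+|\{y\in D_T(x):c(x_iy)=1\}|$; by \eqref{eq: leaf h h' def}, \eqref{eq: h def}, \eqref{eq: c1 h' hdT} and \eqref{eq: c def} every colour-$1$ child of $x_i$ lies in $A_i\cup B_i\cup B'_i\cup\bigcup_{\ell:\,|H^\ell_i|\le 10np'}H^\ell_i$ or is one of at most $np'$ leaf-children, and since Claim~\ref{cl: height and h} gives $h\in\{0,\dots,k\}$ we have $D_T(x_i)=\bigcup_{\ell=0}^{k}H^\ell_i$, so this set has at most $(np'+5np'+np')+10(k+1)np'+np'\le 40knp'$ vertices using \eqref{eq: Ai size} and the definitions of $B_i,B'_i$. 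For \ref{F17}: if $x\in L(F'_\ell)$ then the edge from $x$ to its parent is colour $2$, so \eqref{eq: under red} gives $|T(x)|<(np')^{-1}|T(a_T(x))|\le n/(np')\le n^{1-1/(k+1)}$ by \eqref{eq: def Delta p}.

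For \ref{F12}, write $D:=\Delta(L_1)$ and pick $x^*\in B^*$ with $|D_T(x^*)\cap\he|=D$. Every vertex of $B^*$ has $h$-value $0$: it lies at depth $k$ in some $T''\in C(F^2)$, which by Claim~\ref{cl: height and h} has height at most $k$, so $T''(x^*)$ has height $0$. Hence \eqref{eq: c1 h' hdT} gives $h'(x^*)=D$. Let $v_0=r(T''),v_1,\dots,v_k=x^*$ be the path in $T''$ from its root down to $x^*$; all its edges are colour $2$, so by Claim~\ref{cl: h-value} the values $h(v_0)>\dots>h(v_k)=0$ strictly decrease, and $h(v_0)=k$ by Claim~\ref{cl: height and h}, forcing $h(v_j)=k-j$. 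Then $v_j\in H^{h(v_{j-1})-1}_{v_{j-1}}$ with $c(v_{j-1}v_j)=2$, so \eqref{eq: h' def} gives $h'(v_{j-1})\ge h'(v_j)$, whence $h'(v_j)\ge D$ for all $j$; in particular $h'(r(T''))\ge D$. Crucially $r(T'')\in C^1$, because the concatenation of $P_T(x_1,r(T''))$ with the $k$ colour-$2$ edges of $P_{T''}(r(T''),x^*)$ has $f$-value at most $k$ by \eqref{eq: f bound k}, so $P_T(x_1,r(T''))$ contains no colour-$2$ edge. Now iterate Claim~\ref{cl: h'-value} from $r(T'')$: at each of the $k$ levels we get at least $np'$ colour-$1$ children (still in $C^1$) with $h'$-value $\ge D$ and $h$-value one smaller, so after $k$ steps there are at least $(np')^k$ vertices $x$ with $h(x)=0$, $h'(x)\ge D$, all in $C^1$. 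Each such $x$ is a non-leaf with $h$-value $0$, hence by \eqref{eq: c1 h' hdT} has at least $D$ heavy-leaf children and all its edges have colour $1$, so by \eqref{eq: L3 prop} the whole star $S_{\widehat{L}_2}(x)$ survives into $L_2$, of size $\ge D\ge\Delta(L_1)/3$; these stars are vertex-disjoint since their centres are distinct and heavy leaves are leaves of $T$. Finally $D\le np'$, as every vertex of $B^*$ is a non-leaf with $h$-value $0$ and so has at most $np'$ leaf-children by \eqref{eq: h def}.

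\ref{F13} and \ref{F14} both rest on Claim~\ref{cl: height and h}: each component of $F^2$ has height at most $k$, hence at most $\sum_{j\le k}\Delta^j\le 2\Delta^k$ vertices. Since $F^{\#}=F^2\cup(\text{colour-}1\text{ edges of }\last)$ and these extra edges are pendant at heavy leaves, for a component $T'$ of $F^{\#}$ the set $T'\cap F^2$ is a single component of $F^2$ (or $T'$ is a lone pendant edge). To bound the attached heavy leaves I would use that the parent of a colour-$1$ heavy-leaf edge has, by \eqref{eq: h def}, at most $np'$ heavy children (and at most $\Delta$ children in all), and that in Case~$1$ only the root of the $F^2$-component can receive such edges, since the other component vertices are not in $C^1$ while $\last=L_2\subseteq\widehat{L}_2$. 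The bound $|T'|\le n/M_*$ then follows from the dichotomy $np=n^{1/(k+1)}$ versus $np=\Delta^{k+1}/n$: in the latter case Claim~\ref{cl: height and h} forces $(np')^k\le\Delta^k$, so $\Delta^k\le n/M_*^6$; in the former, $\Delta\le n^{(k+2)/(k+1)^2}$, so $\Delta^k\le n^{1-1/(k+1)^2}$; either way $2\Delta^k\le n/M_*$ for large $n$. For \ref{F14}: by \ref{F01} every component of $F'$ and of $F^{\#}$ is glued from stars of size $\ge np'/2$ along a bounded number of levels (by \eqref{eq: f bound k} and \eqref{eq: under red}), and a level-by-level count shows that whenever a level is non-trivial the vertex count is multiplied by $\ge np'/2-1\ge M_*$, so one of the two colour classes of $T'$ exceeds the other by a factor $\ge M_*$.

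Finally, a leaf $y$ of $\last$ lies in $\bigcup_i\LA_i$ exactly when the parent-edge of $a_T(y)$ has colour $1$, i.e.\ $a_T(y)$ is not a non-root vertex of a component of $F^2$; in particular every leaf of $L_2$ is counted, since $\cen(L_2)\subseteq C^1$. In Case~$1$, $\bigcup_i\LA_i=L(L_2)$, and \ref{F12} with $\Delta(L_1)\ge np'/\log n$ gives $|L_2|\ge(np')^{k+1}/(3\log n)$, which one checks is $\ge\eta n\min\{np'/\Delta,1\}$ via the same dichotomy on $np$; in Case~$2$, $|\last|\ge\eta n$ by \eqref{eq: enough heavy leafs} and \eqref{eq: L4 size}, and the leaves of $\last$ hanging off non-root vertices of $F^2$-components are controlled using the contraction \eqref{eq: under red} of the subtrees below colour-$2$ edges. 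For \ref{F18}: if Case~$1$ applies then $L_1\ne\es$, so \ref{F12} gives $(np')^{k+1}/(3\log n)\le|L_2|\le n$, i.e.\ $(np')^{k+1}<3n\log n$; substituting $np'=M_*^6\max\{n^{1/(k+1)},\Delta^{k+1}/n\}$ yields $\Delta\le n^{3/4}\log n$ in both ranges of $\Delta$. The main obstacle I expect is \ref{F13}: the bookkeeping of which colour-$1$ heavy-leaf edges of $\last$ attach to a given component of $F^2$ is delicate — one must exploit the special role of $C^1$ in Case~$1$ and the contraction \eqref{eq: under red} in Case~$2$ — and the $n/M_*$ half of the bound needs the non-obvious consequence of Claim~\ref{cl: height and h} above; \ref{F16} is the second hardest, for the same reasons.
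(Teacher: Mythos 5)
Most of your items do track the paper's proof: \ref{F11}, \ref{F17} and \ref{F18} are essentially the published arguments, the main part of \ref{F12} (pushing $h'\ge\Delta(L_1)$ up the colour-$2$ path to $r(T'')\in C^1$ and then iterating Claim~\ref{cl: h'-value} downwards) is exactly the paper's, and your shortcut for the ``moreover'' part of \ref{F12} (every centre of $L_1$ lies in $B^*$, hence has $h$-value $0$, hence at most $np'$ leaf-children by \eqref{eq: h def}) is a valid simplification of the paper's counting. Your route to $|T'|\le n/M_*$ in \ref{F13} via the dichotomy on $p$ also works, since any nontrivial component of $F^2$ forces $np'\le\Delta$ by Claims~\ref{cl: h-value} and~\ref{cl: height and h} (the paper instead bounds $|T(y)|\le n/(np')$ for the root's children via \eqref{eq: under red}).

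There are, however, two genuine gaps. The serious one is \ref{F16}. In Case 1 you bound $|\bigcup_i\LA_i|=|L_2|\ge(np')^{k+1}/(3\log n)$, but the inequality $(np')^{k+1}/(3\log n)\ge\eta n\min\{np'/\Delta,1\}$ is equivalent to $(np')^{k}\max\{\Delta,np'\}\ge 3\eta n\log n$, which fails, e.g., for $k=1$ and $\Delta=n^{1/2}\log^{1/2}n$ (left side $\Theta(n\log^{1/2}n)$, right side $\Theta(n\log n)$); so this bound is quantitatively too weak. In Case 2 the appeal to \eqref{eq: under red} cannot control the leaves of $\last$ whose parents have colour-$2$ parent-edges: summing $|T(y)|\le n/(np')$ over the up to $\Delta$ such children of a single vertex gives only $\Delta n/(np')$, which can be $\Omega(n)$. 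The mechanism the paper actually uses -- and which you never invoke -- is the set $B_i'$ in \eqref{eq: Ai Bi def} and \eqref{eq: c def}: at every vertex, the $\min\{np',s'\}$ children with the most heavy grandchildren are deliberately coloured $1$, so at least a $\min\{np'/\Delta,1\}$-fraction of $\he$ hangs below colour-$1$-parented vertices, and a third of those survive into $\last$ by \eqref{eq: L3 prop}; combined with $|\he|\ge 4\eta n$ this gives \ref{F16} directly. The second gap is in \ref{F14}: the claim that ``whenever a level is non-trivial the vertex count is multiplied by $\ge np'/2-1$'' is false, because vertices at an intermediate depth of a component of $F'$ can be leaves of that component (only vertices with positive $h$-value are guaranteed $\ge np'$ colour-$2$ children), so consecutive level sizes need not grow; the correct comparison, as in the paper, is between the bottom level, of size at least $(np')^{\ell}/2$ by Claim~\ref{cl: height and h} and \eqref{eq: L3 prop}, and the union of all other levels, of size at most $2\Delta^{\ell-1}+1$, together with a separate treatment of the star components of $F^\#$.
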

\begin{proof}
Suppose $x_{i'} \in V(F_i)$ and $h(x_{i'})=\ell$.
By  \eqref{eq: c def}, we have that 
$$|D_{F_i}(x_{i'})| \leq  np' + \Big|  A_{i'} \cup B_{i'} \cup B_{i'}' \cup \bigcup_{\ell :  |H_{i'}^\ell| \leq  10np'  } H_{i'}^\ell \Big|  \stackrel{\eqref{eq: Ai size}, \eqref{eq: Ai Bi def}}{\leq} 
np' + (1+5+1+ 10(k+1))np' \leq 40k np'-1.$$
We obtain the penultimate inequality since Claim~\ref{cl: height and h} implies $|H_{i'}^\ell|=0$ for each $\ell > k$.
As $d_{F_i}(x_{i'}) \leq |D_{F_i}(x_{i'})|+1$ for all $i'\in [n]$, we obtain~\ref{F11}.

Suppose that $L_1 \neq \es$ and let $x \in B^*$ be a vertex such that $d_{L_1}(x)= \Delta(L_1)>0$. 
Since $x\in B^*$, by \eqref{eq: def B* L1},  the vertex $x$ is contained in a component $T'\in C(F')$ with $h(r(T'))= k$ (i.e. $T'$ has height $k$) and $h(x)=0$. Thus \eqref{eq: c1 h' hdT} implies that have $h'(x) = |D_T(x)\cap \he| = d_{L_1}(x)=\Delta(L_1)$. 
Let $x= y_{0} y_1\dots y_{k} = r(T')$ be the path $P_T(r(T'),x)$ between $x$ and $r(T')$. 
Claim~\ref{cl: h-value}
implies that $h(y_\ell)=\ell$ for each $\ell\in [k]\cup \{0\}$. 
Since $c(y_{\ell-1} y_{\ell})=2$ for each $\ell\in [k]$, the definition \eqref{eq: h' def} ensures that $h'(y_{k}) \geq h'(x) \geq \Delta(L_1)$.

Also $c(y_{\ell-1}y_{\ell})=2$ for each $\ell \in [k]$, 
 \eqref{eq: f bound k} implies that 
$P_T(x_1,y_{k})$ contains no edge $e$ with $c(e)=2$, thus
thus $y_{k}\in V(C^1)$. 
Claim~\ref{cl: h'-value} shows that $D_{F_1}( y_{k})$ contains at least $np'$ vertices $y$ with $h'(y) \geq h'(y_{k})$ and $h(y)=k-1$
as well as $c(y_{k}y)=1$. So all these vertices $y$ belong to $V(C^1)$. Repeatedly applying Claim~\ref{cl: h'-value} to vertices $y\in D_{F_1}^{\ell}(y_{k})$ with $h'(y)\geq h'(y_k)$ and $h(y)=k-\ell$ for each $\ell\in[k]$, we conclude that
$D_{F_1}^{k}(y_{k}) \subseteq V(C_1)$ contains at least $(np)^{k}$ vertices $y'$ with $h'(y') \geq h'(y_{k})\geq h'(x) \geq \Delta(L_1)$ and $h(y')=0$.
Since $h(y')=0$ and $h'(y')>0$, we have
$$d_{L_2}(y') \stackrel{\eqref{eq: L3 prop}}{\geq } 
 d_{\widehat{L}_2}(y')/3 \stackrel{\eqref{eq: def hat L2 L3}}{=} |\he \cap D_{T}(y')|/3 \stackrel{ \eqref{eq: c1 h' hdT}}{=}  h'(y')/3 \geq \Delta(L_1) /3.$$
 Thus there are at least $(np')^{k}$ distinct vertices $y''$ with $d_{L_2}(y'')\geq \Delta(L_1)/3$. 
As $(np')^{k} (\Delta(L_1)/3) \leq |L_2| \leq n$, \eqref{eq: def Delta p} implies that we have $\Delta(L_1)\leq np'$.
Thus we obtain \ref{F12}.

Note that any tree $T'\in C(F')$ with the root $r(T')$ has height $h(r(T'))\leq k$ by Claim~\ref{cl: height and h}.
An edge $e\in L_2\cup L_3$ does not join any two non-trivial components of $F'$ as $e$ contains a leaf vertex. Also, by \eqref{eq: def B* L1} and \eqref{eq: def hat L2 L3}, no edge in $L_2\cup L_3$ is incident to $B^*$.
Thus for any component $T'' \in C(F^{\#})$ (its height is at most $k$), we have
$|V(T'')| \leq \sum_{\ell=0}^{k} |D^\ell_T(r(T''))| \leq \sum_{\ell=0}^{k} \Delta^{\ell} \leq 2\Delta^{k}.$ 
Suppose $2\Delta^{k} \geq  n/M_*$, then \eqref{eq: def Delta p} with \eqref{eq: hierarchy} implies $k\geq 2$ and we have $np' = M_*^6 \Delta^{k+1} n^{-1} \geq M_*^6 (n/2M_*)^{(k+1)/k} n^{-1} \geq  4 M_*^{2} n^{1/k}$.
For any $y\in D_{F'}(r(T''))$, \eqref{eq: under red} implies that 
$|T(y)| \leq (np')^{-1}n$.
As $V(T'')\subseteq \bigcup_{y\in D_{F'}(r(T''))} T(y) \cup D_{\last}(r(T''))$, we have
$$|V(T'')| \leq \hspace{-0.2cm} \sum_{y \in D_{F'}(r(T''))} |T(y)| + d_{T}(r(T'')) 
\leq \Delta( (np')^{-1}n+1) \stackrel{\eqref{eq: def Delta p}}{\leq} n^{1/k} ( M_*^{-2}n^{1-1/k}/4+1) \leq n/M_* .$$
Thus \ref{F13} holds.

To verify \ref{F14},
consider first some $T'\in C(F')$ with a root $x:= r({T'})$ and $\ell:=h(x)$. Claim~\ref{cl: height and h} implies $1\leq \ell \leq k$. 
As $F' = F^2 \setminus \LA^*$, 
there exists a unique $T'' \in C(F^2)$ such that $T'\subseteq T''$. 
Since every vertex in $\LA^*$ is a leaf of $T$, we know $x=r(T'')$.
Since $\LA^*\subseteq L(L_2\cup L_3)$, we have
$$|D_{T'}^{\ell}(x)|
\geq |D_{T''}^{\ell}(x) \setminus L(L_2\cup L_3)| 
\stackrel{\eqref{eq: L3 prop},\eqref{eq: def hat L2 L3} }{\geq} |D_{T''}^{\ell}(x)\setminus \he| + \frac{1}{2}|D_{T''}^{\ell}(x)\cap \he| 
\stackrel{{\rm Claim}~\ref{cl: height and h}}{\geq} (np')^{\ell}/2.$$
On the other hand, $|\bigcup_{ \ell'=1}^{\ell-1} D_{T'}^{\ell'}(x)| \leq \sum_{\ell'=0}^{\ell-1} \Delta^{\ell} \leq 2\Delta^{\ell'-1}$.
Since exactly one of $A_{T'}(x)$ and $B_{T'}(x)$ contains all vertices in $D_{T'}^{\ell}(x)$, we have
$\max\{ \frac{|A_{T'}(x) |}{|B_{T'}(x)|}, \frac{|B_{T'}(x)|}{|A_{T'}(x)|} \}
\geq \frac{(np')^{\ell} /2}{ 2\Delta^{\ell-1}} 
\geq M_*$ by \eqref{eq: def Delta p}. 
%as $np'\geq M_* n^{1/(k+1)}$ and $\Delta  \leq n^{1/k}$ by \eqref{eq: def Delta p}. 

Now we consider some $T'\in C(F^{\#})$. 
If $h(r(T'))=1$, then Claim~\ref{cl: height and h} implies that $T'$ is a star component of $F'$ or $F'\cup L_2\cup L_3$.
By \ref{F01}, \eqref{eq: def hat L2 L3} and \eqref{eq: L3 prop}, it is easy to see that $T'$ has size at least $np'/(3\log{n}) > M_*$; thus \ref{F14} holds in this case. 
If $(T',x)$ with $x:=r(T')$ has height $\ell \in [k]\setminus\{1\}$,
then there exists a unique nontrivial component $T''\in C(F^2)$ such that $T'\cap T'' \neq \emptyset$ and $x=r(T'')$.
 If $T'' = T'\setminus \last$ has height $\ell-1$, 
then there exists a vertex $y\in V(T'')$ such that $y\in D^{\ell-1}_{{T'}}(x)$, $h(y)=0$ and $|D_{\last}(y)|>0$.
However, 
$$|D_{T'}(y)| \geq |D_{ (\widehat{L}_2\cup \widehat{L}_3)\cap F^2}(y)| \stackrel{\eqref{eq: L3 prop}}{\geq} 2|D_{ (L_2\cup L_3) \cap F^2}(y)| \geq 2 |D_{\last}(y)| >0,$$ 
a contradiction.
Hence $T''$ also has height $\ell$. 
Then $|D_{T'}^{\ell}(x)| \geq |D_{T''}^{\ell}(x)|$ while $|\bigcup_{ \ell'=1}^{\ell-1} D_{T'}^{\ell'}(x)| \leq 2\Delta^{\ell-1}$. 
Thus, as before, by \eqref{eq: def Delta p}  we have 
$\max\{ \frac{|A_{T'}(x) |}{|B_{T'}(x)|}, \frac{|B_{T'}(x)|}{|A_{T'}(x)|} \}
\geq \frac{(np')^{\ell}/2 }{ 2\Delta^{\ell-1}} \geq M_*.$
Thus \ref{F14} holds.
 
%As $\LA^*= \he\setminus L_1$ and $k=1$, every vertex in $\LA^*$ is adjacent to $V(F_1)\setminus R(F_1)$ or $V(F_2)\setminus R(F_1)$. Thus $\LA^* = Q^*_1\cup Q^*_2$, we have \ref{F15}.

Note that $\{ D_T^2(x_i) \cap \he: i\in [n]\} $ forms a partition of $\he$, 
as $x_1$ is a leaf and hence $x_2$ is not in $\he$.
For all $i\in [n]$
and $y\in B'_i, y'\in D_T(x_i)\sm B'_i$, by \eqref{eq: Ai Bi def} we know $|\he\cap D_T(y)|  \geq |\he\cap D_T(y')|$. 
This implies that $\sum_{y\in B'_i}|\he\cap D_T(y)| \geq\min\{\frac{np'}{\Delta},1\} |D_T^2(x_i) \cap \he|  $ as 
either $|B'_i|\geq \min\{\frac{np'}{\Delta},1\}  d_{T}(x_i)$ or $\bigcup_{y\in B'_i}\he\cap D_T(y)=D_T^2(x_i)$.
Thus
\begin{align*}
\Big|\bigcup_{i\in [n]} \bigcup_{y\in B'_i}(\he\cap D_T(y))\Big| 
\geq  \sum_{i\in [n]}  \left|D_T^2(x_i) \cap \he\right| \cdot  \min\{\frac{np'}{\Delta},1\}
=  \left| \he\right|\cdot \min\{\frac{np'}{\Delta},1\}
\geq 4\eta n\cdot \min\{\frac{np'}{\Delta},1\} 
\end{align*}
For all $i\in [n]$ and $y\in B'_i$, \eqref{eq: c def} implies that $c(x_iy)=1$, thus $y\in \bigcup_{\ell\in [k+1]}  V(F_{\ell})\setminus R(F_{\ell})$. Then \eqref{eq: L3 prop} implies that at least one third of the vertices in $\he \cap D_T(y)$ belong to $\LA_{\ell}$. Since this holds for every $y\in B'_i$, the above calculation shows \ref{F16}.

Suppose $x\in L(F'_{\ell})$ for some $\ell \in [k]$.
By the definition of $F'_\ell$,
we have $c(xa_T(x))=2$. 
Thus~\eqref{eq: under red} implies that $|T(x)|\leq n(np')^{-1} \leq n^{1- 1/(k+1)}$, thus \ref{F17} holds.

To show \ref{F18}, assume for a contradiction that $\Delta > n^{3/4} \log{n}$ and $|L_1|\geq \eta n$.
Then \eqref{eq: def Delta p} implies $k=1$, and
 $np' \geq \frac{M_* \Delta^{k+1}}{n} \geq n^{1/2} \log^2{n}$.
As every vertex with at least one heavy child, has at least $np'/\log n$ heavy children,
and as $L_1,L_2,L_3$ are vertex-disjoint star-forests,
we conclude that $\Delta(L_1) \geq {np'}/{\log{n}}$.
However, \ref{F12} implies $|L_2| \geq {(np')^2}/{(3\log{n})} \geq  n \log^3{n}>n$ which is a contradiction. 
Therefore, \ref{F18} holds.
\end{proof}

\section{Distribution of $V(T)$}
\label{sec: distribution}

In Section~\ref{sec:prep},
we defined the graph $G'$  and a partition $\{ V_\ih\}_{ \ih\in [r]\times [2]}$ of $V(G)$.
In this section, we define a partition  $\{X_\ih\}_{ \ih\in [r]\times [2]}$ of $V(T)\setminus L(L_1)$, a partition $\{L_{1,\ih}\}_{ \ih\in [r]\times [2]}$ of $L(L_1)$, a subgraph $F^{\circ}$ of $\last$ and a partition $\{Y_{\ih}\}_{ \ih\in [r]\times [2]}$ of $L(F^{\circ})$.
Later we aim to embed the vertices in $(X_{\ih} \setminus L(F^{\circ}))\cup Y_{\ih}\cup L_{1,\ih})$ into $V_{\ih}$ for each $\ih \in [r]\times [2]$.
Having in mind the edge-decomposition of $T$ into $F_1,\ldots,F_{k+1}, F_1',\ldots,F_k',L_1,\last$, which we defined in Section~\ref{sec:prep}, and the intention that the edges in $F_1,\ldots,F_{k+1}, L_1$ mainly are embedded into $R$ while the others are embedded into $G'$, we need to take care of several issues.
For example, assigning a vertex $x$ of $T$ to $X_\ih$ forces us later to embed all $y\in D_T(x)$ with $c(xy)=2$ into $X_\ihb$.
We also want that each $X_{\ih}$ contains enough vertices of $\LA^*$ and $\bigcup_{\ell \in [k+1]} \LA_\ell$ so that 
we have enough freedom at the end of embedding process.

In order to find a suitable collection $\{X_\ih\}_{ \ih\in [r]\times [2]}$ of vertices of $V(T)\sm L(L_1)$, we first describe an algorithm that proceeds in $k+2$ rounds.
For each $\ih \in [r]\times [2]$, let
 $$n_\ih := |V_\ih|.$$ 
For each $\ell\in [k+1]\cup \{0\}$, 
in $\ell$-th round, 
we will distribute vertices in $V(F_\ell)\sm V(R_\ell)$ and $V(T')\sm R(T')$ for each $T'\in H_\ell(F^\#)$ to 
build a collection $\{ Z_\ih^\ell\}_{ \ih\in [r]\times [2]}$ of pairwise disjoint sets such that \ref{Z1}$_\ell$--\ref{Z5}$_\ell$ hold (see below), and at the end we will set
$X_\ih:= \bigcup_{\ell=0}^{k+1} Z_\ih^\ell$.
Our main tool is Lemma~\ref{lem: vector distribution}.
\begin{enumerate}[label={\rm(Z\arabic*)}]
\item\label{Z1}\hspace{-0.2cm}$_{\ell}$ 
$ \displaystyle \max_{\ih,\ih'\in [r]\times[2]} \bigg\{ \Big|  \frac{|Z_\ih^\ell|}{n_\ih}  - \frac{  |Z_{\ih'}^\ell|}{n_{\ih'}}    \Big|\bigg\} 
\leq \min\Big\{ \frac{2r^5 \Delta^{k}}{n}, \frac{r^5}{M_*} \Big\}.$ 
\item\label{Z2}\hspace{-0.2cm}$_{\ell}$ 
$\displaystyle 
\bigcup_{j\in [\ell]\cup \{0\}}\bigcup_{\ih\in [r]\times[2]} Z_\ih^j 
= \bigcup_{j\in [\ell]\cup \{0\}} V(F_j) \cup\bigcup_{T'\in H_j(F^{\#}),\, j\in [\ell]} V(T').$
\item\label{Z3}\hspace{-0.2cm}$_{\ell}$ In Case 2, for each $\ih\in [r]\times [2]$, we have
$|Z_\ih^\ell \cap \LA^*| \geq 2\eta^2 |\LA^*_{\ell}| \frac{n_\ih}{n} - \min\{ 2r^2 \Delta^k, r^2 n/M_*\}.$
\item\label{Z4}\hspace{-0.2cm}$_{\ell}$ For all $j \in [\ell]$,
$T' \in H_{j}(F^{\#})$, and $x\in V(T')$,
if $x\in Z_\ih^j$, then $D_{T'}(x)\subseteq Z_\ihb^j$.
\item\label{Z5}\hspace{-0.2cm}$_{\ell}$ For each $\ih \in [r]\times [2]$, we have
$|Z_\ih^\ell \cap \LA_{\ell}| \geq 2\eta^2 |\LA_{\ell}| \frac{n_\ih}{n} - r^2 \Delta.$
\end{enumerate} \vspace{0.5cm}
Condition~\ref{Z1}$_\ell$ ensures that relative sizes of sets $Z_{\ih}^{\ell}$ resembles the relative sizes of the sets $V_{\ih}$.
Condition~\ref{Z2}$_\ell$ ensures that we actually assign every vertex to a set
whereas the conditions \ref{Z3}$_\ell$ and \ref{Z5}$_\ell$ ensure that enough vertices in $\LA^*$ and vertices in $\LA_\ell$ are assigned to each $Z_\ih^\ell$, respectively.
Condition \ref{Z4}$_\ell$ ensures that for every vertex $x$ assigned to be embedded into $V_{\ih}$ for some $\ih\in [r]\times[2]$, its child $y$ with $c(xy)=2$ is assigned to be embedded into $V_\ihb$.

\bigskip

\noindent {\bf Distribution algorithm.}\newline
Next we describe our distribution algorithm,
which relies on Lemma~\ref{lem: vector distribution}.
In Round 0, we let
$$Z_\ih^0:= \left\{\begin{array}{ll}
\{x_1\} & \text{ if } \ih=(1,1), \\
\emptyset & \text{ if } \ih \in [r]\times[2]\setminus\{(1,1)\}.
\end{array}\right.$$
Clearly, \ref{Z1}$_{0}$ and \ref{Z2}$_{0}$ hold.
We simply define $\LA_0,\LA^*_0:=\emptyset$ and so also \ref{Z3}$_{0}$ and \ref{Z5}$_{0}$ hold.
As $N_{T}(x_1)=\{x_2\}$ and $c(x_1x_2)=1$, also \ref{Z4}$_{0}$ holds.
We proceed to Round $1$. 

\medskip

\noindent {\bf Round $\ell$.} 
We define a set of vectors in $\cF\subseteq\N_0^6$
so that each vector $\bq_x\in \cF$ represents the implications  of the assignment of a vertex $x$ to a certain set $Z_\ih^\ell$.
For each $x \in V(F_\ell) \setminus R(F_{\ell})$,
we define a vector $\bq_x\in \N_0^6$.
The first two coordinates of $\bq_x$ measure how many vertices are forced to be assigned to $Z_\ih^\ell$ and to $Z_\ihb^\ell$, respectively, if we assign $x$ to $Z_\ih^\ell$
whereas the remaining coordinates measure how many vertices of $\LA^*_\ell$ and $\LA_\ell$ are then forced to be assigned in $Z_\ih^\ell$ and $Z_\ihb^\ell$, respectively.
To this end, for each $x \in V(F_\ell) \setminus R(F_{\ell})$, 
if $x$ is not the root of any non-trivial component in $F^\#$,
then we let $\bq_x:=(1,0,0,0,0,0)$ and otherwise let $\bq_x$ be as follows, where $x=r(T')$ for some $T'\in C(F^\#)$:
$$\bq_x:=\big(|A_{T'}(x)|, |B_{T'}(x)|, |A_{T'}(x)\cap \LA^*_\ell|, |B_{T'}(x)\cap \LA_\ell^*|, |A_{T'}(x)\cap \LA_\ell|, |B_{T'}(x)\cap \LA_\ell| \big).$$
Recall that $(A_{T'}(x), B_{T'}(x))$ is the bipartition of $T'$ such that $x\in A_{T'}(x)$.
We also define
\begin{align*}
\mathcal{F}&:= \{ \bq_x\}_{ x \in V(F_\ell)\setminus R(F_{\ell})}.
\end{align*}
Note that 
\begin{align}\label{eq: 34 sum Q}
\sum_{\bq_x \in \cF} (q_3+ q_4) = |\LA^*_\ell|\enspace \text{ and } \enspace\sum_{\bq_x \in \cF} (q_5+ q_6) = |\LA_\ell|.
\end{align}
We apply Lemma~\ref{lem: vector distribution} with the following objects and parameters to obtain a partition $\{\cF_\ih \}_{ \ih\in [r]\times [2]}$ of $\cF$.

\medskip
\noindent
{ \small
\begin{tabular}{c|c|c|c|c|c|c|c|c}
object/parameter & $ \cF $ & $ {n_\ih}/{n} $ & $t$ & $r$ & $\min\{2\Delta^k, n/M_*\} $ &   $\min\{2\Delta^k,n/M_*\}$ & $\Delta$ & $2\eta$
\\ \hline
playing the role of & $\bF$ & $\alpha_{i,h}$ & $t$ & $r$ &  $\Delta_1$ & $\Delta_2$  & $\Delta_3$ & $\beta$
\end{tabular}
}\newline \vspace{0.2cm}

\noindent Indeed, we have $1/r \ll 2\eta \ll 1/t$. 
By \ref{F14} and the definition of $\cF$, (A1)$_{\ref{lem: vector distribution}}$ holds. 
Condition (A2)$_{\ref{lem: vector distribution}}$ holds by \ref{F13} and (A3)$_{\ref{lem: vector distribution}}$ holds by the definition of $n_\ih$ and \ref{G2}.
Then Lemma~\ref{lem: vector distribution} provides a partition  $\{\cF_\ih \}_{ \ih\in [r]\times [2]}$ of $\cF$ satisfying the following:
\begin{align}\label{eq: property 2 dist}
\max_{\ih,\ih'\in [r]\times[2]} \Bigg\{
 \bigg|  \frac{ \sum_{\bq\in \cF_\ih} q_1 + \sum_{\bq\in \cF_\ihb} q_2 }{n_{\ih}}
 - \frac{\sum_{\bq\in \cF_{\ih'}} q_1 + \sum_{\bq\in \cF_{\ihb'}} q_2 }{n_{\ih'}} \bigg| \Bigg\} 
 \leq \min \Bigg\{ \frac{2r^5 \Delta^k}{n}, \frac{r^5}{M_*} \Bigg \},
\end{align}
\begin{align}\label{eq: property 1 dist}
\sum_{\bq\in \cF_\ih } q_{3} + \sum_{\bq\in \cF_{\ihb}} q_{4}
\geq  \frac{n_\ih}{n}\cdot4\eta^2 \sum_{ \bq\in \cF} (q_{3}+q_{4}) -\min\bigg\{ 2 r^2 \Delta^k,  \frac{r^2 n}{M_*} \bigg\}, \text{ and }
\end{align}
\begin{align}\label{eq: property 1 dist 2}
\sum_{\bq\in \cF_\ih } q_{5} + \sum_{\bq\in \cF_{\ihb}} q_{6}
\geq  \frac{n_\ih}{n}\cdot4\eta^2 \sum_{ \bq\in \cF} (q_{5}+q_{6}) - r^2 \Delta.
\end{align}
For each $\ih\in [r]\times[2]$ and $\bq_x \in \cF_{\ih}$, we add $A_{T'}(x)$ to $Z_\ih^\ell$ and $B_{T'}(x)$ to $Z_\ihb^\ell$; that is,
\begin{align*} Z_\ih^\ell:=  
 \{ x\colon \bq_x\in \cF_\ih\}\cup  \bigcup_{ \substack{({T'},x)\colon  \bq_x \in \cF_\ih, \\ {T'}\in H_{\ell}(F^{\#}),\, r({T'})=x }}
A_{T'}(x) \cup \bigcup_{\substack{({T'},x)\colon  \bq_x \in \cF_{ \ihb}, \\ {T'}\in H_{\ell}(F^{\#}),\, r(T')=x }} B_{T'}(x).
\end{align*}
This definition naturally yields \ref{Z4}$_{\ell}$. 
Property \ref{Z2}$_{\ell-1}$ together with the above definition implies~\ref{Z2}$_{\ell}$. 
From the above, we have 
$|Z_\ih^\ell| =  \sum_{\bq\in \cF_\ih} q_1 + \sum_{\bq\in \cF_{\ihb}} q_2$, 
so~\eqref{eq: property 2 dist} implies \ref{Z1}$_{\ell}$.
By \eqref{eq: 34 sum Q},  \eqref{eq: property 1 dist},  \eqref{eq: property 1 dist 2} and the fact that $n_\ih\leq n$, imply
\ref{Z3}$_{\ell}$ and \ref{Z5}$_{\ell}$.
If $\ell=k+1$, then we end the algorithm. Otherwise, we proceed to Round $(\ell+1)$. \newline

Once the above distribution algorithm has terminated, for each $\ih\in [r]\times [2]$, 
we let $X_\ih:= \bigcup_{\ell=0}^{k+1} Z_\ih^\ell$.
Recall that $\bigcup_{\ih\in [r]\times[2]} X_\ih$ consists of all vertices in $T$ except the leaves incident to edges in $L_1$.

We observe that \ref{Z1}$_{0}$--\ref{Z1}$_{k+1}$ and \ref{Z2}$_{k+1}$ imply that the following holds in Case 2:
\begin{align}\label{eq: size of Xih not big case 2}
\sum_{\ih\in [r]\times[2]} |X_\ih| = n - |L_1|, \enspace \text{ and }\enspace |X_\ih| =\frac{(n-|L_1|)n_\ih}{n} \pm (k+2)\min\bigg\{ 2r^5 \Delta^k, \frac{r^5n}{M_*}\bigg\}.
\end{align}

In Case 1, \ref{Z2}$_{k+1}$ with the fact that $|L_1|\geq \eta n$ implies that 
 $$\sum_{\ih\in [r]\times[2]} |X_\ih| = n - |L_1| \leq (1-\eta) n.$$
This together with \ref{Z1}$_{0}$--\ref{Z1}$_{k+1}$ implies that the following holds in Case 1:
\begin{align}\label{eq: size of Xih not big case 1}
|X_\ih| \leq (1-\eta)n_\ih + (k+2) \min\{ 2r^5\Delta^k, r^5n/M_*\} \leq (1-2\eta/3) n_\ih.
\end{align} 

In Case 2,  
we have $|\LA^*|\geq \eta n$. Thus \ref{Z3}$_{0}$--\ref{Z3}$_{k+1}$ imply that in Case 2, for each $\ih\in [r]\times[2]$, we have 
\begin{align}\label{eq: size of Xih W in case 2}
|X_\ih\cap \LA^*| 
&\geq \sum_{\ell=0}^{k+1} \Big(2\eta^2 |\LA^*_\ell| \frac{n_\ih}{n} - \frac{r^2n}{M_*}\Big) 
\geq \frac{3}{2}\eta^3 n_{\ih}.
\end{align}
Also \ref{Z5}$_{1}$--\ref{Z5}$_{k+1}$ imply that, for each $\ih\in [r]\times[2]$, we have 
\begin{align}\label{eq: size of Q*}
\bigg|X_\ih\cap \bigcup_{\ell \in [k+1]} \LA_\ell\bigg|
\geq \sum_{\ell=0}^{k+1} \Big(2\eta^2 |\LA_\ell| \frac{n_\ih}{n} - r^2\Delta\Big) 
\stackrel{\ref{F16}}{ \geq} \eta^3 n_{\ih} \cdot 
\min\bigg\{ \frac{np'}{\Delta},1\bigg	\}.
\end{align}

Now we have a partition $\{X_\ih\}_{\ih\in[r]\times[2]}$ of vertices which is almost well-distributed.
However, later we need that exactly $n_\ih$ vertices are embedded into $V_\ih$.
To ensure this we consider a subgraph $F^{\circ}$ of $\last$ and sets $L_{1,\ih}$ and $Y_{\ih}$ for each $\ih \in [r]\times [2]$ such that the following statements hold:
\begin{enumerate}[label={\rm(L\arabic*)}]
\item\label{L1} $L(L_1) = \bigcup_{\ih \in [r]\times [2]} L_{1,\ih}$ and $L(F^{\circ}) = \bigcup_{\ih \in [r]\times [2]} Y_{\ih}$.
\item\label{L2} For each $\ih \in [r]\times [2]$, we have
$|X_{\ih}\setminus L(F^{\circ}) | + |L_{1,\ih}| +|Y_{\ih}|= n_{\ih}$.
\item\label{L3} We have $F^{\circ}=\emptyset$ in Case 1 and $\Delta(F^{\circ}) \leq np'$ in Case 2.
\item\label{L4} $|L(F^{\circ})| \leq \min\{ r^7 \Delta^k, \frac{r^7 n}{M_*} \}.$
\end{enumerate} 
Note that \ref{L2} implies that the number of vertices which are assigned to $V_\ih$ is exactly $n_\ih$.
Condition~\ref{L3} and~\ref{L4} ensure that $\Delta(F^\circ)$ is small
so that we can embed it into $R$
while $\last\sm F^\circ$ is not too small.

Indeed, such a graph $F^{\circ}$ and sets $L_{1,\ih}$, $Y_{\ih}$ exist by the following claim.

\begin{claim}\label{cl: L1-4}
There exists a partition $\{L_{1,\ih} \}_{ \ih\in [r]\times [2]}$ of $L(L_1)$ and a subgraph $F^{\circ} \subseteq \last$ and a partition $\{ Y_{\ih} \}_{ \ih\in [r]\times [2]}$ of $L(F^{\circ})$ satisfying \ref{L1}--\ref{L4}.
\end{claim}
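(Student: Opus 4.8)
The plan is to handle the two cases separately. In Case 1 we have $\last = L_2$, $|L_1|\geq\eta n$, and we set $F^\circ:=\emptyset$, so \ref{L3} and \ref{L4} are immediate and $L(F^\circ)=\emptyset$ forces $Y_\ih=\emptyset$ for all $\ih$. The only work is to distribute $L(L_1)$ among the sets $L_{1,\ih}$ so that $|X_\ih|+|L_{1,\ih}|=n_\ih$ for each $\ih$. By \eqref{eq: size of Xih not big case 1} we have $|X_\ih|\leq(1-2\eta/3)n_\ih$, so the ``deficiency'' $n_\ih-|X_\ih|$ is a nonnegative integer for every $\ih$; moreover $\sum_\ih(n_\ih-|X_\ih|)=n-\sum_\ih|X_\ih|=|L_1|$ by \eqref{eq: size of Xih not big case 1}'s companion identity $\sum_\ih|X_\ih|=n-|L_1|$ (which follows from \ref{Z2}$_{k+1}$). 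Hence I can greedily partition the $|L_1|$ leaves of $L_1$ into sets $L_{1,\ih}$ of size exactly $n_\ih-|X_\ih|$ each, and \ref{L1}, \ref{L2} hold. Since there are no constraints tying individual leaves of $L_1$ to particular clusters (we will embed $L_1$ last via Lemma~\ref{lem: random embedding}), any such partition works.

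In Case 2 we have $\last=L_2\cup L_3$, $|L_1|<\eta n$, and \eqref{eq: size of Xih not big case 2} gives $|X_\ih|=\frac{(n-|L_1|)n_\ih}{n}\pm(k+2)\min\{2r^5\Delta^k,r^5n/M_*\}$ together with $\sum_\ih|X_\ih|=n-|L_1|$. Now the deficiencies $n_\ih-|X_\ih|$ need not all be nonnegative, but they sum to $|L_1|$ and each has absolute value at most $|L_1|\cdot n_\ih/n + (k+2)\min\{2r^5\Delta^k,r^5n/M_*\}$, which is much smaller than $\min\{r^7\Delta^k,r^7n/M_*\}$ by the hierarchy \eqref{eq: hierarchy} and \ref{G2}--\ref{G3}. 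To absorb the possibly-negative deficiencies, I will first select $F^\circ\subseteq\last$: since $|\last|\geq\eta n$ (noted after Case 2) and $\Delta(\last)\leq\Delta$, while $\last$ is a star-forest, I can choose a subforest $F^\circ$ consisting of whole stars (or truncated stars) of $\last$ so that $|L(F^\circ)|$ equals any prescribed value up to an additive $\Delta$ and, crucially, $\Delta(F^\circ)\leq np'$ — here I exploit that each star of $\last=L_2\cup L_3$ has size at most $\Delta$, but by \ref{F01}, \eqref{eq: L3 prop} and the heavy-leaf property we can always pick, from each star, a sub-star of size $\leq np'$ and still collect enough leaves; alternatively pick $F^\circ$ from the many size-$\geq np'/(3\log n)$ stars guaranteed by the structure of $L_2$. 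I will set $|L(F^\circ)|:=\sum_\ih\max\{0,|X_\ih|-n_\ih\}+ |L_1| + (\text{small slack})$, ensuring $|L(F^\circ)|\leq\min\{r^7\Delta^k, r^7n/M_*\}$ as in \ref{L4}.

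Having fixed $F^\circ$, I note $X_\ih\setminus L(F^\circ)$ has size $|X_\ih|-|X_\ih\cap L(F^\circ)|$; the remaining leaves to distribute are $L(L_1)$ and $L(F^\circ)$, totalling $|L_1|+|L(F^\circ)|$ vertices, and I need $|X_\ih\setminus L(F^\circ)|+|L_{1,\ih}|+|Y_\ih|=n_\ih$ for each $\ih$, i.e.\ I need to realize deficiencies $d_\ih:=n_\ih-|X_\ih\setminus L(F^\circ)|=n_\ih-|X_\ih|+|X_\ih\cap L(F^\circ)|$. These sum to $|L_1|+|L(F^\circ)|$. The point of the construction of $|L(F^\circ)|$ above is that each $d_\ih\geq 0$: indeed $|X_\ih\cap L(F^\circ)|$ can be made at least $\max\{0,|X_\ih|-n_\ih\}$ by sending enough of $F^\circ$'s leaves whose parents lie in $X_\ih$ (or $X_\ihb$, respecting the bipartition constraint \ref{Z4}) into clusters where there is a surplus — here I use that the parents of $\last$-leaves are already assigned by the distribution algorithm, and leaves are free. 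Then a greedy allocation sets $|Y_\ih|$ and $|L_{1,\ih}|$ to realize each $d_\ih$, giving \ref{L2}; \ref{L1} holds by construction; \ref{L3} holds since $\Delta(F^\circ)\leq np'$; \ref{L4} holds by our size bound. The main obstacle is the bookkeeping in Case 2: making $F^\circ$ simultaneously satisfy the low-degree bound $\Delta(F^\circ)\leq np'$, the size bound \ref{L4}, and large enough that $|X_\ih\cap L(F^\circ)|$ covers every surplus $|X_\ih|-n_\ih$ while respecting the partition $\{X_\ih\}$ and the bipartite parity imposed by \ref{Z4}$_\ell$ — this requires carefully choosing which stars of $\last$ enter $F^\circ$, using that $\last$ contains $\Omega(n)$ leaves spread over many clusters (by \eqref{eq: size of Xih W in case 2} each $X_\ih$ already meets $\LA^*$ in $\Omega(n_\ih)$ vertices) and invoking \ref{F12} for the abundance of moderately large stars in $L_2$.
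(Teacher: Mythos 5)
Your overall strategy is the same as the paper's: in Case 1 take $F^{\circ}=\emptyset$ and fill the (nonnegative) deficits $n_\ih-|X_\ih|$, which sum to $|L_1|$, with a partition of $L(L_1)$; in Case 2 build $F^{\circ}$ by selecting leaves of $\last$ from the over-full clusters, capping the number taken from any one star at $np'$ so that \ref{L3} holds, and reassigning them to under-full clusters via the sets $Y_\ih$. Case 1 is correct, and the mechanism you describe for Case 2 is the right one.

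There is, however, a concrete step that fails: your prescription $|L(F^{\circ})|:=\sum_\ih\max\{0,|X_\ih|-n_\ih\}+|L_1|+(\text{slack})$ is incompatible with \ref{L4}. In Case 2 the only bound on $|L_1|$ is $|L_1|<\eta n$, while $\min\{r^7\Delta^k,r^7n/M_*\}\le r^7n/M_*\ll \eta n$ by the hierarchy \eqref{eq: hierarchy}; so already for $|L_1|=\eta n/2$ your $F^{\circ}$ violates \ref{L4} (and \ref{L4} is genuinely needed later, e.g.\ in \eqref{eq: F* degree F**} and \eqref{eq: n diamond size}). The point you are missing is that $F^{\circ}$ only has to absorb the total \emph{surplus} $\sum_\ih\max\{0,|X_\ih|-n_\ih\}$, which by \eqref{eq: size of Xih not big case 2} is at most $2r(k+2)\min\{2r^5\Delta^{k},r^5n/M_*\}$ and hence satisfies \ref{L4}; the remaining $|L_1|$ units of deficit are covered directly by the sets $L_{1,\ih}$. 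There is no need to inflate $F^{\circ}$ to balance the totals, since the identity $\sum_\ih d_\ih=|L_1|+|L(F^{\circ})|$ holds automatically for \emph{any} choice of $F^{\circ}$; the only constraints are $d_\ih\ge 0$ for each $\ih$ together with \ref{L3} and \ref{L4}. Relatedly, your explanation of why $d_\ih\ge 0$ has the direction reversed: the leaves of $F^{\circ}$ must be \emph{taken from} $X_\ih\cap\LA^*$ for each surplus cluster $\ih$ (at least $|X_\ih|-n_\ih$ of them, which exist and can be chosen with at most $np'$ per star because $|X_\ih\cap\LA^*|\ge\tfrac32\eta^3 n_\ih$ by \eqref{eq: size of Xih W in case 2} and $\sum_{x}\min\{np',d_{\last}(x)\}\ge\min\{np'/\Delta,1\}\,|X_\ih\cap \LA^*|$ is still large enough by \eqref{eq: def Delta p}), and only then \emph{placed into} deficit clusters via the $Y_{\ih'}$. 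With $|L(F^{\circ})|$ corrected to the total surplus, your argument becomes the paper's proof.
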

\begin{proof}
%Since $T, G$ are both $n$-vertex graphs, by \ref{Z2}$_{k+1}$, we have
%\begin{align}\label{eq: vertex size sum same}
% |L_1|+\sum_{\ih \in [r]\times [2]} |X_{\ih}|   = \sum_{\ih \in [r]\times [2]} n_{\ih}.
% \end{align}
%This with \ref{Z1}$_{1}$--\ref{Z1}$_{k+1}$ implies that for any $\ih \in [r]\times [2]$, we have 
%\begin{align}\label{eq: X n size diff}
% |X_{\ih}| = \frac{n- |L_1|}{n} n_{\ih} \pm (k+1) \min\bigg\{ 2r^5 \Delta^{k}, \frac{r^5 n}{M_*}\bigg\}.
% \end{align}
Let 
$\mathcal{I}^+:= \{ \ih\in [r]\times [2] : |X_i^{h}| > n_{\ih} \}$ and $\mathcal{I}^- := \{ \ih\in [r]\times [2] : |X_i^{h}| \leq n_{\ih} \}.$
In Case 1,
by  \eqref{eq: size of Xih not big case 1}, we have $\mathcal{I}^+ = \emptyset$.
Thus, if $\mathcal{I}^{+} \neq \emptyset$ holds, then Case 2 applies and \eqref{eq: size of Xih not big case 2} implies that for each $\ih \in [r]\times [2]$, we have
$|X_{\ih}| \leq n_{\ih} + (k+2) \min\{ 2r^5 \Delta^{k}, r^5 n/M_*\}.$
For each $\ih\in \mathcal{I}^{+}$, we choose a set $M_\ih \subseteq X_{\ih}\cap \LA^*$ such that
\begin{align}\label{eq: M set sizess}
|M_{\ih}| = |X_{\ih}|- n_{\ih}\leq (k+2) \min\{ 2r^5 \Delta^{k}, r^5 n/M_*\}
\end{align}
and  for any $x\in R(\last)\cap X_\ihb$, we have
\begin{align}\label{eq: np' max deg}
|M_{\ih}\cap N_{\last}(x)| \leq np'.
\end{align}
Indeed, since $d_{\last}(x) \leq \Delta$ for all $x\in R(\last)$ and Case 2 applies, we have
\begin{eqnarray*}
\sum_{x\in R(\last) \cap V_\ihb } \min\{ np', d_{\last}(x) \} 
&\geq& \min\Big\{\frac{ np'}{\Delta},1\Big\} \sum_{x\in R(\last) \cap V_\ihb } d_{\last}(x) \geq |X_\ih\cap \LA^*| \cdot  \min\Big\{\frac{np'}{\Delta},1\Big\}  \\
&\stackrel{  \eqref{eq: size of Xih W in case 2}}{ \geq }& \eta^3 n_\ih \cdot \min\Big\{ \frac{np'}{\Delta},  1\Big\}  
\stackrel{\eqref{eq: def Delta p}}{\geq} (k+2) 
\min\Big\{ 2r^5 \Delta^{k}, \frac{r^5 n}{M_*}\Big\}.
\end{eqnarray*}
Thus, by \eqref{eq: M set sizess} we can choose the desired set $M_{\ih}$ of size $|X_\ih|-n_{\ih}$ satisfying \eqref{eq: np' max deg}. For each $\ih \in \mathcal{I}^-$, let $M_{\ih} :=\emptyset$.
By \eqref{eq: size of Xih not big case 2}, we have
$$\sum_{\ih\in \mathcal{I}^+} |M_{\ih}|  = \sum_{\ih\in \mathcal{I}^+} (|X_{\ih}|-n_i) = 
\sum_{\ih \in \mathcal{I}^- } (n_{\ih}- |X_{\ih}|) - |L_1| \leq \sum_{\ih \in\mathcal{I}^- } (n_{\ih}- |X_{\ih}|)  .$$
Thus, we can partition $\bigcup_{\ih\in \mathcal{I}^+} M_{\ih}$ into $\{ Y_{\ih}\}_{  \ih \in \mathcal{I}^-}$ such that $|Y_{\ih}| \leq n_{\ih} - |X_{\ih}|$ for all $\ih \in \mathcal{I}^-$. For each $\ih \in \mathcal{I}^+$, let $Y_{\ih} :=\emptyset$.
Let $F^{\circ}$ be the graph with 
$$V(F^{\circ}) = \bigcup_{\ih \in [r]\times [2]} Y_{\ih} \cup \Big\{ a_{\last}(x) : x\in \bigcup_{\ih \in [r]\times [2]} Y_{\ih}\Big\} \enspace \text{and} \enspace
E(F^{\circ}) = E\big(\last[V(F^{\circ})]\big).$$
As $\bigcup_{\ih \in [r]\times [2]} Y_{\ih} \subseteq \LA^*$, we have that 
$\{Y_{\ih}\}_{ \ih\in [r]\times [2]}$ forms a partition of $L(F^\circ)$.
It is easy to see that by the above definition, for each $\ih \in [r]\times [2]$, we have
$$ |X_{\ih}\setminus L(F^{\circ})| + |Y_{\ih}| \leq n_{\ih}.$$
\COMMENT{
If $\ih \in \mathcal{I}^+$, then $Y_{\ih}=\emptyset$ and 
$|X_{\ih}\setminus M_{\ih}| = n_{\ih}$.
If $\ih \in \mathcal{I}^-$, then $M_{\ih}=\emptyset$ and 
$|X_{\ih}|+ |Y_{\ih}| \leq n_{\ih}$.
}
By \eqref{eq: size of Xih not big case 2}, we have
$|L(L_1)| =  \sum_{\ih\in [r]\times[2]} (n_{\ih} -|X_{\ih}\setminus M_{\ih}| + |Y_{\ih}| ).$ 
Hence there exists a partition $\{L_{1,\ih} \}_{ \ih\in [r]\times [2]}$ of $L(L_1)$ such that $|L_{1,\ih}| + |X_{\ih}\setminus M_{\ih}| + |Y_{\ih}| = n_{\ih}$ for all $\ih \in [r]\times [2]$.
The definition of $L_{1,\ih}$ and $Y_{\ih}$ trivially implies \ref{L1} and \ref{L2}.
Note that in Case 1, we have $\mathcal{I}^+ =\emptyset$ and hence $F^{\circ}=\emptyset$.  
Thus, by \eqref{eq: M set sizess} and \eqref{eq: np' max deg}, both \ref{L3} and \ref{L4} hold. This proves the claim.
\end{proof}

\section{Construction of embedding}\label{sec: embedding}

In this section we describe our algorithm embedding $T$ into $G\cup R$, which succeeds with high probability.
In the previous section, we assigned every vertex of $T\setminus L(L_1)$ to a set $X_{\ih}$ for some $\ih \in [r]\times [2]$ with the intention to embed (essentially) all vertices in $X_\ih$ to $V_\ih$.
As discussed earlier, we proceed in $k+1$ rounds and an additional final round. In round $\ell$, we embed the vertices in $V(F_\ell\cup F_\ell')\sm R(F_\ell)$ and in the final round we embed the vertices in $L(L_1)\cup L(\last)$.

At the beginning we choose disjoint sets $V_{\ih,\ell}$, $V'_{\ih,\ell}$ and $\widehat{V}_{\ih,\ell}$ in $V_\ih$ of size $n_{\ih,\ell},n_{\ih,\ell}',\widehat{n}_{\ih,\ell}$, respectively (see~\eqref{eq: nn' sizes}). 
Later in round $\ell$, we embed the vertices of $X_{\ih} \cap (V(F_\ell)\sm R(F_\ell))$ and $X_\ih\cap L(F_\ell')$ into one of the sets $V_{\ih,\ell},V'_{\ih,\ell},\widehat{V}_{\ih,\ell}$ . 
`Leftover' vertices in each of $V_{\ih,\ell}$, $V'_{\ih,\ell}$ and $\widehat{V}_{\ih,\ell}$ will be covered in the final round.
While embedding the edges of $F_{\ell}$ and $F'_{\ell}$ in each round, 
we keep track of how vertices of $T$ are distributed among neighbourhoods of vertices in $G'$ and among neighbourhoods of vertices in an irregularity-graph of $G'$. 
This information will help us to maintain `super-regularity' in the graph  induced by the `unused' vertices in $G'$. 
For this, we introduce multi-collections $\cB_i, \cB'_i, \cB''_i$, irregularity-graphs $J_{\ih}$ and functions $g_1, g_2, g_3, g_4,g_5,g_6$.

Recall that $|V_\ih|=n_\ih$ and $L(\last)=\LA^*$.
For all $\ell\in [k+1]\cup \{0\}$, $\ell' \in [k]$ and $\ih\in [r]\times [2]$, we define 
\begin{align}\label{eq: nn' sizes}
\begin{split}
n_{\ih,\ell} &:= |X_\ih\cap (V(F_\ell) \setminus R(F_{\ell}))| + \mu n_\ih, \enspace \enspace \enspace \widehat{n}_{\ih,\ell'} := |X_\ih \cap L(F'_{\ell'}) | + \mu n_\ih, \\
n'_{\ih,\ell}&:= \mu n_{\ih}, \text{ and }  \hspace{3.6cm} n_{\ih,k+2} := n_\ih -\Big(\sum_{j\in [k+1]\cup \{0\}} (n_{\ih,j}+n'_{\ih,j}) + \sum_{j\in [k]} \widehat{n}_{\ih,j}\Big).
\end{split}
 \end{align}
 Note that we have 
 $$X_{\ih} \cap \Big(\bigcup_{\ell \in[k+1]\cup \{0\}} (V(F_{\ell})\setminus R(F_{\ell})) \cup  \bigcup_{\ell'\in [k]} L(F'_{\ell'}) \Big)
 = X_{\ih} \cap (V(T)\setminus \LA^*) = X_{\ih}\setminus \LA^*.$$
Thus this with \eqref{eq: nn' sizes}\COMMENT{And use \ref{F02} and \ref{F03} to verifies the sets $V(F_{\ell})\setminus R(F_{\ell}), L(F'_{\ell'})$ are pairwise disjoint for all $\ell, \ell'$. } gives 
$$n_{\ih,k+2}\geq n_{\ih} - |X_{\ih}| + |X_{\ih}\cap \LA^*| - (3k+4)\mu n_{\ih}.$$
Since $\mu\ll \eta$, by \eqref{eq: size of Xih not big case 1} in Case 1 
and by \eqref{eq: size of Xih not big case 2} and \eqref{eq: size of Xih W in case 2} in Case 2, we obtain for each $\ih\in [r]\times[2]$
 \begin{align}\label{eq: nih0 size}
 |n_{\ih,k+2}|\geq \eta^3 n_\ih.
 \end{align}
 \COMMENT{vertices in $\LA^*$ does not belong to any $V(F_\ell)$ or $V(F'_{\ell})$.}
 In order to keep track of the $\epsilon$-regularity of appropriate subgraphs of $G'$, for each $\ih\in [r]\times [2]$, we consider the irregularity-graph $J_{\ih}:=J_{G'[V_{\ih},V_{\ihb}]}(V_{\ih},d,\epsilon)$. Recall that this is defined in \eqref{eq: def J graph}.
Lemma~\ref{lem: max degree HGde} together with \ref{G1} implies that
\begin{align}\label{eq: Delta Hih}
\Delta(J_\ih) \leq 2\epsilon n_\ih.
\end{align}
For each $\ih\in [r]\times [2]$, we define the following multi-collections (that is, we consider multi sets here) of subsets of $V_\ih$:
\begin{align*}
\cB_{\ih}&:= \{N_{G',V_\ih}(u)    : u\in V_\ihb \}, \enspace
 \cB'_{\ih} := \{ N_{J_\ih}(u) : u\in V_\ih \} \enspace  \text{and} \enspace 
 \cB''_{\ih} := \{ N_{G',V_\ih}(v,v') : v,v'\in V_\ihb \}.
\end{align*}
\COMMENT{Note that, if there are two vertices $u,u'\in V_\ihb$ with exactly same neighborhood, then $N_{G'}(u)= N_{G'}(u')$. In the case $\cB_{\ih}$ contains the set $N_{G'}(u)$ twice. So we have $|\cB_{\ih}| = |V_\ihb| $ in any case.}
Now for each $\ih\in [r]\times [2]$, 
we pick a partition 
$$\{ V_{\ih,\ell} : \ell \in [k+2]\cup \{0\} \} \cup \{V'_{\ih,\ell}: \ell\in [k+1]\cup \{0\}\} \cup \{\widehat{V}_{\ih,\ell} : \ell\in [k]\}$$ of $V_\ih$ 
satisfying the following for all $\ell\in [k+1]\cup \{0\}$, $\ell'\in [k]$ and $B\in \cB_{\ih}\cup  \cB'_{\ih}\cup \cB''_{\ih}$:
\begin{enumerate}[label={\rm(V\arabic*)}]
\item\label{V1} $|V_{\ih,\ell}| = n_{\ih,\ell} $, $|V'_{\ih,\ell}| = n'_{\ih,\ell}$ and $|\widehat{V}_{\ih,\ell'}| = \widehat{n}_{\ih,\ell'} $,
\item\label{V2} $|V_{\ih,\ell}\cap B| = \frac{n_{\ih,\ell} |B| }{n_\ih } \pm \epsilon^2 n_\ih$, $|V'_{\ih,\ell}\cap B| =\frac{n'_{\ih,\ell}|B|}{n_\ih} \pm \epsilon^2 n_{\ih}$ and $|\widehat{V}_{\ih,\ell}\cap B| = \frac{\widehat{n}_{\ih,\ell} |B| }{n_\ih } \pm \epsilon^2 n_\ih$.
\end{enumerate}
Indeed, if we choose a partition of $V_\ih$ uniformly at random among all partitions satisfying \ref{V1}, 
then Lemma~\ref{Chernoff Bounds} yields that \ref{V2} holds with probability at least $1-n^{-4}$. 
Thus union bounds with the fact that $|\cB_{\ih}\cup  \cB'_{\ih}\cup \cB''_{\ih}| < n^2$ 
imply that there exists a partition satisfying \ref{V1} and \ref{V2}. 

We need to take particular care of vertices with many grandchildren in $F^\#$.
To this end, for all $\ell \in [k+1]$ and $\ih \in [r]\times[2]$, let
\begin{align}\label{eq: def X' }
\begin{split}
X'_{\ih,\ell}&:= \Big\{x \in X_\ih\cap (V(F_{\ell})\setminus R(F_{\ell})) : |D_{F^\#}(D_{F'}(x)) | \geq \frac{n}{M\log{n}} \Big\}, \\
X'&:=\bigcup_{\ell\in [k+1], \ih \in [r]\times [2]} X'_{\ih,\ell} \enspace \text{ and } \enspace \overline{X'}:= V(T)\setminus X'.
\end{split}
\end{align}
Also, if $\ell\in [k]$ and $x\in L(F'_{\ell})$, then 
$|D_{F^{\#}}(D_{F'}(x))|\leq |T(x)|  \stackrel{\ref{F17}}{\leq} n^{1-1/(k+1)}$. Thus
\begin{align}\label{eq: F' leaf not in X'}
X'\cap L(F'_{\ell}) =\emptyset.
\end{align}
As $\sum_{x\in V(T)} |D_{F^\#}(D_{F'}(x))|\leq |V(T)|$,
it is easy to see that 
\begin{align}\label{eq: X' size M logn}
|X'| \leq M \log{n}.
\end{align}
In fact, we prove below that if $X'\neq \es$, then $k=2$.
For all $x\in V(T)$ and $\ih \in [r]\times [2]$, let 
\begin{align}\label{eq: def gcB}
g_1(x)&:= |D_{F'}(x)|, & g_2(x)  &:= |D_{\last}(x)|, \nonumber \\
g_3(x)&:= |D^2_{F'}(x)| \cdot\mathbbm{1}_{\overline{X'}}(x), &
g_4(x)&:=|D_{\last}(D_{F'}(x))| \cdot\mathbbm{1}_{\overline{X'}}(x), \nonumber \\
g_5(x)&:= |D^2_{F'}(x)|\cdot \mathbbm{1}_{X'(x)}, & g_6(x)&:= |D_{\last}(D_{F'}(x))|\cdot \mathbbm{1}_{X'}(x), \nonumber \\
 \cB_{\ih}^{1},\cB_{\ih}^{2}&:= \cB_{\ih} \cup  \cB'_{\ih} \enspace \text{ and }   & \cB_{\ih}^{3},\cB_{\ih}^{4} & :=  \cB'_{\ih}. 
\end{align}
For each $j\in [2]$ and $\ell\in [k]$, we consider the functions $g_{j+2}$ and $g_{j+4}$ since 
$g_{j+2}(x)$ and $g_{j+4}(x)$ for $x\in V(F_{\ell})\setminus R(F_{\ell})\cup L(F'_{\ell})$
provide information about values of $g_{j}(y)$ for $y\in L(F'_{\ell+1}).$
The functions $g_1, g_2, g_3$ and $g_4$ will play the roles of $d_{F}(x)$ in Lemma~\ref{lem: random matching behaves random} or 
Lemma~\ref{lem: p matching}, and $g_5+g_6$ will play the role of $f$ in Lemma~\ref{lem: random embedding}. 
Note that if $x\in X'$, then we have $g_5(x)+ g_6(x) = |D_{F^\#}(D_{F'}(x))|$.
The above collections $\cB_{\ih}^j$ will play the roles of $\cB_i$ in Lemma~\ref{lem: random embedding} or  Lemma~\ref{lem: random matching behaves random}. 
One of our main concern is that we need to guarantee that
(A2)$_{\ref{lem: random matching behaves random}}$ and (A3)$_{\ref{lem: random matching behaves random}}$ or~\ref{L45A2} and~\ref{L45A3} hold for $G'[V'_{\ih,\ell}, V'_{\ihb,\ell}]$ in each round.

For all $j\in [6]$, $\ell \in [k+1]$, $\ell'\in [k]$ and $\ih \in [r]\times[2]$, we define
\begin{align}\label{eq: m ih j ell def}
m_{\ih}^{j,\ell}:= \sum_{ x\in X_\ih\cap ( V(F_{\ell})\setminus R(F_{\ell}) )} g_j(x)
\enspace \text{and} \enspace \widehat{m}_{\ih}^{j,\ell'}:= \sum_{ x\in X_\ih \cap L(F'_{\ell'})} g_j(x).
\end{align}
By~\eqref{eq: F' leaf not in X'}, we have 
\begin{align}\label{eq:mhat56}
	\widehat{m}^{5,\ell'}_\ih=\widehat{m}^{6,\ell'}_\ih=0.
\end{align}
Let $\widehat{m}^{j,0}_{\ih}:=0$ for all $j\in [6]$ and $\ih\in [r]\times [2]$.
Note that by the definition of $g_j$ and \ref{Z4}$_\ell$, for all $j\in [2]$, $j'\in [6]$ and $\ell \in [k]$, as $R(F'_{\ell}) \subseteq (V(F_{\ell})\setminus R(F_{\ell}))\cup L(F'_{\ell-1})$ by \ref{F02} and \ref{F03}, we have 
\begin{align}\label{eq: m g relation}
m^{j+2,\ell}_{\ihb} +  m^{j+4,\ell}_{\ihb} + \widehat{m}^{j+2,\ell-1}_{\ihb}  \stackrel{(\ref{eq:mhat56})}{=} \widehat{m}^{j,\ell}_{\ih} \enspace \text{and} \enspace m^{j',\ell}_{\ih}, \widehat{m}^{j',\ell}_{\ih} \leq  2\max\{n_\ih, n_\ihb\} \stackrel{\ref{G2}}{\leq} 2tn/r, %\label{eq: m upper bound}
\end{align}
and for each $\ih \in [r]\times[2]$, we have
\begin{align}\label{eq: g sum to W*}
\sum_{\ell \in [k+1] } m^{2,\ell}_{\ihb} + \sum_{\ell \in [k]} \widehat{m}^{2,\ell}_{\ihb} = |X_{\ih} \cap \LA^*|.
\end{align}
For technical reasons, we assume that maximum over the emptyset equals $0$.
For several applications of concentration inequalities,
it will be convenient to define the following for each $j\in [6]$,
$$\Delta_{j}:=\max\Big\{ g_j(x): x \in \bigcup_{\ell\in [k+1]} V(F_{\ell})\setminus R(F_{\ell}) \Big\}
 \enspace  \text{and} 
\enspace \widehat{\Delta}_{j} := \max\Big\{g_j(x): x \in \bigcup_{\ell\in [k]} L(F'_{\ell}) \Big\}.$$
Note that if $k=1$ and $j\in \{5,6\}$ then for any vertex $x\in V(T)$, we have
\begin{align}\label{eq: g56 zero if k=1}
g_j(x)=0; \text{ thus } \Delta_{j}=\widehat{\Delta}_j=0 \text{ and } X' =\emptyset.
\end{align}
Indeed, if $k=1$, we have $g_5(x)=0$ as $F'$ is a star-forest, and we have
$g_6(x) =0$ as $\last \subseteq L_2\cup L_3$ is not incident to any vertex in $B^*=L(F')$. 
Thus, if $X'\neq \emptyset$, then $k\geq 2$ (in fact $k=2$) and for all $x\in V(T)$, $\ih \in [r]\times [2]$ and $\ell \in [k+1]$, we have
\begin{align}\label{eq: X' m g1 g2 value}
 g_1(x), g_2(x) \leq n^{1/2}. %\text{ and } m^{5,\ell}_{\ih,} m^{6,\ell}_{\ih} \leq M n^{1/2} \log{n}.
\end{align}
In order to not repeat the same argument for two cases and for each value of $k$, 
we define the following parameters $\nu$ and $w_*$. 
Note that we always have $w_*\geq M_*^{1/2}$.
Let
\begin{align}\label{eq: def w* nu}
\begin{split}
w_*:= \left\{ \begin{array}{ll}  \min\{ M_* \log{n}, M_*^{1/2} n^{1/2}/\Delta  \} & \text{ if } k=2, \\
M_*\log{n} & \text{ otherwise},
\end{array}\right.  \text{ and } 
\nu :=  \left\{ \begin{array}{ll}  \epsilon^{1/3} & \text{ if Case 2 applies and }  k=1, \\
n^{-1/10} & \text{ otherwise}.%\notag
\end{array}\right.
\end{split}
\end{align}

\begin{claim}\label{cl: Delta j not big}
For all $j\in [2]$, $j'\in \{3,4\}$, $j''\in \{5,6\}$ and $j_*\in [6]$, we have
\begin{align*}
 \Delta_j \leq \min\Big\{ \frac{n}{M\log{n}} , n^{1/k}\Big\}, \enspace \Delta_{j'} \leq \frac{n}{M\log{n}}, \enspace  \Delta_{j''} \leq \frac{n}{M_* w_*} \enspace \text{and} \enspace \widehat{\Delta}_{j_*} \leq n^{2/3}.
 \end{align*} 
\end{claim}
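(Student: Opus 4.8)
The plan is to bound each $g_j$ separately by tracing back to the structural definitions of $F^{\#}$, $F'$, $\last$, and $X'$ established in Section~\ref{sec:prep}, together with the degree and height bounds of Claims~\ref{cl: height and h}, \ref{cl: properties2} and \ref{cl: properties}. First I would handle $\Delta_j$ for $j\in[2]$: a vertex $x\in V(F_\ell)\setminus R(F_\ell)$ has $g_1(x)=|D_{F'}(x)|\le d_T(x)\le\Delta(T)\le\Delta<n/(M\log n)$ by~\eqref{eq: def Delta p}, and $g_2(x)=|D_{\last}(x)|\le\Delta$ likewise; the bound $\Delta_j\le n^{1/k}$ also follows from $\Delta<n^{1/k}$ in~\eqref{eq: def Delta p}. (When $X'\neq\es$ we even have the stronger~\eqref{eq: X' m g1 g2 value}, but that is not needed here.)

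Next, for $j'\in\{3,4\}$ I would use the fact that $g_3(x)=|D^2_{F'}(x)|\cdot\mathbbm 1_{\overline{X'}}(x)$ and $g_4(x)=|D_{\last}(D_{F'}(x))|\cdot\mathbbm 1_{\overline{X'}}(x)$ are nonzero only when $x\notin X'$, and that any grandchild-type set counted here is contained in $D_{F^{\#}}(D_{F'}(x))$; since $x\in\overline{X'}$, the very definition~\eqref{eq: def X'} of $X'$ gives $|D_{F^{\#}}(D_{F'}(x))|<n/(M\log n)$, hence $\Delta_{j'}\le n/(M\log n)$. For $j''\in\{5,6\}$, note $g_5(x)+g_6(x)=|D_{F^{\#}}(D_{F'}(x))|$ when $x\in X'$, so each of $g_5,g_6$ is at most $|D_{F^{\#}}(D_{F'}(x))|$; by \ref{F13} every component of $F^{\#}$ has at most $\min\{2\Delta^k,n/M_*\}$ vertices, and the grandchildren of the $F'$-children of $x$ lie in such a component, but this only gives $n/M_*$, not $n/(M_*w_*)$. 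To get the extra $w_*$ factor I would split on $k$: if $k\neq 2$ then $w_*=M_*\log n$ and $\Delta_{j''}\le|T(x)|$ — when $k=1$ we have $g_5=g_6=0$ by~\eqref{eq: g56 zero if k=1}, so there is nothing to prove; when $k\ge3$ one uses~\eqref{eq: under red} twice (two colour-$2$ edges lie below $x$ on the way to a counted vertex) to get $|D_{F^{\#}}(D_{F'}(x))|\le n(np')^{-2}\le n^{1-2/(k+1)}\le n/(M_*^2\log n)$ using $np'\ge M_*^6n^{1/(k+1)}$. If $k=2$, then $w_*=\min\{M_*\log n,\,M_*^{1/2}n^{1/2}/\Delta\}$: the $M_*\log n$ branch is handled as just described (one step of~\eqref{eq: under red} gives $n(np')^{-1}\le n^{1-1/3}$, which is $\le n/(M_*^2\log n)$), while for the $M_*^{1/2}n^{1/2}/\Delta$ branch I would bound $|D_{F^{\#}}(D_{F'}(x))|\le\sum_{y\in D_{F'}(x)}d_T(y)\cdot\Delta\le\Delta^2\cdot\ldots$; more carefully, $|D_{F^{\#}}(D_{F'}(x))|\le\Delta\cdot\Delta=\Delta^2$ is too weak, so instead one bounds it by $(n/M_*)$ from \ref{F13} and checks $n/M_*\le n/(M_*w_*)\cdot w_*$ trivially — the real content is that $n/M_*\le M_*^{-1/2}n^{1/2}\Delta^{-1}\cdot n$ fails unless $\Delta\le M_*^{1/2}n^{1/2}$, which holds when $k=2$ because $\Delta<n^{1/2}$ by~\eqref{eq: def Delta p} and $M_*\ge1$. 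Combining, $\Delta_{j''}\le n/(M_*w_*)$ in all cases.

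Finally, for $\widehat\Delta_{j_*}$ with $j_*\in[6]$: every $x\in L(F'_\ell)$ is a leaf of a star-component of $F'$, so $c(xa_T(x))=2$, and~\eqref{eq: under red} gives $|T(x)|\le n(np')^{-1}\le n^{1-1/(k+1)}\le n^{2/3}$ using $k\ge1$ and $np'\ge n^{1/(k+1)}$ from~\eqref{eq: def Delta p}. Since each $g_{j_*}(x)$ for such $x$ is bounded by $|D_{F^{\#}}(D_{F'}(x))|\le|T(x)|$ (for $j_*\in\{3,4,5,6\}$, though in fact $g_5=g_6=0$ on $L(F'_\ell)$ by~\eqref{eq: F' leaf not in X'}) or by $d_T(x)\le|T(x)|$ (for $j_*\in\{1,2\}$), we get $\widehat\Delta_{j_*}\le n^{2/3}$.

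The main obstacle I anticipate is the $\Delta_{j''}\le n/(M_*w_*)$ bound in the case $k=2$ with the second branch of $w_*$ active: one has to verify that the component-size bound from \ref{F13}, namely $\min\{2\Delta^2,n/M_*\}$, combined with $\Delta<n^{1/2}$, really yields $n/(M_*w_*)=n^{1/2}\Delta^{-1}$ rather than just $n/M_*$ — this requires noticing that when $\Delta$ is close to $n^{1/2}$ the height-$k$ structure forces the grandchild sets to be much smaller than a whole component, so one should bound $|D_{F^{\#}}(D_{F'}(x))|$ directly via~\eqref{eq: under red} (a colour-$2$ edge below $x$) as $\le n(np')^{-1}=n^{2}(np'n)^{-1}$ and then use $np'=M_*^6\Delta^3n^{-1}$ for $k=2$ to get $\le n\cdot n/( M_*^6\Delta^3)=n^2/(M_*^6\Delta^3)$; checking $n^2/(M_*^6\Delta^3)\le n^{1/2}/(M_*^{1/2}\Delta)$, i.e. $n^{3/2}\le M_*^{11/2}\Delta^2$, which holds since $\Delta\ge n^{1/(k+1)}=n^{1/3}$ gives $\Delta^2\ge n^{2/3}$ — wait, $n^{2/3}<n^{3/2}$, so this needs $\Delta$ larger; the cleanest fix is to also use that on this branch $\Delta>n^{3/8}$ or so, i.e. to split the $k=2$ analysis at the crossover $\Delta=M_*^{1/2}n^{1/2}/\log n$ between the two branches of $w_*$ and check each branch with the appropriate one of \ref{F13}'s two bounds. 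This bookkeeping, while elementary, is where the proof will take the most care.
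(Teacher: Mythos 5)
Your treatment of $\Delta_j$ for $j\in[2]$ and of $\Delta_{j'}$ for $j'\in\{3,4\}$ matches the paper's (bound by $\Delta$ via \eqref{eq: def Delta p}, respectively by the defining threshold of $X'$ in \eqref{eq: def X' }), and those parts are fine. The genuine gap is the bound $\Delta_{j''}\le n/(M_*w_*)$ in the case $k=2$, which you explicitly leave open. Two computational slips derail you there. First, each time you invoke \eqref{eq: under red} you drop the factor $|D_{F'}(x)|\le\Delta$: the correct estimate is $|D_{F^{\#}}(D_{F'}(x))|\le\sum_{y\in D_{F'}(x)}|T(y)|\le\Delta\cdot n(np')^{-1}=\Delta p'^{-1}$, not $n(np')^{-1}$. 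Second, you compute the target as $n^{1/2}/(M_*^{1/2}\Delta)$, whereas $n/(M_*w_*)=n^{1/2}\Delta/M_*^{3/2}$ when $w_*=M_*^{1/2}n^{1/2}/\Delta$. The paper closes this case by splitting at $\Delta=n^{1/2}/M_*^{3/2}$: below the threshold, $w_*\le M_*^{1/2}n^{1/2}/\Delta$ gives $\Delta^2\le\bigl(M_*^{1/2}n^{1/2}/w_*\bigr)\cdot\bigl(n^{1/2}/M_*^{3/2}\bigr)=n/(M_*w_*)$; above it, $w_*<M_*^{2}$ and $\Delta p'^{-1}=n^2/(M_*^6\Delta^2)\le n/M_*^3\le n/(M_*w_*)$. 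Your proposed alternative split at $\Delta=M_*^{1/2}n^{1/2}/\log n$ is left unverified and is unnecessary once the two quantities above are computed correctly.

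Two smaller problems. Your chain $\widehat\Delta_{j_*}\le n^{1-1/(k+1)}\le n^{2/3}$ is false for $k\ge3$ (then $1-1/(k+1)\ge 3/4$); the paper disposes of $k\ge3$ at the outset via $\widehat\Delta_{j_*}\le\Delta^2\le n^{2/k}\le n^{2/3}$ for $j_*\in[6]\setminus[2]$ (and $\Delta\le n^{1/3}$ for $j_*\in[2]$), and you need that separate case too. Relatedly, your claim that ``using \eqref{eq: under red} twice'' yields $|D_{F^{\#}}(D_{F'}(x))|\le n(np')^{-2}$ for $k\ge3$ is not sound: \eqref{eq: under red} bounds the \emph{size of the subtree} hanging below a colour-$2$ edge, whereas $|D_{F^{\#}}(D_{F'}(x))|$ is a \emph{count} of grandchildren, bounded only by $\sum_{y\in D_{F'}(x)}|T(y)|\le\Delta p'^{-1}$, which for $k\ge3$ can exceed $n$; here again the correct route is $\Delta^2\le n^{2/3}\le n/(M_*w_*)$.
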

\begin{proof}
First, consider the case $k\geq 3$. Then for each $i\in [2]$, we have 
$\Delta_i, \widehat{\Delta}_i \leq n^{1/k}$ and
for $i\in [6]\setminus [2]$, we have
$\Delta_{i}, \widehat{\Delta}_{i} \leq \Delta^2 \leq n^{2/3} \leq \min\{\frac{n}{M\log{n}},\frac{n}{M_*w_*}\}$ by \eqref{eq: def Delta p}. As this proves what we want, we may assume $k\leq 2$.

For all $j_*\in [6]$, $\ell\in [k]$ and a vertex $x\in L(F'_{\ell})$, we have
$g_{j_*}(x) \leq |T(x)|$. 
Thus \ref{F17} implies $\widehat{\Delta}_{j_*}\leq n^{1-1/(k+1)} \leq n^{2/3}$. For $j\in [2]$, \eqref{eq: def Delta p} with the fact that $\Delta_{j}, \widehat{\Delta}_j\leq \Delta$ implies that for $j \in [2]$, we have $\Delta_{j}  \leq \min\{ \frac{n}{M\log{n}} , n^{1/k}\}$.  
For each $j'\in \{3,4\}$, $x\notin X'$, we have 
$g_{j'}(x) \leq |D_{F^{\#}}(D_{F'}(x))|\leq \frac{n}{M\log{n}}$ by the definition of $X'$ and 
the fact that $F^{\#} = F'\cup \last$. 
Thus $\Delta_{j'} \leq \frac{n}{M \log{n}}$ as $g_{j'}(x)=0$ for $x\in X'$.

Finally, for $j''\in \{5,6\}$, if $k=1$, then \eqref{eq: g56 zero if k=1} implies $\Delta_{j''}=0$.
Assume $k=2$ and $\Delta \leq {n^{1/2}}/{M_*^{3/2}}$. 
Then we have $w_* \geq M_*^{2}$ by \eqref{eq: def w* nu} and we obtain 
$\Delta_{j''}\leq \Delta^2 \leq  \frac{M_*^{1/2}n^{1/2}}{ w_*}\cdot \frac{n^{1/2}}{M_*^{3/2}}  \leq \frac{n}{M_* w_*}.$
 If $k=2$ and $\Delta > {n^{1/2}}/{M_*^{3/2}}$, then we have $w_*< M_*^{2}$.
For $x\in V(T)$, we have
 $$|D_{F^{\#}}(D_{F'}(x))| \leq \sum_{x' \in D_{F'}(x)} |T(x')| 
\stackrel{\eqref{eq: under red}}{\leq} |D_{F'}(x)| n (np')^{-1} 
\leq \Delta p'^{-1} \stackrel{\eqref{eq: def Delta p}}{\leq} \frac{n}{M_*^3} \leq \frac{n}{M_* w_*}.$$
 \COMMENT{Note that 
 $\Delta p'^{-1} \leq \Delta\cdot (M_*^6 \Delta^3 n^{-2})^{-1} \leq
 n^2/(M_*^6\Delta^2) \leq n/M_*^3$.
 }
 As $g_{j''}(x) \leq |D_{F^{\#}}(D_{F'}(x))|$ for all $x\in V(T)$, we obtain $\Delta_{j''} \leq  {n}/(M_* w_*)$. This proves the claim.
\end{proof}

Suppose next that $\phi$ is an injective map
defined on a subset of $V(T)$ into $V(G)$ and $\epsilon'>0$ is a (small) error parameter.
Next we define four types of sets of vertices which potentially ruin certain `quasi-randomness' properties of $G'$ on specified subsets of $V_{\ih}$.
These sets consist of vertices whose neighbourhoods are not as we would like them to be.
We later always want to ensure these sets to be small (in some cases even empty).
Recall the definition in~\eqref{def: fsum}.
For all $\ih\in [r]\times[2]$, $\ell\in \{0,\ldots,k+1\}$, $\ell'\in [k+1]$, $j\in [6]$,   and $j'\in [2]$, 
we define
\begin{align}
\begin{split}
\cC^{j,\ell}_{\ih}(\phi,\epsilon')&:= \Big\{ u \in V_\ih : 
\fsum{g_j}{\phi}{ N_{J_\ih, V_{\ih,\ell}\cup V'_{\ih,\ell} }(u)} >\epsilon^{1/2} m_{\ih}^{j,\ell} + \epsilon'  n_\ih \Big\},    \\
\cD^{j,\ell'}_{\ih}(\phi,\epsilon')&:= \Big\{ u \in V_\ih : 
\fsum{g_j}{\phi}{ N_{J_\ih, \widehat{V}_{\ih,\ell} }(u)}  >  \epsilon^{1/2}\widehat{m}_{\ih}^{j,\ell'}+ \epsilon'  n_\ih \Big\},  \\
\widetilde{\cC}^{j',\ell}_{\ih}(\phi,\epsilon')&:= \Big\{ u \in V_\ihb : 
\fsum{g_{j'}}{\phi}{ N_{G', V_{\ih,\ell}\cup V'_{\ih,\ell} }(u)} \neq
(d\pm \epsilon^{1/2}) m_{\ih}^{j',\ell} \pm \epsilon' n_\ih \Big\}, \\
\widetilde{\cD}^{j',\ell'}_{\ih}(\phi,\epsilon')&:= \Big\{ u \in V_\ihb :  \fsum{g_{j'}}{\phi}{ N_{G', \widehat{V}_{\ih,\ell} }(u)} \neq (d\pm \epsilon^{1/2}) \widehat{m}_{\ih}^{j',\ell'} \pm \epsilon' n_\ih \Big\}.\label{eq: def cC cD}
\end{split}
\end{align}

We start our embedding algorithm with a function $\phi_0$ that maps $x_1$ to an arbitrary vertex in $V_{(1,1),0}$.
For each $\ell\in [k+1]$, we will iteratively extend the function $\phi_{\ell-1}$ to $\phi_{\ell}$ satisfying the following properties for all $\ih\in [r]\times [2]$, $j\in \{1,2\}$, $j'\in \{3,4\}$ and $j''\in \{5,6\}$:
\begin{enumerate}[label={\text{\rm($\Phi$\arabic*)}}]
\item\hspace{-0.2cm}$_\ell$\label{Phi1} $\phi_{\ell}\colon\bigcup_{\ell'=0}^{\ell} (F_{\ell'} \cup F'_{\ell'}) \rightarrow G'\cup \bigcup_{\ell'=1}^{\ell} R_{\ell'}$ is an embedding.

\item\hspace{-0.2cm}$_\ell$ \label{Phi2} $\phi_{\ell}( (X_\ih \setminus X') \cap ( V(F_{\ell})\setminus R(F_{\ell}) ) \subseteq V_{\ih,\ell}$ and $\phi_{\ell}( (X_\ih \cap X') \cap ( V(F_{\ell})\setminus R(F_{\ell}) ) \subseteq V'_{\ih,\ell}$.

\item\hspace{-0.2cm}$_\ell$ \label{Phi2'} if $\ell \leq k$, 
then $\phi_{\ell}(X_\ih\cap L(F'_{\ell})) \subseteq \widehat{V}_{\ih,\ell}$.

\item\hspace{-0.2cm}$_\ell$\label{Phi3}
$\cC^{j,\ell}_{\ih} (\phi_\ell,\nu)= \widetilde{\cC}^{j,\ell}_{\ih} (\phi_\ell, \nu )=\cC^{j',\ell}_{\ih}(\phi_\ell,\epsilon) =\emptyset \enspace$
and $\enspace |\cC^{j'',\ell}_{\ih}(\phi_\ell,\epsilon^{1/3})| \leq  2^{-  w_*} n$.

\item\hspace{-0.2cm}$_\ell$\label{Phi4} if $\ell \leq k$, then
$\cD^{j,\ell}_{\ih}(\phi_{\ell},\epsilon_{\ell}) =\cD^{j',\ell}_{\ih}(\phi_\ell, \epsilon_{\ell} )=\cD^{j'',\ell}_{\ih}(\phi_\ell, \epsilon_{\ell} ) =\emptyset \enspace$  and 
$\enspace |\widetilde{\cD}^{j,\ell}_{\ih} (\phi_\ell, \epsilon_{\ell} )| \leq  2^{-  w_*} n.$
\end{enumerate}
Note that it is easy to see that $\phi_0$ satisfies  \ref{Phi1}$_0$-- \ref{Phi4}$_0$.
We proceed with Round 1.
\newline

\noindent {\bf Embedding algorithm.} \newline

\noindent {\bf Round $\ell$ with $\ell \leq k+1$.}
Assume we have defined $\phi_{\ell-1}$ satisfying \ref{Phi1}$_{\ell-1}$--\ref{Phi4}$_{\ell-1}$. 
We first proceed to Step $\ell.1$ and then to Step $\ell.2$. \newline

\noindent {\bf Step $\ell.1$.}
In this step, we embed $V(F_{\ell})\setminus R(F_{\ell})$ into $R_\ell$ by using Lemma~\ref{lem: random embedding}. 
Moreover, we use Lemma~\ref{lem: randomly distributing} to ensure that the value of $g_j$ is well-distributed over the sets in $\cB_{\ih}^{j}$; 
thus concluding that \ref{Phi3}$_\ell$ holds.

Recall that $R_{\ell} \in \bG(n,M_* p')$.
Let $\sE_{1,\ell}$ be the event that there exist a map $\phi'_{\ell}$ extending $\phi_{\ell-1}$ which embeds $F_{\ell}$ into $R_{\ell}$ and a multi-collection $\cF_{\ih,\ell}\subseteq \cB'_{\ih}$ satisfying the following.
\begin{enumerate}[label=\text{($\Phi'$\arabic*)}]
\item \label{phi'1} $\phi'_{\ell}( (X_{\ih}\cap V(F_{\ell})) \setminus X') \subseteq  V_{\ih,\ell}$ and $\phi'_{\ell}(X_{\ih}\cap V(F_{\ell})\cap X') \subseteq V'_{\ih,\ell}$.
\item \label{phi'2} $|\cF_{\ih,\ell}| \leq 2^{-w_*}n$.  
\item \label{phi'3} For all $\ih\in [r]\times [2]$, $B\in \cB'_{\ih}\setminus \cF_{\ih,\ell}$ and $j\in \{5,6\}$, we have 
$\fsum{g_j}{\phi'_{\ell}}{B\cap V'_{\ih,\ell}} \leq \epsilon^{2/5} n_{\ih}$.
\end{enumerate}
We apply Lemma~\ref{lem: random embedding} with respect to the following graphs and parameters to estimate $\Pro[\sE_{1,\ell}]$. \newline

\noindent
{ \small
\begin{tabular}{c|c|c|c|c|c}
object/parameter & $n$ & $(F_{\ell},R(F_{\ell}))$&  $[r]\times [2]$ &   $ (X_{\ih}\cap V(F_{\ell})) \setminus (R(F_{\ell})\cup X')$ &    $V\setminus (\bigcup_{\ih}V_{\ih,\ell}\cup V_{\ih,\ell}')$
\\ \hline
playing the role of & $n$ & $(F,R)$ & $[r]$ &  $X_i $   & $U$ \\ \hline \hline

object/parameter & $t M_* w_*/r$ & $R_\ell$ & $V_{\ih,\ell}$  &  $(X_{\ih}\cap V(F_{\ell})\cap X')\setminus R(F_{\ell})$ &  $\{ B\cap V'_{\ih,\ell} : B\in \cB'_{\ih}\}$ 
\\ \hline
playing the role of & $w$ & $G$  &  $V_i$ &  $X'_i$& $\cB_{i}$ \\ \hline \hline
object/parameter &  $ 3\epsilon$ & $M_*p'$ & $V'_{\ih,\ell}$ & $ \frac{r}{2tn}(g_5(x)+g_6(x))$ &  $\phi_{\ell-1}|_{R(F_\ell)}$ 
\\ \hline
playing the role of &  $ \epsilon$ & $p$  & $ V'_i$   &   $f(x)$  & $\phi'$ \\ 
\end{tabular}
}\newline \vspace{0.2cm}

\noindent 
Next, we check that conditions \ref{L41A1}--\ref{L41A4}  hold
so that we can apply Lemma~\ref{lem: random embedding}.
Condition \ref{L41A1} holds because
\begin{eqnarray*}
|V_{\ih,\ell}| &\stackrel{\eqref{eq: nn' sizes}}{=}&  |X_\ih\cap (V(F_\ell) \setminus R(F_{\ell}))| + \mu n_\ih 
\stackrel{\ref{G2}}{\geq} |X_\ih\cap (V(F_{\ell}) \setminus R(F_{\ell}))| + \mu n/(2tr)\\ 
&\stackrel{\ref{F11},(\ref{eq: def Delta p})}{\geq} &  |(X_\ih\cap V(F_{\ell})) \setminus (R(F_{\ell})\cup X')|  + 12 \cdot (M_*p')^{-1}\cdot 40 knp' + 30 (M_*p')^{-1}\log{n}.
\end{eqnarray*}
Note that we have $|V'_{\ih,\ell}| \stackrel{\eqref{eq: nn' sizes}}{=} \mu n_{\ih} 
\stackrel{\ref{G2}}{\geq} n^{1-1/(k+1)} \log^6{n} \stackrel{ \eqref{eq: def Delta p}}{\geq} (M_*p')^{-1} \log^6{n}.$
This with \eqref{eq: X' size M logn} implies that \ref{L41A2} holds.

For all $\ih\in [r]\times [2]$ and $B\in \cB_{\ih}'$, we have
$$|B\cap V'_{\ih,\ell} | \stackrel{\ref{V2},\eqref{eq: nn' sizes},\eqref{eq: def gcB}}{= }\mu |B| \pm \epsilon^2 n_{\ih} \stackrel{\eqref{eq: nn' sizes},\eqref{eq: Delta Hih} }{\leq} 3\epsilon |V'_{\ih,\ell} |.$$ \COMMENT{We obtain the final inequality since $|B|\leq 2\epsilon n_{\ih}$ and $|V'_{\ih,\ell}|= \mu n_{\ih}$.}
Thus \ref{L41A3} holds.

Condition \ref{L41A4} follows from \eqref{eq: m g relation} and the fact that $\Delta_5+\Delta_6\leq 2n/(M_*w_*)$ holds by Claim~\ref{cl: Delta j not big}.  

As $(3\epsilon)^{1/2}\cdot 2tn/r \leq \epsilon^{2/5} n_{\ih}$ (by~\eqref{eq: hierarchy}), and $(3\epsilon)^2 \cdot 2t M_* w_*/r \geq w_*$,\COMMENT{This part is needed to deduce \ref{phi'2} and \ref{phi'3} from the conclusion of the Lemma~\ref{lem: random embedding}.}  
Lemma~\ref{lem: random embedding} implies 
\begin{align}\label{eq: E1ell prob}
\bPr[\sE_{1,\ell}] \geq 1- n^{-7/4}.
\end{align}
Recall that once $\sE_{1,\ell}$ holds, then a desired embedding $\phi'_{\ell}$ exists. 
Moreover, once $\sE_{1,\ell}$ holds, and if we choose a embedding $\phi'_{\ell}$ uniformly at random among all embeddings of $F_\ell$ into $R_\ell$ extending $\phi_{\ell-1}$ and satisfying \ref{phi'1}--\ref{phi'3}, then such a chosen embedding satisfies more properties with high probability. To prove this, we let $\Pi_{\ell}$ be the set of all injective maps $\sigma: V(F_{\ell}) \setminus (R({F_{\ell}}) \cup X') \rightarrow \bigcup_{\ih\in [r]\times[2]} V_{\ih,\ell}$  such that 
$\sigma( (X_{\ih}\cap V(F_{\ell})) \setminus (R(F_\ell)\cup X') \subseteq V_{\ih,\ell}$ holds for each $\ih \in [r]\times [2]$.
Let $\Pi'_{\ell} \subseteq \Pi_{\ell}$ be the set of all $\sigma \in \Pi_{\ell}$ satisfying the following for all $j\in [4], \ih \in [r]\times [2]$ and $B\in \cB^j_{\ih}$:
\begin{align}\label{eq: E ell 1 holds}
\fsum{g_j}{\sigma}{B\cap V_{\ih,\ell}} = \frac{|V_{\ih,\ell}\cap B| m^{j,\ell}_{\ih} }{ n_{\ih,\ell}} \pm  \epsilon \sqrt{ M_* \Delta_j n_{\ih,\ell}  \log{n}}.
 \end{align}
 Let $\sE'_{1,\ell}$ be the event that there exists a function $\phi'_{\ell}$ extending $\phi_{\ell-1}$ which embeds $F_{\ell}$ into $R_{\ell}$
 satisfying \ref{phi'1}--\ref{phi'3} and 
 $\left.\phi'_{\ell}\right|_{\bigcup_{\ih\in [r]\times[2]}V_{\ih,\ell}} \in \Pi'_{\ell}$.
Now we apply Lemma \ref{lem: randomly distributing} for each $\ih\in [r]\times [2]$ with respect to the following objects and parameters.
\newline

\noindent
{ \small
\begin{tabular}{c|c|c|c|c|c|c|c|c|c}
object/parameter & $( X_\ih \setminus X')\cap (V(F_{\ell}) \setminus R(F_{\ell})) $ & $V_{\ih,\ell}$ & $4$ & $g_j$  & $\epsilon$ & $n_{\ih,\ell}$& $\{ B\cap V_{\ih,\ell} : B\in \cB^j_{\ih}\}$& $\mu^{-1}t^2$ & $M_*\log{n}$
%\\
\\ \hline
playing the role of & $U$ & $V$ & $s$ & $f_j$  & $\epsilon$ & $n$ &$\cB_j$ & $t$ & $w_j$ 
% \\\hline \hline

%object/parameter & $\{ B\cap V_{\ih,\ell} : B\in \cB^j_{\ih}\}$ %& $m_{\ih}^{j,\ell}$ & $\Delta_j$ 
%& $\mu^{-1}t^2$ & $M_*\log{n}$&
%\\ \hline
%playing the role of  & $\cB_j$ %& $\norm{f_j}_1$ & $\norm{f_j}_{\infty}$ 
% & $t$ & $w_j$ &
\end{tabular}
}\newline \vspace{0.2cm}

\noindent Indeed, this is possible by Claim~\ref{cl: Delta j not big}.
As for $\ih \in [r]\times [2]$ and a function $\sigma$ chosen uniformly at random in among $\Pi_{\ell}$, $\left.\sigma\right|_{V_{\ih,\ell}}$ is uniformly distributed among all injective maps from $( X_\ih \setminus X')\cap (V(F_{\ell}) \setminus R(F_{\ell}))$ to $V_{\ih,\ell}$.
Thus Lemma \ref{lem: randomly distributing} implies that 
a map $\sigma \in \Pi_{\ell}$ chosen uniformly at random will satisfy \eqref{eq: E ell 1 holds} for all $j\in [4]$ and $B\in \cB^j_{\ih}$ with probability at least $1- |V_{\ih,\ell}|^{-3} \geq 1 - n^{-5/2}$. 
Moreover, the maps $\left.\sigma\right|_{V_{\ih,\ell}}$ are mutually independent over all $\ih \in [r]\times[2]$ as their domains are disjoint.
Thus a union bound implies that a randomly chosen $\sigma$ lies in $\Pi'_{\ell}$ with probability at least $1 - n^{-2}$. Consequently
$$|\Pi'_{\ell}| \geq (1-  n^{-2}) |\Pi_{\ell}|.$$
Since $R_{\ell}\in \bG(n,M'p)$ is a binomial random graph, 
the distribution of $R_\ell$ is invariant under vertex permutations.
Moreover, for all $j\in \{5,6\}$, $\ih\in [r]\times[2]$ and $B\in \cB'_{\ih}$
the value of $\fsum{g_j}{\phi'_{\ell} \circ \sigma}{B\cap V'_{\ih,\ell}}$ is invariant
for any vertex permutation $\sigma$ which is the identity on $V(G)\setminus \bigcup_{\ih\in [r]\times[2]} V_{\ih,\ell}$.
Thus, 
assuming $\sE_{1,\ell}$, for any two permutation $\sigma,\sigma' \in \Pi_{\ell}$, if we choose a random embedding $\phi'_{\ell}$ satisfying \ref{phi'1}--\ref{phi'3} (if there are more than one such embedding, then we choose one uniformly at random), then we have
$$\mathbb{P}\Big[\sigma = \left.\phi'_{\ell}\right|_{\bigcup_{\ih\in [r]\times [2]} V_{\ih,\ell}} \Big] = \mathbb{P}\Big[\sigma'= \left.\phi'_{\ell}\right|_{\bigcup_{\ih\in [r]\times [2]} V_{\ih,\ell}}\Big].$$ 
Therefore, we conclude
\begin{align}\label{eq: failure probability 1}
\mathbb{P}[\sE'_{1,\ell} ] \geq \mathbb{P}\Big[\left.\phi'_{\ell}\right|_{\bigcup_{\ih\in [r]\times [2]} V_{\ih,\ell}} \in \Pi'_{\ell} \mid \sE_{1,\ell}\Big] \mathbb{P}[\sE_{1,\ell}] = \frac{|\Pi'_{\ell}|}{|\Pi_{\ell}|}\mathbb{P}[\sE_{1,\ell}] \geq (1-2n^{-3/2}).
\end{align}
\COMMENT{Here, the first inequality is not an equality, because there could be more than one embedding satisfying \ref{phi'1}--\ref{phi'3}. Then we could get a strict inequality.}
Now we verify that once $\sE'_{1,\ell}$ holds, then there exists a map $\phi'_{\ell}$ extending $\phi_{\ell-1}$ which satisfies \ref{Phi2}$_\ell$ and \ref{Phi3}$_\ell$.
We continue with a simple claim for later use.
 \begin{claim}\label{eq: bound by nu}
 $$\epsilon\sqrt{ M_* \Delta_j n_{\ih,\ell}  \log{n}}  \leq \left\{ \begin{array}{ll} \nu n_\ih/2 &  \text{ if } j\in [2],\\ \epsilon n_{\ih} &  \text{ if } j\in [4]\setminus [2]. \end{array}\right.$$
 \end{claim}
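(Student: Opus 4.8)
The plan is to prove Claim~\ref{eq: bound by nu} by a direct computation, splitting into the cases $j\in[2]$ and $j\in\{3,4\}$ and, within the first case, further splitting according to the value of $k$, since the definitions of $\nu$ and $w_*$ in~\eqref{eq: def w* nu} behave differently when $k=1$, $k=2$, and $k\geq 3$.

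First I would record the elementary bounds that will be used throughout. From~\eqref{eq: nn' sizes} and~\ref{G2} we have $n_{\ih,\ell}\leq n_\ih\leq tn/r$, so in particular $n_{\ih,\ell}\leq n$. Also $n_\ih\geq n/(tr)$, so $n$ and $n_\ih$ differ only by the bounded factor $tr$, and $\log n$-type error terms are harmless against any fixed power saving coming from $\nu$ or $2^{-w_*}$. The core estimates for $\Delta_j$ come from Claim~\ref{cl: Delta j not big}: for $j\in[2]$, $\Delta_j\leq\min\{n/(M\log n),n^{1/k}\}$, and for $j\in\{3,4\}$, $\Delta_j\leq n/(M\log n)$. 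Recall $M=10kM_*^7$, so $M_*\Delta_j\log n\leq M_* \cdot \frac{n}{M\log n}\cdot\log n = \frac{n}{10kM_*^6}$ whenever we use the $n/(M\log n)$ bound.

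For $j\in\{3,4\}$ this immediately finishes: $\epsilon\sqrt{M_*\Delta_j n_{\ih,\ell}\log n}\leq\epsilon\sqrt{\frac{n}{10kM_*^6}\cdot n}=\frac{\epsilon n}{\sqrt{10kM_*^6}}\leq \epsilon n_\ih$, using $n\leq tr\,n_\ih$ and $1/r,1/M_*\ll\epsilon$ so that $tr\leq 10kM_*^6$ comfortably (more carefully: $n\leq tr n_\ih$ and $tr/\sqrt{10kM_*^6}\leq 1$ by the hierarchy~\eqref{eq: hierarchy}, since $1/M_* \ll 1/C \ll 1/C' \ll \epsilon$ and $C' \le r \le C$). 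For $j\in[2]$ I would treat the two sub-cases of $\nu$. When Case~2 applies and $k=1$, we have $\nu=\epsilon^{1/3}$; using $\Delta_j\leq n^{1/k}=n$ is too weak, so instead use $\Delta_j\leq n/(M\log n)$, giving $\epsilon\sqrt{M_*\Delta_j n_{\ih,\ell}\log n}\leq\epsilon\sqrt{\frac{n}{10M_*^6}\cdot n}\leq\epsilon n/\sqrt{10M_*^6}\leq \epsilon^{1/3}n_\ih/2=\nu n_\ih/2$, again absorbing the bounded factor $tr$. Otherwise $\nu=n^{-1/10}$; here the relevant bound is $\Delta_j\leq n^{1/k}$. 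Since $k\geq 2$ in this regime only when we are not in the previous sub-case — but actually $k\geq1$ always, so I must be careful: if $k=1$ but Case~1 applies, then $\nu=n^{-1/10}$ and $\Delta_j\leq n^{1/k}=n$ is useless, so I use $\Delta_j\leq n/(M\log n)$ and get a bound of order $\epsilon n$, which is much larger than $\nu n_\ih/2 = n^{9/10}/(2tr)$. This is the main obstacle: I need a better bound on $\Delta_j$ when $k=1$ and Case~1 applies. This is exactly where~\ref{F18} enters: in Case~1, $\Delta\leq n^{3/4}\log n$, hence $\Delta_j\leq\Delta\leq n^{3/4}\log n$, so $\epsilon\sqrt{M_*\Delta_j n_{\ih,\ell}\log n}\leq\epsilon\sqrt{M_*n^{3/4}\log n\cdot n\cdot\log n}=\epsilon\sqrt{M_*}\,n^{7/8}\log n\leq n^{9/10}/(2tr)=\nu n_\ih/2$ for large $n$. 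For $k\geq2$, $\Delta_j\leq n^{1/k}\leq n^{1/2}$, so $\epsilon\sqrt{M_*\Delta_j n_{\ih,\ell}\log n}\leq\epsilon\sqrt{M_* n^{1/2}\cdot n\cdot\log n}=\epsilon\sqrt{M_*}n^{3/4}\sqrt{\log n}\leq n^{9/10}/(2tr)=\nu n_\ih/2$, again for $n$ large. Assembling these sub-cases, in every case $\epsilon\sqrt{M_*\Delta_j n_{\ih,\ell}\log n}\leq\nu n_\ih/2$ for $j\in[2]$ and $\leq\epsilon n_\ih$ for $j\in\{3,4\}$, which is the claim. The only genuine subtlety is remembering to invoke~\ref{F18} in the $k=1$, Case~1 regime; the rest is bookkeeping with the hierarchy~\eqref{eq: hierarchy} and Claim~\ref{cl: Delta j not big}.
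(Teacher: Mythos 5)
Your proof is correct and follows essentially the same route as the paper: both rest on the bounds from Claim~\ref{cl: Delta j not big}, invoke \ref{F18} to get $\Delta_j\leq\Delta\leq n^{3/4}\log n$ in the regime where $\nu=n^{-1/10}$ cannot be reached via $\Delta_j\leq n/(M\log n)$, and split according to the definition of $\nu$ in \eqref{eq: def w* nu}. The only difference is cosmetic: the paper merges your two sub-cases ``$k\geq 2$'' and ``$k=1$, Case 1'' into a single bound $\Delta_j\leq n^{3/4}\log n$, whereas you treat them separately.
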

 \begin{proof}
Recall that $M= 10k M_*^7$. 
By Claim~\ref{cl: Delta j not big}, we have $\Delta_{j}\leq n/(M\log{n})$ for all $j\in [4]$. 
Thus $$ \epsilon \sqrt{ M_* \Delta_j n_{\ih,\ell}  \log{n}}
\leq \epsilon \sqrt{ M_* \log{n} \cdot \frac{n}{M\log{n}} \cdot n_\ih }  
\leq \epsilon n_\ih.$$
This implies the claim for the case $j\in \{3,4\}$.
By \eqref{eq: def w* nu}, this also implies the claim if $k=1$ and Case 2 applies.

Now we suppose $j\in [2]$. Moreover, we suppose either $k\geq 2$ or Case 1 applies.
Thus, by \ref{F18} and Claim~\ref{cl: Delta j not big}, we conclude $\Delta\leq n^{3/4}\log n$. As $\Delta_j\leq \Delta$, we obtain 
$$ \epsilon \sqrt{ M_* \Delta_j n_{\ih,\ell}  \log{n}}
\leq \epsilon \sqrt{ M_* n^{3/4} \log^2{n}\cdot n_\ih}  
\stackrel{\eqref{eq: def w* nu}}{\leq} \nu n_\ih/2.$$
This proves the claim.
\end{proof}
Note that 
for all $j\in [2]$, $\ih \in[r]\times[2]$ and $B\in \cB^j_\ih$, we have
\begin{eqnarray}\label{eq: g12 V' sum}
\fsum{g_j}{\phi'_{\ell}}{B\cap V'_{\ih,\ell}}
\stackrel{\ref{phi'1}}{\leq} \sum_{x\in X'} g_j(x) \stackrel{\eqref{eq: X' size M logn},\eqref{eq: X' m g1 g2 value}}{\leq}  M n^{1/2} \log{n}.\end{eqnarray}
Now, we assume that $\sE'_{1,\ell}$ holds, 
and demonstrate that an embedding $\phi_\ell'$ satisfying $\sE'_{1,\ell}$ also satisfies \ref{Phi3}$_\ell$.
First, consider $j\in [4]$, $\ih\in [r]\times [2]$ and $u\in V_{\ih}$. By \eqref{eq: Delta Hih} and \ref{V2}, we obtain 
\begin{align}\label{NJih}
	|N_{J_\ih,V_{\ih,\ell}}(u)|\leq 3\epsilon n_{\ih,\ell}.
\end{align}
By \eqref{eq: def gcB}, we have $N_{J_\ih,V_{\ih,\ell}}(u) \in \cB^{j}_{\ih}$.
Consequently, $\sE'_{1,\ell}$ with \eqref{eq: E ell 1 holds} implies that
\begin{eqnarray}\label{eq: Vihell value}
\fsum{g_j}{\phi'_{\ell}}{N_{J_\ih,V_{\ih,\ell}}(u)} 
\stackrel{\eqref{NJih}}{\leq}  \epsilon^{1/2} m^{j,\ell}_{\ih} +  \epsilon \sqrt{ M_* \Delta_j n_{\ih,\ell}  \log{n}} .
\end{eqnarray}\COMMENT{
$\fsum{g_j}{\phi'_{\ell}}{N_{J_\ih,V_{\ih,\ell}}} 
\stackrel{\eqref{NJih}}{\leq} 3\epsilon m_{\ih}^{j,\ell} +  \epsilon \sqrt{ M_* \Delta_j n_{\ih,\ell}  \log{n}}  \leq \epsilon^{1/2} m^{j,\ell}_{\ih} +  \epsilon \sqrt{ M_* \Delta_j n_{\ih,\ell}  \log{n}}.$
}
If $j\in [2]$, then by Claim~\ref{eq: bound by nu} we have
\begin{eqnarray*}
\fsum{g_j}{\phi'_{\ell}}{N_{J_\ih,V_{\ih,\ell}\cup V'_{\ih,\ell}}(u)} 
&=& \fsum{g_j}{\phi'_{\ell}}{N_{J_\ih,V_{\ih,\ell}}(u)} + 
\fsum{g_j}{\phi'_{\ell}}{N_{J_\ih, V'_{\ih,\ell}}(u)} \\
&\stackrel{\eqref{eq: g12 V' sum},\eqref{eq: Vihell value}}{\leq}&
\epsilon^{1/2} m^{j,\ell}_{\ih} +  \nu n_{\ih}/2 + M n^{1/2} \log{n}
\leq \epsilon^{1/2} m^{j,\ell}_{\ih} +  \nu n_{\ih}.
\end{eqnarray*}
If $j\in \{3,4\}$, then by \eqref{eq: def gcB}, we have $g_j(x)=0$ for all $x\in X'$.
Thus by Claim~\ref{eq: bound by nu}, we have
\begin{eqnarray*}
\fsum{g_j}{\phi'_{\ell}}{N_{J_\ih,V_{\ih,\ell}\cup V'_{\ih,\ell}}(u)} 
= \fsum{g_j}{\phi'_{\ell}}{N_{J_\ih,V_{\ih,\ell}}(u)} 
\stackrel{\eqref{eq: Vihell value}}{\leq }\epsilon^{1/2} m^{j,\ell}_{\ih}  + \epsilon n_{\ih}.
\end{eqnarray*}
Thus we have $\cC^{j,\ell}_{\ih} (\phi_\ell,\nu)=\emptyset$ if $j\in [2]$ and $\cC^{j,\ell}_{\ih} (\phi_\ell,\epsilon)=\emptyset$ if $j\in \{3,4\}$.

Now we consider $j\in [2]$, $\ih\in [r]\times [2]$ and $v\in V_{\ih}$.
Observe that \ref{G1},  \ref{V2} and \eqref{eq: def gcB} imply that $|N_{G',V_{\ih,\ell}}(v)|=(d\pm 2\epsilon)n_{\ih,\ell}$ for all $v\in V_\ihb$.
Hence, again by \eqref{eq: def gcB} and Claim~\ref{eq: bound by nu}, we conclude that $\sE'_{1,\ell}$ implies
$$ \fsum{g_j}{\phi'_{\ell}}{N_{G',V_{\ih,\ell}}(v)} 
\stackrel{\eqref{eq: E ell 1 holds}, \text{Claim~\ref{eq: bound by nu}}}{=} (d\pm 2\epsilon)n_{\ih,\ell}\cdot\frac{  m^{j,\ell}_{\ih} }{ n_{\ih,\ell}} + \nu n_\ih/2 
= (d\pm \epsilon^{1/2})m^{j,\ell}_{\ih} \pm \nu n_\ih/2.$$
This implies
$$ \fsum{g_j}{\phi'_{\ell}}{N_{J_\ih,V_{\ih,\ell}\cup V'_{\ih,\ell}}(u)}
\stackrel{\eqref{eq: g12 V' sum},\ref{phi'1}}{\leq}
  \fsum{g_j}{\phi'_{\ell}}{N_{J_\ih,V_{\ih,\ell}}(u)} + M n^{1/2} \log{n}
= (d\pm \epsilon^{1/2})m^{j,\ell}_{\ih} \pm \nu n_\ih.$$
Hence we have $\widetilde{\cC}^{j,\ell}_{\ih}(\phi_\ell,\nu) =\emptyset$.

Consider $j\in \{5,6\}$. 
By \eqref{eq: def gcB} we have $g_j(x)=0$ for any $x\in \overline{X'}$. 
Thus \ref{phi'1} implies $ \fsum{g_j}{\phi'_{\ell}}{ N_{J_\ih,V_{\ih,\ell}}(u) }=0$. 
This implies that for any $u\in V_{\ih}$ such that $N_{J_\ih}(u) \in \cB_i'\setminus \cF_{\ih,\ell}$, by \ref{phi'1}, we conclude that $\sE'_{1,\ell}$ implies
$$\fsum{g_j}{\phi'_{\ell}}{ N_{J_\ih,V_{\ih,\ell}\cup V'_{\ih,\ell}}(u) }
= \fsum{g_j}{\phi'_{\ell}}{ N_{J_\ih, V'_{\ih,\ell}}(u)}
\stackrel{\ref{phi'3}}{\leq} \epsilon^{2/5} n_{\ih} 
\leq \epsilon^{1/2} m_{\ih}^{j,\ell} + \epsilon^{1/3} n_{\ih}.$$
Thus $|\cC^{j',\ell}_{\ih}(\phi_\ell,\epsilon^{1/3})|  \leq |\cF_{\ih,\ell}| \stackrel{\ref{phi'2}}{\leq} 2^{-w_*} n.$
Therefore, if $\sE'_{1,\ell}$ occurs, 
then we have an embedding $\phi'_{\ell}$ which extends $\phi_{\ell-1}$ and satisfies \ref{Phi3}$_\ell$,
otherwise we end the algorithm with failure. Note that \ref{Phi2}$_{\ell}$ holds by the construction of $\phi'_{\ell}$.
If $\ell=k+1$, then we proceed to the final round (observe that then also
\ref{Phi1}$_{k+1}$,\ref{Phi2'}$_{k+1}$ and \ref{Phi4}$_{k+1}$ hold as they are implied from \ref{Phi1}$_{k}$, \ref{Phi2'}$_{k}$ and \ref{Phi4}$_{k}$), otherwise we proceed to Step $\ell.2$.
\newline

\noindent {\bf Step $\ell$.2.}
In this step, we embed  $X_{\ih} \cap L(F'_{\ell})$ into $\widehat{V}_{\ih,\ell}$ by using Lemma~\ref{lem: random matching behaves random} in such a way that  \ref{Phi1}$_{\ell}$, \ref{Phi2'}$_{\ell}$ and \ref{Phi4}$_{\ell}$ hold.
For this, we need to verify
(A1)$_{\ref{lem: random matching behaves random}}$--(A3)$_{\ref{lem: random matching behaves random}}$.

By \ref{F02}, \ref{Phi1}$_{\ell-1}$ and \ref{Phi2}$_{\ell-1}$, 
the set $R(F'_{\ell})=\cen(F'_{\ell})$ is already embedded by $\phi'_{\ell}$ into $\bigcup_{\ih\in [r]\times[2]} (V_{\ih,\ell} \cup V_{\ih,\ell}\cup \widehat{V}_{\ih,\ell-1})$.
By \ref{Z4}$_{0}$--\ref{Z4}$_{k+1}$, we know that for all $x\in X_\ih\cap L(F'_{\ell})$, we have $a_T(x) \in X_\ihb$
and so \ref{Phi2}$_{\ell-1}$ implies that  
$ \phi'_{\ell} ( a_T(x) ) \in V_{\ihb,\ell} \cup V'_{\ihb,\ell} \cup \widehat{V}_{\ihb,\ell-1}.$ 
Thus the parent of $x$ is already embedded into the `correct' cluster.
As the sum of the `$g_1$-value' of the neighbours of the vertices in $\widetilde{\cD}^{1,\ell-1}_{\ihb}(\phi_{\ell-1},\epsilon_{\ell-1})$ is `wrong',
the vertices in $\widetilde{\cD}^{1,\ell-1}_{\ihb}(\phi_{\ell-1},\epsilon_{\ell-1})$ may not satisfy the second condition in (A2)$_{\ref{lem: random matching behaves random}}$.
So we simply remove these vertices and consider the following objects:
\begin{align}\label{eq: defs V''}
\begin{split}
\widetilde{V}_{\ih,\ell} &:= \widehat{V}_{\ih,\ell}\setminus \widetilde{\cD}^{1,\ell-1}_{\ihb}(\phi_{\ell-1},\epsilon_{\ell-1}),   \hspace{1.35cm}
\widetilde{F}_{\ih}:= F'_{\ell}[ R(F'_{\ell}) \cap X_\ihb, L(F'_{\ell}) \cap X_{\ih} ],  \\ 
\widetilde{G}_{\ih} &:= G'[ V_{\ihb,\ell} \cup V'_{\ihb,\ell}\cup  \widehat{V}_{\ihb,\ell-1}, \widetilde{V}_{\ih,\ell} ] \enspace \text{ and } \enspace
\widetilde{J}_{\ih} := J_{\widetilde{G}_{\ih} }( V_{\ihb,\ell} \cup V'_{\ihb,\ell}\cup  \widehat{V}_{\ihb,\ell-1}, d, \epsilon_{\ell-1}^{1/2}). 
\end{split}
\end{align}
\COMMENT{Note that $\widetilde{\cD}^{1,\ell-1}_{\ihb}(\phi_{\ell-1},\epsilon_{\ell-1})$ can contains some vertices outside $\widehat{V}_{\ih,\ell}$.}
Note that we aim to embed $\widetilde{F}_{\ih}$ into $\widetilde{G}_{\ih}$ using Lemma~\ref{lem: random matching behaves random}.
In such an application, $\widetilde{J}_{\ih}$ will play the role of $J_G(U,d,\epsilon)$.

Observe that \ref{Phi4}$_{\ell-1}$ implies that $|\widetilde{\cD}^{1,\ell-1}_{\ihb}(\phi_{\ell-1},\epsilon_{\ell-1})| \leq 2^{-w_*} n$, and so
\begin{eqnarray}\label{eq: V'' size}
|\widetilde{V}_{\ih,\ell}| = \widehat{n}_{\ih,\ell} \pm  2^{-w_*} n \stackrel{\eqref{eq: def w* nu}}{=} (1\pm \epsilon) \widehat{n}_{\ih,\ell} \stackrel{\eqref{eq: nn' sizes}}{\geq}\mu n_\ih/2.
\end{eqnarray}
Now we wish to apply Lemma~\ref{lem: random matching behaves random} for each $\ih\in [r]\times [2]$ with the followng objects and parameters. \newline

\noindent
{ \small
\begin{tabular}{c|c|c|c|c|c|c}
object/parameter & $\widetilde{G}_{\ih}$ &$V_{\ihb,\ell}  \cup V'_{\ihb,\ell}\cup \widehat{V}_{\ihb,\ell-1}$
& $\widetilde{V}_{\ih,\ell}$ & $|\widetilde{V}_{\ih,\ell}|$& $\widetilde{F}_{\ih}$  & $g_j$
\\ \hline
playing the role of & $G$ & $U$ & $V$ & $n$  &  $F$  & $f_j$    \\ \hline \hline
object/parameter& $4$ &    $\{ B\cap \widetilde{V}_{\ih,\ell} : B\in \cB^j_{\ih}\}$ & $\phi'_{\ell}\mid_{R(F_{\ell}')\cap X_\ihb}$ & $\epsilon_{\ell-1}^{1/2}$ & $t^2\mu^{-1}$
\\ \hline
playing the role of & $s$&  $\cB_{j}$  & $\psi$ & $\epsilon$ & $t$  
\end{tabular}
}\newline \vspace{0.2cm}

\noindent In order  to apply Lemma~\ref{lem: random matching behaves random}, we first check that (A1)$_{\ref{lem: random matching behaves random}}$--(A3)$_{\ref{lem: random matching behaves random}}$ hold with the above objects and parameters.
Indeed, $\widetilde{F}_{\ih}$ is a star-forest by \ref{F01}, and $\phi'_{\ell}\mid_{R(F_{\ell}')\cap X_\ihb}$ is an injective map from $R(\widetilde{F}_{\ih})$ to $V_{\ihb,\ell} \cup V'_{\ihb,\ell}\cup \widehat{V}_{\ihb,\ell-1}$ by \ref{Phi2}$_{\ell}$ and \ref{Phi2'}$_{\ell-1}$.

Note that for each $j\in [4]$, by \ref{Z4}$_{\ell}$, \eqref{eq: def gcB} and the definition of $X_{\ih}$, we obtain
$$\sum_{u\in L(\widetilde{F}_{\ih})} g_j(u) \leq \max\big\{ |X_\ih| , |X_\ihb|\big\} \stackrel{\substack{\ref{G2},\eqref{eq: size of Xih not big case 2},\\ \eqref{eq: size of Xih not big case 1},\eqref{eq: V'' size}}}{\leq} t^2 \mu^{-1}  |\widetilde{V}_{\ih,\ell}|.$$
Moreover, we have
\begin{align}\label{eq: L g sum sum}
|L(\widetilde{F}_{\ih})| = \sum_{x\in R(F'_{\ell})\cap X_{\ihb}} d_{F'_{\ell}}(x) 
=\sum_{x\in V(F_{\ell})\setminus R(F_{\ell})\cap X_{\ihb}} g_1(x)
+ \sum_{x\in L(F'_{\ell-1})\cap X_{\ihb}} g_1(x) = m_{\ihb}^{1,\ell}+\widehat{m}_{\ihb}^{1,\ell-1}.
\end{align}
As $\widehat{\Delta}_j\leq n^{2/3}$ by Claim~\ref{cl: Delta j not big}, this implies that
(A1)$_{\ref{lem: random matching behaves random}}$ holds.\COMMENT{Note that $n^{2/3}\leq \frac{\epsilon^{2}_{\ell-1}|\widetilde{V}_{\ih,\ell}| }{\log{|\widetilde{V}_{\ih,\ell}|}}$. }
For each $u \in V_{\ihb,\ell} \cup V'_{\ihb,\ell}\cup \widehat{V}_{\ihb,\ell-1}$, 
we have $N_{G',V_\ih}(u) \in \cB^1_{\ih}$. Thus 
\ref{G1}, \ref{V2} and \eqref{eq: V'' size} yield that 
\begin{eqnarray}\label{eq: A degree good 1}
d_{\widetilde{G}_{\ih} }(u) 
%&= |N_{G' }(u)\cap V''_{\ih,\ell} | 
= (d\pm \epsilon)|\widetilde{V}_{\ih,\ell}| \pm (\epsilon^2 n_\ih + |\widetilde{\cD}^{1,\ell-1}_{\ihb}(\phi_{\ell-1},\epsilon_{\ell-1})|) \stackrel{\eqref{eq: V'' size}, \text{\ref{Phi4}$_{\ell-1}$}}{=} (d \pm \epsilon_{\ell-1}^{1/2})|\widetilde{V}_{\ih,\ell}| .
\end{eqnarray}
This verifies the first part of (A2)$_{\ref{lem: random matching behaves random}}$.

To see the second part of (A2)$_{\ref{lem: random matching behaves random}}$, we fix a vertex $v\in \widetilde{V}_{\ih,\ell}$.
By \eqref{eq: def gcB}, we have $N_{G',V_{\ihb} }(v)\in \cB^1_{\ihb}$. 
As $\widetilde{\cC}^{1,\ell}_\ihb(\phi_{\ell},\nu)=\emptyset$ by \ref{Phi3}$_{\ell}$ and as $v\notin \widetilde{\cD}^{1,\ell}_\ihb(\phi_{\ell-1},\epsilon_{\ell})$, we conclude
 that 
\begin{eqnarray}\label{eq: V'V eq g1}
\begin{split}
\fsum{g_1}{\phi'_{\ell}}{N_{G',V_{\ihb,\ell}\cup V'_{\ihb,\ell}}(v)} 
&\stackrel{\eqref{eq: def cC cD}}{=}& 
(d\pm \epsilon^{1/2}) m_{\ihb}^{1,\ell} \pm \nu n_\ihb \enspace \text{ and }\\
\fsum{g_1}{\phi'_{\ell}}{N_{G',\widehat{V}_{\ihb,\ell-1}}(v)}  
&\stackrel{\eqref{eq: def cC cD}}{=}&   
(d\pm \epsilon^{1/2}) \widehat{m}_{\ihb}^{1,\ell-1}   \pm \epsilon_{\ell-1} n_{\ihb}.
\end{split}
\end{eqnarray}
Therefore
\begin{eqnarray*}
\fsum{d_{\widetilde{F}_{\ih}}}{\phi'_{\ell}}{N_{\widetilde{G}_{\ihb}}(v) }
&=&\fsum{d_{F'_{\ell}}}{\phi'_{\ell}}{ N_{G',V_{\ihb,\ell}\cup V'_{\ihb,\ell}}(v)} 
+ \fsum{d_{F'_{\ell}}}{\phi'_{\ell}}{ N_{G',\widehat{V}_{\ihb,\ell}}(v)}  \\
&\stackrel{(\ref{eq: def gcB})}{=}&
\fsum{g_1}{\phi'_{\ell}}{ N_{G',V_{\ihb,\ell}\cup V'_{\ihb,\ell}}(v)} 
 + \fsum{g_1}{\phi'_{\ell}}{ N_{G',\widehat{V}_{\ihb,\ell}}(v)} \\
&\stackrel{\eqref{eq: V'V eq g1}}{ =} &(d\pm \epsilon^{1/2})(m_{\ihb}^{1,\ell}+\widehat{m}_{\ihb}^{1,\ell-1})  \pm  2\epsilon_{\ell-1} n_\ih 
\stackrel{\eqref{eq: V'' size},\eqref{eq: L g sum sum}}{=} d |L(\widetilde{F}_{\ih})| \pm \epsilon_{\ell-1}^{1/2} |\widetilde{V}_{\ih,\ell}|.
\end{eqnarray*}
Hence also the second part of (A2)$_{\ref{lem: random matching behaves random}}$ holds.

Now we verify (A3)$_{\ref{lem: random matching behaves random}}$. 
Recall the definition of $\widetilde{J}_\ih$ in~\eqref{eq: defs V''}.
Note that for any $u\in V_{\ihb,\ell} \cup V'_{\ihb,\ell}\cup \widehat{V}_{\ihb,\ell-1}$, by \ref{V2} and \eqref{eq: def J graph}, we have
$$N_{\widetilde{J}_{\ih}}(u) \subseteq N_{J_{\ihb}}(u)\cap (V_{\ihb,\ell}\cup V'_{\ihb,\ell}\cup \widehat{V}_{\ihb,\ell-1}) \enspace \text{ and } \enspace
N_{J_{\ihb}}(u) \in \cB^{1}_{\ihb}.$$

By \eqref{eq: def cC cD}, \ref{Phi3}$_\ell$ and \ref{Phi4}$_{\ell-1}$,
for each $u\in V_{\ihb,\ell} \cup V'_{\ihb,\ell} \cup \widehat{V}_{\ihb,\ell-1}$ we have
\begin{align}\label{eq: NH value small}
\fsum{g_1}{\phi'_{\ell}}{N_{J_{\ihb}, V_{\ihb,\ell}\cup V'_{\ihb,\ell}}(u)} \leq 
 \epsilon^{1/2} m^{1,\ell}_{\ihb}+ \epsilon^{1/3} n_\ihb \enspace \text{and} \enspace 
\fsum{g_1}{\phi'_{\ell}}{N_{J_{\ihb}, \widehat{V}_{\ihb,\ell-1}}(u)}  
\leq  \epsilon^{1/2} \widehat{m}^{1,\ell-1}_{\ihb}+ \epsilon_{\ell-1} n_\ihb.
\end{align}
Thus, we have 
\begin{eqnarray*}
& & \hspace{-2cm} \qquad\sum_{x,x'\colon\phi'_{\ell}(x) \phi'_{\ell}(x') \in E(\tilde{J}_{\ih}) } d_{\widetilde{F}_{\ih}}(x) d_{\widetilde{F}_{\ih}}(x') 
 \leq  \sum_{ x\in \cen(\widetilde{F}_{\ih}) }\sum_{y\colon \phi'_{\ell}(y) \in  N_{\widetilde{J}_{\ih}}(\phi'_{\ell}(x )) } d_{F'_{\ell}}( y ) d_{F'_{\ell}}(x)  \\
 &\leq & \sum_{ x\in \cen(\widetilde{F}_{\ih}) }d_{F'_{\ell}}(x) \left(
 \fsum{g_1}{\phi'_{\ell}}{N_{J_\ihb, V_{\ihb,\ell}\cup V'_{\ihb,\ell}}(\phi'_{\ell}(x))} 
 +  \fsum{g_1}{\phi'_{\ell}}{N_{J_\ihb, \widehat{V}_{\ihb,\ell-1}}(\phi'_{\ell}(x))} \right)\\
&\stackrel{\eqref{eq: NH value small}}{ \leq}   &
\sum_{ x\in \cen(\widetilde{F}_{\ih}) }d_{F'_{\ell}}(x) \left(  \epsilon^{1/2} (m^{1,\ell}_{\ihb}+ \widehat{m}^{1,\ell-1}_{\ihb}) + 2\epsilon_{\ell-1} n_\ihb\right)  \stackrel{\eqref{eq: m g relation}}{\leq}  2\epsilon_{\ell-1} (tn/r)^2 
\stackrel{\eqref{eq: nn' sizes}}{ \leq} \epsilon_{\ell-1}^{1/2} | V_{\ihb,\ell} \cup V'_{\ihb,\ell}\cup \widehat{V}_{\ihb,\ell-1}|^2.
\end{eqnarray*}
Hence (A3)$_{\ref{lem: random matching behaves random}}$ holds as well.

Therefore, we can indeed apply Lemma~\ref{lem: random matching behaves random} for each $\ih\in [r]\times[2]$ and 
can extend $\phi'_{\ell}$ to $\phi_{\ell}$ such that
for each $\ih\in [r]\times[2]$, 
the function $\phi_{\ell}$ embeds $\widetilde{F}_{\ih}$ into $G'$ in such a way that
$\phi_{\ell}( L(\tilde{F}_{\ih}) )\subseteq \widehat{V}_{\ih,\ell}$ and 
for each $j\in [4]$ and $B\in \cB^j_{\ih}$, we have 
\begin{align}\label{eq: B2 for case 1}
\fsum{g_j}{\phi_{\ell}}{B\cap \widehat{V}_{\ih,\ell}} = 
\sum_{x\in \cen(\widetilde{F}_{\ih}) }\sum_{y\in N_{\widetilde{F}_{\ih}}(x)} \frac{g_j(y) | N_{G}( \phi_{\ell}(x) )\cap B\cap \widetilde{V}_{\ih,\ell}| }{ d| \widetilde{V}_{\ih,\ell}| } \pm  \epsilon^{1/400}_{\ell-1} | \widetilde{V}_{\ih,\ell}|.
\end{align}

This immediately implies that \ref{Phi1}$_\ell$--\ref{Phi2'}$_\ell$ hold and \ref{Phi3}$_\ell$ also holds as $\phi_{\ell}$ extends $\phi'_{\ell}$.
Next we only need to verify \ref{Phi4}$_\ell$.
We fix some $u\in V_\ih$ and $j\in [4]$.
The definitions in \eqref{eq: def gcB} imply that $N_{J_\ih}(u) \in \cB^{j}_{\ih}$. 
Thus
\begin{eqnarray*}
\fsum{g_j}{\phi_{\ell}}{N_{J_{\ih},\widehat{V}_{\ih,\ell}}(u)}
&\stackrel{\eqref{eq: B2 for case 1}}{=}& \sum_{x\in \cen(\widetilde{F}_{\ih}) }\sum_{y\in N_{\widetilde{F}_{\ih}}(x)} 
 \frac{g_j(y) | N_{G}( \phi_{\ell}(x) )\cap N_{J_\ih}(u) \cap \widetilde{V}_{\ih,\ell}| }{ d |\widetilde{V}_{\ih,\ell}| } \pm  \epsilon^{1/400}_{\ell-1}|\widetilde{V}_{\ih,\ell}|\\
&\stackrel{\eqref{eq: Delta Hih}, \eqref{eq: V'' size}}{\leq}&  \sum_{x\in \cen(\widetilde{F}_{\ih}) }\sum_{y\in N_{\widetilde{F}_{\ih}}(x)}  \frac{4 g_j(y)  \epsilon n_\ih }{ d \mu n_\ih } +  \epsilon^{1/400}_{\ell-1} n_\ih 
\leq  \epsilon^{1/2} \widehat{m}_{\ih}^{j,\ell} + \epsilon_{\ell} n_\ih.
\end{eqnarray*}
Here, the penultimate inequality holds since
$|N_{G}( \phi_{\ell}(x) )\cap N_{J_\ih}(u) \cap \widetilde{V}_{\ih,\ell}|\leq |N_{J_\ih}(u)|$ and
the final inequality holds since 
$$\sum_{x\in \cen(\widetilde{F}_\ih)}\sum_{y\in N_{\widetilde{F}_\ih}(x)}g_j(y) 
= \sum_{ y\in X_{\ih}\cap L(F'_{\ell})} g_j(y)  \stackrel{\eqref{eq: m ih j ell def}}{=} 
\widehat{m}_{\ih}^{j,\ell}.$$
Since this holds for all $j\in [4]$ and $u\in V_\ih$, we have
$\cD^{j,\ell}_{\ih}(\phi_{\ell},\epsilon_{\ell}) =\emptyset.$
For each $j\in \{5,6\}$, by \eqref{eq: F' leaf not in X'}, we also have $\cD^{j,\ell}_{\ih}(\phi_{\ell},\epsilon_{\ell}) =\emptyset.$
Hence the first part of \ref{Phi4}$_\ell$ holds.

Consider $j\in [2]$ and $u\in V_\ihb \setminus \cC^{j+4,\ell}_{\ihb}(\phi'_{\ell},\epsilon^{1/3})$. 
Since $N_{G', V_{\ih}}(u)\in \cB^{j}_{\ih}$ and $u\notin \cC^{j+4,\ell}_{\ihb}(\phi'_{\ell},\epsilon^{1/3})$, definition \eqref{eq: def cC cD} implies that
\begin{eqnarray} \label{eq: B property hold j45}
\fsum{g_{j+4}}{\phi_{\ell}}{N_{J_{\ihb},V_{\ihb,\ell}\cup V'_{\ihb,\ell}}(u)}\leq \epsilon^{1/2}m^{j+4,\ell}_{\ihb} + \epsilon^{1/3} n_\ihb \stackrel{\eqref{eq: m g relation}}{\leq} 2 \epsilon^{1/3} n_\ihb.
\end{eqnarray}
Also \ref{Phi4}$_{\ell-1}$ with  \eqref{eq: def cC cD} implies that 
\begin{eqnarray}\label{eq: B property hold j45 2}
\fsum{g_{j+4}}{\phi_{\ell}}{N_{J_{\ihb},\widehat{V}_{\ihb,\ell-1}}(u)}
\leq \epsilon^{1/2}\widehat{m}^{j+4,\ell}_{\ihb} + \epsilon_{\ell-1} n_\ihb \stackrel{\eqref{eq: m g relation}}{\leq} 2 \epsilon_{\ell-1} n_\ihb.
\end{eqnarray}
Also,  \ref{Phi3}$_{\ell}$ and \ref{Phi4}$_{\ell-1}$ imply that
\begin{eqnarray}\label{eq: B property hold j45 3}
\fsum{g_{j+2}}{\phi_{\ell}}{N_{J_{\ihb},V_{\ihb,\ell}\cup V'_{\ihb,\ell}\cup \widehat{V}_{\ihb,\ell-1}}(u)}
\leq \epsilon^{1/2}(m^{j+2,\ell}_{\ihb}+ \widehat{m}^{j+2,\ell-1}_{\ihb}) + \epsilon^{1/3} n_\ihb + 
\epsilon_{\ell-1} n_\ihb \stackrel{\eqref{eq: m g relation}}{\leq} 2 \epsilon_{\ell-1} n_\ihb.
\end{eqnarray}
Thus 
{
\begin{eqnarray*}
& & \hspace{-2.2cm} \fsum{g_j}{\phi_{\ell}}{ N_{G',\widehat{V}_{\ih,\ell}}(u)}\stackrel{\eqref{eq: B2 for case 1}}{=} \sum_{x\in \cen(\widetilde{F}_{\ih}) }\sum_{y\in N_{\widetilde{F}_{\ih}}(x)}  \frac{g_j(y) d_{G',\widetilde{V}_{\ih,\ell}}( u, \phi_{\ell}(x)) }{ d|\widetilde{V}_{\ih,\ell}| } \pm  \epsilon^{1/400}_{\ell-1}|\widetilde{V}_{\ih,\ell}| % \pm |\widehat{V}_{\ih,\ell}\setminus \widetilde{V}_{\ih,\ell}|
\\
&\stackrel{\eqref{eq: V'' size},\text{\ref{Phi4}$_{\ell-1}$}}{=}& \hspace{-0.5cm} \sum_{x\in \cen(\widetilde{F}_{\ih}) }  \frac{ \sum_{y\in N_{\widetilde{F}_{\ih}}(x)} g_j(y) ( d_{G',\widehat{V}_{\ih,\ell}}(u, \phi_{\ell}(x) ) \pm 2^{-w_*}n )  }{ d \widehat{n}_{\ih,\ell} \pm 2^{-w_*}n  } \pm  \epsilon^{1/400}_{\ell-1}\widehat{n}_{\ih,\ell} \\
&\stackrel{\eqref{eq: def w* nu},\eqref{eq: def gcB}}{=}&\hspace{-0.8cm} \sum_{\phi'_{\ell}(x)\in V_{\ih,\ell}\cup V'_{\ih,\ell}\cup \widehat{V}_{\ih,\ell-1} }  \frac{ \big(g_{j+2}(x)+ g_{j+4}(x)\big) d_{G',\widehat{V}_{\ih,\ell}}( u,\phi_{\ell}(x) ) }{ d\widehat{n}_{\ih,\ell} } \pm  3\epsilon^{1/400}_{\ell-1}n'_{\ih,\ell}  \\
&\stackrel{\ref{V2}}{=}& \hspace{-0.3cm}
(d\pm \epsilon^{1/2}) \fsum{g_{j+2}}{\phi_{\ell}}{V_{\ih,\ell}\cup V'_{\ih,\ell}\cup \widehat{V}_{\ih,\ell-1} }  + (d\pm \epsilon^{1/2}) \fsum{g_{j+4}}{\phi_{\ell}}{V_{\ih,\ell}\cup V'_{\ih,\ell}\cup \widehat{V}_{\ih,\ell-1} } \\
&& \hspace{-0.3cm} \pm 2d^{-1} \fsum{g_{j+2}}{\phi_{\ell}}{ N_{J_{\ihb}, V_{\ih,\ell}\cup V'_{\ih,\ell}\cup \widehat{V}_{\ih,\ell-1} }(u)}
 \pm 2d^{-1} \fsum{g_{j+4}}{\phi_{\ell}}{ N_{J_{\ihb},V_{\ih,\ell}\cup V'_{\ih,\ell}\cup \widehat{V}_{\ih,\ell-1}}(u)} 
  \pm  3 \epsilon^{1/400}_{\ell-1}n'_{\ih,\ell} \\
&\stackrel{ \substack{ \eqref{eq: B property hold j45},\\ \eqref{eq: B property hold j45 2},\eqref{eq: B property hold j45 3}} }{=}& \hspace{-0.3cm} (d\pm \epsilon^{1/2})\fsum{g_{j+2}}{\phi_{\ell}}{V_{\ih,\ell}\cup V'_{\ih,\ell} \cup \widehat{V}_{\ih,\ell-1} } +(d\pm \epsilon^{1/2})\fsum{g_{j+4}}{\phi_{\ell}}{V_{\ih,\ell}\cup V'_{\ih,\ell} \cup \widehat{V}_{\ih,\ell-1} } \pm  \epsilon^{1/500}_{\ell-1} n_\ihb \\
&\stackrel{\text{\ref{Phi2}$_{\ell-1}$},\eqref{eq: m ih j ell def}}{=}&  \hspace{-0.3cm} (d\pm \epsilon^{1/2}) ( m^{j+2,\ell}_{\ihb}+ m^{j+4,\ell}_{\ihb}+ \widehat{m}^{j+2,\ell-1}_{\ihb} ) \pm  \epsilon^{1/500}_{\ell-1} n_\ihb
\stackrel{\ref{G2},(\ref{eq: m g relation})}{=} \hspace{-0.2cm} (d\pm \epsilon^{1/2}) \widehat{m}^{j,\ell}_{\ih} \pm \epsilon_{\ell} n_\ih.
\end{eqnarray*}
}
Here, we obtain fourth equality because, by \ref{V2} and the definition of $J_{\ihb}$, if $\phi_{\ell}(x) \notin N_{J_{\ihb},V_{\ih,\ell}\cup V'_{\ih,\ell}\cup \widehat{V}_{\ih,\ell-1}}(u)$, then 
$d_{G',\widehat{V}_{\ih,\ell}}(u,\phi_{\ell}(x)) = 
(d^2\pm 4\epsilon)\widehat{n}_{\ih,\ell}$, and 
otherwise $d_{G',\widehat{V}_{\ih,\ell}}(u,\phi_{\ell}(x)) \leq \widehat{n}_{\ih,\ell}$.
This shows that $\widetilde{\cD}^{j,\ell}_{\ih}(\phi_{\ell},\epsilon_{\ell}) \subseteq  \cC^{j+4,\ell}_{\ihb}(\phi'_{\ell},\epsilon^{1/3})$. 
Combining this with \ref{Phi3}$_{\ell}$ leads to $|\widetilde{\cD}^{j,\ell}_{\ih}(\phi_{\ell},\epsilon_{\ell})|\leq |\cC^{j+4,\ell}_{\ihb}(\phi'_{\ell},\epsilon^{1/3})| \leq 2^{-w_*} n$.
Consequently, \ref{Phi4}$_{\ell}$ holds.
We proceed to Round $(\ell+1)$.\newline

%%%%%%%%%%%%%%%%%%%%%%%%%%%%%%%%%

\noindent {\bf Final round.}
At this stage, 
the algorithm completed Step $(k+1)$.1. 
We have an embedding $\phi'_{k+1}$ of $T- L(\last\cup L_1)$ into $V(G)$
satisfying \ref{Phi1}$_{k+1}$--\ref{Phi4}$_{k+1}$.
Let $\phi_{k+1}:=\phi'_{k+1}$.
In the following we complete the embedding by embedding the remaining edges in $\last\cup L_1$.
We proceed in two steps; first we apply Lemma~\ref{lem: random embedding simple} to embed $L_1\cup F^\circ$ and then Lemma~\ref{lem: p matching} to embed  $\last\sm F^\circ$.

Recall that $\LA^* = \he\cap V(\last) = L(\last)$.
For every $\ih\in [r]\times [2]$, we define yet not covered vertices as
$$V^{\circ}_\ih:= V_\ih\setminus \phi_{k+1}(V(T)\setminus (L(L_1)\cup \LA^*)).$$ 
Then \ref{Phi2}$_{1}$--\ref{Phi2}$_{k+1}$, \ref{Phi2'}$_{1}$--\ref{Phi2'}$_{k+1}$ imply that
\begin{align}\label{eq: Vih0 V'ih size}
V_{\ih,k+2}\subseteq V^\circ_{\ih} \text{ and } 
n^{\circ}_{\ih}:= |V^{\circ}_\ih| = n_\ih - |X_\ih\sm \LA^*|  \stackrel{\ref{V1},\eqref{eq: nn' sizes}}{=} n_{\ih,k+2} + (3k+4) \mu n_\ih.
\end{align}
Note that if Case 1 holds, then $\last=L_2$. 
Thus \ref{Z4}$_1$ implies that for $\ih \in [r]\times [2]$, all parents of vertices in $L(L_2)\cap X_{\ih}$ are in $X_{\ihb}$ (recall that $F^\#=F'\cup \last$), 
and \ref{Phi2}, \eqref{eq: def hat L2 L3} and \eqref{eq: L3 prop} imply that 
\begin{align}\label{eq: case 1 parents of F*}
\phi_{k+1}\big( \{ a_{T}(x) : x\in L(\last)\cap X_{\ih}\}\big) \subseteq V_{\ihb,1}.
\end{align}

Now we consider again both cases simultaneously.
Recall that we have a partition $\{L_{1,\ih}\}_{ \ih\in [r]\times [2]}$ of $L(L_1)$, a subgraph $F^{\circ}\subseteq \last$ and a partition $\{Y_{\ih}\}_{ \ih\in [r]\times [2]}$ of $L(F^{\circ})$ satisfying \ref{L1}--\ref{L4}.

Let $\sE_2$ to be the event that there exists an embedding $\phi_{k+2}$ satisfying the following.
\begin{enumerate}[label=(E\arabic*)]
\item \label{E1} $\phi_{k+2}$ extends $\phi_{k+1}$,
\item \label{E2} $\phi_{k+2}$ embeds $L_1\cup F^\circ$ into $R_{k+2}$, and
\item \label{E3} for each $\ih \in [r]\times [2]$, we have $\phi_{k+2}( L_{1,\ih}\cup Y_{\ih} )\subseteq V_{\ih}^{\circ}$.
\end{enumerate}
First, we apply Lemma~\ref{lem: random embedding simple} with respect to the following graphs and parameters to estimate $\mathbb{P}[\sE_2]$.
\newline

\noindent
{\small
\begin{tabular}{c|c|c|c|c|c}
object/parameter & $(L_1\cup F^{\circ}, R(L_1\cup F^{\circ}))$ &  $R_{k+2}$ & $V^{\circ}_\ih$ & $[r]\times [2]$& $M_*p'$ \\ \hline
playing the role of & $(F,R)$  & $G$  &  $V_i$ & $[r]$ & $p$ \\ \hline \hline
object/parameter & 
$V(G)\sm \bigcup_{\ih\in [r]\times [2]} V_\ih^\circ $  & $L_{1,\ih}\cup Y_{\ih}$ & $ \Delta(L_1 \cup F^{\circ}) $ & $\phi_{k+1}\mid_{R(L_1\cup F^\circ)}$ &\\ \hline
playing the role of & $U$  & $X_i$ & $\Delta$ & $\phi'$ &
\end{tabular}
}\newline \vspace{0.2cm}

\noindent Indeed, \ref{A2,41} trivially holds.
Note that $L_1$ and $F^{\circ} \subseteq \last$ are vertex-disjoint.
Now we verify (A1)$_{\ref{lem: random embedding simple}}$ for each case.
Note that by \eqref{eq: Vih0 V'ih size}, we have
\begin{eqnarray}\notag
|V^\circ_{\ih}|  = n_{\ih} - |X_{\ih} \sm \LA^*| 
&\stackrel{\ref{L2}}{=} &
|X_{\ih}\setminus L(F^{\circ})| + |L_{1,\ih}| + |Y_{\ih}| - |X_{\ih}\setminus \LA^*| \\
&=& |X_{\ih}| - |X_{\ih}\cap L(F^{\circ})| + |L_{1,\ih}\cup Y_{\ih}| - |X_{\ih}|+|X_{\ih}\cap \LA^*|\notag \\
%= |X_{\ih} \cap \Lambda^*| - |X_{\ih}\cap L(F^{\circ})| + |L_{1,\ih}| + |Y_{\ih}| \\
&=& |L_{1,\ih}\cup Y_{\ih}| + |X_{\ih}\cap \LA^*| - |X_{\ih}\cap L(F^{\circ})|.\label{eq: circ size}
\end{eqnarray}
By \eqref{eq: def Delta p}, we have 
\begin{align}\label{eq: np' p'-1}
(np')^{k}\geq M_*^{6k} n^{k/(k+1)} \geq M_* p'^{-1}.
\end{align}
In Case 1, we have $\last=L_2$. Hence $\LA_*=\LA_1$. 
Moreover, \ref{L3} implies $F^{\circ}=\emptyset$ and so $L_1\cup F^{\circ} = L_1$. 
Hence we have
\begin{eqnarray*} 
|V^{\circ}_\ih| 
&\stackrel{\eqref{eq: circ size}}{=}& |L_{1,\ih}\cup Y_{\ih}| + |X_{\ih}\cap \LA^*| 
\stackrel{ \ref{Z5}_1 }{\geq} |L_{1,\ih}\cup Y_{\ih}| + 2\eta^2 |L_2|\frac{n_\ih}{n} -  r^2 \Delta  \\
& \stackrel{\ref{G2},\ref{F12}}{\geq}& |L_{1,\ih}\cup Y_{\ih}|+ 2\eta^2 \cdot \frac{\Delta(L_1)}{3} \cdot(np')^k \cdot r^{-2} -  r^2 \Delta \\
&\stackrel{\eqref{eq: np' p'-1}}{\geq}&
|L_{1,\ih}\cup Y_{\ih}|+ \eta^2 M_* \Delta(L_1)  p'^{-1} \cdot r^{-2}/2 - r^2 \Delta \\
&=&  |L_{1,\ih}\cup Y_{\ih}|+ 2M_* \Delta(L_1) (M_*p')^{-1} - r^2 \Delta \\
&\stackrel{\ref{F12}}{\geq}&  |L_{1,\ih}\cup Y_{\ih}| + 12\Delta(L_1) (M_*p')^{-1} + 
M_* np'(M_*p' \log{n})^{-1} - r^2 \Delta \\
&\geq &  |L_{1,\ih}\cup Y_{\ih}| + 12\Delta(L_1) (M_*p')^{-1} +  30(M_*p')^{-1}\log{n}.
\end{eqnarray*}
Here, we obtain the 
final inequality since
$\frac{n}{\log{n}} \stackrel{\eqref{eq: def Delta p}}{\geq} \frac{M_*\Delta}{2} +
30 n^{k/(k+1)} \log{n}\stackrel{\eqref{eq: def Delta p}}{\geq} r^2\Delta + 30(M_*p')^{-1}\log{n}$.
Hence, \ref{A1,41} holds in Case 1.

In Case 2, \ref{F12} and \ref{L3} imply that $\Delta(L_1\cup F^{\circ})\leq 2np'$. Thus
\begin{eqnarray*} 
|V^\circ_{\ih}| 
&\stackrel{\eqref{eq: circ size}}{=}& |L_{1,\ih}\cup Y_{\ih}| + |X_{\ih}\cap \LA^*| - |X_{\ih}\cap L(F^{\circ})|\\
 &\stackrel{\eqref{eq: size of Xih W in case 2},\ref{L4} }{\geq}& |L_{1,\ih}\cup Y_{\ih}|  + \eta^3 n_\ih  - \min\{r^7 \Delta^k ,r^7 n /M_*\} \\
 &\geq&|L_{1,\ih}\cup Y_{\ih}| +  \eta^3 n_\ih /2\\
&\geq&  |L_{1,\ih}\cup Y_{\ih}|  + 12\Delta(L_1\cup F^{\circ}) (M_*p')^{-1} + 30(M_*p')^{-1}\log{n}.
\end{eqnarray*}
Hence, in both case, \ref{A1,41} holds.
Thus by Lemma~\ref{lem: random embedding simple}, we have 
\begin{align}\label{eq: prob sE2}
\bPr[\sE_2] \geq 1- n^{-2}.
\end{align}
Assume that $\sE_2$ holds, and we choose a function $\phi_{k+2}$ uniformly at random among all functions satisfying \ref{E1}--\ref{E3}, and for each $\ih \in [r]\times [2]$, we let
$$V^{\bullet}_{\ih} := V^{\circ}_\ih\setminus \phi_{k+2}(L(L_1)\cup L(F^{\circ})) \text{ and }F^{\bullet}:= \last\setminus L(F^{\circ}).$$
Thus it remains to embed $F^\bullet$ into $\bigcup_{\ih\in[r]\times[2]}V_\ih^\bullet$.
As $\phi_{k+2}$ is a random variable, $V^{\bullet}_{\ih}$ is also a random variable.  
To be able to finish the embedding, we want to show that $V^{\bullet}_{\ih}$ are nicely chosen with high probability.
For each $\ih \in [r]\times [2]$, we have
\begin{align}\label{eq:size n bullet}
	n^{\bullet}_{\ih}:=|V^{\bullet}_{\ih}|= |V^{\circ}_{\ih}| - |L_{1,\ih}| - |Y_{\ih}| \stackrel{\eqref{eq: circ size}}{=} 
	|X_{\ih}\cap \LA^* \setminus L(F^{\circ})|
	=|L(F^\circ)\cap X_\ih|.
\end{align}
Next we estimate $n^{\bullet}_{\ih}$ and we consider two cases.
In Case 1, as we have $\Delta(L_1) \stackrel{\ref{F12}}{\geq} \frac{np'}{\log{n}}$, $\last=L_2$ and $F^{\circ}=\emptyset$, we conclude that for each $\ih \in [r]\times [2]$, 
\begin{eqnarray}\label{eq: m31i3-h size}
 n^{\bullet}_{\ih} 
\stackrel{\rm \ref{Z5}_{1}}{\geq}  2\eta^2 |L_2| \frac{n_\ih}{n} -  r^2 \Delta  
\stackrel{\ref{F12}}{\geq} \frac{ 2\eta^2 \Delta(L_1) (np')^{k}}{3r^2} - r^2 \Delta
\stackrel{\eqref{eq: def Delta p}}{\geq} \frac{2\eta^2 n}{3 r^2 \log{n}} - \frac{r^2 n}{M\log{n}}
 \stackrel{\eqref{eq: hierarchy}}{\geq} \frac{n}{M\log{n}}.
\end{eqnarray}
Moreover, since $F^\bullet= \last$, \eqref{eq: case 1 parents of F*} and \eqref{eq: m ih j ell def} implies the following in the Case 1:
\begin{align}\label{eq: n bullet size case 1}
m^{2,1}_{\ihb} = n^{\bullet}_{\ih}.
\end{align}
In Case 2, we have
\begin{eqnarray}\label{eq: n diamond size}
n^{\bullet}_{\ih} \stackrel{\eqref{eq: size of Xih W in case 2},\ref{L4} }{\geq}  \frac{3}{2}\eta^3 n_{\ih} - \min\{ r^7 \Delta^{k} , r^7n/M_*\}  \geq \eta^3 n_{\ih}.
\end{eqnarray}
Let $\sE'_2$ be the event that the following holds:
\begin{enumerate}[label={\rm(V3)}]
\item\label{V3} For each $B\in \cB_{\ih} \cup \cB'_{\ih} \cup \cB''_{\ih}$, we have $|V^{\bullet}_{\ih} \cap B| = \frac{|V^{\circ}_\ih \cap B| |V^{\bullet}_{\ih}| }{|V^{\circ}_\ih| } \pm n^{4/5}.$
\end{enumerate}
Lemma~\ref{Chernoff Bounds} together with \eqref{eq: m31i3-h size} and \eqref{eq: n diamond size} implies that the number of sets of size $n^{\bullet}_{\ih}$ that satisfy \ref{V3} is at least $(1- n^{-3})\binom{n^{\circ}_{\ih}}{n^{\bullet}_{\ih}}$.
As the distribution of the random graph $R_{k+2}$ is invariant under any vertex permutation (similar to Remark~\ref{rmk: symmetry}), for any two subsets $A, B$ of $V^{\circ}_{\ih}$ of size $n^{\bullet}_{\ih}$, we have 
\begin{align}\label{eq: uniform choice}
\mathbb{P}[V^{\bullet}_{\ih} = A \mid \sE_2 ] =\mathbb{P}[V^{\bullet}_{\ih} = B \mid \sE_2].
\end{align}
As the number of all possible outcomes of the random variable $(V^{\bullet}_\ih)_{\ih \in [r]\times [2]}$ is $\prod_{\ih\in [r]\times [2]}\binom{n^{\circ}_{\ih}}{n^{\bullet}_{\ih}}$, and the number of outcomes  
$(V^{\bullet}_\ih)_{\ih \in [r]\times [2]}$ satisfying \ref{V3} is at least 
$\prod_{\ih\in [r]\times [2]} (1- n^{-3})\binom{n^{\circ}_{\ih}}{n^{\bullet}_{\ih}}$, \eqref{eq: uniform choice} implies that
\begin{align*}
\mathbb{P}[ \sE'_2\mid \sE_2 ] \geq \prod_{\ih\in [r]\times [2]} \frac{ (1-n^{-3}) \binom{n^{\circ}_{\ih}}{n^{\bullet}_{\ih}}}{\binom{n^{\circ}_{\ih}}{n^{\bullet}_{\ih}}} \geq (1-n^{-2}).\end{align*}
This together with \eqref{eq: prob sE2} implies that 
\begin{align}\label{eq: failure probability 2}
\mathbb{P}[ \sE'_2 \wedge \sE_2 ] \geq 1 - 2n^{-2}.
\end{align}
From now on we assume that $\phi_{k+2}$ is an embedding satisfying $\sE'_2 \wedge \sE_2$, because otherwise we end the algorithm with failure.
Recall that $F^\bullet=\last\sm L(F^\circ)$, so $F^\bullet$ is a star forest.
Then by \ref{L4}, for any $\ih\in [r]\times[2]$, we have
\begin{align}\label{eq: F* degree F**}
 \sum_{x\in R(F^{\bullet})\cap X_{\ih}} d_{F^{\bullet}}(x) 
=  \sum_{x\in R(F^{\bullet})\cap X_{\ih}} d_{\last}(x) \pm\min\{r^7\Delta^k, r^7 n/M_*\}.
\end{align}
We wish to apply Lemma~\ref{lem: p matching} for each $\ih\in [r]\times [2]$ with the following objects and parameters to obtain the final embedding $\phi$ which extends $\phi_{k+2}$ and embeds $T$ into $G\cup R$.  \newline

\noindent
{ \small
\begin{tabular}{c|c|c|c|c|c|c|c|c}
object/parameter & $G'[V^{\bullet}_\ih, V_\ihb ]$ & $V_\ihb $ & $V^{\bullet}_\ih$ &  $F^{\bullet} [ L(F^{\bullet}) \cap X_\ih,  X_{\ihb} ]$   & $\phi_{k+2}\mid_{ R(F^{\bullet}) \cap X_{\ihb}}$ & $\mu^{1/2}$ & $n^{\bullet}_\ih$ & $2^{-w_*/2}$
\\ \hline
playing the role of & $G$ & $U$ & $V$   &  $F$   & $\psi$ & $\epsilon$& $n$  & $\nu$ 
\end{tabular}
}\newline \vspace{0.2cm}

\noindent 
By~\eqref{eq:size n bullet}, $F^{\bullet} [ L(F^{\bullet}) \cap X_\ih,  X_{\ihb} ]$ has exactly $n^\bullet_\ih$ leaves.
Note that \eqref{eq: m31i3-h size} and \eqref{eq: n diamond size} imply that in both Case 1 and Case 2, we have $1/n^{\bullet}_\ih \ll \mu^{1/2}$, and we know that $w_*^{-3} < \mu^{1/2}$ by \eqref{eq: def w* nu}.
In order to apply the lemma, we need to verify that \ref{L45A1}--\ref{L45A3} hold with the parameters specified above. 
To show \ref{L45A1}, consider a vertex $u \in V_\ihb$.
 As $N_{G'}(u) \in \cB_{\ih} \subseteq \cB^1_{\ih}$, we have
\begin{eqnarray}\label{eq: A degree good}
|N_{G'}(u)\cap V^{\bullet}_\ih| 
  &\stackrel{\ref{V3}}{=}&  \frac{d_{G',V^\circ_{\ih}}(u) |V^{\bullet}_{\ih}| }{|V^\circ_{\ih}| } \pm n^{4/5}\nonumber \\
&\stackrel{\rm \ref{G1}, \ref{V2}, \eqref{eq: Vih0 V'ih size}}{=} &
 \frac{   n^{\bullet}_{\ih} }{ n^\circ_{\ih} }((d\pm \epsilon) n_{\ih,k+2} \pm (3k+4)\mu n_{\ih} ) \pm n^{4/5} \nonumber \\
&\stackrel{\eqref{eq: nih0 size},\eqref{eq: Vih0 V'ih size},\eqref{eq: m31i3-h size},\eqref{eq: n diamond size} }{=} & (d\pm \mu^{1/2}) n^{\bullet}_\ih.
\end{eqnarray}
Thus~\ref{L45A1} holds.

To show~\ref{L45A2}, first assume that Case 1 applies.
In this case, we have $F^{\bullet}= \last=L_2$.
For each $v\in V^{\bullet}_{\ih}$, we have
\begin{eqnarray*}
\fsum{d_{F^{\bullet}}}{ \phi_{k+2}}{N_{G', V_{\ihb}}(v)}&\stackrel{\eqref{eq: case 1 parents of F*}}{=}&
\fsum{d_{F^{\bullet}}}{ \phi_{k+2}}{N_{G',V_{\ihb,1}}(v)}
 \stackrel{\eqref{eq: def gcB}}{=} \fsum{g_2}{\phi_{k+2}}{ N_{G',V_{\ihb,1}}(u)} \\
&\stackrel{\text{\ref{Phi3}$_1$}, \text{\eqref{eq: def cC cD}}}{=} &(d\pm\epsilon^{1/2}) m^{2,1}_{\ihb} \pm \nu n_\ihb \stackrel{ \eqref{eq: def w* nu}, \eqref{eq: m31i3-h size},\eqref{eq: n bullet size case 1} }{=}  (d\pm \mu^{1/2})n^{\bullet}_{\ih}.
\end{eqnarray*}
We obtain the penultimate equality since $\widetilde{\cC}_{\ihb}^{2,1}(\phi,\nu)=\es$.
This shows that~\ref{L45A2} holds for Case 1. 

Now assume Case 2 holds.
For each $\ih\in [r]\times[2]$, we let (see~\eqref{eq: def cC cD})
$$D_{\ih}:= \bigcup_{\ell\in [k]} \widetilde{\cD}^{2,\ell}_{\ihb}(\phi_{\ell},\epsilon_{\ell})\subseteq V_\ih.$$
Thus, for any $v\in V^{\bullet}_\ih\setminus D_{\ih}$, we have
\begin{eqnarray}\label{eq: C2 verifies 1}
 \fsum{d_{F^\bullet}}{\phi_{k+2}}{ N_{G',V_{\ihb}}(v)} \hspace{-0.35cm}
&\stackrel{\eqref{eq: def gcB},\eqref{eq: F* degree F**}}{=}& \hspace{-0.35cm}
\sum_{\ell\in [k+1]} \fsum{g_2}{\phi_{k+2}}{N_{G',V_{\ihb,\ell}\cup V'_{\ihb,\ell}}(v)} + \sum_{\ell\in [k]} \fsum{g_2}{\phi_{k+2}}{N_{G',\widehat{V}_{\ihb,\ell}}(v)} \pm \frac{r^7n}{M_*} \nonumber \\
 &\stackrel{\eqref{eq: def cC cD}, \text{\ref{Phi3}$_{\ell}$}  }{=}& \hspace{-0.35cm} \sum_{\ell\in [k+1]} ( (d\pm \epsilon^{1/2})m^{2,\ell}_{\ihb}  \pm \nu n_\ihb) +
 \sum_{\ell\in [k]} ( (d\pm \epsilon^{1/2}) \widehat{m}^{2,\ell}_{\ihb}  \pm \epsilon_{\ell} n_\ihb) \pm   \frac{r^7n}{M_*} \nonumber \\
 &=& \hspace{-0.35cm} (d\pm \epsilon^{1/2})\left (\sum_{\ell\in [k+1]} m^{2,\ell}_{\ihb}+\sum_{\ell\in [k]} \widehat{m}^{2,\ell}_{\ihb}\right) \pm \mu n_\ihb \nonumber \\
 &\stackrel{\eqref{eq: g sum to W*}}{=}& \hspace{-0.35cm} (d\pm \epsilon^{1/2})|\LA^{*}\cap X_\ih| \pm \mu n_\ihb 
 \stackrel{\eqref{eq: n diamond size},\ref{L4}}{=} (d\pm \mu^{1/2}) n^{\bullet}_\ih.
\end{eqnarray}
Here we obtain the second inequality since $\widetilde{\cC}^{2,\ell}_{\ihb}(\phi_{k+2},\nu)=\es$ for all $\ell \in [k+2]$ and because $v\notin D_\ih$.
On the other hand, again $\widetilde{\cC}^{2,\ell}_{\ihb}(\phi_{k+2},\nu)=\es$ for all $\ell \in [k+2]$, so if $u\in V^{\bullet}_\ih\cap D_{\ih}$, then still we have
\begin{eqnarray}\label{eq: special vertices in D}
\hspace{-0.2cm}
\fsum{d_{F^\bullet}}{\phi_{k+2}}{ N_{G',V_{\ihb}}(v)}\hspace{-0.33cm}
&\stackrel{\eqref{eq: def gcB},\eqref{eq: F* degree F**}}{\geq}&\hspace{-0.33cm}
 \sum_{\ell\in [k+1]} \fsum{g_2}{\phi_{k+2}}{ N_{G', V_{\ihb,\ell}\cup V'_{\ihb,\ell}}(u)} - \min\{r^7\Delta^k, r^7 n/M_*\} \nonumber \\
 &\stackrel{\eqref{eq: def cC cD}, \text{\ref{Phi3}$_{\ell}$} }{\geq}&\hspace{-0.33cm}   \sum_{\ell\in [k+1]} \left( (d- \epsilon^{1/2}) m^{2,\ell}_{\ihb} - \nu n_\ihb \right) - \min\{r^7\Delta^k, r^7 n/M_*\}  \nonumber \\
 &\geq&\hspace{-0.33cm} (d - \epsilon^{1/2}) \left(\sum_{\ell\in [k+1]} m^{2,\ell}_{\ihb}\right) - (k+1)\nu n_\ihb - \min\{r^7\Delta^k, r^7 n/M_*\} \nonumber \\
 &\stackrel{\eqref{eq: def gcB},\eqref{eq: m ih j ell def}}{=}& \hspace{-0.33cm} (d - \epsilon^{1/2} )\bigg|X_\ih\cap \bigcup_{\ell=1 }^{k+1} \LA_{i} \bigg|  - (k+1)\nu n_\ihb - \min\{r^7\Delta^k, r^7 n/M_*\}.
\end{eqnarray}

By \eqref{eq: def w* nu}, $w_*\in \{M_*\log{n} , \frac{M_*^{1/2} n^{1/2}}{\Delta}\}$. 
If $w_* > 2\log{n}$, then $D_{\ih}=\emptyset$ by \ref{Phi4}$_{1}$--\ref{Phi4}$_{k}$. 
Thus \eqref{eq: C2 verifies 1} implies that~\ref{L45A2} holds.
If $w_* = \frac{M_*^{1/2} n^{1/2}}{\Delta} \leq 2\log{n}$, then by \eqref{eq: def Delta p}, we have that $k=2$ and $p=  \frac{M_*^{3/2}}{ w_*^{3}n^{1/2}}$. 
Hence we have $\Delta^{k} = \frac{M_* n}{w_*^2} =  \frac{n^2 p}{\Delta}$.
Also since $k=2$, \eqref{eq: def w* nu} implies that $\nu = n^{-1/10}$.
As $p'= M_*^6 p$, we have
\begin{eqnarray}
 \hspace{-0.5cm} \fsum{d_{F^\bullet}}{\phi_{k+2}}{ N_{G',V_{\ihb}}(v)} \hspace{-0.3cm}
&\stackrel{\eqref{eq: def w* nu},\eqref{eq: special vertices in D}}{\geq }& \hspace{-0.3cm} (d - \epsilon^{1/2} )\bigg|X_\ih\cap \bigcup_{\ell=1 }^{k+1} \LA_{i} \bigg|  -  n^{9/10} - \min\Big\{\frac{r^7 n^2 p}{\Delta}, \frac{r^7 n}{M_*}\Big\} \nonumber\\
&\stackrel{\eqref{eq: size of Q*}}{\geq }& \hspace{-0.3cm} \eta^{4} n_{\ih} \cdot \min\Big\{\frac{np'}{\Delta},1\Big\} - n^{9/10} - \min\Big\{\frac{r^7 n^2p'}{ M_*^6 \Delta}, \frac{r^7 n}{M_*}\Big\}\nonumber \\
&\geq & \hspace{-0.3cm} \min\Big\{  \frac{  n^2 p'}{ M_* \Delta}, \frac{\eta^{4}n_{\ih}}{4}\Big\}
\stackrel{\eqref{eq: def Delta p}}{ \geq } \min\Big\{\frac{ M_*^6 n}{w_*^2}, \frac{\eta^{4}n_{\ih}}{4}\Big\} \geq 2^{-w_*/2}  n^{\bullet}_\ih
\end{eqnarray}
Here we obtain the third inequality holds by considering the two case of  $np'< \Delta$ and $np'\geq \Delta$. 
We obtain the final inequality as $w_*\geq M^{1/2}_*$.
Since \ref{Phi4}$_1$--\ref{Phi4}$_k$ imply that $|D_{\ih}|\leq (k+2) 2^{-w_*} n \stackrel{\eqref{eq: n diamond size}}{\leq} 2^{-w_*/2} n_\ih^{\bullet}$, we know~\ref{L45A2} holds in Case 2.

Now we only need to verify~\ref{L45A3}.
For each $uu' \in \binom{V^\bullet_{\ihb}}{2}\setminus  J_{\ihb}$, we have $N_{G',V_{\ih}}(u,u')  \in \cB''_{\ih}$. 
Thus the definition of $J_{\ihb}$ implies that
\begin{eqnarray*}
d_{G',V^{\bullet}_{\ih}}(u,u') 
&\stackrel{ \ref{V3} }{=}& \frac{d_{G',V^\circ_{\ih}}(u,u') }{n^\circ_{\ih}}{n^{\bullet}_{\ih}}  \pm n^{4/5}
\stackrel{\eqref{eq: Vih0 V'ih size} }{=}\frac{d_{G',V_{\ih,k+2}}(u,u')  \pm (3k+4)\mu n_\ih  }{n_{\ih,k+2} \pm (3k+4)\mu n_\ih }n_{\ih}^{\bullet} \pm n^{4/5} \\
&\stackrel{\eqref{eq: nih0 size},
\ref{V2} }{=}& 
 (d^2\pm \mu^{1/2}) n_{\ih}^{\bullet}.
 \end{eqnarray*}
Here, we also use for last equality \eqref{eq: m31i3-h size} in Case 1 and \eqref{eq: n diamond size} in Case 2.
Thus this implies that 
$$J^{\bullet}_{\ih}:= J_{G'[V^{\bullet}_\ih, V_{\ihb} ] }(V_\ihb, d,  \mu^{1/2} ) \subseteq J_{\ihb}.$$
Note that $F^{\bullet}\subseteq \last$. Hence have $d_{F^\bullet}(x) \leq d_{\last}(x) = g_2(x)$ for all $x\in V(T)$. 
In Case 1, as $F^\bullet = \last = L_2$, we have
\begin{eqnarray*}
 \sum_{x,x'\colon \phi_{k+2}(x)\phi_{k+2}(x') \in E(J^{\bullet}_{\ihb})} d_{\last}( x) d_{\last}(x') \hspace{-0.3cm}
&\stackrel{\eqref{eq: def gcB}}{\leq}&  \hspace{-0.3cm} \sum_{ x\in X_{\ihb} \cap (V(F_1)\setminus R(F_1)) } g_2(x)  \fsum{g_2}{ \phi_{k+2} }{ N_{J_{\ihb}}(\phi(x))}\\
&\stackrel{\text{\ref{Phi3}$_1$},\eqref{eq: case 1 parents of F*}}{\leq} &\hspace{-0.3cm} \sum_{ x\in X_{\ihb}} g_2(x) (\epsilon^{1/2} m^{2,1}_{\ihb}+ \nu n_\ih ) \\
&\stackrel{\eqref{eq: m ih j ell def},\eqref{eq: def w* nu}}{\leq}& \epsilon^{1/2}  (m^{2,1}_{\ihb})^2 + n^{19/10}\\
&\stackrel{\eqref{eq: m31i3-h size},\eqref{eq: n bullet size case 1}}{\leq}&   \mu^{1/2} ( n^{\bullet}_{\ih})^2 .
\end{eqnarray*}
In Case 2, we obtain
\begin{eqnarray*}
& & \hspace{-2cm} \sum_{x,x'\colon \phi_{k+2}(x) \phi_{k+2}(x') \in E(J^{\bullet}_{\ihb})  } d_{F^\bullet}(x) d_{F^\bullet}(x') 
 \leq  \sum_{x\colon\phi_{k+2}(x) \in V_\ihb} g_2(x) \fsum{g_2}{ \phi_{k+2} }{ N_{J_{\ihb}}(\phi(x))} \\
&\leq & \sum_{x\colon\phi_{k+2}(x) \in V_\ihb} g_2(x)
\left( \sum_{\ell\in [k+1]}  \fsum{g_2}{ \phi_{k+2} }{ N_{J_{\ihb}, V_{\ihb,\ell}\cup V'_{\ihb,\ell}}(\phi(x))} +  \sum_{\ell\in [k]}  \fsum{g_2}{ \phi_{k+2} }{ N_{J_{\ihb}, \widehat{V}_{\ihb,\ell}}(\phi(x))}  \right)\\
&\stackrel{\text{\ref{Phi3}$_\ell$},\text{\ref{Phi4}$_\ell$}}{\leq }&
\sum_{x\colon\phi_{k+2}(x) \in V_\ihb} g_2(x) \left(\sum_{\ell=1}^{k+1} \left(\epsilon^{1/2} m^{2,\ell}_{\ihb} + \nu n_\ihb \right) +  \sum_{\ell=1}^{k} \left( \epsilon^{1/2}\widehat{m}^{2,\ell}_{\ihb} +\epsilon_{\ell} n_\ihb\right) \right) \\
&\stackrel{\eqref{eq: m g relation},\ref{G2}}{\leq} &
\sum_{x\colon\phi_{k+2}(x) \in V_\ihb}  \mu^{2/3} g_2(x) n_{\ih} 
\stackrel{\eqref{eq: n diamond size}}{\leq} \mu^{1/2} (n^{\bullet}_{\ih})^2.
\end{eqnarray*}
Thus~\ref{L45A3} holds.
Hence, Lemma~\ref{lem: p matching} yields the desired embedding $\phi$, and this finishes the embedding algorithm.

\bigskip

If the above embedding algorithm succeeds, 
then we obtain the desired embedding $\phi$ of $T$ into $G\cup \bigcup_{\ell=1}^{k+2} R_{\ell}$.
Moreover, using a union bound together with \eqref{eq: failure probability 1} and \eqref{eq: failure probability 2} implies that the embedding algorithm succeeds with probability at least $1-  2(k+2)n^{-3/2}.$

Observe that we did not use $R_{k+3}$ so far. 
We will use it in the following section to deal with the case where \eqref{eq: enough heavy leafs} does not hold.

\section{Trees with few heavy leafs} \label{sec: few heavy leaves}

Recall that $\he$ and $\li$ denotes the set of heavy and light leaves in $T$, respectively.
Observe that we have assumed in the previous three sections that $|\he|\geq 4\eta n$ (see~\eqref{eq: enough heavy leafs}).
Now we may assume that $|\he|< 4\eta n$.
We split again into two cases; first we assume that $|\li|\geq 4\eta n$
and otherwise, $T$ has at most $8\eta n$ leaves.
In the latter case,
we use Lemma~\ref{lem: number of leaves}
to conclude that $T$ contains
a collection $\{ P_i : i\in [2\eta n]\}$ of vertex-disjoint $(k+3)$-vertex bare paths.
Indeed, observe that
$$
\frac{n}{k+3}-16\eta n \geq 2\eta n.
$$
\newline

\noindent {\bf CASE A.} $|\li| \geq 4\eta n$.
\medskip

We proceed as follows.
First we remove exactly $4\eta n$ light leaves from $T$ and obtain a new tree $T'$.
Afterwards, we add $4\eta n$ leaves to $T'$ in such a way that the new tree $T^*$
 has at least $4\eta n$ heavy leaves.
As $T^*$ has at least $4\eta n$ heavy leaves,
$T^*$ has an embedding into $G \cup \bigcup_{\ell=1}^{k+2}R_\ell$ with high probability, as we showed in the previous sections, and so does $T'$.
Since we only removed light leaves of $T$ to obtain $T'$,
it is easy to extend the embedding of $T'$ to an embedding of $T$ by using the edges in $R_{k+3}$.

Now we turn to the details.
Let $L \subseteq \li$ be a set of exactly $4\eta n$ light leaves and we set
 $$T':= T- L.$$
Let $x_1,\dots, x_{4\eta n}$ be $4\eta n$ new vertices.
Since every $m$-vertex tree contains at least $m/2$ vertices of degree $1$ or $2$ for every $m\geq 2$,
the tree $T'$ contains at least $2\eta n$ vertices $y_1,\ldots,y_{\eta n}$ of degree $1$ or $2$ (in $T'$). 
%Let $\{ z_1,\dots, z_{ \frac{  \eta \log{n} }{ 2 p }}\}$ be a set of vertices in $T'$ with $d_{T'}(z_i)\leq 2$ for $i\in [\frac{ \eta \log{n} }{ 2p }]$.
We partition $\{ x_1,\dots, x_{4\eta n}\}$ into  sets $X_1,\dots, X_{{ 2 \eta \log{n} }/{ p' } }$ of size ${2np'}/{\log{n}} \pm 1$.
Let $T^*$ be a tree with 
$$V(T^*):= V(T')\cup \{ x_1,\dots, x_{4\eta n}\}\text{ and }
E(T^*)= E(T')\cup \{ y_i x :x \in X_i , i\in  [{ 2\eta \log{n} }/p'] \}.$$
Hence, for each $i\in [{ 2 \eta \log{n} }/{ p' }]$, we have
\begin{align*}\label{eq: artificial heavy}
\frac{2np'}{\log{n}}\leq d_{T^*}(y_i)\leq \frac{2np'}{\log{n}} + 3 < \Delta.
\end{align*}
Thus $x_j$ is a heavy leaf in $T^*$ for all $j\in [4\eta n]$ as well as
$\Delta(T^*)\leq \Delta$ and $T^*$ has at least $ 4\eta n$ heavy leaves.

Since $T^*$ satisfies \eqref{eq: enough heavy leafs}, there exists an embedding $\phi'$ of $T^*$ into $G\cup \bigcup_{\ell=1}^{k+2} R_{\ell}$ with probability at least $1-2(k+2)n^{-3/2}$.
Now, let $\{ a_1,\dots, a_{(1-4\eta) n}\}:= \phi'(V(T'))$.
For each $i\in[(1-4\eta) n]$, 
let $d_i:= d_{T-E(T')}(\phi'^{-1}(a_i))$.
We apply Lemma~\ref{lem: light leaves embedding} with the following objects and parameters. 
\newline

\noindent {\small
\begin{tabular}{c|c|c|c|c|c|c|c|c}
object/parameter & $\phi'(V(T'))$ & $\{ a_1,\dots, a_{(1-4\eta) n}\}$ & $R_{k+3}$ & $|\phi'(V(T'))|$ & $M_*p'$ & $d_i$ & $4\eta n$ & $2np'/\log n$
\\ \hline
playing the role of & $A$ & $B$ & $G$ &  $k$ & $p$ & $d_i$ & $n$ & $\Delta$
\end{tabular}}
\newline \vspace{0.2cm}

\noindent Note that $\sum_{i\in [(1-4\eta) n]} d_i = |L| =4\eta n$ and
$$M_*p'
\geq \frac{p'}{\eta}\cdot \frac{\log 4\eta n}{\log n}
\geq \frac{4np'}{\log n} \cdot \frac{\log 4\eta n}{4\eta n}.
$$
 Lemma~\ref{lem: light leaves embedding} ensures that, with probability $1-o(1)$, $\phi'$ can be extended to an embedding $\phi$ of $T$ into $G\cup \bigcup_{\ell=1}^{k+3} R_{\ell}$. \newline

\noindent {\bf CASE B.} The tree $T$ contains a collection $\{ P_i : i\in [2\eta n]\}$ of vertex-disjoint $(k+3)$-vertex bare paths. 
\medskip

We proceed similarly as in Case A.
This time, we remove the interior vertices of the paths in $\{ P_i : i\in [2\eta n]\}$ and obtain a forest $F$,
say $s_i$ and $t_i$ are the endvertices of $P_i$, which are still contained in $F$.
Again, we consider a set of new vertices $\{ x_1,\dots, x_{2(k+1)\eta n}\}$ and partition
this set into 
${ 2 \eta \log{n} }/{ p' }$ sets $X'_1,\dots, X'_{{  2\eta \log{n} }/{ p' } }$ of size ${(k+1)np'}/{\log{n}} \pm 1$.
Again, we construct a tree $T^*$ which contains $F$ with many heavy leaves as follows
\begin{align*}
	V(T^*)&:= V(F)\cup \{x_1,\dots, x_{2(k+1)\eta n}\}\\
	E(T^*)&:= E(F)\cup \{s_i t_i : i\in [2\eta n]\} \cup  \{ s_i q :q \in X'_i , i\in  [{ 2\eta \log{n} }/{ p' }] \}.
\end{align*}
Hence for each $i\in [2\eta n]$, we have
\begin{align*}
\frac{(k+1)np'}{\log{n}}-2\leq d_{T^*}(s_i),d_{T^*}(t_i)\leq \frac{(k+1)np'}{\log{n}} + 3 < \Delta.
\end{align*}
Thus $x_j$ is a heavy leaf for all $j\in [2(k+1)\eta n]$ 
as well as $\Delta(T^*)\leq \Delta$ and $T^*$ has at least $ 4\eta n$ heavy leaves.

Since $T^*$ satisfies \eqref{eq: enough heavy leafs}, 
there exists an embedding $\phi'$ of $T^*$ into $G\cup \bigcup_{\ell=1}^{k+2} R_{\ell}$ with probability at least $1- 2(k+2)n^{-3/2}$.
Now, let $V':= V(G)\setminus \phi'(V(F))$.
  Now we apply Lemma~\ref{lem: bare paths embedding} with the following objects and parameters. \newline

\noindent{\small
\begin{tabular}{c|c|c|c|c|c|c|c}
object/parameter & $\phi'(s_i)$ & $\phi'(t_i)$ & $R_{k+3}[V'\cup \phi'(\{s_1,t_1,\ldots, s_{\eta n},t_{\eta n}\})]$ & $ k+3$ & $M_*p'$ & $n$ & $M_*$
\\ \hline
playing the role of & $s_i$ & $t_i$ & $G$ &  $k$ & $p$  & $2\eta n$ & $M$
\end{tabular}}
\newline \vspace{0.2cm}

\noindent As $M_*p' \geq M_*(\frac{ \log 2\eta n }{n^{k+1}} )^{1/(k+2)}$ by \eqref{eq: def Delta p}, 
Lemma~\ref{lem: bare paths embedding} ensures that, with probability $1-o(1)$, $\phi'$ can be extended to an embedding $\phi$ of $T$ into $G\cup \bigcup_{\ell=1}^{k+3} R_{\ell}$. This finishes the proof of Theorem~\ref{thm: main}.

\bibliographystyle{amsplain}
\bibliography{littrees}

\end{document}